\newtheorem{thm}{Theorem}[section]
\newtheorem{cor}[thm]{Corollary}
\newtheorem{defn}[thm]{Definition}
\newtheorem{prop}[thm]{Proposition}
\newtheorem{lem}[thm]{Lemma}
\newtheorem{rem}[thm]{Remark}
\newcommand{\ps}[2]{\left\langle#1|#2\right\rangle}
\newcommand{\pss}[2]{\bigl\langle#1|#2\bigr\rangle}
\newcommand{\psr}[2]{\left(#1\,|\,#2\right)_{L^2}}
\newcommand{\T}[2]{\mathcal T^{(#1,\,#2)}M}
\newcommand{\Df}[2]{\mathcal D^{(#1,\,#2)}}
\newcommand{\tspn}[3]{\mathcal P_{#1}^{#2}(#3)}
\newcommand{\tsp}[2]{\mathcal P^{#1}(#2)}
\newcommand{\sym}{\mathcal S^2(M)}
\newcommand{\sdp}{\mathcal S_{+}^2(M)}
\newcommand{\tr}[1]{\mathrm{tr} #1}
\newcommand{\trp}[1]{\mathrm{tr}(#1)}
\newcommand{\dv}{\mathsf{\delta}}
\newcommand{\tdv}{\mathsf{\tilde\delta}}
\newcommand{\D}{\mathsf{D}}
\newcommand{\tD}{\mathsf{\tilde D}}
\newcommand{\norm}[1]{\left|#1\right|}
\newcommand{\normm}[1]{\bigl|#1\bigr|}
\newcommand{\rst}{\overset{\circ}{Ric}}
\newcommand{\rstg}{\overset{\circ}{Ric_g}}
\begin{document}
\renewcommand{\labelitemi}{\textperiodcentered}

\title{Fourth order curvature flows and geometric applications}
\author{Vincent Bour\footnote{Institut Fourier, Université Grenoble 1\newline 100 rue des Maths, BP 74, 38402 St Martin d'Hères, France\newline e-mail: vincent.bour@ujf-grenoble.fr}}
\date{December 2010}

\maketitle

\begin{abstract}
We study a class of fourth order curvature flows on a compact Riemannian manifold, which includes the gradient flows of a number of quadratic geometric functionals, as for instance the $L^2$ norm of the curvature.

Such flows can develop a special kind of singularities, that could not appear in the Ricci flow, namely  singularities where the manifold collapses with bounded curvature. We show that this phenomenon cannot occur if we assume a uniform positive lower bound on the Yamabe invariant.

In particular, for a number of gradient flows in dimension four, such a lower bound exists if we assume a bound on the initial energy. This implies that these flows can only develop singularities where the curvature blows up, and that blowing-up sequences converge (up to a subsequence) to a ``singularity model'', namely a complete Bach-flat, scalar-flat manifold.

We prove a rigidity result for those model manifolds and show that if the initial energy is smaller than an explicit bound, then no singularity can occur. Under those assumptions, the flow exists for all time, and converges up to a subsequence to the sphere or the real projective space.

This gives an alternative proof, under a slightly stronger assumption, of a result from Chang, Gursky and Yang asserting that integral pinched $4$-manifolds with positive Yamabe constant are space forms.
\end{abstract}

\vspace*{20pt}

\section{Introduction}

The analysis of fourth and higher order partial differential equations is a subject of rising interest in Riemannian geometry. This interest is motivated by the successful resolution of a number of key issues involving second order equations, such as the Yamabe problem and the prescribed scalar curvature problem, which are linked to an elliptic second order equation, and evolution problems such as the Ricci flow, the mean curvature flow, or the Yamabe flow. These problems, which have motivated the development of geometric analysis, have higher order counterparts, and it is natural to wonder how far the known results extend. Moreover, if a number of second order equations are now very well understood in dimensions two and three, it seems that in higher dimensions, equations of higher order behave better, because they are more suited to the context, or carry more regularity. For example, it appears that a lot of second order issues are in some way related to the Einstein-Hilbert functional, a crucial invariant for surfaces. The Yamabe problem consists in minimizing this functional in a given conformal class, and the Ricci flow can be seen as its gradient flow, by adding extra dimensions or by keeping a measure fixed (see \cite{CCDMM} and \cite{Per02}).

In the same spirit, fourth order equations often appear in relation with some quadratic curvature functional, linear combination of:
\begin{equation}\label{eq:functional1}
\mathcal F_{Rm}(g)=\int_M \norm{Rm_g}^2 dv_g,\quad \mathcal F_{Ric}(g)=\int_M \norm{Ric_g}^2 dv_g\text{\quad and \quad}\mathcal F_R(g)=\int_M R_g^2 dv_g,
\end{equation}
or equivalently of:
\begin{equation}\label{eq:functional2}
\mathcal F_W(g)=\int_M \norm{W_g}^2 dv_g,\quad  \mathcal F_{\overset{\circ}{Ric}}(g)=\int_M \normm{\overset{\circ}{Ric}_g}^2 dv_g\text{\quad and \quad}\mathcal F_R(g)=\int_M R_g^2 dv_g,
\end{equation}
where $Rm_g$, $W_g$, $\overset{\circ}{Ric}_g$ and $Ric_g$ are respectively the Riemann, Weyl, traceless Ricci and Ricci curvature tensors, $R_g$ is the scalar curvature, and the norms are taken by considering them as double-forms (see section \ref{subsec:Operators}), e.g. $\norm{Rm}^2=\frac{1}{4}Rm_{ijkl}Rm^{ijkl}$ and $\norm{Ric}^2=Ric_{ij}Ric^{ij}$.

Let also define:
\begin{gather*}
\mathcal F_2(g)=\frac{n}{8(n-1)}\mathcal F_R(g)-\frac{1}{2}\mathcal F_{Ric}(g)= \int_M \sigma_2(A_g) dv_g,
\end{gather*}
where $A_g=Ric_g-\frac{1}{2(n-1)}R_g g$ is the Weyl-Schouten tensor and $\sigma_2(h)=\frac{1}{2}(\tr(h)^2-\norm{h}^2)$ is the second symmetric function of the eigenvalues of a symmetric 2-tensor $h$. The quantity $\sigma_2$ plays an important role in conformal geometry, see for example \cite{Via06}.

These functionals are particularly interesting in dimension four, as it is the only dimension for which they are scale invariant. Following Berger's idea of finding best metrics on a manifold $M$ by minimizing $\int_M \norm{Rm}^{\frac{n}{2}}dv_g$, LeBrun initiated the study of minimizers of $\mathcal F_{Rm}$ in dimension four (\cite{LeB04}). There has also been much work lately on critical metrics for these functionals, which are solutions of quasilinear fourth order equations. In dimension four, in particular, much attention is paid to the gradient of $\mathcal F_W$ known as the Bach tensor. Note that for this particular dimension, according to the Gauss-Bonnet formula:
\begin{equation*}
\mathcal F_W-\frac{1}{2}\mathcal F_{\rst}+\frac{1}{24}\mathcal F_R=8\pi^2\chi(M),
\end{equation*}
where  $\chi(M)$ denotes the Euler-Poincaré characteristic of $M$, and it follows that $\mathcal F_W$ and $-\mathcal F_2$ only differ by a constant and are both conformal invariants. We can also write $\mathcal F_2(g)=\int_M Q_g dv_g$ where $Q_g=\frac{1}{6}\Delta R_g + \frac{1}{6}R_g^2-\frac{1}{2}\norm{Ric}^2$ is the Q-curvature of Branson, which has good conformal properties (see \cite{Cha04}).

If Einstein metrics are the only critical points for the normalized Einstein-Hilbert functional, there is another important class of metrics in dimension four, namely the scalar-flat anti-self-dual one, that are critical for quadratic functionals. It would suggest that those functionals do not carry as much rigidity as the Einstein-Hilbert one. However, in dimension three, Gursky and Viaclovsky proved that the critical points for $\mathcal F_2=\frac{3}{16}\mathcal F_R-\frac{1}{2}\mathcal F_{Ric}$ with $\mathcal F_2(g) \geq 0$ are exactly the constant curvature metrics (\cite{GV01}).

In dimensions greater than five, there is no information to wait from minimizers, as there exists metrics of volume $1$ on any manifold such that the functional $\mathcal F_{Rm}$ is arbitrarily small (see \cite{Ber02} section 11.3.3).

Given the spectacular advances made on the Ricci flow, a natural approach to obtain critical metrics consists in performing the gradient flow associated to the functional. Several higher order gradient flows have been successfully carried out during the last decade. In addition to the flows on curves and surfaces, Mantegazza has studied flows of four and higher order on immersed hypersurfaces in \cite{Man02}, Brendle has introduced a fourth order equivalent of the Yamabe conformal flow in \cite{Bre03}, Chen and He have obtained results for the Calabi flow on Kähler manifolds in \cite{CH08}. As for the gradient flows of the quadratic functionals we are interested in, Zheng has considered the gradient flow of $\mathcal F_{Ric}$ in dimension three in \cite{Zhe03}, and Streets has studied the gradient flow of $\mathcal F_{Rm}$ in dimension four in \cite{Str08}.

The results obtained by Mantegazza in \cite{Man02}, where he was motivated by approximating singular flows by higher order ones, suggest that it is useful to increase the order of the equation with the dimension in order to get regularity. Indeed, for his class of flows, he showed that singularities cannot occur provided that the order of the equation is larger than $2 ([ n/2]+1)$.

In this paper, we study a class of fourth order flows on compact manifolds, which includes the gradient flows for quadratic curvature functionals whose gradient becomes strongly elliptic when the DeTurck trick is applied, and in particular, it includes the flows of Zheng and Streets.

Some of the ideas and tools used to study the Ricci flow naturally extend to these higher order flows. We prove short-time existence using the DeTurck trick, and prove Bando-Bernstein-Shi type estimates in order to study singularities. The lack of maximum principle for fourth order equations is overcome by an extensive use of integral estimates. Then $C^0$ estimates come from Sobolev inequalities. 

Since we have to bound a Sobolev constant along the flow in order to get the estimates, a special kind of singularity can appear, where the injectivity radius goes to zero while the curvature stays bounded. This could not happen in the Ricci flow, for which the curvature blows up at any singularity. 

This phenomenon of collapsing with bounded curvature can also appear for sequences of renormalized flows near a point where the curvature blows up. It has been ruled out for the Ricci flow by the non-collapsing result of Perelman (see \cite{Per02}), that ensures that it is always possible to take a limit of such blowing-up sequences. But in our situation, we have no entropy similar to that of Perelman for the Ricci flow. Moreover, none of the methods used to study other higher order flows apply in this setting. For instance, the universal Sobolev-type inequality of Michael and Simon used by Mantegazza only works in the context of hypersurfaces.

The only situation in which it was possible to rule out collapsing with bounded curvature is when the flow starts very close (in the $L^2$ sense) to a flat metric or to the sphere:

\begin{thm}[Streets, \cite{Str09}]
For all positive constants $C$ and $K$, there exists a positive constant $\epsilon(C,K)$ such that if $(M,g)$ is a compact Riemannian $4$-manifold with $Vol_g(M)=1$, $C_S< C$, $\norm{\nabla \mathcal F_{Rm}}_{H_1^2}\leq K$ and $\mathcal F_{Rm}(g)\leq \epsilon$, then the gradient flow of $\mathcal F_{Rm}$ starting from $g$ exists for all time and converges exponentially fast to a flat metric.
\end{thm}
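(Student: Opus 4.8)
The plan is to run a bootstrap (continuity) argument on the maximal interval of existence $[0,T)$ of the (DeTurck-modified) flow. Fix large constants, say $2C$ for the Sobolev constant and a small constant $\Lambda$ for the supremum of the curvature, and let $[0,T')\subset[0,T)$ be the largest subinterval on which $C_S(g(t))\leq 2C$ and $\sup_M|Rm_{g(t)}|\leq\Lambda$. The goal is to show that, if $\epsilon=\epsilon(C,K)$ is small enough, then on $[0,T')$ these two quantities in fact strictly improve, so that $T'=T$; combined with the a priori estimates this forces $T=\infty$, with curvature decaying exponentially to zero.

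Two ingredients are essentially free. First, because the flow is the gradient flow of the diffeomorphism-invariant functional $\mathcal F_{Rm}$, the energy is monotone, $\tfrac{d}{dt}\mathcal F_{Rm}(g(t))=-\|\nabla\mathcal F_{Rm}\|_{L^2}^2\leq 0$, so $\mathcal F_{Rm}(g(t))\leq\mathcal F_{Rm}(g(0))\leq\epsilon$ throughout; and since in dimension four $\mathcal F_{Rm}$ is scale invariant we have $\langle\nabla\mathcal F_{Rm},g\rangle_{L^2}=0$, which keeps $\mathrm{Vol}_{g(t)}(M)=1$. Second, the bound $\|\nabla\mathcal F_{Rm}\|_{H_1^2}\leq K$ on the initial data, fed into short-time existence, gives a definite time $\tau=\tau(C,K)>0$ on which $g(t)$ stays uniformly equivalent to $g(0)$ and $C_S(g(t))\leq 2C$; this provides the "buffer" needed to start the parabolic estimates at a positive time.

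The heart of the argument is an $\epsilon$-regularity step. The curvature satisfies a parabolic equation of the schematic form $\partial_t Rm=-\Delta^2 Rm+\nabla^2(Rm\star Rm)+\nabla Rm\star\nabla Rm+Rm\star Rm\star Rm$, and on $[0,T')$ the Sobolev inequality is available with constant $\leq 2C$. A parabolic Moser iteration then shows that, after time $\tau$, $\sup_M|Rm_{g(t)}|$ is bounded by a positive power of $\mathcal F_{Rm}(g(t))\leq\epsilon$ times a constant depending only on $C$; choosing $\epsilon$ small makes this much less than $\Lambda$, improving the curvature assumption. Simultaneously the Bando--Bernstein--Shi type estimates proved earlier in the paper control all higher covariant derivatives $\|\nabla^m Rm\|$ on $[\tau,T')$. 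Now that $\sup_M|Rm|$ is small and all derivatives are controlled, collapsing with bounded curvature is excluded: a uniform lower bound on the injectivity radius persists, and hence $C_S(g(t))$ stays below $2C$, closing the bootstrap on the Sobolev constant. I expect this combined step — the Moser iteration together with propagating control of $C_S$, i.e.\ genuinely ruling out collapse — to be the main obstacle, since it is precisely the phenomenon absent in the Ricci flow and the only place where the smallness of $\mathcal F_{Rm}$ is indispensable.

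It remains to upgrade smallness to exponential decay and to pass to the limit. With $\sup_M|Rm|$ small and the derivative bounds in hand, the nonlinear terms in the evolution of $\mathcal F_{Rm}$ are dominated by the leading one, giving $\tfrac{d}{dt}\mathcal F_{Rm}\leq -c\|\nabla^2 Rm\|_{L^2}^2$; using the Sobolev inequality and the interpolation $\mathcal F_{Rm}=\|Rm\|_{L^2}^2\leq\sup_M|Rm|\cdot\mathrm{Vol}^{1/2}\|Rm\|_{L^2}$ (so $\mathcal F_{Rm}\leq(\sup_M|Rm|)^2$) yields a differential inequality of the form $\tfrac{d}{dt}\mathcal F_{Rm}\leq-\delta\,\mathcal F_{Rm}$, hence $\mathcal F_{Rm}(g(t))\leq e^{-\delta t}\mathcal F_{Rm}(g(0))$; interpolating with the $\|\nabla^m Rm\|$ bounds gives exponential decay of $\|Rm_{g(t)}\|_{C^k}$ for every $k$. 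In particular the curvature is uniformly bounded, so by the continuation criterion the flow exists for all time, and since $\int_0^\infty\|\partial_t g(t)\|_{C^0}\,dt<\infty$ the metrics $g(t)$ converge exponentially in every $C^k$ to a limit $g_\infty$ with $Rm_{g_\infty}\equiv 0$, i.e.\ a flat metric on $M$.
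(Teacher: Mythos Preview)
This theorem is not proved in the present paper: it is quoted in the introduction as a result of Streets \cite{Str09}, with no proof supplied here, to motivate the author's own approach to ruling out collapsing with bounded curvature via a lower bound on the Yamabe invariant. There is therefore no ``paper's own proof'' to compare your proposal against.

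For what it is worth, your outline is broadly in the spirit of Streets' argument (bootstrap on a Sobolev constant plus a curvature bound, $\epsilon$-regularity/Moser iteration under small energy, Bando--Bernstein--Shi estimates, then exponential decay), but two of your steps are not justified as written. First, the propagation of the Sobolev constant is the whole point: you assert that once $\sup_M|Rm|$ is small and the derivatives are controlled, ``collapsing with bounded curvature is excluded'' and hence $C_S$ stays below $2C$, but bounded curvature alone never prevents collapse---one needs an actual argument (in Streets' paper this comes from controlling the deviation of $g(t)$ from $g(0)$ via the smallness of $\partial_t g$, which in turn uses the $H_1^2$ bound on $\nabla\mathcal F_{Rm}$ and the exponential decay itself). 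Second, your route to the differential inequality $\tfrac{d}{dt}\mathcal F_{Rm}\leq -\delta\,\mathcal F_{Rm}$ is not correct: the inequality $\mathcal F_{Rm}\leq(\sup_M|Rm|)^2$ is false in general (it goes the wrong way, since $\mathcal F_{Rm}=\int|Rm|^2\leq(\sup|Rm|)^2\mathrm{Vol}$ gives an \emph{upper} bound by the square of the sup-norm only when the volume is $1$, but you then need a \emph{lower} bound on $\|\nabla^2 Rm\|_{L^2}^2$ in terms of $\mathcal F_{Rm}$, which requires a Poincar\'e-type inequality rather than interpolation). So the sketch has the right architecture but the two load-bearing steps---non-collapsing and the coercivity inequality yielding exponential decay---need genuine arguments.
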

In the theorem, $C_S$ denotes the Sobolev constant defined by:
\begin{equation*}
C_S=\inf \bigl\{C\in\mathbb R , \norm{u}_{4}\leq C (\norm{\nabla u}_2+Vol_g(M)^{-\frac{1}{4}}\norm{u}_2),\ \forall u\in H_1^2(M)\bigr\}.
\end{equation*}

We recall that if $(M,g)$ is a complete Riemannian $n$-manifold, $n\geq 3$, the Yamabe invariant of $(M,[g])$ ($[g]$ conformal class of $g$) is defined by:
\begin{equation*}
Y_g=\inf\Bigl\{\int_M \norm{\nabla u}_g^2 + \frac{n-2}{4(n-1)} R_g u^2 dv_g,\ u\in H_1^2(M),\ \norm{u}_{\frac{2n}{n-2}}=1\Bigr\}.
\end{equation*}
If $M$ is compact, then $Y_g>-\infty$ and
\begin{equation*}
Y_g=\frac{n-2}{4(n-1)}\inf\Bigl\{Vol_g(M)^{\frac{2-n}{n}}\int_M R_{\tilde g} dv_{\tilde g},\ \tilde g \in [g]\Bigr\}.
\end{equation*}

\begin{thm}[Streets, \cite{Str10}]
There exists a constant $\epsilon>0$ such that if $(M,g)$ is a compact Riemannian $4$-manifold satisfying 
\begin{equation*}
Y_g>0 \text{\qquad and \qquad} \mathcal F_W(g) + \frac{1}{2}\mathcal F_{\overset{\circ}{Ric}}(g)\leq \epsilon \chi(M),
\end{equation*}
then the gradient flow of $\mathcal F_{Rm}$ starting from $g$ exists for all time and converges exponentially fast to either the sphere or the real projective space.
\end{thm}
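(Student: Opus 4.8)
The plan is to run, for the gradient flow of $\mathcal F_{Rm}$ in dimension four, the full package of tools developed in this paper --- short-time existence by the DeTurck trick, Bando--Bernstein--Shi type estimates, $C^0$ estimates via Sobolev inequalities, and the non-collapsing criterion under a positive lower bound on the Yamabe invariant --- and to close it with an integral rigidity statement for the critical metrics. I would first record that the hypotheses force $\chi(M)>0$: if $\chi(M)\le 0$, then $\mathcal F_W(g)+\tfrac12\mathcal F_{\rst}(g)\le\epsilon\chi(M)\le 0$, which with $\mathcal F_W,\mathcal F_{\rst}\ge 0$ gives $W\equiv 0$, $\rst\equiv 0$, hence $g$ of constant curvature, positive since $Y_g>0$, so $(M,g)$ is already $S^4$ or $\mathbb{RP}^4$ --- these being, as $\chi$ of a closed $4$-manifold is an integer and $\chi(S^4/\Gamma)=2/\norm{\Gamma}$, the only compact $4$-manifolds of constant positive curvature. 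Assuming then $\chi(M)>0$ (so the pinching is a genuine smallness condition), the norm conventions together with the Gauss--Bonnet formula quoted above give the pointwise identity $\norm{Rm}^2=\norm{W}^2+\tfrac12\normm{\rst}^2+\tfrac1{24}R^2$, whence $\mathcal F_{Rm}=8\pi^2\chi(M)+\mathcal F_{\rst}$: the flow is, up to the additive topological constant, the gradient flow of $\mathcal F_{\rst}$, so $\mathcal F_{\rst}(g(t))$ is non-increasing and $\int_0^T\norm{\nabla\mathcal F_{Rm}(g(t))}_{L^2}^2\,dt\le\mathcal F_{\rst}(g(0))$; moreover, being scale-invariant in dimension four, the flow preserves the volume. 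Short-time existence then gives a unique smooth solution on a maximal interval $[0,T)$, and the quantity to control is the scale-invariant energy $\mathcal E(t)=\int_M\bigl(\norm{W}^2+\tfrac12\normm{\rst}^2\bigr)\,dv_{g(t)}$, which is $\le\epsilon\chi(M)$ at $t=0$.

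The core is a bootstrap showing that neither collapse nor curvature blow-up occurs, so that $T=\infty$. Commuting $\partial_t$ with the curvature operators and absorbing the cubic curvature terms by the Sobolev inequality (the interpolation behind the Bando--Bernstein--Shi estimates) should yield a reaction--diffusion inequality for $\mathcal E(t)$ showing it stays below a fixed threshold $\varepsilon_0$ \emph{as long as} the Sobolev constant $C_S$ of $g(t)$ stays bounded and $\mathcal E(0)\le\epsilon$ is small. With $\mathcal E(t)$ small, the localized small-energy regularity and the Bando--Bernstein--Shi estimates make $\norm{Rm}$ and all $\norm{\nabla^k Rm}$ pointwise bounded; in particular the scalar curvature, which solves a fourth-order parabolic equation whose zeroth-order terms are curvature-quadratic and now controlled, stays uniformly close to a positive constant --- whence $\mathcal F_W(g(t))$ stays small by Gauss--Bonnet and $Y_{g(t)}\ge\tfrac12 Y_g>0$. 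The positive lower bound on $Y_{g(t)}$ and the curvature bound then feed the non-collapsing result of this paper, which precludes degeneration of the injectivity radius and bounds $C_S$ uniformly, closing the loop; a continuity argument in $t$ makes all the bounds global, so the solution extends past every finite time.

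Next I would extract the limit. Since $\int_0^\infty\norm{\nabla\mathcal F_{Rm}(g(t))}_{L^2}^2\,dt<\infty$, some sequence $t_i\to\infty$ has $\norm{\nabla\mathcal F_{Rm}(g(t_i))}_{L^2}\to 0$; by the uniform curvature and derivative bounds, the uniform Sobolev bound (no collapse), the fixed volume and the ensuing diameter bound, Cheeger--Gromov compactness gives smooth convergence of $(M,g(t_i))$ to a compact metric $g_\infty$ on $M$, critical for $\mathcal F_{Rm}$ hence for $\mathcal F_{\rst}$, with $Y_{g_\infty}>0$ and both $\mathcal F_{\rst}(g_\infty)$ and $\mathcal F_W(g_\infty)$ small. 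Integrating the Bochner--Weitzenb\"ock formula for $\rst$ against the Euler--Lagrange equation and absorbing the curvature terms via the Yamabe Sobolev inequality and the smallness should force $\rst\equiv 0$ and $W\equiv 0$, so $g_\infty$ has constant positive curvature and $M$ is $S^4$ or $\mathbb{RP}^4$ with a round metric (this recovers the Chang--Gursky--Yang rigidity as a byproduct). Finally, the round metric being an isolated, stable critical point of $\mathcal F_{Rm}$ modulo diffeomorphisms and scalings, a Lojasiewicz--Simon inequality near $g_\infty$ upgrades the subsequential convergence to exponential convergence of the whole flow.

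The hard part, I expect, lies entirely in the second step: showing that a positive lower bound on the Yamabe invariant is simultaneously \emph{preserved} by the flow and strong enough to \emph{prevent collapsing} (i.e.\ to bound $C_S$), since, unlike the Ricci flow, this fourth-order flow has no monotone quantity analogous to Perelman's entropy. Disentangling the apparent circularity ``small energy $\Rightarrow$ bounded curvature $\Rightarrow$ non-collapsed $\Rightarrow$ bounded $C_S$ $\Rightarrow$ near-constant positive scalar curvature $\Rightarrow$ small energy'' via a continuity argument is where essentially all the work lies; the rigidity of the limit is, by comparison, a fairly standard Bochner-type computation.
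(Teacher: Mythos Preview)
This theorem is not proved in the present paper; it is quoted from Streets as prior work. The paper establishes closely related results for the gradient flows of $\mathcal F^\alpha=(1-\alpha)\mathcal F_W+\tfrac{\alpha}{2}\mathcal F_{\rst}$ with $\alpha\in(0,1)$ (Theorem~\ref{thm:smallenergy}), but explicitly \emph{not} for $\alpha=1$, which is the $\mathcal F_{Rm}$ flow of Streets' statement. So there is no ``paper's own proof'' to compare against; the relevant comparison is with the method the paper uses for its own theorems, and with what that method fails to deliver at $\alpha=1$.

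You have correctly located the crux in the circularity ``small energy $\Rightarrow$ bounded curvature $\Rightarrow$ non-collapsed $\Rightarrow$ bounded $C_S$ $\Rightarrow$ scalar curvature near a positive constant $\Rightarrow$ small energy stays small'', but your proposal does not break it: the assertion that the scalar curvature ``stays uniformly close to a positive constant'' is exactly the missing ingredient, and it cannot be fed into a continuity argument without an a priori Sobolev (or Yamabe) bound already in hand. Note also that your controlling quantity $\mathcal E(t)=\mathcal F_W+\tfrac12\mathcal F_{\rst}$ is \emph{not} monotone along the $\mathcal F_{Rm}$ flow --- only $\mathcal F_{\rst}$ is --- so even the ``small energy stays small'' part of the loop is not automatic.

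For $\alpha<1$ the paper dissolves this circularity in one stroke via Gursky's inequality (Lemma~\ref{lem:StreetsYamabeLemma}),
\[
Y_g^2\ \ge\ \tfrac{2}{3}\bigl((1-\alpha)8\pi^2\chi(M)-\mathcal F^\alpha(g)\bigr),
\]
which, combined with the monotonicity of $\mathcal F^\alpha$ along its own gradient flow, gives a uniform positive lower bound on $Y_{g(t)}$ with no bootstrap whatsoever; non-collapsing, $C^k$ bounds, and compactness then follow as in Theorems~\ref{thm:YamabeSingularity}--\ref{thm:zoomin}. At $\alpha=1$ the right-hand side is $-\tfrac13\mathcal F_{\rst}(g)\le 0$ and the inequality is vacuous --- the paper says as much just after Theorem~\ref{thm:4dimSingularity} --- so this mechanism does not cover the $\mathcal F_{Rm}$ flow. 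Your plan inherits precisely this gap: to carry it out you need an independent device, as in Streets' original argument, to control the Yamabe or Sobolev constant along the $\mathcal F_{Rm}$ flow \emph{before} any curvature bound is available. The \L ojasiewicz--Simon step you add for exponential convergence is reasonable and indeed goes beyond what the present paper does (it obtains only subsequential convergence), but it is downstream of the unresolved step.
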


In this paper, we deal with the issue of collapsing with bounded curvature by controlling the Yamabe invariant. When we assume a positive lower bound on it, the metric cannot collapse with bounded curvature. Moreover, if the curvature blows up, we show that we can apply a compactness result to a sequence of renormalized flows near a singularity, and prove that a subsequence converges to a ``singularity model''.

In particular, in dimension four, such a positive lower bound exists on the Yamabe invariant as soon as the Yamabe invariant and the mean Q-curvature are positive, as it was pointed out by Streets in \cite{Str10}. We show that these conditions are uniformly satisfied for the gradient flows of a number of quadratic functionals, assuming a bound on the initial energy.

It implies that these flows only develop singularities where the curvature blows up, and that blowing-up sequences converge (up to a subsequence) to a ``singularity model'', namely a complete Bach-flat, scalar-flat manifold.

Moreover, we prove a rigidity result for those model manifolds and show that if the initial energy is less than an explicit bound, then no singularity can occur. Under those assumptions, the flow exists for all time, and converges up to a subsequence to the sphere or the real projective space.

In \cite{CGY03}, Chang, Gursky and Yang proved the following result for pinched $4$-manifolds: 
\begin{thm}\label{thm:CGY}
Let $(M,g)$ be a compact Riemannian $4$-manifold. If $Y_{g}>0$ and 
\begin{equation*}
\mathcal F_W(g)<4\pi^2\chi(M),
\end{equation*}
then $M$ is diffeomorphic to $S^4$ or $\mathbb{R}P^4$.
\end{thm}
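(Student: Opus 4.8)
The plan is to follow the strategy of Chang--Gursky--Yang: first I would use the Gauss--Bonnet formula to recast the hypothesis as an integral curvature pinching; then, using the positivity of the Yamabe invariant, I would replace $g$ by a conformal metric satisfying the corresponding \emph{pointwise} pinching; and finally I would deform that metric to a metric of constant positive curvature by the normalised Ricci flow. For the reformulation, halving the identity $8\pi^2\chi(M)=\mathcal F_W(g)-\tfrac12\mathcal F_{\rst}(g)+\tfrac1{24}\mathcal F_R(g)$ shows that $\mathcal F_W(g)<4\pi^2\chi(M)$ is equivalent to $\mathcal F_W(g)+\tfrac12\mathcal F_{\rst}(g)<\tfrac1{24}\mathcal F_R(g)$, i.e. to the integral pinching $\int_M\bigl(\norm{W_g}^2+\tfrac12\normm{\rst_g}^2\bigr)\,dv_g<\tfrac1{24}\int_M R_g^2\,dv_g$; in particular $\chi(M)>0$, and since $\mathcal F_2(g)=\tfrac1{24}\mathcal F_R(g)-\tfrac12\mathcal F_{\rst}(g)=\int_M\sigma_2(A_g)\,dv_g$ in dimension four, the same inequality gives $\int_M\sigma_2(A_g)\,dv_g=\mathcal F_2(g)>\mathcal F_W(g)\geq 0$, which together with $Y_g>0$ is the input for the next step.

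Next I would solve the fully nonlinear conformal problem attached to $\sigma_2$. Using $Y_g>0$ and $\int_M\sigma_2(A_g)\,dv_g>0$, together with the conformal invariance of $\int_M\sigma_2(A)\,dv$, one produces a metric $\hat g=e^{2w}g\in[g]$ whose Weyl--Schouten tensor $A_{\hat g}$ lies in the positive cone $\Gamma_2^+=\{\sigma_1>0,\ \sigma_2>0\}$ at every point, with $\sigma_2(A_{\hat g})$ a positive constant; this relies on a continuity/degree argument for the equation $\sigma_2(A_{\hat g})=\mathrm{const}$ and on $C^0,C^1,C^2$ a priori estimates for the conformal factor, the positivity of $Y_g$ being what forces the solution into $\Gamma_2^+$ rather than the negative cone. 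I would then use the elementary algebraic fact that on a $4$-dimensional inner product space a symmetric tensor $S$ with $\sigma_1(S)>0$ and $\sigma_2(S)>0$ has smallest eigenvalue $\mu_1>-\tfrac12\sigma_1(S)$ (otherwise $\sigma_2(S)=\mu_1(\sigma_1(S)-\mu_1)+\sigma_2(\mu_2,\mu_3,\mu_4)\le(\sigma_1(S)-\mu_1)\bigl(\mu_1+\tfrac13(\sigma_1(S)-\mu_1)\bigr)\le 0$). Applied to $S=A_{\hat g}$, this gives $R_{\hat g}=3\sigma_1(A_{\hat g})>0$ and $\mathrm{Ric}_{\hat g}=A_{\hat g}+\tfrac16R_{\hat g}\,\hat g>0$; by Bonnet--Myers, $\pi_1(M)$ is finite.

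Third, I would promote the integral pinching to a pointwise one for $\hat g$. Because $\mathcal F_W(\hat g)=\mathcal F_W(g)$ is a conformal invariant and $\chi(M)$ is topological, the reformulation of the first step applies verbatim to $\hat g$, so $\int_M\bigl(\norm{W_{\hat g}}^2+\tfrac12\normm{\rst_{\hat g}}^2\bigr)\,dv_{\hat g}<\tfrac1{24}\int_M R_{\hat g}^2\,dv_{\hat g}$; moreover $\sigma_2(A_{\hat g})=\mathrm{const}>0$ already yields $\normm{\rst_{\hat g}}^2<\tfrac1{12}R_{\hat g}^2$ pointwise. The pinching density $\Phi=\norm{W_{\hat g}}^2+\tfrac12\normm{\rst_{\hat g}}^2$ satisfies an elliptic differential inequality, obtained from the contracted second Bianchi identity, the Weitzenböck/Bochner--Simons type formulas for $W_{\hat g}$ and $\rst_{\hat g}$, and the constancy of $\sigma_2(A_{\hat g})$. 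Since $R_{\hat g}>0$ and $Y_g>0$ give $\hat g$ a uniform Yamabe--Sobolev inequality, a Moser iteration fed by the small $L^2$-mass $\int_M\Phi\,dv_{\hat g}$ then yields the pointwise bound $\Phi<\tfrac1{24}R_{\hat g}^2$ on $M$; the numerical margin in the hypothesis is exactly what makes the iteration close, which is why the sharp threshold is $4\pi^2\chi(M)$.

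Finally I would invoke the sharp pointwise pinching theorem for the Ricci flow in dimension four (Margerin, Huisken): a compact $4$-manifold carrying a metric with $R>0$ and $\norm{W}^2+\tfrac12\normm{\rst}^2<\tfrac1{24}R^2$ everywhere is deformed by the normalised Ricci flow to a metric of constant positive sectional curvature; hence $M$ is a $4$-dimensional spherical space form, and since the only finite groups acting freely on $S^4$ are $\{1\}$ and $\mathbb Z_2$, $M$ is diffeomorphic to $S^4$ or $\mathbb{R}P^4$. The hard part will be the coordination of the two analytic steps without any loss of constants: solving the $\sigma_2$-Yamabe equation in the positive cone under only $Y_g>0$ and $\int_M\sigma_2(A_g)\,dv_g>0$, and running the $L^2\to L^\infty$ iteration for $\Phi$ so that the resulting pointwise pinching retains the sharp constant $\tfrac1{24}$, equivalently so that it is still governed by $4\pi^2\chi(M)$. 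It is precisely to avoid this coordination that the fourth-order flow of the present paper gives an alternative argument, at the cost of a slightly stronger bound on the initial energy.
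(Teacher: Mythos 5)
You should first note that the paper contains no proof of Theorem~\ref{thm:CGY}: it is quoted from \cite{CGY03}, and the paper explicitly declines to reprove it, offering instead a flow-theoretic proof of the weaker Corollary~\ref{cor:pinching} (pinching $\mathcal F_W+\tfrac{2}{9}\mathcal F_{\rst}<\tfrac{8}{9}\pi^2\chi(M)$ rather than the sharp $\mathcal F_W<4\pi^2\chi(M)$), obtained by running the gradient flow of $\mathcal F^\alpha$, controlling the Yamabe invariant via Lemma~\ref{lem:StreetsYamabeLemma}, and excluding blow-up limits with the rigidity Theorem~\ref{thm:rigidity}. So your proposal is to be measured against the original Chang--Gursky--Yang argument, and in outline you have reconstructed it: the Gauss--Bonnet reformulation ($\mathcal F_W<4\pi^2\chi(M)$ iff $\mathcal F_W+\tfrac12\mathcal F_{\rst}<\tfrac1{24}\mathcal F_R$, hence $\int_M\sigma_2(A_g)\,dv_g>\mathcal F_W(g)\geq 0$), the conformal deformation into the cone $\Gamma_2^+$ with the consequence $Ric_{\hat g}>0$, and the final appeal to Margerin's pointwise pinching theorem are all correct and are indeed the skeleton of \cite{CGY03}.

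The genuine gap is your third step. Solving $\sigma_2(A_{\hat g})=\mathrm{const}>0$ gives the pointwise bound $\normm{\rst_{\hat g}}^2<\tfrac1{12}R_{\hat g}^2$ but carries no pointwise information about $W_{\hat g}$, and the proposed Moser iteration for $\Phi=\norm{W_{\hat g}}^2+\tfrac12\normm{\rst_{\hat g}}^2$ cannot close: for a general metric $\norm{W}$ satisfies no elliptic differential inequality (the Weitzenböck formula of Lemma~\ref{lem:weitzenbock} requires $\dv W=0$, which you have not arranged), and more fundamentally $\int_M\norm{W}^2dv$ is scale- and conformally invariant in dimension four, so no bound on this integral, however refined, can control $\norm{W}^2/R_{\hat g}^2$ pointwise --- one can concentrate Weyl curvature on a small region without disturbing the integral hypothesis, so an $L^2$-to-$L^\infty$ step "with the sharp constant" is not merely delicate, it is false as stated. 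This is exactly the difficulty Chang, Gursky and Yang circumvent: they do not first prescribe $\sigma_2$ constant and then try to regularize $W$; they solve a fully nonlinear equation in which the Weyl term is built into the right-hand side (schematically $\sigma_2(A_{\hat g})=\lambda\norm{W_{\hat g}}^2+c$ for suitable constants, solved by a continuity/degree argument whose a priori estimates use the integral hypothesis), so that the solution satisfies Margerin's pointwise pinching by construction. Without that device, or some substitute for it, your outline does not yield the theorem; and it is precisely to avoid this machinery that the present paper settles for the stronger energy hypothesis of Corollary~\ref{cor:pinching} and proves it by an entirely different, fourth-order flow argument.
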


Note that according to the Gauss-Bonnet formula, the pinching assumption in this theorem can also be written:
\begin{equation*}
\mathcal F_W(g)+\frac{1}{2}\mathcal F_{\rst}(g)<\frac{1}{24}\mathcal F_R(g).
\end{equation*}

This theorem is a ``conformal'' version of a result due to Margerin (\cite{Mar98}). As a matter of fact, the assumptions in the theorem of Chang, Gursky and Yang do not depend on the metric we choose on the conformal class of the initial metric. This fact allows them to perform a conformal transformation on the initial metric in order to exhibit a metric in the same conformal class which satisfies the pointwise pinching of Margerin. This is done by using some fully nonlinear equations. Despite the fact that Margerin's result is based on the Ricci flow to produce a metric of constant curvature, Chang, Gursky and Yang don't provide a direct proof of the existence of such a metric.

In this article, we give a direct proof of the existence of a metric of constant curvature under slightly stronger assumptions (see Corollary~\ref{cor:pinching}). Our proof is more natural as it only relies on the study of a geometric flow.

The structure of the paper is the following. In the rest of the introduction, we present our results. In section 2, we prove short-time existence for the flows we introduce. In section 3, we show that the gradient flows of quadratic functionals belong to our class of equations. In section 4, we prove a rigidity result in dimension four for metrics that are critical for the functionals $\mathcal F^\alpha$ defined in the introduction.  In section 5, we prove that collapsing with bounded curvature cannot occur if the Yamabe constant is bounded away from zero. In section 6, we prove Bando-Bernstein-Shi type estimates on the curvature. In section 7 we present a compactness result for the solutions of our flows. In section 8, we prove the main results of the paper. Finally, section 9 is devoted to some technical results, useful all along the paper, and which are not directly specific to geometric flows.

\paragraph{Acknowledgments:}

The author is grateful to Carlo Mantegazza, Zindine Djadli for his constant support and Thomas Richard for helpful discussions. Thanks also to the Scuola Normale Superiore di Pisa and the Centro di Ricerca di Matematica Ennio de Giorgi for the hospitality.

The author is partially supported by the ANR project "Flots et op\'erateurs g\'eom\'etriques" (FOG) ANR-07-BLAN-0251-01.

\subsection{General results}

Let denote the space of symmetric $(2,0)$-tensors by $\sym$ and the space of metrics by $\sdp$. Given a compact Riemannian $n$-manifold $(M,g_0)$, we consider evolution equations of the following type:
\begin{equation*}
\left\lbrace 
\begin{aligned}[l]
\partial_t g&=P(g)\\
g(0)&=g_0,
\end{aligned}\right.\qquad (E_P(g_0))
\end{equation*}
where $P:\sdp \to \sym$ is a smooth map of the form
\begin{equation*}
P(g)= \dv\tdv Rm_g +a\Delta R_g g + b \nabla^2 R_g + Rm_g* Rm_g,
\end{equation*}
with $a < \frac{1}{2(n-1)}$ and $b\in\mathbb R$

\vspace*{10pt}

We write $\dv\tdv Rm_g$ for $\nabla^\alpha \nabla^\beta Rm_{\alpha i \beta j}$ and $S\ast T$ denotes any linear combination (with coefficients independent of the metric) of terms obtained from $S\otimes T$ by taking tensor products with $g$ and $g^{-1}$, contracting and permuting indices. 

\vspace*{10pt}

The gradient flows for most of the quadratic functionals generated by those in (\ref{eq:functional1}) or (\ref{eq:functional2}) are of this type (see section \ref{sec:GradientFlows} for details). The assumption $a < \frac{1}{2(n-1)}$ is necessary for $P$ to become strongly elliptic when the DeTurck trick is applied (see section \ref{sec:ShortTime}).

For this class of flows, we first prove short-time existence and integral estimates:

\begin{thm}[Short-time existence]\label{thm:existence}
Let $(M,g_0)$ be a compact Riemannian manifold. 

Let $P:\sdp \to \sym$ be a smooth map of the form
\begin{equation*}
P(g)= \dv\tdv Rm_g +a\Delta R_g g + b \nabla^2 R_g + Rm_g* Rm_g,
\end{equation*}
with $a < \frac{1}{2(n-1)}$ and $b\in\mathbb R$. There exists a unique maximal solution $(g_t)$ to $E_P(g_0)$ defined on an interval $[0,T)$, with $T$ positive.
\end{thm}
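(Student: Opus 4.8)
The plan is to follow the DeTurck trick exactly as in Hamilton's and DeTurck's treatment of the Ricci flow, adapted to the fourth-order setting. The map $P$ is quasilinear of fourth order, but its linearization fails to be elliptic because of the diffeomorphism invariance of curvature (the symbol has a kernel corresponding to Lie derivatives along vector fields). First I would compute the principal symbol of the linearization $DP_g$. Since $\dv\tdv Rm_g = \nabla^\alpha\nabla^\beta Rm_{\alpha i\beta j}$, modulo lower-order terms its linearization contributes, at a covector $\xi$, a symbol acting on a symmetric $2$-tensor $h$ roughly like $h\mapsto \tfrac12(\norm{\xi}^2 h_{ij} - \xi_i\xi^k h_{kj} - \xi_j\xi^k h_{ki} + \xi_i\xi_j\,\tr h$\,) composed with $\norm{\xi}^2$, while the terms $a\Delta R_g\,g + b\nabla^2 R_g$ contribute (through $\delta R_g$) symbols of the form $a\norm{\xi}^2(\tr h \norm{\xi}^2 - \xi^k\xi^l h_{kl})g_{ij}$ and $b\,\xi_i\xi_j(\tr h\norm{\xi}^2 - \xi^k\xi^l h_{kl})$. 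This fourth-order symbol vanishes on the subspace $\{\xi\odot V\}$ of tensors of the form $\xi_i V_j + \xi_j V_i$, which is the obstruction to ellipticity.

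The standard remedy is to break diffeomorphism invariance: fix a background metric $\bar g$ (take $\bar g = g_0$) and consider the modified flow $\partial_t g = P(g) + \mathcal L_{W(g)}g$, where $W(g)$ is a first-order differential operator in $g$ built from the difference of the Levi-Civita connections of $g$ and $\bar g$ (the DeTurck vector field $W(g)^k = g^{ij}(\Gamma(g)^k_{ij} - \Gamma(\bar g)^k_{ij})$, or a suitable second-order analogue adapted to the fourth-order operator). The added term $\mathcal L_{W(g)}g$ supplies, at the symbol level, precisely a term proportional to $\xi\odot(\text{something involving }\xi\cdot h)$ that is invertible on the previously-degenerate subspace; together with the condition $a < \tfrac{1}{2(n-1)}$ — which is exactly what guarantees that the trace part of the symbol is nondegenerate and of the correct sign — the full symbol of the modified operator becomes that of a positive definite fourth-order operator, i.e. strongly elliptic. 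I would check this by splitting $\sym$ at each point into the trace part, the $\xi$-tangential trace-free part, the mixed part $\xi\odot V^\perp$, and the fully $\xi$-orthogonal part, and verifying positivity of the symbol on each block; the inequality $a < \tfrac{1}{2(n-1)}$ enters only in the block containing $\tr h$ and $g_{ij}$.

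Once strong ellipticity of the DeTurck-modified operator is established, short-time existence and uniqueness of a solution $\tilde g_t$ to the modified initial value problem follows from standard parabolic theory for quasilinear systems (e.g. via $L^p$ or Hölder maximal regularity, or the inverse function theorem on parabolic Hölder spaces, or a Banach fixed point / Galerkin argument), giving a unique maximal solution on some $[0,\tilde T)$. Then I would recover a solution of the original equation $E_P(g_0)$ by solving the associated ODE-flow: define $\varphi_t$ as the flow of the time-dependent vector field $-W(\tilde g_t)$ with $\varphi_0 = \mathrm{id}$, which exists on $[0,\tilde T)$ since $M$ is compact, and set $g_t = \varphi_t^*\tilde g_t$. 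A direct computation using $\partial_t(\varphi_t^*\tilde g_t) = \varphi_t^*(\partial_t\tilde g_t) + \varphi_t^*\mathcal L_{-W(\tilde g_t)}\tilde g_t = \varphi_t^*(P(\tilde g_t))$ together with the diffeomorphism-naturality $\varphi_t^*(P(\tilde g_t)) = P(\varphi_t^*\tilde g_t) = P(g_t)$ shows $g_t$ solves $E_P(g_0)$. For uniqueness of the original flow, given two solutions one pulls them back by solving the (now fourth-order but with known coefficients) ``harmonic map heat flow'' type equation to land in DeTurck gauge and invokes uniqueness there; alternatively one uses that the linearization along any solution, after gauge-fixing, is strongly parabolic. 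Finally, extending to a maximal interval $[0,T)$ is routine. The main obstacle is the symbol computation in the second and third steps: verifying that the DeTurck modification genuinely restores strong ellipticity and pinning down the exact role of the hypothesis $a < \tfrac{1}{2(n-1)}$ — this is where all the real work lies, the rest being adaptation of well-known arguments.
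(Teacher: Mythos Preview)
Your overall strategy is exactly the paper's: apply the DeTurck trick to render the linearized operator strongly elliptic, invoke quasilinear parabolic theory for the modified flow, and transfer existence and uniqueness back to $E_P(g_0)$ via the diffeomorphism generated by the DeTurck vector field. The existence/uniqueness transfer you sketch matches the paper's Proposition~2.1 closely.

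There is, however, one concrete gap in your proposal. The DeTurck vector field you write down, $W(g)^k = g^{ij}(\Gamma(g)^k_{ij}-\Gamma(\bar g)^k_{ij})$, is first order in $g$, so $\mathcal L_{W}g$ is only second order and cannot influence the \emph{fourth}-order principal symbol of $P$. Your parenthetical ``second-order analogue'' still falls one order short: to modify a fourth-order symbol you need $V$ to be a \emph{third}-order vector field. The paper takes precisely $V_g=-\Delta\gamma_{g,g_0}+\tfrac{2(b-a)-1}{4}\nabla R_g$ (with $\gamma$ your $W$ up to a factor), so that $L_V g$ is genuinely fourth order.

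With this choice the paper's symbol computation is also considerably cleaner than the block decomposition you propose. Setting $R_\xi=\xi\otimes\xi-\norm{\xi}^2 g$, one finds directly that
\[
\sigma_\xi(P-L_V)'_g \;=\; -\tfrac{1}{2}\norm{\xi}^4\,\mathrm{Id}_{\sym}\;+\;a\,\ps{R_\xi}{\,\cdot\,}R_\xi,
\]
a rank-one perturbation of a negative multiple of the identity. Since $\norm{R_\xi}^2=(n-1)\norm{\xi}^4$, strong ellipticity is immediate from $a<\tfrac{1}{2(n-1)}$, and the same computation shows this threshold is sharp (at $a=\tfrac{1}{2(n-1)}$ the symbol kills $R_\xi$, and for larger $a$ it changes sign on $R_\xi$). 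So the role of the hypothesis is not quite ``only in the trace block'' as you suggest, but rather along the single direction $R_\xi$; your four-block decomposition would eventually reveal this, but the paper's formulation gets there in one line.
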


\begin{thm}[Bando-Bernstein-Shi estimates]\label{thm:estimates}
Let $(M,g_0)$ be a compact Riemannian $n$-manifold, $n\geq 3$. Let $P:\sdp \to \sym$ be a smooth map of the form
\begin{equation*}
P(g)= \dv\tdv Rm_g +a\Delta R_g g + b \nabla^2 R_g + Rm_g* Rm_g,
\end{equation*}
with $a < \frac{1}{2(n-1)}$ and $b\in\mathbb R$. Let $g_t$, $t\in[0,T)$, be a solution of $E_P(g_0)$ such that $\norm{Rm_{g_t}}_\infty$ is uniformly bounded by some constant $D$. For all $k\in\mathbb N$, there exists $C(n,P,k)$ such that for all $t$ in $(0,T)$:
\begin{align*}
\int_M \norm{\nabla^{2k}Rm_{g_t}}_{g_t}^2dv_{g_t}\leq \mathcal F_{Rm}(g_0) \frac{e^{C(1+D)(1+Dt)^{k+1}}}{t^k}.
\end{align*}
\end{thm}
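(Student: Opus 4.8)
The plan is to adapt the classical Bando-Bernstein-Shi strategy to a fourth order flow. Since $\int_M\norm{\nabla^{2k}Rm_{g}}_g^2\,dv_g$ is a diffeomorphism invariant, and since, by the DeTurck trick used in the proof of Theorem~\ref{thm:existence}, the solution $(g_t)$ of $E_P(g_0)$ is, for each $t$, isometric to $\tilde g_t$, where $(\tilde g_t)$ solves a strongly parabolic fourth order flow $\partial_t\tilde g=\tilde P(\tilde g)$, it is enough to prove the estimate along $(\tilde g_t)$. Using the second Bianchi identity to simplify $\dv\tdv Rm_g$ and the linearization formula for the curvature, one checks that $\tilde P(g)$ has the schematic form $\nabla^2 Rm_g+Rm_g\ast Rm_g$ (second order in the curvature, fourth order in $g$) and that $Rm_{\tilde g_t}$ satisfies an evolution equation
\begin{equation*}
\partial_t Rm=-\Delta^2 Rm+Rm\ast\nabla^2 Rm+\nabla Rm\ast\nabla Rm+Rm\ast Rm\ast Rm,
\end{equation*}
the leading operator being strongly elliptic (equal to $-\Delta^2$ up to a positive constant and lower order terms, by parabolicity).

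Differentiating this equation and commuting $\partial_t$ past $\nabla^{2k}$ --- which, by the Leibniz rule, produces only terms built from $\partial_t\Gamma\sim\nabla^3 Rm+\nabla Rm\ast Rm$, hence never more than three curvature factors --- one obtains, for every $k\in\mathbb N$,
\begin{equation*}
\partial_t\nabla^{2k}Rm=-\Delta^2\nabla^{2k}Rm+\sum_{i+j\le 2k+2}\nabla^i Rm\ast\nabla^j Rm+\sum_{i+j+l\le 2k}\nabla^i Rm\ast\nabla^j Rm\ast\nabla^l Rm .
\end{equation*}
The structural point I would stress is the two-derivative gap: the leading dissipative term has order $2k+4$, while every other term has order at most $2k+2$.

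Next I would set $u_{2k}(t)=\int_M\norm{\nabla^{2k}Rm_{g_t}}_{g_t}^2\,dv_{g_t}$ and compute its time derivative. The variation of the metric inside the norm and the volume element contributes $\int\norm{\nabla^{2k}Rm}^2\ast\tilde P(g)$; the leading term contributes $-2\int\norm{\Delta\nabla^{2k}Rm}^2$, which a Bochner identity rewrites as $-\int\norm{\nabla^{2k+2}Rm}^2$ plus curvature error terms. Every term other than $-\int\norm{\nabla^{2k+2}Rm}^2$ is then estimated by H\"older's inequality together with the interpolation inequalities for tensors proved in Section~9 (which are obtained purely by integration by parts and, crucially, use only the pointwise bound $\norm{Rm_{g_t}}_\infty\le D$ --- no lower bound on the injectivity radius, no bound on a Sobolev constant), and the top order piece is absorbed by Young's inequality. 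Keeping track of the homogeneity and of the powers of $D$, one reaches
\begin{equation*}
\frac{d}{dt}u_{2k}\le-\tfrac12 u_{2k+2}+C(n,P,k)\,D(1+D)\,u_{2k}.
\end{equation*}
I expect this last bookkeeping --- in particular, checking that the coefficient of $u_{2k}$ carries at least one positive power of $D$, which is exactly what forces the final exponent to be of the form $(1+D)(1+Dt)^{k+1}$ rather than $e^{Ct}$ --- to be the main technical obstacle.

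Finally I would extract the factor $t^{-k}$. For fixed $k$, consider $\Phi_k(t)=\sum_{j=0}^{k}A_j\,t^j u_{2j}(t)$ with $A_k=1$ and $A_{j-1}=2jA_j$. With this choice the term $jA_j t^{j-1}u_{2j}$ produced by differentiating $t^j$ at level $j$ is dominated by the dissipation term $-\tfrac12 A_{j-1}t^{j-1}u_{2j}$ coming from level $j-1$, so summing the inequalities above with these weights gives $\frac{d}{dt}\Phi_k\le C(n,P,k)\,D(1+D)\,\Phi_k$; moreover $\Phi_k(0)=A_0\,\mathcal F_{Rm}(g_0)$ since every term with $j\ge 1$ vanishes at $t=0$. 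Gr\"onwall's inequality then yields $t^k u_{2k}(t)\le\Phi_k(t)\le A_0\,\mathcal F_{Rm}(g_0)\,e^{C(n,P,k)D(1+D)t}$, and since $A_0=2^k k!$ and $D(1+D)t\le(1+D)(1+Dt)^{k+1}$, the multiplicative constant can be absorbed into the exponential to give the stated bound. The base case $k=0$ is the elementary inequality $\frac{d}{dt}\mathcal F_{Rm}(g_t)\le C\,D(1+D)\,\mathcal F_{Rm}(g_t)$.
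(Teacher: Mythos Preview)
Your overall architecture---a differential inequality for $u_{2k}=\|\nabla^{2k}Rm\|_2^2$, a time-weighted sum $\sum A_j t^j u_{2j}$, and Gr\"onwall---matches the paper's. But there is a genuine gap at the level of the leading term, and the DeTurck detour does not help you avoid it.

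First, the DeTurck step is idle for these estimates. The quantity $u_{2k}$ is diffeomorphism invariant, so its time derivative along the DeTurck flow equals its time derivative along the original flow $\partial_t g=P(g)$. Passing to $\tilde g_t$ therefore cannot improve the evolution inequality; it only introduces the background metric $g_0$ (through $\gamma_{g,g_0}$) into the pointwise evolution of $Rm$, which complicates rather than simplifies the schematic form.

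Second---and this is the real issue---your claim that the leading operator acting on $Rm$ is $-\Delta^2$ up to a positive constant is incorrect when $a\neq 0$. The paper's computation (Section~6) shows that
\[
\partial_t\|\nabla^{k}Rm\|_2^2+\|\nabla^{k+2}Rm\|_2^2-\tfrac{a}{2}\|\nabla^{k+2}R\|_2^2=\text{lower order}.
\]
For $a>0$ the term $+\tfrac{a}{2}\|\nabla^{k+2}R\|_2^2$ is top order and has the wrong sign. Using the pointwise bound coming from the orthogonal curvature decomposition, $|\nabla^{k+2}R|^2\le 2n(n-1)|\nabla^{k+2}Rm|^2$, you can absorb it only when $a<\frac{1}{n(n-1)}$, which is strictly smaller than the allowed range $a<\frac{1}{2(n-1)}$. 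The paper handles the full range by replacing $u_k$ with the modified energy
\[
\mathcal R_k=\|\nabla^k Rm\|_2^2+\frac{a_+}{1-2a_+(n-1)}\|\nabla^k R\|_2^2,
\]
whose evolution produces exactly the extra positive multiple of $\|\nabla^{k+2}R\|_2^2$ needed to restore a coercive top-order term $-c_a\mathcal R_{k+2}$. Once you make this substitution, the rest of your outline (interpolation for the lower order terms, the weighted sum in $t$, Gr\"onwall) goes through essentially as you wrote it, and is indeed how the paper proceeds.
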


When a singularity appears, there exists schematically two possibilities: either the curvature blows up, or the injectivity radius goes to zero while the curvature stays bounded. This was proven by Zheng and Streets for their respective flows.

We provide here a similar result for more general flows:

\begin{defn}
Let $(g_i)_{i\in I}$ be a family of complete metrics on a manifold $M$. We say that the family collapses with bounded curvature if it has a uniform $C^0$ bound on curvature and if it satisfies one of the three equivalent assertions:
\begin{align*}
 &(i)\quad  \inf_{i\in I}\, \mathrm{inj}_M(g_i)=0,\\
 &(ii)\quad \inf_{i\in I}\,\inf_{x\in M} Vol_{g_i}(B_x(1))=0,\\
 &(iii)\quad  \forall A\in \mathbb R \ \sup_{i\in I}\, (\mathcal B_A(g_i))=\infty.
\end{align*}
\end{defn}
Here $\mathcal B_A$ is the best second Sobolev constant relative to $A$ (See definition \ref{defn:SobolevDefinition}). See Proposition \ref{prop:collapsequivalence} for a proof of the equivalence.

\begin{thm}\label{thm:singularity1}
Let $(M,g_0)$ be a compact Riemannian $n$-manifold, $n\geq 3$.

Let $g_t$, $t\in[0,T)$ be the maximal solution of $E_P(g_0)$. One of the following situations occurs:

\begin{enumerate}
\item The flow exists for all time (i.e. $T=\infty$).
\item A finite time singularity occurs, where the curvature blows up: 
\begin{equation*}
T<\infty \text{\quad and \quad}\underset{t\to T}{\varlimsup}\ \norm{Rm_{g_t}}_\infty=\infty,
\end{equation*}
\item A finite time singularity occurs, where the metric collapses with bounded curvature.
\end{enumerate}
\end{thm}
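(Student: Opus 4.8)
The plan is to perform a standard dichotomy argument: either the curvature stays bounded up to the maximal time $T$, or it does not, and in the former case we must show that $T=\infty$ or else the metric collapses with bounded curvature. Suppose we are not in case (2), i.e. $\varlimsup_{t\to T}\norm{Rm_{g_t}}_\infty<\infty$; we want to conclude that either $T=\infty$ (case 1) or the family $(g_t)_{t\in[0,T)}$ collapses with bounded curvature (case 3). So assume for contradiction that $T<\infty$ and that the family does \emph{not} collapse with bounded curvature; using the equivalent characterization $(iii)$ in the definition, this means that for some $A$ the best second Sobolev constant $\mathcal B_A(g_t)$ is uniformly bounded on $[0,T)$. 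The goal is then to show the flow extends past $T$, contradicting maximality.

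The key steps are as follows. First, from the uniform curvature bound $\norm{Rm_{g_t}}_\infty\le D$ and Theorem~\ref{thm:estimates}, I obtain, for each $k$, a bound on $\int_M\norm{\nabla^{2k}Rm_{g_t}}^2\,dv_{g_t}$ that is finite on compact subintervals of $(0,T)$ and in particular is uniformly bounded on, say, $[T/2,T)$, with a constant depending on $n,P,k,D,T$. Next, I upgrade these $L^2$ bounds on covariant derivatives of curvature to $C^0$ bounds. This is exactly where the non-collapsing hypothesis enters: the uniform bound on $\mathcal B_A(g_t)$ gives a uniform Sobolev inequality, and combining it with the $L^2$ derivative estimates (and interpolation / Sobolev embedding iterated enough times, as in the standard Moser-type argument — here genuinely needed since there is no maximum principle) yields uniform $C^0$ bounds on $\nabla^m Rm_{g_t}$ for all $m$, uniformly on $[T/2,T)$. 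Then I control the metrics themselves: since $\partial_t g = P(g)$ and $P(g)$ is a contraction of $\nabla^2 Rm$, $\nabla^2 R$ and $Rm*Rm$, the bound $\norm{\partial_t g_t}_{g_t}\le C$ gives that the metrics $g_t$ are all uniformly equivalent on $[0,T)$ and that $g_t$ converges as $t\to T$ to a limiting metric $g_T$; moreover the uniform bounds on all $\nabla^m Rm_{g_t}$, together with uniform equivalence of the metrics, give uniform $C^m$ bounds (in a fixed background metric) on $g_t$, so the convergence $g_t\to g_T$ is in $C^\infty$ and $g_T$ is a smooth metric. Finally, applying the short-time existence Theorem~\ref{thm:existence} with initial data $g_T$ produces a solution on $[T,T+\epsilon)$, which by uniqueness glues with $(g_t)$ to give a solution on $[0,T+\epsilon)$, contradicting the maximality of $T$. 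Hence if the curvature stays bounded and $T<\infty$, the family must collapse with bounded curvature; this is precisely the trichotomy.

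I should also note that case (3) as phrased requires the curvature to stay \emph{bounded} at the collapsing singularity, which is automatic here because we are in the branch where $\varlimsup_{t\to T}\norm{Rm_{g_t}}_\infty<\infty$; and the three listed alternatives are mutually exclusive up to the obvious overlap (if $T=\infty$ we are in case 1 regardless). One technical point worth addressing carefully is that the curvature bound $D$ and the Sobolev bound should be taken over the whole interval $[0,T)$ and then the estimates of Theorem~\ref{thm:estimates} are applied with base time $T/2$ (or any fixed $t_0\in(0,T)$) to absorb the $t^{-k}$ blow-up, so that all derivative bounds are genuinely uniform near $t=T$.

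The main obstacle will be the passage from $L^2$ estimates on $\nabla^{2k}Rm$ to pointwise $C^0$ estimates on all derivatives of the curvature, because this is the step that fundamentally relies on the non-collapsing assumption (via a uniform Sobolev constant) rather than on a maximum principle; one must be careful that the constants in the iterated Sobolev embeddings depend only on $n$, the Sobolev constant $\mathcal B_A$, and the already-established bounds, and do not implicitly reintroduce a dependence on the (possibly degenerating) geometry. Once uniform $C^\infty$ control of the metrics up to time $T$ is in hand, extending the flow by the short-time existence theorem is routine.
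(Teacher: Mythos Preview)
Your proposal is correct and follows essentially the same approach as the paper: assume $T<\infty$ with bounded curvature and no collapsing, use the uniform Sobolev constant together with the Bando--Bernstein--Shi estimates on $[T/2,T)$ to upgrade $L^2$ derivative bounds to uniform $C^0$ bounds via Proposition~\ref{prop:SobolevInfty}, and then extend the flow past $T$ by the standard argument, contradicting maximality. The paper's proof is terser but the logical structure is identical.
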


To rule out collapsing with bounded curvature, we want to control some Sobolev constant along the flow. This is possible when the Yamabe invariant remains uniformly positive:

\begin{thm}\label{thm:YamabeSingularity}
Let $(M,g_0)$ be a compact Riemannian $n$-manifold, $n\geq 3$.

Let $g_t$, $t\in[0,T)$ be the maximal solution of $E_P(g_0)$. 

Suppose that there exists some $Y_0>0$ such that $Y_{g_t}\geq Y_0$ for all $t\in[0,T)$. 

Then either the flow exists for all time, or the curvature blows up:
\begin{equation*}
T<\infty\ \Rightarrow\ \underset{t\to T}{\varlimsup}\ \norm{Rm_{g_t}}_\infty=\infty.
\end{equation*}
\end{thm}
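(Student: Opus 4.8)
The plan is to combine the trichotomy of Theorem~\ref{thm:singularity1} with the elementary fact that a uniform positive lower bound on the Yamabe invariant, together with a curvature bound, produces a uniform Sobolev inequality, which in turn rules out collapsing with bounded curvature. Since $M$ is compact, each $g_t$ is complete, so the collapsing terminology of the preceding definition applies to the family $(g_t)_{t\in[0,T)}$.

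Argue by contradiction: suppose $T<\infty$ and $\varlimsup_{t\to T}\norm{Rm_{g_t}}_\infty<\infty$. Then $\norm{Rm_{g_t}}_\infty$ is bounded for $t$ near $T$, and being continuous in $t$ it is bounded on all of $[0,T)$, say $\norm{Rm_{g_t}}_\infty\leq D$. In the trichotomy of Theorem~\ref{thm:singularity1}, case (1) is excluded since $T<\infty$, and case (2) is excluded by the curvature bound, so we must be in case (3): the family $(g_t)_{t\in[0,T)}$ collapses with bounded curvature. By the definition of collapsing together with the equivalence of Proposition~\ref{prop:collapsequivalence}, this forces $\sup_{t\in[0,T)}\mathcal B_A(g_t)=\infty$ for every $A\in\mathbb R$.

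Now I would contradict this by exhibiting one value of $A$ for which $\mathcal B_A(g_t)$ stays bounded. By definition of the Yamabe invariant, for every $u\in H_1^2(M)$ and every $t$, homogeneity gives
\begin{equation*}
Y_0\,\norm{u}_{\frac{2n}{n-2}}^2\leq Y_{g_t}\,\norm{u}_{\frac{2n}{n-2}}^2\leq \int_M \norm{\nabla u}_{g_t}^2\,dv_{g_t}+\frac{n-2}{4(n-1)}\int_M R_{g_t}u^2\,dv_{g_t}.
\end{equation*}
Since $\norm{Rm_{g_t}}_\infty\leq D$ implies $\norm{R_{g_t}}_\infty\leq c(n)D$, and using $Y_0>0$, we obtain
\begin{equation*}
\norm{u}_{\frac{2n}{n-2}}^2\leq Y_0^{-1}\Bigl(\norm{\nabla u}_2^2+c(n)D\,\norm{u}_2^2\Bigr),
\end{equation*}
a Sobolev inequality with constants depending only on $n$, $Y_0$ and $D$, uniform in $t$. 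Unwinding Definition~\ref{defn:SobolevDefinition}, this bounds $\mathcal B_A(g_t)$ uniformly in $t$ for a suitable choice of $A$ (e.g. $A\geq c(n)DY_0^{-1}$), contradicting $\sup_t\mathcal B_A(g_t)=\infty$. Hence case (3) is impossible, leaving case (1) ($T=\infty$) or case (2) ($\varlimsup_{t\to T}\norm{Rm_{g_t}}_\infty=\infty$), which is exactly the asserted dichotomy.

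The only real content here is already packaged in Proposition~\ref{prop:collapsequivalence} (the characterization of collapsing via the blow-up of $\mathcal B_A$) and in Theorem~\ref{thm:singularity1}; once those are in hand the argument is short. The main point requiring care is matching the exact normalization of $\mathcal B_A$ in Definition~\ref{defn:SobolevDefinition} with the uniform inequality obtained above, i.e. checking that a bound of the form $\norm{u}_{\frac{2n}{n-2}}^2\leq C_1\norm{\nabla u}_2^2+C_2\norm{u}_2^2$ with fixed $C_1,C_2$ indeed controls $\mathcal B_A(g_t)$ for the relevant $A$; this is routine.
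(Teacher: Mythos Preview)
Your proof is correct and follows the same approach as the paper: invoke the trichotomy of Theorem~\ref{thm:singularity1} and rule out case~(3) by showing that a uniform positive Yamabe lower bound together with a $C^0$ curvature bound yields a uniform Sobolev inequality, hence no collapsing. The paper packages this last step as Proposition~\ref{prop:YamabeCollapse} (itself a consequence of Proposition~\ref{prop:RpBoundsB} with $p=\infty$ and Proposition~\ref{prop:collapsequivalence}), whereas you unfold it by hand; note only that your parenthetical choice of $A$ is slightly off---from $\norm{u}_{\frac{2n}{n-2}}^2\leq Y_0^{-1}\norm{\nabla u}_2^2+Y_0^{-1}c(n)D\norm{u}_2^2$ and $\sqrt{a+b}\leq\sqrt{a}+\sqrt{b}$ one gets $A=Y_0^{-1/2}$ (not $c(n)DY_0^{-1}$), with $\mathcal B_A(g_t)\leq (c(n)DY_0^{-1})^{1/2}$.
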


Then, when a singularity occurs, we would like to zoom in a region where the curvature blows up and, as the assumption on the Yamabe constant implies that the injectivity radius remains positively bounded from below, to obtain a ``singularity model'' at the limit.

For the Ricci flow, as we can get pointwise estimates by the maximum principle, the curvature is bounded in $C^k$ norm as soon as it is in $C^0$ norm. Here, the use of integral estimates makes $C^k$ curvature bounds also depend on the initial $L^2$ norm of the curvature. This dependence becomes important when we apply the estimates to a sequence of renormalized solutions near a singular time. Indeed, in this case, the initial metrics of the solutions we consider are in fact scalings of metrics $g_t$ with $t$ going to $T$. This fact forces us to control $\mathcal F_{Rm}$ in the following way along the flow:
\begin{equation*}
\norm{Rm_{g_t}}_2\leq C (1+\norm{Rm_{g_t}}_\infty)^{1-\frac{n}{4}},
\end{equation*}
in order to get $C^k$ estimates on the curvature of such sequences, and then to apply a compactness theorem. This will put strong restrictions on the equations we can deal with. For example, we couldn't get the control for $P(g)=-\nabla \mathcal F_W + F(g)g$ where $F(g) g$ is a conformal term added to the Bach flow in order to get strong ellipticity. This also explains why we should restrict ourselves to low dimensions when it comes to study singularities, so that we only have to obtain a constant bound on $\mathcal F_{Rm}$.

\begin{thm}\label{thm:zoomin}
Let $(M,g_0)$ be a compact Riemannian $n$-manifold, $n=3$ or $4$.

Let $g_t$, $t\in[0,T)$ be the maximal solution of $E_P(g_0)$.

Suppose that there exists some $Y_0>0$ and $C<\infty$ such that $Y_{g_t}\geq Y_0$ and $\norm{Rm_{g_t}}_2\leq C$ for all $t\in[0,T)$. Then one of the following situations occurs:

\begin{enumerate}
\item The flow exists for all time with uniform $C^k$ curvature bounds and the metric doesn't collapse with bounded curvature.

Then, for all sequences $(t_i,x_i)$ with $t_i\to\infty$, we can extract a subsequence such that $(M,g_{t_i},x_i)$ converges to a pointed manifold $(M_\infty,g_\infty,x_\infty)$ in the pointed $C^\infty$ topology.
\item A finite or infinite time singularity occurs, where the curvature blows up: for all $p$ in $(\frac{n}{2},\infty]$,
\begin{equation*}
\underset{t\to T}{\varlimsup}\ \norm{Rm_{g_t}}_p=\infty.
\end{equation*}

Moreover, there exists a sequence $(t_i, x_i)_{i\geq 0}$ such that $t_i\to T$ and
\begin{equation*}
(M,\norm{Rm_{g_{t_i}}}_\infty g_{t_i},x_i)
\end{equation*}
converges to a non-flat complete Riemannian manifold in the pointed $C^\infty$ topology.
\end{enumerate}
\end{thm}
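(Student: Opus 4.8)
The plan is to split according to whether $\varlimsup_{t\to T}\norm{Rm_{g_t}}_\infty$ is finite or infinite, and in each case to combine the Bando--Bernstein--Shi estimates (Theorem~\ref{thm:estimates}) with the non-collapsing produced by the Yamabe lower bound --- the content of section~5, which also underlies Theorem~\ref{thm:YamabeSingularity} --- and then invoke the $C^\infty$-compactness theorem of section~7. I will repeatedly use that $E_P$ is invariant under the parabolic rescaling $(g,t)\mapsto(\lambda g,\lambda^2 t)$ (each term of $P$ is homogeneous of degree $-1$ as a $(0,2)$-tensor, so $\tilde g(\tau):=\lambda\,g(\lambda^{-2}\tau)$ again solves $E_P$), that the Yamabe invariant is scale invariant, and that $\int_M\norm{Rm}^2\,dv$ is scale invariant when $n=4$ and decays under blow-up rescalings when $n=3$ --- this is exactly why the statement is limited to $n\in\{3,4\}$, where $\mathcal F_{Rm}$ needs only a \emph{constant} bound.

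First I would treat the case $\varlimsup_{t\to T}\norm{Rm_{g_t}}_\infty<\infty$. Then $D:=\sup_{t\in[0,T)}\norm{Rm_{g_t}}_\infty<\infty$, so by Theorem~\ref{thm:YamabeSingularity} a finite-time singularity is impossible and $T=\infty$. Applying Theorem~\ref{thm:estimates} to the flow restarted at time $t-1$ (for $t\geq 1$), with the uniform bounds $D$ on curvature and $C$ on $\mathcal F_{Rm}$, yields $\int_M\norm{\nabla^{2k}Rm_{g_t}}^2\,dv_{g_t}\leq C_k$ with $C_k$ independent of $t$; interpolation bounds every $\norm{\nabla^j Rm_{g_t}}_2$. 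The hypotheses $Y_{g_t}\geq Y_0>0$ and the curvature bound give, by the non-collapsing results of section~5, a uniform Sobolev inequality and a uniform positive lower bound on the injectivity radius, so Sobolev embedding upgrades the $L^2$ bounds to uniform $C^k$ bounds on $Rm_{g_t}$ and shows the metric does not collapse with bounded curvature. The assertion about sequences $(t_i,x_i)$, $t_i\to\infty$, is then precisely the compactness theorem.

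Next, the case $\varlimsup_{t\to T}\norm{Rm_{g_t}}_\infty=\infty$. Here $\phi(t):=\sup_{M\times[0,t]}\norm{Rm}$ is finite for $t<T$, nondecreasing, and tends to $\infty$ as $t\to T$; a short argument gives, for each $i$, a time $t_i\to T$ and a point $x_i\in M$ with $\lambda_i:=\norm{Rm_{g_{t_i}}}(x_i)=\phi(t_i)=\norm{Rm_{g_{t_i}}}_\infty\to\infty$, whence $\norm{Rm_{g_s}}\leq\lambda_i$ on $M\times[0,t_i]$. I then consider the rescaled flows $\tilde g_i(\tau):=\lambda_i\,g(t_i+\lambda_i^{-2}\tau)$, which solve $E_P$ on time intervals exhausting $(-\infty,0]$ and satisfy $\norm{Rm_{\tilde g_i(\tau)}}\leq 1$ for $\tau\leq 0$, $\norm{Rm_{\tilde g_i(0)}}(x_i)=1$, $\norm{Rm_{\tilde g_i}}_{L^2}\leq C$, and $Y_{\tilde g_i}\geq Y_0$. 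Exactly as before, Theorem~\ref{thm:estimates} applied to $\tilde g_i$ on $[-1,0]$ plus the non-collapsing from the Yamabe bound gives uniform $C^k$ bounds on $Rm_{\tilde g_i(0)}$ and a uniform injectivity radius lower bound at $x_i$, so the compactness theorem produces a subsequence along which $(M,\tilde g_i(0),x_i)=(M,\norm{Rm_{g_{t_i}}}_\infty\,g_{t_i},x_i)$ converges in the pointed $C^\infty$ topology to a complete $(M_\infty,g_\infty,x_\infty)$ with $\norm{Rm_{g_\infty}}(x_\infty)=1$, hence non-flat. Finally, for $p\in(n/2,\infty)$, the identity $\int_M\norm{Rm_{\lambda g}}^p\,dv_{\lambda g}=\lambda^{\,n/2-p}\int_M\norm{Rm_g}^p\,dv_g$ shows that a uniform bound $\norm{Rm_{g_t}}_p\leq C_p$ would force $\norm{Rm_{\tilde g_i(0)}}_p\to 0$, hence $Rm_{g_\infty}\equiv 0$ by $C^\infty_{\mathrm{loc}}$-convergence, contradicting non-flatness; thus $\varlimsup_{t\to T}\norm{Rm_{g_t}}_p=\infty$ for every $p\in(n/2,\infty]$.

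The hard part is the uniform control of the Sobolev constant and the injectivity radius --- for the flow itself in the first case and, crucially, for the rescaled family $\tilde g_i$ in the second --- starting only from the Yamabe lower bound and a curvature bound; this is exactly where $Y_{g_t}\geq Y_0>0$ enters and is the substance of section~5. A subsidiary technical issue is that the Bando--Bernstein--Shi estimates must be applied with constants uniform in $i$, which forces the point-picking to be arranged so that each rescaled flow has a backward time interval of fixed length on which the curvature is uniformly bounded, and which is only manageable because in dimensions $3$ and $4$ a constant bound on $\mathcal F_{Rm}$ suffices.
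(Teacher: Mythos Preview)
Your proposal is correct and follows essentially the same route as the paper: split on whether $\varlimsup_{t\to T}\norm{Rm_{g_t}}_\infty$ is finite, use Theorem~\ref{thm:YamabeSingularity} and the restarted Bando--Bernstein--Shi estimates together with the Yamabe-based non-collapsing of section~5 to get uniform $C^k$ bounds in the first case, and in the second case perform the same point-picking and parabolic rescaling $g_i(\tau)=\lambda_i\,g(t_i+\lambda_i^{-2}\tau)$, invoke the compactness theorem of section~7, and rule out boundedness of $\norm{Rm}_p$ for $p>n/2$ via the scaling contradiction. Your identification of why the $L^2$ bound suffices precisely in dimensions $3$ and $4$, and of the Yamabe lower bound as the source of both Sobolev and injectivity-radius control, matches the paper's logic.
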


\begin{rem}
If $n=3$, we can replace the assumption $\norm{Rm_{g_t}}_2\leq C $ by the weaker one $\norm{Rm_{g_t}}_2\leq C (1+\norm{Rm_{g_t}}_\infty)^{\frac{1}{4}}$.
\end{rem}

\subsection{Application to 4-dimensional gradient flows}

In dimension $4$, the Yamabe invariant can be kept away from zero by a number of gradient flows, what prevents the metric from collapsing with bounded curvature. This fact allows us to describe the singularities by a ``blow-up''.
\vspace*{10pt}

For $\alpha\geq 0$, let define:

\begin{equation*}
\mathcal F^\alpha(g)=(1-\alpha)\mathcal F_W + \frac{\alpha}{2}\mathcal F_{\overset{\circ}{Ric}},
\end{equation*}
and
\begin{equation*}
\left\lbrace 
\begin{aligned}[l]
\partial_t g&=-2\nabla \mathcal F^\alpha(g)\\
g(0)&=g_0.
\end{aligned}\right.\qquad (E_4^\alpha(g_0))
\end{equation*}
\vspace*{10pt}

Short-time existence is supplied by Theorem \ref{thm:existence} provided $\alpha>0$. The case $\alpha=1$ corresponds (according to the Gauss-Bonnet formula) to the gradient flow of $\mathcal F_{Rm}$.

\begin{thm}\label{thm:4dimSingularity}
Let $\alpha$ be in $(0,1)$. Let $(M,g_0)$ be a compact Riemannian $4$-manifold such that $Y_{g_0}>0$ and $\mathcal F^\alpha(g_0)\leq (1-\alpha)8\pi^2\chi(M)$.

Let $g_t$, $t\in[0,T)$ be the maximal solution of $E_4^\alpha(g_0)$. Then one of the following situations occurs:

\begin{enumerate}
\item The flow exists for all time with uniform $C^k$ curvature bounds and the metric doesn't collapse with bounded curvature.

Moreover, there exists a sequence $(t_i)$ with $t_i\to\infty$, such that $g_{t_i}$ converges  in the $C^\infty$ topology to a metric $g_\infty$, which is critical for $\mathcal F^\alpha$ (i.e. $\nabla \mathcal F^\alpha(g_\infty)=0$).
\item A finite or infinite time singularity occurs, where the curvature blows up: for all $p$ in $(2,\infty]$,
\begin{equation*}
\underset{t\to T}{\varlimsup}\ \norm{Rm_{g_t}}_p=\infty.
\end{equation*}

Moreover, there exists a sequence $(t_i, x_i)_{i\geq 0}$ such that $t_i\to T$ and
\begin{equation*}
(M,\norm{Rm_{g_{t_i}}}_\infty g_{t_i},x_i)
\end{equation*}
converges to a complete non-compact non-flat Bach-flat, scalar-flat Riemannian manifold in the pointed $C^\infty$ topology.
\end{enumerate}
\end{thm}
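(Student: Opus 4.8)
The plan is to deduce Theorem~\ref{thm:4dimSingularity} from the general results of the previous sections—in particular Theorems~\ref{thm:zoomin} and~\ref{thm:YamabeSingularity}—by checking that the two hypotheses ``$Y_{g_t}\geq Y_0>0$'' and ``$\norm{Rm_{g_t}}_2\leq C$'' are preserved along the flow $E_4^\alpha(g_0)$ under the stated assumptions on the initial data, and then upgrading the abstract conclusions of Theorem~\ref{thm:zoomin} to the more precise statements about $\mathcal F^\alpha$-critical limits and Bach-flat scalar-flat singularity models.

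First I would establish the uniform $L^2$ bound on curvature. Since $(E_4^\alpha(g_0))$ is the (normalized) gradient flow of $\mathcal F^\alpha$, the energy $\mathcal F^\alpha(g_t)$ is nonincreasing in $t$, so $\mathcal F^\alpha(g_t)\leq\mathcal F^\alpha(g_0)\leq(1-\alpha)8\pi^2\chi(M)$ for all $t$. Because $\mathcal F^\alpha=(1-\alpha)\mathcal F_W+\tfrac{\alpha}{2}\mathcal F_{\rst}$ with both $\alpha\in(0,1)$ and $1-\alpha>0$, this controls $\mathcal F_W(g_t)$ and $\mathcal F_{\rst}(g_t)$ individually (each is a nonnegative multiple bounded by the energy). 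The Gauss--Bonnet formula in dimension four, $\mathcal F_W-\tfrac12\mathcal F_{\rst}+\tfrac1{24}\mathcal F_R=8\pi^2\chi(M)$, then turns a bound on $\mathcal F_W$ and $\mathcal F_{\rst}$ into a bound on $\mathcal F_R(g_t)=\int_M R_{g_t}^2\,dv_{g_t}$, hence on $\mathcal F_{Rm}(g_t)=\int_M\norm{Rm_{g_t}}^2dv_{g_t}$ since $\norm{Rm}^2$ is a fixed linear combination of $\norm{W}^2$, $\normm{\rst}^2$ and $R^2$. This gives the constant $C$ needed in Theorem~\ref{thm:zoomin}.

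Next I would produce the uniform positive lower bound on the Yamabe invariant. This is the step where I expect the real work to lie, and it is the one the introduction attributes to an observation of Streets: in dimension four, a positive lower bound on $Y_{g_t}$ follows from $Y_{g_0}>0$ together with positivity of the total $Q$-curvature $\int_M Q_{g_t}\,dv_{g_t}$, which (by Gauss--Bonnet) equals $\mathcal F_2(g_t)=-\mathcal F_W(g_t)+8\pi^2\chi(M)+\tfrac1{24}\mathcal F_R(g_t)$ up to the constant, and this is $\geq 8\pi^2\chi(M)-\mathcal F_W(g_t)>0$ precisely because $\mathcal F_W(g_t)\leq\mathcal F^\alpha(g_t)/(1-\alpha)\leq 8\pi^2\chi(M)$ with strict inequality propagated from the initial strict inequality $Y_{g_0}>0$ forcing $\chi(M)>0$ and the energy strictly below the threshold along the flow (the gradient flow strictly decreases energy unless the metric is critical, and a critical metric with $\mathcal F^\alpha=(1-\alpha)8\pi^2\chi$ would have to be handled separately). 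One then invokes the fact—essentially a consequence of the conformal lower bound for the Paneitz/$Q$-curvature operator, or more directly of the argument in \cite{CGY03}—that on a compact $4$-manifold with $Y_g>0$ and $\int_M Q_g\,dv_g>0$, the Yamabe invariant is bounded below by a positive constant depending only on $Y_g$ and $\int_M Q_g\,dv_g$; since both of these are controlled uniformly in $t$, we get $Y_{g_t}\geq Y_0>0$. At this point Theorem~\ref{thm:YamabeSingularity} already rules out collapsing with bounded curvature, and Theorem~\ref{thm:zoomin} provides the dichotomy in its abstract form.

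Finally I would refine the two alternatives. In case~(1), Theorem~\ref{thm:zoomin} gives all-time existence, uniform $C^k$ bounds, and subconvergence of $(M,g_{t_i})$ to some $(M_\infty,g_\infty)$; compactness of $M$ is preserved because the non-collapsing and curvature bounds keep the volume bounded below and the diameter bounded above, and since $\mathcal F^\alpha(g_t)$ is monotone and the flow is its gradient flow, a standard argument (integrating $\norm{\nabla\mathcal F^\alpha(g_t)}^2$ in time, which is finite, and using the $C^k$ bounds) forces $\nabla\mathcal F^\alpha(g_{t_i})\to 0$, so the limit $g_\infty$ is $\mathcal F^\alpha$-critical. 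In case~(2) one applies the blow-up part of Theorem~\ref{thm:zoomin}: rescaling by $\norm{Rm_{g_{t_i}}}_\infty$ at suitable points gives $C^\infty$-subconvergence to a complete non-flat limit $(M_\infty,g_\infty)$; this limit is noncompact because if it were compact it would be a fixed-curvature-scale smooth manifold contradicting that the curvature was blowing up on $M$ while $\mathcal F_{Rm}$ stays bounded (the rescaled $L^2$ norm of curvature on a fixed ball tends to zero in dimension four, forcing either flatness or escape to infinity of the curvature concentration). Scalar-flatness of $g_\infty$ follows because, under the rescaling, $\mathcal F_R(g_{t_i})$ is scale-invariant in dimension four and uniformly bounded, so $R$ of the rescaled metrics tends to zero in $L^2$ on compact sets, and one shows the limit satisfies the static (elliptic) equation associated to $\mathcal F^\alpha$, whose trace part forces $R_{g_\infty}=0$ and whose traceless part is exactly the Bach equation, giving Bach-flatness. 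The main obstacle is the Yamabe lower-bound step: one must carefully track the interplay between $Y_g>0$, the $Q$-curvature integral, and the energy monotonicity to ensure the strict inequality $\mathcal F_W(g_t)<8\pi^2\chi(M)$ does not degenerate in the limit $t\to T$, and to rule out the borderline case where the initial metric is itself critical at the threshold energy.
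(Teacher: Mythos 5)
Your skeleton matches the paper's: monotonicity of $\mathcal F^\alpha$ along its gradient flow gives the uniform $L^2$ curvature bound via Gauss--Bonnet, the uniform bound $Y_{g_t}\geq Y_0>0$ is exactly Lemma~\ref{lem:StreetsYamabeLemma} (i.e. $Y_g^2\geq\frac23\bigl((1-\alpha)8\pi^2\chi(M)-\mathcal F^\alpha(g)\bigr)$, applied after first reducing to the strict inequality $\mathcal F^\alpha(g_0)<(1-\alpha)8\pi^2\chi(M)$, a reduction you correctly flag), and then Theorem~\ref{thm:zoomin} gives the dichotomy. The step you describe as ``where the real work lies'' is in fact the short one; note also that $Y_{g_0}>0$ alone does not force $\chi(M)>0$ --- that needs the energy hypothesis as well.

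The genuine gaps are in your refinement of the two alternatives, and they all trace back to one missing observation: the volume is constant along the flow, since $\partial_t Vol_{g_t}(M)=\frac12\int_M\trp{P_{g_t}}dv_{g_t}=0$ ($\mathcal F^\alpha$ is scale invariant in dimension four). In case (1) the paper combines constant (hence finite) volume with non-collapsing to get compactness of the limit via Lemma~\ref{lem:VolumeCompactLemma}, hence a limit diffeomorphic to $M$; your claim that non-collapsing plus curvature bounds alone give a diameter upper bound is unjustified without a volume upper bound. In case (2) non-compactness comes from $Vol_{g_i}(M)=\alpha_i^{2}Vol_{g_0}(M)\to\infty$; your alternative argument, that ``the rescaled $L^2$ norm of curvature on a fixed ball tends to zero,'' is false: $\int_{B_{g_i}(x_i,r)}\norm{Rm_{g_i}}^2dv_{g_i}=\int_{B_{g(t_i)}(x_i,r\alpha_i^{-1/2})}\norm{Rm_{g(t_i)}}^2dv_{g(t_i)}$, and curvature does concentrate at exactly that scale (otherwise the limit would be flat, contradicting $\norm{Rm_{g_\infty}(x_\infty)}=1$). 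The same concentration issue undermines your claim that $R_{g_i}\to0$ in $L^2_{loc}$, and the trace of $\nabla\mathcal F^\alpha(g_\infty)=0$ gives only $\Delta R_{g_\infty}=0$, not $R_{g_\infty}=0$. You also assert, but do not prove, that the blow-up limit is critical: the paper gets this from the change of variables $\int_{-T_1}^0\norm{\nabla\mathcal F^\alpha(g_i(t))}_2^2dt=\int_{t_i-T_1\alpha_i^{-2}}^{t_i}\norm{\nabla\mathcal F^\alpha(g(t))}_2^2dt$, a tail of the finite integral $\int_0^T\norm{\nabla\mathcal F^\alpha(g_t)}_2^2dt$, hence tending to zero. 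Then scalar-flatness follows the paper's route: $R_{g_\infty}$ is harmonic and in $L^2$ (by (\ref{eq:R2bound}) and scale invariance), hence constant by Yau's theorem, hence zero because the limit has infinite volume; only after that does $\nabla\mathcal F^\alpha(g_\infty)=\nabla\mathcal F_W(g_\infty)$ yield Bach-flatness. These pieces (volume constancy, the rescaled energy-decay identity, and the Yau plus infinite-volume argument) need to be supplied for your proposal to close.
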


Note that the condition $\mathcal F^\alpha(g_0)\leq (1-\alpha) 8\pi^2\chi(M)$ is equivalent to an integral pinching between the scalar curvature and the traceless Ricci tensor: 
\begin{equation*}
\mathcal F^\alpha(g_0)\leq (1-\alpha)8\pi^2\chi(M)\Leftrightarrow \int_M \normm{\overset{\circ}{Ric}}^2 dv_g \leq \frac{1-\alpha}{12}\int_M R^2 dv_g.
\end{equation*}

When $\alpha=1$, the theorem remains true but becomes useless, as the assumption $\mathcal F^1(g_0)\leq (1-\alpha) 8\pi^2\chi(M)$ is only satisfied by Einstein metrics.

\vspace*{10pt}

We prove the following rigidity result for the singularity models: 

\begin{thm}\label{thm:rigidity}
Let $(M,g)$ be a complete Riemannian $4$-manifold with positive Yamabe constant and let $\alpha$ be in $[0,1]$. Suppose that $R_g$ is in $L^2(M)$. If $\alpha=0$, suppose that $R_g$ is constant.

If $g$ is a critical metric of $\mathcal F^\alpha$ with 
\begin{equation*}
\mathcal F_W(g)+\frac{1}{4}\mathcal F_{\rst}(g)< \frac{3}{16} Y_g^2,
\end{equation*}
then $g$ is of constant sectional curvature.
\end{thm}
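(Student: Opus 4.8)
The plan is to run a Bochner–Weitzenböck argument against the Euler–Lagrange equation of $\mathcal F^\alpha$ and to close the resulting integral estimate with the sharp Sobolev inequality that a positive Yamabe constant provides in dimension four. Since $M$ may be non-compact, I do not want to use the Gauss–Bonnet formula: everything has to come out of the critical equation directly.

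First I would write down the critical point equation. In dimension four the gradient of $\mathcal F_W$ is the Bach tensor $B$ — symmetric, trace-free, divergence-free and conformally invariant — while the gradient of $\mathcal F_{\rst}$ is an explicit fourth-order operator whose leading part is $\Delta\rst$ plus Hessian-of-$R$ terms; so an $\mathcal F^\alpha$-critical metric satisfies an equation of the schematic form
\begin{equation*}
(1-\alpha)\,B+\tfrac{\alpha}{2}\bigl(\Delta\rst + c_1\nabla^2 R + c_2(\Delta R)\,g\bigr)=\rst*\rst+W*\rst+c_3\,R\,\rst .
\end{equation*}
Taking the trace (using that $B$ is trace-free) gives a second-order equation for $R$ of the form $\Delta R = c\,|\rst|^2+\cdots$, which, together with $R\in L^2$ and the Yamabe–Sobolev inequality, lets me control $\int_M R^2$ and $\int_M|\nabla R|^2$ by quantities that the pinching hypothesis makes small; when $\alpha=0$ this traced equation is vacuous, which is precisely why $R$ is then assumed constant (and $R\in L^2$ forces $R\equiv0$ on an infinite-volume manifold). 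I also record the contracted second Bianchi identity, which expresses $\delta\rst$ in terms of $dR$, and note that $\mathcal F_W(g)$ and $\mathcal F_{\rst}(g)$ are finite by hypothesis, so $Rm\in L^2(M)$ — this is what makes the cutoff integrations by parts below legitimate on a complete, possibly non-compact, manifold.

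Next I would pair the critical equation with $\rst$ and integrate against a cutoff. Using the Bianchi identity, $\int_M\langle\Delta\rst,\rst\rangle=-\int_M|\nabla\rst|^2$, the vanishing of the $(\Delta R)g$-term against the trace-free $\rst$, and integration by parts turning the $B$-term into something cubic in curvature, one gets
\begin{equation*}
\int_M|\nabla\rst|^2\,dv_g \;\le\; \int_M\bigl(c_4|W|+c_5|\rst|\bigr)|\rst|^2\,dv_g \;+\; \int_M\bigl(c_6 R\,|\rst|^2 + \text{lower-order }R\text{-terms}\bigr)\,dv_g .
\end{equation*}
One then feeds this into the sharp Sobolev inequality coming from $Y_g>0$, namely $Y_g\bigl(\int_M f^4\bigr)^{1/2}\le\int_M|\nabla f|^2+\tfrac16 R f^2$ for compactly supported $f$: applied with $f=|\rst|$ (via Kato's inequality), the $\tfrac16 Rf^2$ term absorbs the conformally-weighted part of the $R$-curvature terms — this is where the conformal invariance of $B$ and of the Yamabe operator matters — the a priori bound on $\int R^2+\int|\nabla R|^2$ handles the remainder, and Hölder turns the cubic terms into $(c_4\|W\|_2+c_5\|\rst\|_2)\,\|\rst\|_4^2$. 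This yields $Y_g\|\rst\|_4^2\le C\bigl(\mathcal F_W+\mathcal F_{\rst}\bigr)^{1/2}\|\rst\|_4^2+(\text{small})$. Running the same scheme with $f=|W|$ (on the oriented double cover if $M$ is non-orientable), using the four-dimensional Weitzenböck formula for the self- and anti-self-dual Weyl tensors, gives the companion estimate $Y_g\|W\|_4^2\le C'\bigl(\mathcal F_W+\mathcal F_{\rst}\bigr)^{1/2}\|W\|_4^2+(\text{small})$. The hypothesis $\mathcal F_W(g)+\tfrac14\mathcal F_{\rst}(g)<\tfrac{3}{16}Y_g^2$ is exactly the threshold making the coefficients on the right strictly smaller than $Y_g$, which forces $\|\rst\|_4=0$ and $\|W\|_4=0$; hence $\rst\equiv0$ and $W\equiv0$, so $g$ has constant sectional curvature.

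The step I expect to be the main obstacle is extracting these differential inequalities with the sharp constants from the Euler–Lagrange equation: bookkeeping the scalar-curvature terms when $\alpha>0$ (where $R$ is not constant), controlling the $\delta W$ terms that appear in the Weyl Weitzenböck formula for a metric that is only $\mathcal F^\alpha$-critical rather than harmonic-Weyl, and making every integration by parts rigorous through an exhaustion by cutoffs on a complete non-compact manifold. Lining the final constants up precisely with the sharp threshold $\tfrac{3}{16}Y_g^2$ is the delicate endgame.
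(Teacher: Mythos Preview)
Your overall shape---pair the Euler--Lagrange equation with $\rst$, close with Kato plus the Yamabe--Sobolev inequality, then kill $W$ by a Weitzenb\"ock argument---is the right strategy, and it is what the paper does. But two of your intermediate steps are off in ways that block the sharp constant.

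\textbf{The scalar curvature is constant, not merely controlled.} You write that the traced critical equation has the form $\Delta R=c\,|\rst|^2+\cdots$ and propose to bound $\int R^2$ and $\int|\nabla R|^2$. In fact the trace of $\nabla\mathcal F^\alpha$ is exactly $\tfrac{\alpha}{4}\Delta R$ with no zero-order terms (in dimension four $\mathcal F^\alpha=\mathcal F_W+\tfrac{\alpha}{24}\mathcal F_R$ up to a topological constant, and the Bach tensor is trace-free). So for $\alpha>0$ the critical equation gives $\Delta R=0$; together with $R\in L^2$ and Yau's $L^2$ Liouville theorem this forces $R$ to be \emph{constant}. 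This is not a minor convenience: once $R$ is constant the gradient simplifies to
\[
\nabla\mathcal F^\alpha(g)=\tfrac12\Delta\rst-\overset{\circ}{\overline{(W+\tfrac12\rst\wedge g)}}(\rst)+\tfrac14|\rst|^2 g+\tfrac{2-\alpha}{12}R\,\rst,
\]
so after pairing with $\rst$ the scalar-curvature contribution is $\tfrac{2-\alpha}{6}\int R\,|\rst|^2$, which dominates the $\tfrac16\int R\,|\rst|^2$ needed for the conformal Laplacian and leaves a \emph{nonnegative} remainder $\tfrac{1-\alpha}{6}R\int|\rst|^2$. Your plan of merely ``controlling'' $R$-terms would not produce this exact cancellation and would not reach the threshold $\tfrac{3}{16}Y_g^2$. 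The sharp constant itself comes from an algebraic estimate you did not identify,
\[
\bigl|\langle W+\tfrac12\rst\wedge g,\ \rst\wedge\rst\rangle\bigr|\le \tfrac{2}{\sqrt3}\,|\rst|^2\bigl(|W|^2+\tfrac14|\rst|^2\bigr)^{1/2},
\]
which is precisely what makes the combination $\mathcal F_W+\tfrac14\mathcal F_{\rst}$ and the number $\tfrac{3}{16}=\bigl(\tfrac{\sqrt3}{4}\bigr)^2$ appear.

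\textbf{The Weyl step must come \emph{after} Einstein.} You propose to run the Weitzenb\"ock argument on $|W|$ in parallel with the $\rst$ argument, and you flag the $\delta W$ terms as a worry. That worry is fatal: the Derdzi\'nski--type inequality $\tfrac12\Delta|W|^2\ge|\nabla W|^2+\tfrac12R|W|^2-\sqrt6|W|^3$ uses $\delta W=0$, and an $\mathcal F^\alpha$-critical metric has no reason to be harmonic-Weyl. The paper's order of operations is essential: the $\rst$-estimate \emph{alone} already forces $\rst\equiv0$ under the pinching hypothesis, so $g$ is Einstein; Einstein implies $\delta W=0$; and only then does the Weitzenb\"ock argument on $W$ (with refined Kato, under the much weaker bound $\mathcal F_W<\tfrac{25}{54}Y_g^2$) yield $W\equiv0$.
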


Particular cases of this theorem have already been given by Kim in \cite{Kim10} when $\alpha=0$ and $M$ is not compact (i.e. for non-compact Bach-flat manifolds), and by Streets in \cite{Str10} when $\alpha=1$ (see Gap Theorems I and II of his paper).

However, in these papers, the pinching hypothesis lies in the following form:
\begin{equation*}
\mathcal F_W(g)+\frac{1}{2}\mathcal F_{\rst}(g)\leq \epsilon,
\end{equation*}
with $\epsilon$ a non-explicit constant depending on the Yamabe constant or the Sobolev constant. Here, we obtain such a result with an explicit pinching (and for a larger class of functionals).

\vspace*{10pt}

Theorem~\ref{thm:rigidity} implies that if the initial energy is not too large, then no singularity can appear along the flow:

\begin{thm}\label{thm:smallenergy}
Let $(M,g_0)$ be a compact Riemannian $4$-manifold and let $\alpha$ be in $(0,1)$. If $Y_{g_0}>0$ and 
\begin{equation*}
\left\lbrace 
\begin{aligned}[l]
\alpha&\leq \frac{4}{13}\\
\mathcal F^\alpha(g_0)&< 2\alpha\pi^2\chi(M)
\end{aligned}\right.
\text{\qquad or \qquad}
\left\lbrace 
\begin{aligned}[l]
\alpha&\geq \frac{4}{13}\\
\mathcal F^\alpha(g_0)&< \frac{8}{9}(1-\alpha)\pi^2\chi(M),
\end{aligned}\right.
\end{equation*}
then the solution of $E_4^{\alpha}(g_0)$ exists for all time and there exists a sequence $(t_i)$ such that $g_{t_i}$ converges in the $C^\infty$ topology to the sphere or the real projective space.
\end{thm}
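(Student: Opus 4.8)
The plan is to combine the trichotomy of Theorem~\ref{thm:4dimSingularity} with the rigidity result Theorem~\ref{thm:rigidity}, so that the hypotheses on $\alpha$ and $\mathcal F^\alpha(g_0)$ simultaneously (a) verify the running assumption $\mathcal F^\alpha(g_0)\leq (1-\alpha)8\pi^2\chi(M)$ needed to invoke Theorem~\ref{thm:4dimSingularity}, (b) propagate a positive lower bound on $Y_{g_t}$ and an $L^2$ bound on curvature along the flow (these are built into Theorems~\ref{thm:YamabeSingularity}--\ref{thm:4dimSingularity} in dimension four once $Y_{g_0}>0$ and the pinching holds), and (c) rule out case~2 of Theorem~\ref{thm:4dimSingularity} by showing the putative singularity model would contradict Theorem~\ref{thm:rigidity}. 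Once case~2 is excluded we are in case~1: the flow exists for all time, does not collapse, has uniform $C^k$ curvature bounds, and $g_{t_i}\to g_\infty$ smoothly with $g_\infty$ critical for $\mathcal F^\alpha$. A final application of Theorem~\ref{thm:rigidity} to the \emph{compact} limit $g_\infty$ (whose Yamabe constant is positive and whose energy is still bounded by the pinching constant, by lower semicontinuity of $\mathcal F^\alpha$ under $C^\infty$ convergence and monotonicity of $\mathcal F^\alpha$ along its gradient flow) forces $g_\infty$ to have constant sectional curvature, hence to be the round sphere or $\mathbb{R}P^4$.

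The heart of the argument is the bookkeeping that turns the two explicit numerical conditions on $(\alpha,\mathcal F^\alpha(g_0))$ into the hypothesis $\mathcal F_W+\tfrac14\mathcal F_{\rst}<\tfrac{3}{16}Y^2$ of Theorem~\ref{thm:rigidity}, applied \emph{both} to the singularity models of case~2 and to the limit metric $g_\infty$ of case~1. For the singularity models, one uses that $\mathcal F^\alpha$ is non-increasing along $E_4^\alpha$ (it is $-2\nabla\mathcal F^\alpha$), that the blow-up rescaling $\|Rm_{g_{t_i}}\|_\infty g_{t_i}$ preserves the scale-invariant quantities $\mathcal F_W$ and $\mathcal F_{\rst}$ in dimension four, and that the Yamabe invariant is also scale-invariant and bounded below by $Y_0>0$; passing to the $C^\infty$-limit, Fatou-type lower semicontinuity gives $\mathcal F_W(g_\infty)+\tfrac14\mathcal F_{\rst}(g_\infty)\leq$ (the relevant fraction of the initial energy), while $Y_{g_\infty}\geq Y_0$. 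One then needs the chain of inequalities relating $\mathcal F^\alpha(g_0)$, $\chi(M)$, and $Y_{g_0}^2$: by the Gauss--Bonnet identity $\mathcal F_W-\tfrac12\mathcal F_{\rst}+\tfrac1{24}\mathcal F_R=8\pi^2\chi(M)$ together with the sharp inequality $\mathcal F_R(g)\geq \tfrac{2}{3}\, Y_g^2$ on manifolds with $Y_g>0$ (the Yamabe-minimizing metric in the conformal class has constant scalar curvature, and in dimension four $\mathcal F_R=\tfrac{2}{3}Y^2$ for that metric, with $\geq$ in general since the Yamabe metric minimizes $\int R^2$ among conformal metrics of unit volume after normalization — here one must be careful and cite the precise statement used by Streets). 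Combining these, $8\pi^2\chi(M)\leq \mathcal F_W+\tfrac1{24}\mathcal F_R - \tfrac12\mathcal F_{\rst}+\tfrac12\mathcal F_{\rst}$, etc., lets one bound $\chi(M)$ from above in terms of $\mathcal F_W$, $\mathcal F_{\rst}$ and $Y_g^2$; feeding this back into the two case conditions $\mathcal F^\alpha(g_0)<2\alpha\pi^2\chi(M)$ (for $\alpha\leq 4/13$) and $\mathcal F^\alpha(g_0)<\tfrac89(1-\alpha)\pi^2\chi(M)$ (for $\alpha\geq 4/13$) produces exactly $\mathcal F_W+\tfrac14\mathcal F_{\rst}<\tfrac{3}{16}Y^2$. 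The threshold $\alpha=4/13$ and the constants $2$ and $8/9$ should drop out of optimizing which of the two estimates $\mathcal F_W\leq \tfrac1{1-\alpha}\mathcal F^\alpha$ and $\tfrac12\mathcal F_{\rst}\leq \tfrac1\alpha\mathcal F^\alpha$ is binding.

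The main obstacle I expect is precisely this last extremal computation: one has several competing linear inequalities among $\mathcal F_W$, $\mathcal F_{\rst}$, $\mathcal F_R$ (or $\mathcal F^\alpha$), $\chi(M)$ and $Y^2$, and it is not a priori obvious that the two stated conditions are the \emph{sharp} consequence guaranteeing the rigidity hypothesis; getting the constants $4/13$, $2\pi^2$, $\tfrac89\pi^2$ exactly right requires careful tracking of where each bound is tight (the worst case for $\mathcal F_W$ versus the worst case for $\mathcal F_{\rst}$, and the equality case of $\mathcal F_R\geq\tfrac23 Y^2$). A secondary technical point is justifying that $g_\infty$ in case~1 is genuinely compact and that the energy and Yamabe bounds pass to the limit — compactness follows from non-collapsing plus the uniform $C^k$ curvature bounds (so $M_\infty=M$ with no loss of volume, since $\chi$ is a diffeomorphism invariant and the limit is a smooth metric on $M$), and the energy/Yamabe bounds pass by smooth convergence. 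Everything else is an assembly of Theorems~\ref{thm:YamabeSingularity}, \ref{thm:zoomin}, \ref{thm:4dimSingularity} and \ref{thm:rigidity}, which I would cite verbatim.
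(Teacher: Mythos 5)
Your overall strategy coincides with the paper's: invoke Theorem~\ref{thm:4dimSingularity}, rule out the blow-up alternative by applying Theorem~\ref{thm:rigidity} (with $\alpha=0$) to the non-flat, Bach-flat, scalar-flat singularity model, and apply Theorem~\ref{thm:rigidity} once more to the critical limit metric in the global-existence case, using scale invariance of $\mathcal F_W$, $\mathcal F_{\rst}$, $Y$, lower semicontinuity of the energies and of $Y$ under pointed convergence (Lemma~\ref{lem:YamabeLimit}), and the $L^2$ bound \eqref{eq:R2bound} on the scalar curvature. However, there is a genuine gap precisely at the step you flag as the main obstacle: converting the hypotheses on $(\alpha,\mathcal F^\alpha(g_0))$ into the rigidity pinching $\mathcal F_W+\frac14\mathcal F_{\rst}<\frac3{16}Y^2$. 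The inequality you invoke, $\mathcal F_R(g)\ge\tfrac23 Y_g^2$ with ``$\mathcal F_R=\tfrac23Y^2$ for the Yamabe metric'', is misstated (for a Yamabe metric in dimension four one has $Y^2=\tfrac1{36}\mathcal F_R$) and, more importantly, it runs the wrong way: it bounds $Y^2$ from \emph{above} by a curvature integral, whereas the argument needs a \emph{lower} bound on $Y_{g_t}^2$ in terms of the evolving energies. The correct tool is Gursky's inequality (Lemma~\ref{lem:StreetsYamabeLemma} with $\alpha=0$), $Y_g^2\ge\tfrac23\bigl(8\pi^2\chi(M)-\mathcal F_W(g)\bigr)$, obtained by evaluating at the Yamabe metric, discarding $\mathcal F_{\rst}\ge0$ in the Gauss--Bonnet identity, and transferring back to $g$ by conformal invariance of $Y$ and of $\mathcal F_2=8\pi^2\chi(M)-\mathcal F_W$; applying Gauss--Bonnet directly to $g_t$ as in your sketch does not produce this. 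Combined with the elementary optimization between $\mathcal F_W\le\frac1{1-\alpha}\mathcal F^\alpha$ and $\mathcal F_{\rst}\le\frac2{\alpha}\mathcal F^\alpha$ (this is Proposition~\ref{prop:equiBounds}; the threshold $\alpha=\frac4{13}$ is where $\frac4{13\alpha}=\frac9{13(1-\alpha)}$), and with monotonicity of $\mathcal F^\alpha$ along its gradient flow, the two stated conditions yield $\mathcal F_W(g_t)+\frac14\mathcal F_{\rst}(g_t)\le\frac3{16}Y_{g_t}^2-\epsilon$ for all $t$; your proposal leaves this computation undone, and the route you indicate would not deliver it.

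A secondary point you should address explicitly: Theorem~\ref{thm:rigidity} requires a \emph{strict} inequality, while Fatou-type lower semicontinuity of the energies and $Y_{g_\infty}\ge\varlimsup Y_{g_i}$ only preserve non-strict inequalities in the limit. This is why the uniform margin $\epsilon>0$ above (extracted from the strictness of the hypotheses, as in Proposition~\ref{prop:equiBounds}) must be carried through the blow-up and through the convergence $g_{t_i}\to g_\infty$; without it the pinching could a priori degenerate to equality on the limit manifold and the rigidity theorem could not be applied. With these two repairs your outline matches the paper's proof.
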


Taking $\alpha=\frac{4}{13}$ in the previous theorem, we obtain the following result:
\begin{cor}\label{cor:pinching}
Let $(M,g_0)$ be a compact Riemannian $4$-manifold. If $Y_{g_0}>0$ and 
\begin{equation*}
\mathcal F_W(g_0)+\frac{2}{9}\mathcal F_{\rst}(g_0)<\frac{8}{9}\pi^2\chi(M),
\end{equation*}
then $M$ is diffeomorphic to $S^4$ or $\mathbb{R}P^4$.
\end{cor}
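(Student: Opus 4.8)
The plan is to deduce Corollary~\ref{cor:pinching} directly from Theorem~\ref{thm:smallenergy} by specializing to $\alpha=\frac{4}{13}$, which is precisely the value at which the two regimes of that theorem meet. First I would observe that for $\alpha=\frac{4}{13}$ both alternatives of Theorem~\ref{thm:smallenergy} impose exactly the same bound on the initial energy: indeed $2\alpha\pi^2\chi(M)=\frac{8}{13}\pi^2\chi(M)$ and $\frac{8}{9}(1-\alpha)\pi^2\chi(M)=\frac{8}{9}\cdot\frac{9}{13}\pi^2\chi(M)=\frac{8}{13}\pi^2\chi(M)$ agree (the two affine functions $\alpha\mapsto 2\alpha$ and $\alpha\mapsto\frac{8}{9}(1-\alpha)$ cross at $\alpha=\frac{4}{13}$, which is also why this choice yields the largest pinching constant).

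Next I would translate the energy condition into the stated pinching. Since $\mathcal F^\alpha=(1-\alpha)\mathcal F_W+\frac{\alpha}{2}\mathcal F_{\rst}$, for $\alpha=\frac{4}{13}$ we have
\[
\mathcal F^{4/13}(g_0)=\frac{9}{13}\mathcal F_W(g_0)+\frac{2}{13}\mathcal F_{\rst}(g_0)=\frac{9}{13}\Bigl(\mathcal F_W(g_0)+\tfrac{2}{9}\mathcal F_{\rst}(g_0)\Bigr),
\]
so $\mathcal F^{4/13}(g_0)<\frac{8}{13}\pi^2\chi(M)$ holds if and only if $\mathcal F_W(g_0)+\frac{2}{9}\mathcal F_{\rst}(g_0)<\frac{8}{9}\pi^2\chi(M)$, which is exactly the hypothesis of the corollary. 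Together with $Y_{g_0}>0$ and the fact that $\frac{4}{13}\in(0,1)$, all hypotheses of Theorem~\ref{thm:smallenergy} are met.

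Finally I would invoke Theorem~\ref{thm:smallenergy}: the solution of $E_4^{4/13}(g_0)$ exists for all time and a subsequence $g_{t_i}$ converges in the $C^\infty$ topology to the round sphere or the real projective space. Since $C^\infty$ (Cheeger--Gromov) convergence of metrics on the fixed manifold $M$ to a metric on $S^4$ (resp. $\mathbb{R}P^4$) produces diffeomorphisms $M\to S^4$ (resp. $M\to\mathbb{R}P^4$) for $i$ large, it follows that $M$ is diffeomorphic to $S^4$ or $\mathbb{R}P^4$. There is essentially no obstacle here beyond bookkeeping; the only point requiring care is verifying that $\alpha=\frac{4}{13}$ is indeed the optimal threshold and that the arithmetic of the pinching translation is exact. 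One may also note in passing that, via the Gauss--Bonnet formula, the condition $\mathcal F_W(g_0)+\frac{2}{9}\mathcal F_{\rst}(g_0)<\frac{8}{9}\pi^2\chi(M)$ is strictly stronger than the Chang--Gursky--Yang hypothesis $\mathcal F_W(g_0)<4\pi^2\chi(M)$ of Theorem~\ref{thm:CGY}, in accordance with the ``slightly stronger assumption'' announced in the introduction.
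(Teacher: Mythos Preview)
Your proof is correct and follows exactly the paper's approach: the paper simply states ``Taking $\alpha=\frac{4}{13}$ in the previous theorem, we obtain the following result,'' and you have spelled out the arithmetic verifying that the pinching condition $\mathcal F_W(g_0)+\frac{2}{9}\mathcal F_{\rst}(g_0)<\frac{8}{9}\pi^2\chi(M)$ is equivalent to $\mathcal F^{4/13}(g_0)<\frac{8}{13}\pi^2\chi(M)$, which is the common value of the two thresholds in Theorem~\ref{thm:smallenergy} at $\alpha=\frac{4}{13}$. Your additional remarks on why $\alpha=\frac{4}{13}$ is optimal and how the diffeomorphism conclusion follows from $C^\infty$ convergence are accurate and make the argument more self-contained than the paper's one-line indication.
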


According to the Gauss-Bonnet formula, the pinching assumption is equivalent to:
\begin{equation*}
\mathcal F_W(g_0)+\frac{5}{16}\mathcal F_{\rst}(g_0)<\frac{1}{8\times 24}\mathcal F_R(g_0).
\end{equation*}

Corollary~\ref{cor:pinching} can also be reformulated in the following way, which has the nice property of being conformally invariant: 
\begin{cor}\label{cor:confPinching}
Let $(M,g_0)$ be a compact Riemannian $4$-manifold. If $Y_{g_0}>0$ and 
\begin{equation*}
\mathcal F_W(g_0)+\frac{6}{13}Y_{g_0}^2<\frac{40}{13}\pi^2\chi(M),
\end{equation*}
then $M$ is diffeomorphic to $S^4$ or $\mathbb{R}P^4$.
\end{cor}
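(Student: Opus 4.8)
The plan is to deduce this from Corollary~\ref{cor:pinching} by choosing a suitable representative of the conformal class $[g_0]$. The key observation is that every quantity in the hypothesis is conformally or topologically invariant: $\mathcal F_W(g)$ is a conformal invariant in dimension four (as recalled in the introduction), $Y_g$ depends only on $[g]$ by definition, and $\chi(M)$ is a topological invariant. So the inequality $\mathcal F_W(g_0)+\frac{6}{13}Y_{g_0}^2<\frac{40}{13}\pi^2\chi(M)$ is really a condition on the conformal class $[g_0]$.

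First I would pass to a Yamabe metric. By the resolution of the Yamabe problem there is $\tilde g\in[g_0]$ of constant scalar curvature realizing the Yamabe invariant, so that $Y_{\tilde g}=Y_{g_0}>0$; in particular $R_{\tilde g}$ is a positive constant. Evaluating the Yamabe quotient of $\tilde g$ on the constant function (with $\frac{n-2}{4(n-1)}=\frac16$ and $\frac{2n}{n-2}=4$ in dimension four) gives $Y_{\tilde g}=\frac16 R_{\tilde g}\,Vol_{\tilde g}(M)^{1/2}$, hence the \emph{exact} identity $Y_{g_0}^2=\frac{1}{36}\mathcal F_R(\tilde g)$. Combining this with $\mathcal F_W(\tilde g)=\mathcal F_W(g_0)$ and the Gauss-Bonnet relation $\mathcal F_W(\tilde g)-\frac{1}{2}\mathcal F_{\rst}(\tilde g)+\frac{1}{24}\mathcal F_R(\tilde g)=8\pi^2\chi(M)$ to eliminate $\pi^2\chi(M)$, a short purely arithmetical computation shows that our hypothesis is equivalent to
\[
\mathcal F_W(\tilde g)+\frac{5}{16}\mathcal F_{\rst}(\tilde g)<\frac{1}{8\times 24}\mathcal F_R(\tilde g),
\]
which is exactly the pinching hypothesis of Corollary~\ref{cor:pinching} (in its Gauss-Bonnet form) for the metric $\tilde g$.

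Finally, since $Y_{\tilde g}>0$ and $\tilde g$ is a compact Riemannian $4$-manifold satisfying this pinching, Corollary~\ref{cor:pinching} applied to $\tilde g$ yields that $M$ is diffeomorphic to $S^4$ or $\mathbb{R}P^4$, which is the assertion. The only non-elementary input is the existence of the Yamabe minimizer: it is precisely the exactness of $Y_{g_0}^2=\frac{1}{36}\mathcal F_R(\tilde g)$ (as opposed to the mere inequality $Y_{g_0}^2\le\frac{1}{36}\mathcal F_R(g)$ valid for any constant-scalar-curvature representative) that makes the equivalence of the two pinching conditions run in the direction needed here; everything else is bookkeeping with the Gauss-Bonnet formula already displayed in the introduction, so I do not expect a genuine obstacle.
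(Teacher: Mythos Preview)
Your proof is correct and follows essentially the same approach as the paper: pass to a Yamabe metric using conformal invariance of the hypotheses, use the identity $Y_{g_0}^2=\frac{1}{36}\mathcal F_R(\tilde g)$, and reduce via Gauss--Bonnet to the pinching hypothesis of Corollary~\ref{cor:pinching}. The only cosmetic difference is that you land on the Gauss--Bonnet form $\mathcal F_W+\frac{5}{16}\mathcal F_{\rst}<\frac{1}{8\cdot 24}\mathcal F_R$ of that hypothesis, whereas the paper lands on the equivalent form $\frac{9}{13}\mathcal F_W+\frac{2}{13}\mathcal F_{\rst}<\frac{8}{13}\pi^2\chi(M)$.
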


\subsection{Application to 3-dimensional gradient flows}

For 3-dimensional manifolds, as $3>\frac{n}{2}$, $\mathcal F_{Rm}$ strongly controls the geometry. 
\vspace*{10pt}

For $\alpha> 0$, let define:
\begin{equation*}
\mathcal G^\alpha(g)=\mathcal F_{\overset{\circ}{Ric}} + \alpha\mathcal F_R,
\end{equation*}
and
\begin{equation*}
\left\lbrace 
\begin{aligned}[l]
\partial_t g&=-\nabla \mathcal G^\alpha(g)\\
g(0)&=g_0.
\end{aligned}\right.\qquad (E_3^\alpha(g_0))
\end{equation*}

When $\alpha$ is positive, those functionals control the $L^2$ norm of the curvature, and as they decay along their gradient flows, the curvature cannot blow up. However, we have to assume a positive lower bound on the Yamabe constant along the flow to prevent collapsing with bounded curvature. 

\begin{thm}\label{thm:3dimSingularity}
Let $(M,g_0)$ be a compact Riemannian $3$-manifold, let $\alpha> 0$, and let $g_t$, $t\in[0,T)$, be the maximal solution of $E_3^\alpha(g_0)$.

Suppose that there exists some $Y_0>0$ such that $Y_{g_t}\geq Y_0$ for all $t\in[0,T)$.

Then $g_t$ develops no singularity, and there exists a sequence $(t_i,x_i)$ with $t_i\to\infty$ such that $(M,g_{t_i},x_i)$ converges in the pointed $C^\infty$ topology to a manifold $(M_\infty,g_\infty,x_\infty)$ which is critical for $\mathcal G^\alpha$.
\end{thm}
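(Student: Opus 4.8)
The plan is to combine the general machinery developed earlier in the paper — short-time existence, Bando--Bernstein--Shi estimates, the non-collapsing criterion via the Yamabe invariant, and the compactness theorem — with the special feature of dimension three, namely that $\mathcal{F}_{Rm}$ controls the geometry because $3>\frac n2$. The first step is to verify that $E_3^\alpha(g_0)$ is an instance of the class $E_P(g_0)$: one computes $-\nabla\mathcal{G}^\alpha$ and checks, using the results of Section~\ref{sec:GradientFlows}, that it has the required form $\dv\tdv Rm_g + a\Delta R_g\,g + b\nabla^2 R_g + Rm_g* Rm_g$ with $a<\frac1{2(n-1)}=\frac14$. Since $\mathcal{G}^\alpha = \mathcal{F}_{\rst}+\alpha\mathcal{F}_R$ and $\alpha>0$, the coefficient condition should hold (this is exactly why $\alpha>0$ is imposed); Theorem~\ref{thm:existence} then gives the maximal solution $(g_t)$ on $[0,T)$.

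Next I would establish the crucial \emph{a priori} bound on the curvature. Because $\mathcal{G}^\alpha$ is the energy being flowed and it is dissipated along its own gradient flow, $\mathcal{G}^\alpha(g_t)\leq \mathcal{G}^\alpha(g_0)$ for all $t$. For $\alpha>0$ in dimension four this would only bound a conformally weighted combination, but in dimension three there is no scale invariance and $\mathcal{F}_{\rst}+\alpha\mathcal{F}_R$ is, up to the universal decomposition $\norm{Rm}^2 = \norm{W}^2 + \tfrac{2}{n-2}\normm{\rst}^2 + \tfrac{2}{n(n-1)}R^2$ together with $W\equiv 0$ in dimension three, comparable to $\mathcal{F}_{Rm}$ once one also controls $\mathcal{F}_R$ — and $\mathcal{F}_R$ is controlled by the $\alpha\mathcal{F}_R$ term. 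Hence one gets $\norm{Rm_{g_t}}_2\leq C(\alpha,g_0)$ uniformly. This is the hypothesis $\norm{Rm_{g_t}}_2\leq C$ needed to invoke Theorem~\ref{thm:zoomin}.

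Now apply Theorem~\ref{thm:YamabeSingularity} (or directly the dichotomy of Theorem~\ref{thm:zoomin}): under $Y_{g_t}\geq Y_0>0$, either the flow exists for all time, or $\varlimsup_{t\to T}\norm{Rm_{g_t}}_p=\infty$ for all $p\in(\tfrac n2,\infty]=(\tfrac32,\infty]$. But $p=2>\tfrac32$ is admissible in that range, and we have just shown $\norm{Rm_{g_t}}_2$ is bounded; so the second alternative is impossible. Therefore $T=\infty$ and no singularity develops, and moreover case (1) of Theorem~\ref{thm:zoomin} applies: the flow has uniform $C^k$ curvature bounds and does not collapse with bounded curvature. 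Theorem~\ref{thm:zoomin}(1) then yields, for any sequence $t_i\to\infty$ and suitable basepoints $x_i$, a pointed $C^\infty$-subconvergence $(M,g_{t_i},x_i)\to(M_\infty,g_\infty,x_\infty)$.

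It remains to argue that the limit $g_\infty$ is critical for $\mathcal{G}^\alpha$, i.e. $\nabla\mathcal{G}^\alpha(g_\infty)=0$. This is the one genuinely non-formal point and the place I expect the main obstacle. The standard argument: since $\mathcal{G}^\alpha(g_t)$ is monotone and bounded below (it is $\geq 0$), $\int_0^\infty\big|\tfrac{d}{dt}\mathcal{G}^\alpha(g_t)\big|\,dt<\infty$, and $\tfrac{d}{dt}\mathcal{G}^\alpha(g_t) = -\normm{\nabla\mathcal{G}^\alpha(g_t)}_{L^2}^2$ (up to the constant from the flow normalization). Hence along a suitable sequence $t_i\to\infty$ one has $\normm{\nabla\mathcal{G}^\alpha(g_{t_i})}_{L^2(g_{t_i})}\to 0$. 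Using the uniform $C^k$ bounds one upgrades this to local $C^\infty$ control and passes to the limit to conclude $\nabla\mathcal{G}^\alpha(g_\infty)=0$; one must choose the basepoints $x_i$ so that the relevant local integrals of $\normm{\nabla\mathcal{G}^\alpha}^2$ near $x_i$ also go to zero, which is possible because the total integral over $M$ does. Care is needed because $M_\infty$ may a priori be non-compact, but the $L^2$-smallness is preserved on every fixed compact set under the pointed convergence, which is enough to kill $\nabla\mathcal{G}^\alpha$ pointwise on $M_\infty$.
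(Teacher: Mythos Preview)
Your proposal is correct and follows essentially the same route as the paper: bound $\norm{Rm_{g_t}}_2$ via the monotonicity of $\mathcal G^\alpha$ and the decomposition of $Rm$ in dimension~3, invoke Theorem~\ref{thm:zoomin} to exclude the blow-up alternative since $p=2>\tfrac32$, and then use $\int_0^\infty\norm{\nabla\mathcal G^\alpha(g_t)}_2^2\,dt\leq\mathcal G^\alpha(g_0)$ to pick a sequence along which the gradient vanishes in the limit. Two small remarks: the condition $\alpha>0$ is needed to control $\mathcal F_R$ (and hence $\mathcal F_{Rm}$), not for the ellipticity bound $a<\tfrac14$, which holds for a wider range; and your caution about choosing basepoints is unnecessary, since the \emph{global} $L^2$ norm of $\nabla\mathcal G^\alpha$ tends to zero along the sequence, so it does on every compact set regardless of basepoint.
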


\section{Short-time existence}\label{sec:ShortTime}

As it is invariant by diffeomorphisms, the differential operator $P$ is not elliptic. We use the DeTurck trick to fill the $n$-dimensional subspace in the kernel of $\sigma_\xi P'_g$ induced by this geometric invariance.  

We use the notation $\T{p}{q}$ for the space of $(p,q)$-tensors. We will sometimes raise or lower indices in the following way: 
\begin{equation*}
T_{i_1\dotsc i\dotsc  i_p}=g_{ij}T_{i_1\dotsc\phantom{j}\dotsc  i_p}^{\phantom{i_1\dotsc}j},
\end{equation*}
to identify $\T{p+1}{q}$ and $\T{p}{q+1}$.

We say that a differential operator $F:\sdp\to\T{p}{q}$ is geometric if it is invariant by diffeomorphisms, i.e.  if for all metrics $g$ and all diffeomorphisms $\phi:M\to M$, 
\begin{equation*}
F(\phi^*g)=\phi^*F(g).
\end{equation*}

This is in particular the case for the curvature operators and their derivatives with respect to the Levi-Civita connection.

We recall that if $L:h \mapsto L_k(\nabla^k h) + \dotsb+L_0(h)$ is a linear differential operator of order $k$, its principal symbol $\sigma_\xi L$ is defined for all $\xi$ in $T^*M$ by:
\begin{equation*}
\sigma_\xi L(h)=L_k(\xi\otimes\dotsb\otimes\xi\otimes h).
\end{equation*}

We say that $L$ is elliptic if $\sigma_\xi L$ is an isomorphism for all $\xi\neq 0$.

We say that $L$ is strongly elliptic if $k=2k'$ and $(-1)^{k'+1}\sigma_\xi L$ is uniformly positive, i.e. if there exists $\alpha>0$ such that for all $h$,
\begin{equation*}
(-1)^{k'+1}\ps{\sigma_\xi L(h)}{h}\geq \alpha\norm{\xi}^{k}\norm{h}^2.
\end{equation*}

\begin{prop}[DeTurck trick]\label{prop:abstractexistence}
Let $(M,g_0)$ be a compact Riemannian manifold. 

Let $P:\sdp\to\sym$ and $V:\sdp\to TM$ be geometric differential operators such that $(P-L_V)'_{g_0}$ is strongly elliptic. Then
\begin{equation*}
\left\lbrace 
\begin{aligned}[l]
\partial_t g&=P(g)\\
g(0)&=g_0
\end{aligned}\right.\qquad E_P(g_0)
\end{equation*}
admits a unique maximal solution on an open interval $[0,T)$, $T$ positive.
\end{prop}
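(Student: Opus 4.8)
The plan is to combine the DeTurck trick with standard parabolic theory for quasilinear systems. First I would define the DeTurck-modified flow $\partial_t g = P(g) - L_{V(g)} g =: Q(g)$, where $L_X$ denotes the Lie derivative. By hypothesis $Q'_{g_0} = (P - L_V)'_{g_0}$ is strongly elliptic; since $Q$ is quasilinear (its top-order part depends only on $g$, not its derivatives), and the space of metrics is open in $\sym$, strong ellipticity of the linearization persists in a $C^0$-neighborhood of $g_0$, so the flow $\partial_t g = Q(g)$, $g(0)=g_0$ is a (strongly) parabolic quasilinear system near $g_0$. I would then invoke the standard existence and uniqueness theory for such systems (e.g. via $C^{k,\alpha}$ or $H^s$ a priori estimates and a fixed-point/continuation argument on a compact manifold, or directly a theorem of Eidelman–type) to obtain a unique smooth solution $\tilde g_t$ on a maximal interval $[0,\tilde T)$, $\tilde T > 0$.

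Next I would recover a solution of the original equation $E_P(g_0)$ by undoing the DeTurck gauge. The vector field $V(\tilde g_t)$ generates, via the ODE $\partial_t \varphi_t = -V(\tilde g_t)\circ\varphi_t$, $\varphi_0 = \mathrm{id}$, a smooth family of diffeomorphisms on the compact manifold $M$, defined on (a subinterval of) $[0,\tilde T)$; since $M$ is compact, the flow of this time-dependent vector field exists as long as $\tilde g_t$ does. Setting $g_t := \varphi_t^* \tilde g_t$, a direct computation gives $\partial_t g_t = \varphi_t^*\bigl(\partial_t \tilde g_t + L_{V(\tilde g_t)}\tilde g_t\bigr) = \varphi_t^* P(\tilde g_t) = P(\varphi_t^* \tilde g_t) = P(g_t)$, using that $P$ and $V$ are geometric (diffeomorphism-invariant). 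Hence $g_t$ solves $E_P(g_0)$ on the same interval.

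For uniqueness of the solution to $E_P(g_0)$ itself (not just the gauged flow), I would use the standard argument: given two solutions $g_t^{(1)}, g_t^{(2)}$ of $E_P(g_0)$, solve the DeTurck equation with $g_t^{(j)}$ as background — more precisely, solve the harmonic-map-heat-type flow $\partial_t \psi^{(j)}_t = \Delta_{g^{(j)}_t, \hat g} \psi^{(j)}_t$ (with $\hat g$ a fixed reference metric) to build diffeomorphisms pushing each $g^{(j)}$ to a solution of the strictly parabolic equation; by uniqueness for the parabolic system these coincide, and then by uniqueness for the ODE generating the diffeomorphisms the original solutions coincide. Finally, maximality and the half-open interval $[0,\tilde T)$ pass to $[0,T)$ in the obvious way (taking $T = \tilde T$), giving a unique maximal solution.

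The main obstacle is making the parabolic existence step genuinely rigorous in the fourth-order quasilinear setting: one must check that strong ellipticity of $(P-L_V)'_{g_0}$ — which is a condition on the principal symbol — indeed yields short-time existence for the full nonlinear system, which requires either citing an appropriate general theorem for quasilinear parabolic systems of order $2m$ on closed manifolds, or carrying out the linearization-plus-contraction scheme with the correct function spaces (e.g. parabolic Hölder spaces $C^{4+\alpha, 1+\alpha/4}$ or Sobolev spaces adapted to the scaling $\partial_t \sim \nabla^4$), together with the usual bootstrapping to smoothness. The geometric bookkeeping (diffeomorphism invariance, the Lie-derivative identity) and the ODE for $\varphi_t$ are routine by comparison; the uniqueness reduction also hinges on the well-posedness of the auxiliary gauge-fixing flow, which for fourth-order equations is itself a (second-order, in the harmonic map formulation) parabolic problem and causes no essential difficulty.
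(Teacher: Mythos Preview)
Your existence argument is essentially the paper's: solve the gauged equation $\partial_t \tilde g = P(\tilde g) - L_{V(\tilde g)}\tilde g$ by parabolic theory, then pull back by the flow of the DeTurck vector field. One slip: with the sign convention $\partial_t\varphi_t = -V(\tilde g_t)\circ\varphi_t$ you would get $\partial_t(\varphi_t^*\tilde g_t) = \varphi_t^*\bigl(\partial_t\tilde g_t - L_{V(\tilde g_t)}\tilde g_t\bigr)$, not $+L_V$; the correct ODE is $\partial_t\varphi_t = +V(\tilde g_t)\circ\varphi_t$, which is what the paper uses.

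The more substantive issue is your uniqueness step. The harmonic-map-heat-flow trick is tied to the \emph{specific} DeTurck vector field $V^i = g^{jk}(\Gamma^i_{jk}(g)-\Gamma^i_{jk}(\hat g))$, because the tension field of $\psi:(M,g)\to(M,\hat g)$ is exactly this $V$ transported by $\psi$. For an arbitrary geometric $V$ (as in the proposition), solving $\partial_t\psi = \Delta_{g,\hat g}\psi$ will not conjugate a solution of $E_P$ to a solution of $\partial_t g = P(g)-L_{V(g)}g$; you would land in the wrong parabolic equation. The paper avoids this entirely: given any solution $g_t$ of $E_P(g_0)$, let $\psi_t$ solve the \emph{ODE} $\partial_t\psi_t = -V(g_t)\circ\psi_t$, $\psi_0=\mathrm{Id}$. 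Because $V$ is geometric, $\psi_t^* g_t$ solves the gauged (strictly parabolic) equation, so by parabolic uniqueness $\psi_t^* g_t = \tilde g_t$; a further ODE-uniqueness check gives $\psi_t^{-1}=\varphi_t$, hence $g_t=\varphi_t^*\tilde g_t$. No auxiliary PDE is needed, and this is where the hypothesis that $V$ is geometric is actually used.
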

We will use the following lemma:
\begin{lem}\label{lem:DeTurckTrickLemma}
Let $(g_t)$ be a smooth family of metrics and let $(\phi_t)$ be a smooth family of diffeomorphisms. Then
\begin{equation*}
\partial_t(\phi_t^* g_t)=\phi_t^*(\partial_t g_t+L_{V_t}g_t),
\end{equation*}
where $V_t=\partial_t\phi_t\circ\phi_t^{-1}$.
\end{lem}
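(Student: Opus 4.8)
The plan is to reduce the statement to the standard formula for the derivative of a pullback along a time-dependent flow, namely that if $(\psi_s)$ is a smooth family of diffeomorphisms with generating vector field $W_s = \partial_s\psi_s\circ\psi_s^{-1}$, then $\partial_s(\psi_s^*T) = \psi_s^*(L_{W_s}T)$ for any tensor field $T$; in particular, at a parameter value where $\psi_s = \mathrm{id}$, the derivative is simply $L_{W}T$. I would either invoke this as a classical fact or prove it quickly in local coordinates by differentiating $(\psi_s^*T)_x = (D\psi_s)_x^{\otimes\ast}\,T_{\psi_s(x)}$ and recognizing the two resulting terms as the components of the Lie derivative.

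The first step is to set $F(s,t) = \phi_s^*g_t$, a smooth map of two variables, and write $\partial_t\big|_{t=t_0}(\phi_t^*g_t) = \partial_t F(t_0,t_0) = (\partial_s F)(t_0,t_0) + (\partial_t F)(t_0,t_0)$ by the chain rule. The second term is immediate: $(\partial_t F)(t_0,t_0) = \phi_{t_0}^*(\partial_t g_t|_{t=t_0})$, since pullback by a fixed diffeomorphism is linear and commutes with differentiating the argument in $t$.

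The second step is to handle $(\partial_s F)(t_0,t_0) = \partial_s\big|_{s=t_0}(\phi_s^*g_{t_0})$. Here I would factor $\phi_s = \psi_s\circ\phi_{t_0}$ with $\psi_s = \phi_s\circ\phi_{t_0}^{-1}$, so that $\psi_{t_0} = \mathrm{id}$ and $\phi_s^*g_{t_0} = \phi_{t_0}^*(\psi_s^*g_{t_0})$. Differentiating at $s=t_0$ and pulling the fixed diffeomorphism $\phi_{t_0}^*$ outside gives $\phi_{t_0}^*\big(\partial_s|_{s=t_0}(\psi_s^*g_{t_0})\big)$. By the flow formula above, this inner derivative equals $L_{W_{t_0}}g_{t_0}$, where $W_{t_0} = \partial_s\psi_s|_{s=t_0}\circ\psi_{t_0}^{-1} = \partial_s\psi_s|_{s=t_0}$; and $\partial_s\psi_s|_{s=t_0} = (\partial_s\phi_s|_{s=t_0})\circ\phi_{t_0}^{-1} = V_{t_0}$. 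Combining the two contributions yields $\partial_t(\phi_t^*g_t)|_{t=t_0} = \phi_{t_0}^*\big(\partial_t g_t|_{t=t_0} + L_{V_{t_0}}g_{t_0}\big)$, which is the claim since $t_0$ was arbitrary.

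The only genuinely delicate point is the justification of the time-dependent pullback identity $\partial_s(\psi_s^*T) = \psi_s^*(L_{W_s}T)$ — everything else is bookkeeping with the chain rule and naturality of pullback. I expect this to be at most a short local-coordinate computation (or a citation), so the lemma should be routine overall.
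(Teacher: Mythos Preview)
Your proposal is correct and follows essentially the same approach as the paper: split $\partial_t(\phi_t^*g_t)$ at $t_0$ via the chain rule into the $\phi_{t_0}^*(\partial_t g_t)$ term and the $\partial_t(\phi_t^*g_{t_0})$ term, then rewrite the latter using $\phi_t = (\phi_t\circ\phi_{t_0}^{-1})\circ\phi_{t_0}$ to recognize the Lie derivative along $V_{t_0}$. The paper's proof is just a three-line version of exactly this computation.
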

\begin{proof}
\begin{align*}
\partial_t(\phi_t^*g_t)_{|_{t_0}}&=\partial_t(\phi_{t_0}^*g_t)_{|_{t_0}}+\partial_t(\phi_t^*g_{t_0})_{|_{t_0}}\\
&=\phi_{t_0}^*\Bigl(\partial_t{g_t}_{|_{t_0}}\Bigr)+\phi_{t_0}^*\Bigl({\partial_t(\phi_t\circ \phi_{t_0}^{-1})^*g_{t_0}}_{|_{t_0}}\Bigr)\\
&=\phi_{t_0}^*\Bigl(\partial_t{g_t}_{|_{t_0}} + L_{{\partial_t \phi_t}_{|_{t_0}}\circ \phi_{t_0}^{-1}} g_t\Bigr).
\end{align*}
\end{proof}

\begin{proof}[Proof of the proposition]
Since $(P-L_V)'_{g_0}$ is strongly elliptic, it follows from the theory of parabolic equations that
\begin{equation*}
\left\lbrace 
\begin{aligned}[l]
\partial_t g&=P(g)-L_{V_g} g\\
g(0)&=g_0
\end{aligned}\right.\qquad (DT(g_0))
\end{equation*}
admits is a unique maximal solution $\tilde g_t$, $t\in [0,T)$ with $T>0$ (see \cite{MM10}).

Let $\phi_t$, $t\in [0,T)$ be the flow of $V(\tilde g_t)$:
\begin{equation*}
\left\lbrace 
\begin{aligned}[l]
\partial_t \phi_t&=V(\tilde g_t)\circ\phi_t,\\
\phi_0&=Id_M.
\end{aligned}\right.
\end{equation*}
Let show that:
\begin{equation*}
(g_t)_{t\in [0,T_1)}\text{ is a solution of } E_P(g_0) \quad \Longleftrightarrow\quad T_1\leq T\text{ and }\forall t\in [0,T_1)\ g_t=\phi_t^*\tilde g_t.
\end{equation*}
It will gives short-time existence and uniqueness for $E_P(g_0)$.

Let $g_t=\phi_t^*\tilde g_t$. Then $g(0)=g_0$ and by Lemma~\ref{lem:DeTurckTrickLemma}, for all $t$ in $[0,T)$,
\begin{equation*}
\partial_t g=\phi_t^*(\partial_t \tilde g+L_{V_{\tilde g}}\tilde g)=\phi_t^*P(\tilde g)=P(g).
\end{equation*}
So $g_t$ is solution of $E_P(g_0)$ on $[0,T)$.

Now, let $g_t$ be a solution of $E_P(g_0)$ on $[0,T_1)$. Let $\psi_t$, $t\in[0,T_1)$ be the flow of $-V_{g_t}$. Then $\psi_t^*g_t$ is solution of $(DT(g_0))$ on $[0,T_1)$: 
\begin{align*}
\partial_t(\psi_t^*g_t)&=\psi_t^*(\partial_t g-L_{V_g}g)\\
&=\psi_t^*P(g)-\psi_t^*L_{V_g} g)\\
&=P(\psi_t^*g)-L_{V_{\psi_t^*g}}\psi_t^*g.
\end{align*}
Therefore, $T_1\leq T$ and for all $t$ in $[0,T_1)$, $\psi_t^*g_t=\tilde g_t$.

Moreover, for all $t$ in $[0,T_1)$, $\psi_t^{-1}=\phi_t$:
\begin{equation*}
\partial_t(\psi_t^{-1})=-\psi_t^*(-V_{g_t})\circ\psi_t^{-1}=V_{\tilde g_t}\circ\psi_t^{-1},
\end{equation*}
and $\psi_0^{-1}=Id_M$, so $\psi_t^{-1}$ is the flow of $V_{\tilde g_t}$. It follows that $g_t=\phi_t^*\tilde g_t$.
\end{proof}

\vspace*{10pt}

Now, we compute the principal symbols of the operators we will deal with.\\
If $g$ is a metric and $\xi$ is in $T^*M$, let define
\begin{equation*}
R_\xi(g)=\xi\otimes\xi-\norm{\xi}^2 g.
\end{equation*}
If $g$ and $g_0$ are two metrics, let define
\begin{equation*}
(\gamma_{g,g_0})^i=\frac{1}{2} g^{\alpha \beta}(\Gamma_{\alpha \beta}^i(g)-\Gamma_{\alpha \beta}^i(g_0)).
\end{equation*}

\begin{prop}\label{prop:sigmaXi}
For all metrics $g$ and all $\xi$ in $T^*M$, we have
\begin{align*}
&\sigma_{\xi}(L_{V})'_g=\xi\otimes\sigma_\xi V'_g+\sigma_\xi V'_g\otimes\xi,\\
&\sigma_{\xi}R'_g=\ps{R_\xi}{\,\cdotp},\\
&\sigma_{\xi}(Ric - L_{\gamma_{\cdotp,g_0}})'_g=-\frac{1}{2}\norm{\xi}^2 \mathrm{Id}_{\sym},\\
&\sigma_{\xi}(\dv\D (R\, \cdotp))'_g=\ps{R_\xi}{\,\cdotp}R_\xi,\\
&\sigma_{\xi}(\dv\D Ric - L_{\Delta \gamma_{\cdotp,g_0}+\frac{1}{4}\nabla R})'_g=\frac{1}{2}\norm{\xi}^4 \mathrm{Id}_{\sym}.
\end{align*}
Where the operators $\dv$ and $\D$ are defined in section \ref{subsec:Operators} and $V:\sdp\to TM$ is any differential operator of degree at least one,
\end{prop}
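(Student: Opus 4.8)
The plan is to compute each principal symbol by isolating the top-order part of the corresponding differential operator, since the principal symbol only depends on the highest-derivative terms. I would treat the five formulas in order, each reducing to a short computation once the relevant linearization is known.

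\textbf{Linearization of scalar curvature and $R_\xi$.} First I would recall the standard first variation formula $R'_g(h) = -\Delta(\tr h) + \dv\dv h - \ps{Ric}{h}$, whose top-order (second-derivative) part is $-\Delta(\tr h) + \dv\dv h$. Replacing $\nabla$ by $\xi$ gives $\sigma_\xi R'_g(h) = -\norm{\xi}^2 \tr h + \ps{\xi\otimes\xi}{h} = \ps{\xi\otimes\xi - \norm{\xi}^2 g}{h} = \ps{R_\xi}{h}$, which is the second formula. For the first formula, $(L_V g)_{ij} = \nabla_i V_j + \nabla_j V_i$ where lowering uses $g$; linearizing in $g$ and keeping only the terms where derivatives hit $V'_g(h)$ (the terms where the Christoffel symbols are differentiated are lower order relative to $V$, which has degree $\geq 1$), the top-order part is $\nabla_i (V'_g h)_j + \nabla_j (V'_g h)_i$, so $\sigma_\xi (L_V)'_g(h) = \xi\otimes \sigma_\xi V'_g(h) + \sigma_\xi V'_g(h)\otimes\xi$.

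\textbf{The DeTurck-corrected operators.} For the third formula, the key classical fact is that $Ric'_g(h) = -\tfrac12 \Delta_L h - \tfrac12 L_{(\dv G h)^\sharp} g + \text{l.o.t.}$ where $G$ is the Bianchi/gravitation operator, or more precisely that the DeTurck vector field $\gamma_{\cdot,g_0}$ is chosen exactly so that $(Ric - L_{\gamma_{\cdot,g_0}})'_g$ has principal symbol $-\tfrac12\norm{\xi}^2\,\mathrm{Id}$. I would verify this by writing $Ric_{ij} = -\tfrac12 g^{kl}\partial_k\partial_l g_{ij} + \tfrac12(\partial_i \Gamma_j + \partial_j\Gamma_i) + \text{l.o.t.}$ where $\Gamma_i = g^{kl}\Gamma_{kl,i}$-type terms, linearizing, and observing that $(\gamma_{g,g_0})^i$ is precisely constructed so that $L_{\gamma_{\cdot,g_0}}$ cancels the $\partial_i\Gamma_j + \partial_j\Gamma_i$ piece at top order, leaving $-\tfrac12\norm{\xi}^2 h$. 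The fourth formula follows by composing: $\dv\D(R\,\cdot)$ at top order applies the second-order operator $\dv\D$ (with symbol $R_\xi\otimes R_\xi$ acting appropriately — concretely $\sigma_\xi(\dv\D)(\phi) = \ps{R_\xi}{\phi}$-type on the scalar slot, producing $R_\xi$) to $R'_g(h)$ whose symbol is $\ps{R_\xi}{h}$; chaining gives $\ps{R_\xi}{h}\,R_\xi$. I should double-check the precise form of $\dv\D$ from section \ref{subsec:Operators} to get the combinatorial factors right, but the structure is forced.

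\textbf{The fourth-order formula.} The last formula is the crucial one for the paper and the main computational obstacle. Here I would use that $\dv\D Ric$ is a fourth-order operator whose symbol, before DeTurck correction, contains a term obstructing strong ellipticity coming from the Bianchi-type degeneracy of $Ric$. Writing $\dv\D Ric$ at top order as $\dv\D$ applied to $Ric'_g(h)$, whose symbol is $-\tfrac12\norm{\xi}^2 h$ plus the Lie-derivative obstruction $\tfrac12(\xi\otimes\eta + \eta\otimes\xi)$ with $\eta$ the symbol of the relevant vector field, one sees that the correction $L_{\Delta\gamma_{\cdot,g_0} + \frac14\nabla R}$ is engineered so that its symbol $\xi\otimes(\norm{\xi}^2\eta + \tfrac14\ps{R_\xi}{h}\xi) + (\cdots)\otimes\xi$ exactly absorbs both the Lie-derivative obstruction from $Ric'$ (now multiplied by $\norm{\xi}^2$ from the outer $\dv\D$) and any residual $\xi$-tensor term, leaving a clean $+\tfrac12\norm{\xi}^4\,\mathrm{Id}_{\sym}$. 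The hardest part will be tracking the precise coefficients through the composition of the second-order operator $\dv\D$ with the second-order $Ric'_g$ and matching them against the $\Delta\gamma_{\cdot,g_0}$ and $\frac14\nabla R$ corrections; this is exactly the computation that pins down the constants $a < \frac{1}{2(n-1)}$ in the ellipticity hypothesis, so it must be done carefully, but it is a finite symbol calculation with no conceptual difficulty once the variation formulas are in hand.
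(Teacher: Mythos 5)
Your plan is correct, and it is in essence the same termwise symbol computation the paper carries out; in particular your final $+\tfrac{1}{2}\norm{\xi}^4\,\mathrm{Id}_{\sym}$ is the right answer (it is what the statement asserts and what Proposition \ref{prop:elliptic} uses; the last display of the paper's own proof has a sign slip there). The differences are in execution. For the third identity the paper does not go to local coordinates: it reads the cancellation off from its double-form variation formulas (Propositions \ref{prop:Evolution} and \ref{prop:CurvatureEvolution}), which give $\sigma_\xi Ric'_g(h)=-\tfrac12\norm{\xi}^2 h+\xi\otimes\sigma_\xi\gamma'_g(h)+\sigma_\xi\gamma'_g(h)\otimes\xi$; your coordinate verification is the classical equivalent, provided you keep track of the factor $\tfrac12$ in the paper's $\gamma_{g,g_0}$, which is what yields $-\tfrac12\norm{\xi}^2$ rather than $-\norm{\xi}^2$. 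For the fifth identity the paper's route is cleaner than your sketch: it first uses Proposition \ref{prop:deltaDR}, i.e. the contracted Bianchi identity in the form $\dv\D Ric=\Delta Ric+\tfrac12\nabla^2 R+Rm\ast Rm$, so the outer operator literally becomes the Laplacian, the $\tfrac14\nabla R$ part of the vector field visibly cancels the linearization of $\tfrac12\nabla^2R$ (both have symbol $\tfrac12\ps{R_\xi}{h}\,\xi\otimes\xi$), and what remains is $-\norm{\xi}^2\sigma_\xi(Ric-L_{\gamma_{\cdotp,g_0}})'_g=\tfrac12\norm{\xi}^4 h$. Your alternative of composing symbols directly also works, but the phrase ``multiplied by $\norm{\xi}^2$ from the outer $\dv\D$'' must not be taken literally: on symmetric $2$-tensors $(\sigma_\xi(\dv\D)\phi)_{jk}=-\norm{\xi}^2\phi_{jk}+\xi_j\xi^\alpha\phi_{\alpha k}$ (recall $\Delta=\dv\D+\D\dv+Rm\ast\,\cdotp$), and it is exactly the extra $\xi_j\xi^\alpha\phi_{\alpha k}$ piece, applied to $\sigma_\xi Ric'_g(h)$, that produces the residual $\tfrac12\ps{R_\xi}{h}\,\xi\otimes\xi$ term you allude to; if one treats the outer operator as $-\norm{\xi}^2\mathrm{Id}$ and still subtracts $L_{\frac14\nabla R}$, the result comes out wrong by that term. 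So either keep the full symbol of $\dv\D$ in the composition, or make the Bianchi reduction explicit as the paper does; with that point fixed, your argument goes through and pins down the ellipticity threshold exactly as intended.
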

\begin{proof}
We recall that the Lie derivative of a metric is given by:
\begin{equation*}
(L_{V} g)_{ij}=\nabla_i V_j + \nabla_j V_i,
\end{equation*}
then by (\ref{eq:DC7}) in Proposition \ref{prop:CommutingProposition},
\begin{equation*}
(L_V)'_g(h)_{ij}=\nabla_i V'_g(h)_j + \nabla_j V'_g(h)_i + \nabla h \ast V,
\end{equation*}
and as $V$ is of degree at least one,
\begin{equation*}
\sigma_\xi(L_V)'_g(h)_{ij}=\xi_i \sigma_\xi V'_g(h)_j + \xi_j \sigma_\xi V'_g(h)_i.
\end{equation*}

By Proposition \ref{prop:CurvatureEvolution}, 
\begin{equation*}
R'_g(h)=\dv\tdv h+\Delta\tr h-\ps{Ric}{h},
\end{equation*}
then
\begin{equation*}
\sigma_\xi R'_g(h)=\ps{\xi\otimes\xi}{h}-\norm{\xi}^2 \tr\, h=\ps{R_\xi}{h},
\end{equation*}
therefore, as $\dv\D(R_g\,g)=\Delta R_g\,g +\nabla^2 R_g$ (Proposition~\ref{prop:deltaDR}),
\begin{equation*}
\sigma_\xi (\dv\D (R\, \cdotp)'_g=\xi\otimes\xi\, \sigma_\xi R'_g - \norm{\xi}^2 \sigma_\xi R'_g\, g=\ps{R_\xi}{\,\cdotp}R_\xi.
\end{equation*}

It follows of Proposition \ref{prop:Evolution} that:
\begin{align*}
(\gamma_{\cdotp,g_0})'_g(h)^i&=\frac{1}{2}(\nabla^\alpha h_{\alpha}{}^i -\frac{1}{2}\nabla^i \tr\, h) - \frac{1}{2} h^{\alpha \beta}(\Gamma_{\alpha \beta}^i(g)-\Gamma_{\alpha \beta}^i(g_0)),\\
(\gamma_{\cdotp,g_0})'_g(h)&=-\frac{1}{2}(\dv h +\frac{1}{2}\tD \tr h) + h\ast (\Gamma(g)-\Gamma(g_0)),
\end{align*}
and by Proposition \ref{prop:CurvatureEvolution}:
\begin{equation*}
Ric'_g(h)=\frac{1}{2}(\Delta h-\D(\dv h+\frac{1}{2}\tD\tr h))-\tD(\tdv h+\frac{1}{2}\D\tr h)+h\ast Rm.
\end{equation*}
It follows that its principal symbol is
\begin{equation*}
\sigma_\xi Ric'_g(h)=-\frac{1}{2}\norm{\xi}^2 h + \xi\otimes\sigma_\xi\gamma'_g(h)+\sigma_\xi\gamma'_g(h)\otimes\xi,
\end{equation*}
so
\begin{equation*}
\sigma_\xi(Ric-L_{\gamma_{\cdotp,g_0}})'_g=-\frac{1}{2}\norm{\xi}^2 Id_{\sym}.
\end{equation*}

Finally,
\begin{align*}
\sigma_\xi (\Delta Ric)'_g(h)&=-\norm{\xi}^2\sigma_\xi Ric'_g(h)\\
&=-\frac{1}{2}\norm{\xi}^4 h +\xi\otimes(-\norm{\xi}^2\sigma_\xi\gamma'_g(h)) +(-\norm{\xi}^2\sigma_\xi\gamma'_g(h))\otimes\xi,
\end{align*}
and then, since $\dv\D Ric_g=\Delta Ric_g +\frac{1}{2}\nabla^2 R_g +Rm\ast Rm$ (Proposition~\ref{prop:deltaDR}),
\begin{equation*}
\sigma_\xi(\dv\D Ric-L_{\Delta\gamma+\frac{1}{4}\nabla R})'_g=\sigma_\xi(\Delta Ric-L_{\Delta\gamma})'_g
=-\frac{1}{2}\norm{\xi}^4 Id_{\sym}.
\end{equation*}
\end{proof}

\begin{prop}\label{prop:elliptic}
Let $P:\sdp\to\sym$ be smooth map of the form
\begin{equation*}
P(g)= \dv\tdv Rm_g +a\Delta R_g g + b \nabla^2 R_g + Rm_g* Rm_g,
\end{equation*}
and let $V:\sdp \to TM$ be defined by $V_g=-\Delta \gamma_{g,g_0}+\frac{2(b-a)-1}{4}\nabla R_g$. Then
\begin{equation*}
\sigma_{\xi} (P-L_V)'_g=-\frac{1}{2}\norm{\xi}^4 \mathrm{Id}_{\sym}+a\ps{R_\xi}{\,\cdotp}R_\xi,
\end{equation*}
and
\begin{itemize}
 \item If $a<\frac{1}{2(n-1)}$, then $P-L_V$ is strongly elliptic.
 \item If $a=\frac{1}{2(n-1)}$, then $P-L_W$ is not elliptic, for any $W:\sdp\to TM$.
 \item If $a>\frac{1}{2(n-1)}$, then $P-L_W$ is not strongly elliptic, for any $W:\sdp\to{TM}$.
\end{itemize}
\end{prop}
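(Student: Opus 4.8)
The plan is to compute $\sigma_\xi(P-L_V)'_g$ directly from the formulas in Proposition~\ref{prop:sigmaXi}, then analyze when the resulting symbol is strongly elliptic as a quadratic form on $\sym$. First I would decompose $P$ into the pieces whose symbols are already known: writing $P(g)=\dv\tdv Rm_g + a\,\Delta R_g\,g + b\,\nabla^2 R_g + Rm_g\ast Rm_g$, the lower-order term $Rm_g\ast Rm_g$ contributes nothing to the order-four symbol, and $\dv\tdv Rm_g$ agrees with $\dv\D Ric_g$ up to curvature terms of the form $Rm\ast Rm$ (via Proposition~\ref{prop:deltaDR}, or by the second Bianchi identity), so it has the same principal symbol. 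Using the identity $\dv\D(R_g g)=\Delta R_g g + \nabla^2 R_g$ from Proposition~\ref{prop:deltaDR}, the terms $a\,\Delta R_g\,g + b\,\nabla^2 R_g$ can be written as $a\,\dv\D(R_g g) + (b-a)\,\nabla^2 R_g$. Then Proposition~\ref{prop:sigmaXi} gives
\begin{equation*}
\sigma_\xi(\dv\D Ric)'_g = \tfrac{1}{2}\norm{\xi}^4\mathrm{Id}_{\sym} + \sigma_\xi(L_{\Delta\gamma_{\cdot,g_0}+\frac14\nabla R})'_g,
\end{equation*}
wait — I should be careful with signs: the operator $\dv\tdv Rm$ is $\nabla^\alpha\nabla^\beta Rm_{\alpha i\beta j}$, which differs from the analyst's positive Laplacian sign, so I expect $\sigma_\xi(\dv\tdv Rm)'_g = -\tfrac{1}{2}\norm{\xi}^4\mathrm{Id}_{\sym}$ modulo a Lie-derivative term. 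Combining, the only genuinely non-scalar contribution to the symbol comes from the $(b-a)\nabla^2 R_g$ piece together with the $a\ps{R_\xi}{\cdot}R_\xi$ piece coming from $a\,\dv\D(R_g g)$; and $\nabla^2 R_g$, being $\nabla$ of a one-form, has a symbol of the form $\xi\otimes(\text{something})$, i.e. a Lie-derivative-type term. The whole point of the choice $V_g=-\Delta\gamma_{g,g_0}+\frac{2(b-a)-1}{4}\nabla R_g$ is to absorb exactly these Lie-derivative terms (the $-\Delta\gamma$ handles the $\dv\D Ric$ and $\dv\D(R\cdot)$ gauge terms, and the $\frac{2(b-a)-1}{4}\nabla R$ is tuned to kill the residual $\nabla^2 R$ contribution), leaving
\begin{equation*}
\sigma_\xi(P-L_V)'_g = -\tfrac{1}{2}\norm{\xi}^4\mathrm{Id}_{\sym} + a\ps{R_\xi}{\cdot}R_\xi,
\end{equation*}
where $R_\xi = \xi\otimes\xi - \norm{\xi}^2 g$. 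Verifying that the coefficient $\frac{2(b-a)-1}{4}$ is the correct one to make all $\xi\otimes(\cdot)$ terms cancel is the bookkeeping step I'd carry out carefully; I expect this to be the main place errors could creep in.

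For the ellipticity analysis, I would normalize to $\norm{\xi}=1$ by scaling (the symbol is homogeneous of degree four in $\xi$). Then for $h\in\sym$,
\begin{equation*}
\pss{\sigma_\xi(P-L_V)'_g(h)}{h} = -\tfrac{1}{2}\norm{h}^2 + a\,\pss{R_\xi}{h}^2.
\end{equation*}
Strong ellipticity of this fourth-order operator (with $k'=2$, so we need $(-1)^{k'+1}\sigma_\xi L = -\sigma_\xi L$ uniformly positive) amounts to: there is $\alpha>0$ with $\tfrac12\norm{h}^2 - a\,\ps{R_\xi}{h}^2 \geq \alpha\norm{h}^2$ for all $h$. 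So I must find the operator norm of the rank-one (or rank-two, but in any case finite-rank) quadratic form $h\mapsto\ps{R_\xi}{h}$ on $\sym$. Since $R_\xi=\xi\otimes\xi-\norm{\xi}^2 g$ and $\norm{\xi}=1$, one computes $\norm{R_\xi}^2 = \ps{R_\xi}{R_\xi} = \ps{\xi\otimes\xi}{\xi\otimes\xi} - 2\norm{\xi}^2\ps{\xi\otimes\xi}{g} + \norm{\xi}^4\ps{g}{g} = 1 - 2 + n = n-1$. By Cauchy–Schwarz, $\ps{R_\xi}{h}^2 \leq (n-1)\norm{h}^2$, with equality exactly when $h$ is proportional to $R_\xi$. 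Hence:
if $a\le 0$ the form $\tfrac12\norm{h}^2 - a\ps{R_\xi}{h}^2$ is clearly $\ge\tfrac12\norm{h}^2$; if $a>0$ the worst case is $h = R_\xi/\sqrt{n-1}$, giving value $\tfrac12 - a(n-1)$, which is $>0$ iff $a<\tfrac{1}{2(n-1)}$. So $P-L_V$ is strongly elliptic precisely when $a<\tfrac{1}{2(n-1)}$, and in the boundary case $a=\tfrac{1}{2(n-1)}$ the symbol $\sigma_\xi(P-L_V)'_g$ annihilates the direction $h=R_\xi$, so $P-L_V$ fails to be elliptic.

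It remains to handle the last two bullets, where $V$ is replaced by an arbitrary $W:\sdp\to TM$. The key observation is that changing $V$ to $W$ changes the symbol only by $\sigma_\xi(L_V-L_W)'_g = \xi\otimes\eta + \eta\otimes\xi$ for some $\eta=\eta(h)\in T^*M$ depending on $h$ and $\xi$ (by the first formula in Proposition~\ref{prop:sigmaXi}, since $V'_g$ and $W'_g$ have symbols valued in $T^*M$). I would then restrict the quadratic form to the subspace $N_\xi = \{h\in\sym : h(\xi^\sharp,\cdot)=0\}$ of tensors ``orthogonal to $\xi$'' (those $h$ with $\iota_{\xi^\sharp}h=0$). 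On $N_\xi$ the added term vanishes identically: $\ps{\xi\otimes\eta+\eta\otimes\xi}{h} = 2\eta(h)(\xi^\sharp) \cdot(\text{...})$ — more precisely $\pss{\xi\otimes\eta+\eta\otimes\xi}{h}=2\,h(\xi^\sharp,\eta^\sharp)=0$ for $h\in N_\xi$. Also $R_\xi$ restricted to $N_\xi$: for $h\in N_\xi$, $\ps{R_\xi}{h} = \ps{\xi\otimes\xi}{h}-\norm{\xi}^2\tr h = h(\xi^\sharp,\xi^\sharp) - \norm{\xi}^2\tr h = -\norm{\xi}^2\tr h$. Taking $h\in N_\xi$ with $h = \pi - \tfrac{1}{n-1}(\norm{\xi}^2 g - \xi\otimes\xi)\cdot(\ldots)$ — concretely, take $h$ to be $\norm{\xi}^2 g_\perp - (n-1)^{-1}(\ldots)$, i.e. a suitable traceless-plus-trace combination within $N_\xi$ maximizing $(\tr h)^2/\norm{h}^2$; the extremal choice is $h$ proportional to the projection of $g$ onto $N_\xi$, which is $g - \norm{\xi}^{-2}\xi\otimes\xi =: g_\xi$, with $\tr g_\xi = n-1$ and $\norm{g_\xi}^2 = n-1$. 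Then on $\mathrm{span}(g_\xi)\subset N_\xi$ the form equals $-\tfrac12\norm{\xi}^4\norm{h}^2 + a\norm{\xi}^4(\tr h)^2 = \norm{\xi}^4(-\tfrac12 + a(n-1))\norm{h}^2$ (using $(\tr g_\xi)^2=(n-1)^2=(n-1)\norm{g_\xi}^2$), independent of $W$. For $a=\tfrac{1}{2(n-1)}$ this is zero, so $P-L_W$ is not elliptic; for $a>\tfrac{1}{2(n-1)}$ this direction makes $-\sigma_\xi(P-L_W)'_g$ indefinite, hence $P-L_W$ is not strongly elliptic. I expect the finicky part of the whole proof to be this last step — pinning down the right test subspace $N_\xi$ and the extremal $h$ within it so that the arbitrary-$W$ perturbation is provably irrelevant — rather than the symbol computation itself.
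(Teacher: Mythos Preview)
Your symbol computation and the analysis for $a<\tfrac{1}{2(n-1)}$ and $a>\tfrac{1}{2(n-1)}$ are essentially the paper's argument: the paper also rewrites $P$ using $\tdv Rm=-\D Ric$ and $\dv\D(Rg)=\Delta R\,g+\nabla^2 R$, reads off the symbol from Proposition~\ref{prop:sigmaXi}, computes $\norm{R_\xi}^2=(n-1)\norm{\xi}^4$, and tests against $R_\xi$. (Incidentally, your extremal element $g_\xi=g-\norm{\xi}^{-2}\xi\otimes\xi$ is exactly $-R_\xi$, which would have shortened your search considerably.)

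There is, however, a genuine gap in your treatment of the borderline case $a=\tfrac{1}{2(n-1)}$. You show that the \emph{quadratic form} $h\mapsto\pss{\sigma_\xi(P-L_W)'_g(h)}{h}$ vanishes at $h=g_\xi$ and conclude ``not elliptic''. But ellipticity is about the symbol being an isomorphism, and a linear map can have an isotropic vector for its associated bilinear form while still being invertible (think of any nonzero skew-symmetric part). You have only shown failure of \emph{strong} ellipticity, not of ellipticity.

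The fix is already latent in your own observations. You noted that for any $W$ the perturbation $\sigma_\xi(L_{V-W})'_g(h)=\xi\otimes\eta(h)+\eta(h)\otimes\xi$ satisfies $\pss{\xi\otimes\eta+\eta\otimes\xi}{k}=2k(\xi^\sharp,\eta^\sharp)=0$ for every $k\in N_\xi$; since $R_\xi\in N_\xi$, the image of $\sigma_\xi(L_{V-W})'_g$ lies in $R_\xi^\perp$. The remaining piece $-\tfrac12\norm{\xi}^4\mathrm{Id}+a\ps{R_\xi}{\cdot}R_\xi$ also maps into $R_\xi^\perp$ when $a=\tfrac{1}{2(n-1)}$, because $\pss{-\tfrac12\norm{\xi}^4 h+a\ps{R_\xi}{h}R_\xi}{R_\xi}=(-\tfrac12+a(n-1))\norm{\xi}^4\ps{h}{R_\xi}=0$. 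Hence the \emph{entire image} of $\sigma_\xi(P-L_W)'_g$ is contained in $R_\xi^\perp$, so the symbol is not surjective and $P-L_W$ is not elliptic. This is precisely the paper's route: rather than restricting the quadratic form to $N_\xi$, it checks directly that $\pss{\sigma_\xi(L_W)'_g(h)}{R_\xi}=0$ for all $h$, which is the cleaner statement you need here.
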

\begin{proof}
Since $\tdv Rm=-\D Ric$ (Proposition~\ref{prop:Bianchi}) and $\dv\D(R_g\,g)=\Delta R_g\,g+\nabla^2 R_g$ (Proposition~\ref{prop:deltaDR}), we see that: 
\begin{equation*}
P-L_V=-\dv\D Ric +L_{\Delta \gamma_{\cdotp,g_0}+\frac{1}{4}\nabla R}+a \dv\D(R\, \cdotp),
\end{equation*}
then Proposition \ref{prop:sigmaXi} shows that
\begin{equation*}
\sigma_{\xi} (P-L_V)'_g=-\frac{1}{2}\norm{\xi}^4 \mathrm{Id}_{\sym}+a\ps{R_\xi}{\,\cdotp}R_\xi.
\end{equation*}

Let compute
\begin{equation*}
\norm{R_\xi}^2=\norm{\xi}^4-2\ps{\xi\otimes\xi}{\norm{\xi}^2 g}+n\norm{\xi}^4=(n-1)\norm{\xi}^4.
\end{equation*}
Moreover, for all $W:\sdp\to TM$, the image of $\sigma_\xi (L_W)'_g$ lies in $R_\xi^\bot$:
\begin{align*}
\ps{\sigma_\xi (L_W)'_g}{R_\xi}&=\ps{\xi\otimes\sigma_\xi W'_g+\sigma_\xi W'_g\otimes\xi}{\xi\otimes\xi-\norm{\xi}^2 g}\\
&=2\norm{\xi}^2\ps{\xi}{\sigma_\xi W'_g}-2\norm{\xi}^2\ps{\xi}{\sigma_\xi W'_g}\\
&=0.
\end{align*}

\noindent{\bf If $\mathbf{a<\frac{1}{2(n-1)}}$}, then
\begin{align*}
-\ps{\sigma_{\xi} (P-L_V)'_g(h)}{h}&=\frac{1}{2}\norm{\xi}^4\norm{h}^2 -a\norm{\ps{R_\xi}{h}}^2\\
&\geq \frac{1}{2}(1-2a_+(n-1))\norm{\xi}^4\norm{h}^2,
\end{align*}
and $P-L_V$ is strongly elliptic.

\noindent{\bf If $\mathbf{a=\frac{1}{2(n-1)}}$}, then $\sigma_\xi (P-L_V)'_g$ is the orthogonal projection on $R_\xi^\bot$. In particular, 
\begin{align*}
\ps{\sigma_{\xi} (P-L_W)'_g(h)}{R_\xi}&=\ps{\sigma_{\xi} (P-L_V)'_g(h)}{R_\xi}+\ps{\sigma_{\xi} (L_{V-W})'_g(h)}{R_\xi}\\
&=\frac{1}{2}\norm{\xi}^4 \ps{R_\xi}{h} -a\norm{\xi}^2\norm{R_\xi}^2 \ps{R_\xi}{h}\\
&=0,
\end{align*}
i.e. the image of $\sigma_{\xi} (P-L_W)'_g$ is included in $R_\xi^\perp$, therefore $P-L_W$ is not elliptic.

\noindent{\bf If $\mathbf{a>\frac{1}{2(n-1)}}$}, then for $\xi\neq 0$:
\begin{align*}
-\ps{\sigma_{\xi} (P-L_W)'_g(R_\xi)}{R_\xi}&=-\ps{\sigma_{\xi} (P-L_V)'_g(R_\xi)}{R_\xi}+\ps{\sigma_{\xi} (L_{V-W})'_g(R_\xi)}{R_\xi}\\
&=\frac{1}{2}(1-2a(n-1))\norm{\xi}^4 \norm{R_\xi}^2\\
&<0.
\end{align*}
Consequently, $P-L_W$ is not strongly elliptic.
\end{proof}

\paragraph{Proof of Theorem \ref{thm:existence}}
It is an immediate consequence of Propositions \ref{prop:elliptic} and \ref{prop:abstractexistence}.\qed

\section{Gradient flows for geometric functionals}\label{sec:GradientFlows}

If  $T:\sdp \to \T{p}{q}$ is smooth, let define the functional
\begin{equation*}
\mathcal F_T(g)=\int_M \norm{T(g)}_g^2 dv_g,
\end{equation*}
and if $\mathcal F:\sdp\to\mathbb R$ is a smooth functional, let define its gradient by 
\begin{gather*}
\mathcal F'_g(h)=\psr{\nabla \mathcal F(g)}{h}.
\end{gather*}
Then the gradient flow of $\mathcal F$ starting from $g_0$ is the following evolution equation:
\begin{equation*}
\left\lbrace 
\begin{aligned}[l]
\partial_t g&=-\nabla \mathcal F(g)\\
g(0)&=g_0.
\end{aligned}\right.
\end{equation*}
As we immediately get $\partial_t \mathcal F(g_t)=-\int_M \norm{\nabla \mathcal F(g_t)}^2 dv_{g_t}$, we see that $\mathcal F$ decreases along the flow.

\vspace*{10pt}

We recall that the curvature tensor has the following orthogonal decomposition: 
\begin{equation*}
Rm_g=W_g+\frac{1}{n-2}\overset{\circ}{Ric}_g\wedge g + \frac{1}{2n(n-1)}R_g\, g\wedge g,
\end{equation*}
where $\wedge$ is the Kulkarni-Nomizu product defined for $u$ and $v$ in $\sym$ by:
\begin{equation*}
(u\wedge v)_{ijkl}=u_{ik}v_{jl}+u_{jl}v_{ik}-u_{il}v_{jk}-u_{jk}v_{il}.
\end{equation*}

It follows that
\begin{equation*}
\mathcal F_{Rm}(g)=\mathcal F_W(g)+\frac{1}{n-2}\mathcal F_{\overset{\circ}{Ric}}(g) + \frac{1}{2n(n-1)}\mathcal F_R(g).
\end{equation*}

The gradients of the quadratic curvature functionals are given by (see \cite{Bes87}, chapter 4.H):
\begin{align*}
\nabla \mathcal F_{Rm}&=-\dv\tdv Rm -\frac{1}{2}Rm\vee Rm+\frac{1}{2}\norm{Rm}^2 g,\\
\nabla (\mathcal F_{Ric}-\frac{1}{4} \mathcal F_{R})&=-\dv\tdv Rm -Ric\circ (Ric-\frac{1}{2}R\, g)-\overset{\circ}{Rm}(Ric)+\frac{1}{2}(\norm{Ric}^2-\frac{1}{4}R^2)g,\\
\nabla \mathcal F_W&=-\dv\tdv W-\frac{1}{n-2}\overset{\circ}{W}(\overset{\circ}{Ric})-\frac{1}{2}(W\vee W-\norm{W}^2 g),\\
\nabla \mathcal F_2&=-\frac{1}{2}\dv\D A + \frac{1}{2}\overset{\circ}W(\overset{\circ}{Ric})+\frac{n-4}{2(n-2)}(A\circ (Ric-\frac{1}{2}R\,g)+\sigma_2 g),\\
\nabla \mathcal F_R&=2\dv\D(R g) -2R(Ric-\frac{1}{4}R\,g),
\end{align*}
where for a $(4,0)$ curvature tensor $T$ and an endomorphism $u$ we wrote:
\begin{equation*}
(T\vee T)_{ij}=T_{\alpha\beta\gamma i} T^{\alpha\beta\gamma}{}_j \text{\quad and \quad} (\overset{\circ}T u)_{ij}=T_{\alpha i\beta j} u^{\alpha\beta}.
\end{equation*}

From the relations between the derivatives of the curvature given in Propositions \ref{prop:Bianchi} and \ref{prop:deltaDR}, it follows that for $\beta$ in $[0,1]$ and $a <\frac{1}{2(n-1)}$, the gradient flow of the functional
\begin{equation*}
(1-\beta)\mathcal F_{Rm} +  \beta(\mathcal F_{Ric}-\frac{1}{4}\mathcal F_R)-  \frac{a}{2} \mathcal F_R
\end{equation*}
is of the form $E_P$ with
\begin{equation*}
P(g)= \dv\tdv Rm_g +a\Delta R_g g +a  \nabla^2 R_g + Rm_g* Rm_g.
\end{equation*}

For $n\geq 4$, we can also write that for $\beta$ in $[0,1]$ and $\alpha>0$, the gradient flow of the functional
\begin{equation*}
\frac{n-2}{n-3}\beta\mathcal F_W-2(1-\beta)\mathcal F_2+ \frac{\alpha}{4(n-1)} \mathcal F_R
\end{equation*}
is of the form $E_P$ with
\begin{equation*}
P(g)= \dv\tdv Rm_g +\frac{1-\alpha}{2(n-1)}(\Delta R_g\, g + \nabla^2 R_g) + Rm_g* Rm_g.
\end{equation*}

In low dimensions, additional relations between the curvature tensors allow us to write it in an easier way:
 
\paragraph{In dimension $3$:}
We have
\begin{equation*}
\mathcal F_W(g)=0\text{\qquad and \qquad} \mathcal F_{Rm}=\mathcal F_{Ric}-\frac{1}{4}\mathcal F_R.
\end{equation*}
The  gradient flow of $-2\mathcal F_2 +\frac{\alpha}{8} \mathcal F_R$ is of the form $E_P$ with
\begin{equation*}
P(g)= \dv\tdv Rm_g +\frac{1-\alpha}{4}(\Delta R_g\, g + \nabla^2 R_g) + Rm_g* Rm_g.
\end{equation*}

\paragraph{In dimension $4$:}
The Gauss-Bonnet formula gives us another relation between the functionals:
\begin{equation*}
\mathcal F_{Rm}(g) - \mathcal F_{\overset{\circ}{Ric}}(g)=\mathcal F_W(g)+\mathcal F_2(g)=8\pi^2\chi(M).
\end{equation*}
It follows that
\begin{equation*}
\nabla\mathcal F_{Rm}=\nabla(\mathcal F_{Ric}-\frac{1}{4}\mathcal F_R)\text{\qquad and \qquad} \nabla\mathcal F_W(g)=-\nabla\mathcal F_2(g).
\end{equation*}

If $\alpha>0$, the gradient flow of $2\mathcal F_W +  \frac{\alpha}{12}\mathcal F_R$ is the same as the gradient flow of
$2(1-\alpha)\mathcal F_W +  \alpha\mathcal F_{\overset{\circ}{Ric}}$ and is of the form $E_P$ with
\begin{equation*}
P(g)= \dv\tdv Rm_g +\frac{1-\alpha}{6}(\Delta R_g\, g + \nabla^2 R_g) + Rm_g* Rm_g.
\end{equation*}

Moreover, since $W\vee W-\norm{W}^2 g=0$ (see \cite{Bes87}), the gradient of the Weyl functional $\mathcal F_W$, which is called the Bach tensor, takes the following shorter form:
\begin{equation*}
\nabla \mathcal F_W=-\nabla \mathcal F_2=-\dv\tdv W-\frac{1}{2}\overset{\circ}{W}(\overset{\circ}{Ric}).
\end{equation*}

\section{Rigidity results for critical metrics of $\mathcal F^\alpha$}

In this section, we prove Theorem~\ref{thm:rigidity} and a number of auxiliary results which will be necessary for the proof of Theorem~\ref{thm:smallenergy}. We begin with a rigidity result for metrics with harmonic Weyl tensor (i.e. $\dv W=0$). 
\begin{prop}\label{prop:confFlat}
Let $(M,g)$ be a complete Riemannian $4$-manifold with positive Yamabe constant, constant scalar curvature and harmonic Weyl tensor. If 
\begin{equation*}
\mathcal F_W(g)< \frac{25}{54} Y_g^2,
\end{equation*}
then $W_g=0$.
\end{prop}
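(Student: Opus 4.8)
The plan is to use a Bochner-type argument applied to the Weyl tensor, exploiting the fact that on a $4$-manifold with constant scalar curvature the second Bianchi identity turns $\dv W = 0$ into an elliptic equation for $W$. First I would recall the Weitzenböck formula for the Weyl tensor on a $4$-manifold: schematically
\begin{equation*}
\Delta \norm{W}^2 = 2\norm{\nabla W}^2 - \ps{\Delta W}{W}\cdot(\text{const}) + W\ast W\ast W + R\ast W\ast W,
\end{equation*}
and more precisely, when $\dv W = 0$ and $R$ is constant, the rough Laplacian $\Delta W$ can be expressed (via the Lichnerowicz Laplacian and the commutation formulas of Proposition~\ref{prop:CommutingProposition}) as a curvature-quadratic term plus $\frac{R}{?}\,W$. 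The key algebraic input is the sharp pointwise estimate for the cubic term $\ps{W\ast W\ast W}{W}$ in dimension four — by decomposing $W = W^+ + W^-$ into self-dual and anti-self-dual parts one gets a bound of the form $|\langle \mathring{W}(W),W\rangle| \le c\,\norm{W}^3$ with the optimal constant $c = \frac{\sqrt{6}}{6}$ (this is the standard sharp Weyl curvature estimate, see e.g. the work of Gursky–LeBrun / Hamilton). Combining these, one obtains a differential inequality of the form
\begin{equation*}
\Delta \norm{W}^2 \geq 2\norm{\nabla W}^2 - c_1\norm{W}^3 - c_2 R\,\norm{W}^2.
\end{equation*}

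Next I would integrate this against a suitable test function. Since $M$ is only complete (not necessarily compact), I would multiply by a cutoff $\eta^2$, integrate by parts, and use the refined Kato inequality for Weyl tensors satisfying $\dv W = 0$ (which gains a factor $\frac{3}{5}$, i.e. $\norm{\nabla \norm{W}}^2 \le \frac{3}{5}\norm{\nabla W}^2$) to absorb gradient terms; the hypothesis $R_g \in L^2$ together with constancy of $R$ controls the lower-order curvature term. This produces an integral inequality
\begin{equation*}
\int_M \norm{\nabla \norm{W}}^2\,dv_g \leq c_3 \int_M \norm{W}^3\,dv_g + (\text{boundary/cutoff terms} \to 0).
\end{equation*}
Then I would invoke the Yamabe/Sobolev inequality: positivity of $Y_g$ gives, for $u = \norm{W}$,
\begin{equation*}
Y_g\Bigl(\int_M u^4\,dv_g\Bigr)^{1/2} \leq \int_M \norm{\nabla u}^2 + \tfrac{1}{6}R\,u^2\,dv_g,
\end{equation*}
so that $\int u^3 \le \bigl(\int u^4\bigr)^{1/2}\cdot\bigl(\int u^2\bigr)^{1/2}$ and $\int u^4 = \mathcal F_W^{?}$... more carefully, $\int_M \norm{W}^3 \le \bigl(\int_M \norm{W}^4\bigr)^{1/2}\mathcal F_W(g)^{1/2}$ is the wrong pairing; instead I'd use $\int u^3 \le (\int u^2)^{1/2}(\int u^4)^{1/2}$ and bound $\int u^2 = \mathcal F_W(g)$, then feed the $L^4$ norm through Sobolev. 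Chaining the inequalities and collecting constants, the cubic term gets multiplied by a factor proportional to $\mathcal F_W(g)^{1/2}/Y_g$; the hypothesis $\mathcal F_W(g) < \frac{25}{54}Y_g^2$ is exactly what makes the coefficient of $\int \norm{W}^4$ on the right strictly smaller than the coefficient available on the left (coming from the $\frac{3}{5}$ Kato factor and the Sobolev/Yamabe term), forcing $\int_M \norm{W}^4\,dv_g = 0$, hence $W \equiv 0$.

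The main obstacle I anticipate is bookkeeping the sharp constants so that $\frac{25}{54}$ comes out exactly: one must combine (i) the optimal pointwise cubic Weyl estimate, (ii) the refined Kato constant $\frac{3}{5}$ valid under $\dv W = 0$, and (iii) the Yamabe–Sobolev constant, and verify that the product of these gives precisely $\frac{25}{54}$ as the threshold. A secondary technical point is justifying the cutoff argument in the complete noncompact case — showing that the boundary terms from $\eta$ vanish in the limit requires the a priori knowledge that $\norm{W} \in L^2$ (which follows from $\mathcal F_W(g) < \infty$, implicit in the hypothesis) and $R \in L^2$, plus a Moser-iteration or a direct argument that $\norm{W}$ has enough decay; I would handle this with the standard logarithmic cutoff $\eta$ supported on $B(2r)\setminus B(r)$ and let $r \to \infty$, using $\int_{B(2r)\setminus B(r)}\norm{W}^2 \to 0$.
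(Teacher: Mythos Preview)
Your approach is essentially the same as the paper's: Weitzenb\"ock formula for $W$ under $\dv W=0$, refined Kato with constant $\frac{3}{5}$, the Yamabe--Sobolev inequality applied to $u=\norm{W}$, H\"older in the form $\int\norm{W}^3\le\norm{W}_2\norm{W}_4^2$, and a cutoff for the noncompact case. This is correct in outline, and the three sharp constants you list do combine to give the threshold $\tfrac{25}{54}$, via $\bigl(\tfrac{5}{3}\bigr)^2/6$.

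There is one genuine error in your schematic that would prevent you from closing the argument with the sharp constant: the scalar-curvature term in the Weitzenb\"ock inequality carries the \emph{opposite} sign. The correct inequality (Derdzi\'nski) is
\begin{equation*}
\tfrac{1}{2}\Delta\norm{W}^2 \;\geq\; \norm{\nabla W}^2 + \tfrac{1}{2}R\,\norm{W}^2 - \sqrt{6}\,\norm{W}^3,
\end{equation*}
so $+\tfrac{1}{2}R\norm{W}^2$ is a \emph{favorable} term, not one to be absorbed. After applying Kato ($\norm{\nabla W}^2\ge\tfrac{5}{3}\norm{\nabla\norm{W}}^2$) you split this as $\tfrac{5}{3}\cdot\tfrac{1}{6}R\norm{W}^2$ (which combines with the gradient term to form exactly the Yamabe functional of $\norm{W}$) plus a nonnegative remainder $\tfrac{2}{9}R\norm{W}^2$ that you simply drop, using that $R$ is a nonnegative constant when $Y_g>0$. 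With your sign convention you would instead be fighting this term and would not recover $\tfrac{25}{54}$.

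Two minor points. The cubic constant with this normalization is $\sqrt{6}$ (it comes from $18\det W^\pm\le\sqrt{6}\,\norm{W^\pm}^3$), not $\tfrac{\sqrt{6}}{6}$. And the noncompact case needs neither logarithmic cutoffs nor Moser iteration: a standard cutoff $\phi$ with $\norm{\nabla\phi}_\infty\le 2/r$ on $B_x(2r)\setminus B_x(r)$ already gives error terms of size $O(r^{-2})\norm{W}_2^2$, which go to zero as $r\to\infty$ since $\mathcal F_W(g)<\infty$ by hypothesis.
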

Note that in the compact case, sharper results have been obtained by Gursky in \cite{Gur00}. We will use the following Lemmas:

\begin{lem}[Weitzenböck formula, Derdzi\'{n}ski (\cite{Der83})]\label{lem:weitzenbock}
Let $(M,g)$ be a complete Riemannian $4$-manifold with harmonic Weyl tensor. Then
\begin{equation*}
\frac{1}{2}\Delta\norm{W}^2\geq \norm{\nabla W}^2+\frac{1}{2} R \norm{W}^2 - \sqrt{6} \norm{W}^3.
\end{equation*}
\end{lem}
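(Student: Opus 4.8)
The plan is to exploit the special structure of the Weyl tensor in dimension four. On an oriented Riemannian $4$-manifold the Hodge star is an involution of $\Lambda^2$, which therefore splits as $\Lambda^2=\Lambda^+\oplus\Lambda^-$ into its $\pm1$-eigenbundles, each of rank $3$. Viewed as a symmetric endomorphism of $\Lambda^2$, the Weyl tensor preserves this splitting and is trace-free on each factor (the off-diagonal part being the traceless Ricci tensor), so $W=W^+\oplus W^-$ with $W^\pm\in S^2_0(\Lambda^\pm)$, and for the induced norms $\norm{W}^2=\norm{W^+}^2+\norm{W^-}^2$ with the conventions of Section~\ref{subsec:Operators}. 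Since $x^{3/2}+y^{3/2}\leq(x+y)^{3/2}$ for $x,y\geq 0$, it is enough to establish, for each choice of sign,
\begin{equation*}
\tfrac12\Delta\norm{W^\pm}^2\geq\norm{\nabla W^\pm}^2+\tfrac12 R\,\norm{W^\pm}^2-\sqrt6\,\norm{W^\pm}^3,
\end{equation*}
and then add the two inequalities.

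First I would translate the hypothesis $\dv W=0$. The contracted second Bianchi identity expresses $\dv W$ in terms of the Cotton tensor, and in dimension four a short computation shows that $\dv W=0$ forces each of $W^+$ and $W^-$, regarded as a $2$-form with values in $\Lambda^\pm$, to be simultaneously closed and co-closed, i.e.\ harmonic in the relevant elliptic sense. The Bochner--Weitzenböck identity for the bundle $S^2_0(\Lambda^\pm)$ then applies and reads $\nabla^*\nabla W^\pm=q(\mathcal R)\,W^\pm$, where $q(\mathcal R)$ denotes the associated Weitzenböck curvature endomorphism, so that
\begin{equation*}
\tfrac12\Delta\norm{W^\pm}^2=\norm{\nabla W^\pm}^2+\ps{q(\mathcal R)\,W^\pm}{W^\pm}.
\end{equation*}

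Next I would compute $q(\mathcal R)W^\pm$ explicitly. Because $W^\pm$ lies in the Weyl part of the curvature bundle, the Ricci contributions cancel and $q(\mathcal R)$ decomposes into a scalar-curvature piece, equal to $\tfrac12 R\,W^\pm$, together with a piece quadratic in $W^\pm$; contracting the latter against $W^\pm$ produces a fixed universal multiple of $\mathrm{tr}\bigl((W^\pm)^3\bigr)$, which in the normalization of the statement equals $6\,\mathrm{tr}\bigl((W^\pm)^3\bigr)$. This gives
\begin{equation*}
\tfrac12\Delta\norm{W^\pm}^2=\norm{\nabla W^\pm}^2+\tfrac12 R\,\norm{W^\pm}^2-6\,\mathrm{tr}\bigl((W^\pm)^3\bigr).
\end{equation*}
The last ingredient is an elementary eigenvalue estimate: if $A$ is a symmetric trace-free endomorphism of a $3$-dimensional inner-product space with eigenvalues $a,b,c$, then $a+b+c=0$ yields $\mathrm{tr}(A^3)=3abc$, and maximizing $|abc|$ over the unit sphere of the hyperplane $\{a+b+c=0\}$ — the extremum occurs when two eigenvalues coincide, where one finds $|abc|=\tfrac1{3\sqrt6}$ — gives $\bigl|\mathrm{tr}(A^3)\bigr|\leq\tfrac1{\sqrt6}\norm{A}^3$. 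Applied with $A=W^\pm$ this bounds the cubic term by $6\cdot\tfrac1{\sqrt6}\norm{W^\pm}^3=\sqrt6\,\norm{W^\pm}^3$, which is exactly the claimed coefficient; summing over the two signs finishes the proof.

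The principal difficulty is bookkeeping rather than conceptual: one must pin down the Weitzenböck curvature endomorphism $q(\mathcal R)$ on $S^2_0(\Lambda^\pm)$ and track the numerical constants carefully enough that the sharp eigenvalue bound lands precisely on the coefficient $\sqrt6$; verifying that $\dv W=0$ implies harmonicity of $W^\pm$ is classical but likewise demands some care with the $\Lambda^+/\Lambda^-$ bookkeeping.
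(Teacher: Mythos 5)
Your proposal is correct and takes essentially the same route as the paper: the self-dual/anti-self-dual splitting $W=W^++W^-$, the Weitzenböck identity $\tfrac12\Delta\norm{W^\pm}^2=\norm{\nabla W^\pm}^2+\tfrac12 R\norm{W^\pm}^2-18\det W^\pm$ (which the paper simply quotes from Besse 16.73 rather than re-deriving via the Bochner argument you outline, noting $18\det W^\pm=6\,\mathrm{tr}\bigl((W^\pm)^3\bigr)$), and the sharp constraint maximization $\norm{\lambda_1\lambda_2\lambda_3}\leq\tfrac1{3\sqrt6}$. Your recombination via $x^{3/2}+y^{3/2}\leq(x+y)^{3/2}$ is equivalent to the paper's bound $\norm{W^\pm}^3\leq\norm{W}\norm{W^\pm}^2$, and like the paper you should just note explicitly that the splitting is parallel so that $\norm{\nabla W^+}^2+\norm{\nabla W^-}^2=\norm{\nabla W}^2$.
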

\begin{proof}
Let choose a local orientation. Then the Weyl tensor can be written as $W=W^++W^-$ according to the splitting $\Lambda^2(TM)=\Lambda_+^2(TM)\oplus\Lambda_-^2(TM)$.

Then the following Weitzenböck formula holds (see \cite{Bes87}, 16.73):
\begin{equation*}
\frac{1}{2}\Delta\norm{W^{\pm}}^2= \norm{\nabla W^{\pm}}^2+\frac{1}{2} R \norm{W^{\pm}}^2-18\det W^{\pm}.
\end{equation*}
And since the maximum of $\lambda_1\lambda_2\lambda_3$ under the constraints 
\begin{equation*}
\left\lbrace 
\begin{aligned}[l]
&\lambda_1+\lambda_2+\lambda_3=0 \\
&\lambda_1^2+\lambda_2^2+\lambda_3^2=1
\end{aligned}\right.
\end{equation*}
is $\frac{1}{3\sqrt{6}}$, we obtain:
\begin{align*}
\frac{1}{2}\Delta\norm{W^{\pm}}^2&\geq \norm{\nabla W^{\pm}}^2+\frac{1}{2} R \norm{W^{\pm}}^2 - \sqrt{6} \norm{W^{\pm}}^3\\
&\geq \norm{\nabla W^{\pm}}^2+\frac{1}{2} R \norm{W^{\pm}}^2 - \sqrt{6} \norm{W} \norm{W^{\pm}}^2.
\end{align*}
Adding the two inequalities, we get:
\begin{equation*}
\frac{1}{2}\Delta\norm{W}^2\geq \norm{\nabla W^+}^2+\norm{\nabla W^-}^2+\frac{1}{2} R \norm{W}^2 - \sqrt{6} \norm{W}^3.
\end{equation*}
And since the splitting $\Lambda^2(TM)=\Lambda_+^2(TM)\oplus\Lambda_-^2(TM)$ is parallel:
\begin{equation*}
\frac{1}{2}\Delta\norm{W}^2\geq \norm{\nabla W}^2+\frac{1}{2} R \norm{W}^2 - \sqrt{6} \norm{W}^3.
\end{equation*}
\end{proof}

\begin{lem}[Refined Kato inequality]\label{lem:Kato}
Let $(M,g)$ be a complete Riemannian $4$-manifold with harmonic Weyl tensor. Then in the distributional sense, 
\begin{equation*}
\norm{\nabla\norm{W}}^2\leq \frac{3}{5}\norm{\nabla W}^2.
\end{equation*}
\end{lem}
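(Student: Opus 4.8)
The plan is to reduce the statement to a pointwise linear-algebra inequality for the self-dual and anti-self-dual parts of $W$, and then to resolve that inequality by decomposing the covariant derivative into its irreducible components under the structure group.

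\emph{Step 1: reduction to $W^\pm$.} Fix a local orientation and write $W=W^++W^-$ along the \emph{parallel} orthogonal decomposition $\Lambda^2(TM)=\Lambda_+^2(TM)\oplus\Lambda_-^2(TM)$, exactly as in the proof of Lemma~\ref{lem:weitzenbock}. Parallelism and orthogonality give $\norm{W}^2=\norm{W^+}^2+\norm{W^-}^2$ and $\norm{\nabla W}^2=\norm{\nabla W^+}^2+\norm{\nabla W^-}^2$, while $\dv W=0$ implies $\dv W^+=0$ and $\dv W^-=0$. Since $W$ is smooth, $\norm{W}$ and $\norm{W^\pm}$ are locally Lipschitz, so both sides of the claimed inequality are locally bounded functions and it suffices to argue almost everywhere; on the zero set of $W$ one has $\nabla\norm{W}=0$ a.e. and there is nothing to prove, so we may assume $W^+,W^-\neq 0$. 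From $\norm{W}\nabla\norm{W}=\norm{W^+}\nabla\norm{W^+}+\norm{W^-}\nabla\norm{W^-}$ and the Cauchy--Schwarz inequality in $\mathbb R^2$ (applied to the vectors $(\norm{W^+},\norm{W^-})/\norm{W}$ and $(\norm{\nabla\norm{W^+}}\,,\norm{\nabla\norm{W^-}}\,)$) we obtain $\norm{\nabla\norm{W}}^2\leq\norm{\nabla\norm{W^+}}^2+\norm{\nabla\norm{W^-}}^2$. Hence it is enough to prove $\norm{\nabla\norm{W^\pm}}^2\leq\tfrac{3}{5}\norm{\nabla W^\pm}^2$ for a divergence-free section of $\mathcal S^2_0(\Lambda^2_\pm M)$; we treat $W^+$, the case of $W^-$ being identical.

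\emph{Step 2: decomposition of $\nabla W^+$ and the role of $\dv W^+=0$.} Fix $p$ with $W^+(p)\neq 0$. The bundle $\mathcal S^2_0(\Lambda^2_+M)$ is an irreducible $SO(4)$-bundle of rank $5$, and under $\mathrm{Spin}(4)=SU(2)\times SU(2)$ the fibre $T_p^*M\otimes\mathcal S^2_0(\Lambda^2_+M)$ (dimension $20$) splits as the orthogonal sum of its Cartan summand $\mathcal C_p$ (dimension $12$) and a complementary summand of dimension $8$, each occurring with multiplicity one. The contraction $A\mapsto\bigl(\nabla^i A_{ijkl}\bigr)$ is, up to a nonzero constant, the orthogonal projection onto the $8$-dimensional summand, because $(\dv W^+)_{jkl}=\nabla^iW^+_{ijkl}$ is self-dual in $(kl)$ and trace-free in $(jk)$ (the latter by trace-freeness of $W$), so it lands in that summand. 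Therefore $\dv W^+=0$ forces $\nabla W^+(p)\in\mathcal C_p$.

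\emph{Step 3: the extremal inequality (the main obstacle).} At $p$ one has $\nabla\norm{W^+}=\norm{W^+}^{-1}\langle\nabla W^+,W^+\rangle$, i.e. $\nabla\norm{W^+}=\norm{W^+}^{-1}\Pi(\nabla W^+)$ where $\Pi\colon T_p^*M\otimes\mathcal S^2_0(\Lambda^2_+M)\to T_p^*M$ is the fixed linear map $A=(A_i)\mapsto(\langle A_i,W^+\rangle)_i$. It remains to show $\norm{\Pi(A)}^2\leq\tfrac35\norm{W^+}^2\norm{A}^2$ for all $A\in\mathcal C_p$, i.e. that the operator norm of $\Pi$ restricted to $\mathcal C_p$ equals $\sqrt{3/5}\,\norm{W^+}$ and not merely the trivial Kato bound $\norm{W^+}$. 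By $SU(2)\times SU(2)$-equivariance this operator norm is independent of the choice of $W^+(p)\neq 0$, so we compute it for one convenient representative: diagonalize $W^+$ as a symmetric trace-free endomorphism of $\Lambda^2_{+,p}M$ with eigenvalues $\mu_1,\mu_2,\mu_3$, $\sum_k\mu_k=0$, in an orthonormal eigenbasis; parametrize $A$ by the symmetric trace-free $3\times3$ matrices $(\nabla_aW^+)_{kl}$, $a=1,\dots,4$, subject to the linear constraints cutting out $\mathcal C_p$ (equivalently $\dv W^+=0$); and maximize $\sum_a\bigl(\sum_k(\nabla_aW^+)_{kk}\mu_k\bigr)^2$ under $\sum_{a,k,l}(\nabla_aW^+)_{kl}^2=1$. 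This is a finite-dimensional constrained quadratic optimization whose solution, by Lagrange multipliers or by diagonalizing the associated bilinear form on $\mathcal C_p$, is $\tfrac35\norm{W^+}^2$. Combined with Step~1 this proves $\norm{\nabla\norm{W}}^2\leq\tfrac35\norm{\nabla W}^2$.

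The only genuinely delicate point is the last computation: one must implement the divergence-free constraint correctly in components and verify that the constrained supremum of the quadratic form is exactly $3/5$. The remaining ingredients — the parallelism of the $\Lambda^2_\pm$ splitting, the reduction to $W^\pm$, and the identification of $\dv W^+$ with the projection onto the non-Cartan summand — are routine.
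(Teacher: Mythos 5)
Your Step 1 --- the parallel splitting $W=W^++W^-$, the fact that $\dv W=0$ forces $\dv W^\pm=0$, and the Cauchy--Schwarz combination reducing the claim to a refined Kato inequality for each chirality --- is correct and is exactly how the paper argues. The problem is in Steps 2--3: the entire content of the lemma, namely the constant $\frac{3}{5}$, is never actually established. You set up the right framework (the splitting of $T_p^*M\otimes\mathcal S_0^2(\Lambda^2_+)$ into the $12$-dimensional Cartan summand $\mathcal C_p$ and the $8$-dimensional summand detected by the divergence, so that $\dv W^+=0$ puts $\nabla W^+$ in $\mathcal C_p$), but the decisive step --- that the constrained supremum of $\norm{\Pi(A)}^2$ over unit $A\in\mathcal C_p$ equals $\frac{3}{5}\norm{W^+}^2$ --- is simply asserted as the outcome of an optimization you do not perform, and you flag it yourself as the only delicate point. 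Since the paper's proof consists precisely of quoting this per-chirality inequality from \cite{GL99} and then doing your Step 1, a proof that stops short of this computation adds nothing beyond the citation: the gap sits exactly where the theorem lives.

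There is also a flaw in the reduction you use to make the computation feasible: ``by $SU(2)\times SU(2)$-equivariance this operator norm is independent of the choice of $W^+(p)\neq 0$'' does not follow from equivariance alone. Equivariance only says the norm is constant on orbits of the structure group acting on $\mathcal S_0^2(\Lambda^2_+)$, and these orbits are classified by the eigenvalues $(\mu_1,\mu_2,\mu_3)$ of $W^+(p)$ up to scale, not by $\norm{W^+}$ alone; so a priori the answer could depend on the eigenvalue type, and computing it ``for one convenient representative'' is not legitimate without further argument. The independence is in fact true, but it needs an input such as: writing $\sup_{A\in\mathcal C_p,\,\norm{A}=1}\norm{\Pi(A)}^2=\sup_{\norm{\xi}=1}\norm{\Pi_{\mathcal C_p}(\xi\otimes W^+)}^2$, noting that the projection onto the complementary $8$-dimensional summand is, by Schur's lemma, a fixed multiple of $\norm{\xi^i W^+_{ijkl}}^2=\xi^i\xi^m(W^+\vee W^+)_{im}$, and invoking the four-dimensional identity $W^+\vee W^+=\norm{W^+}^2 g$ (the analogue for $W^\pm$ of the identity $W\vee W=\norm{W}^2g$ quoted in section 3), which makes this quantity depend only on $\norm{W^+}$ and on nothing else; the value $\frac{3}{5}$ then comes out of the Schur constant for the divergence map. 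As written, both the representative-independence and the constant are asserted rather than proved, so the proposal is not yet a proof; the efficient alternative is the paper's route, namely to keep your Step 1 and cite the refined Kato inequality for divergence-free $W^\pm$ from \cite{GL99}.
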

\begin{proof}
If we choose a local orientation, we have the following Kato inequality (see \cite{GL99}):
\begin{equation*}
\norm{\nabla\norm{W^\pm}^2}=2\norm{W^\pm}\norm{\nabla\norm{W^\pm}}\leq 2\sqrt{\frac{3}{5}}\norm{W^\pm}\norm{\nabla W^\pm},
\end{equation*}
and then
\begin{align*}
\norm{\nabla\norm{W}^2}&\leq\norm{\nabla\norm{W^+}^2}+\norm{\nabla\norm{W^-}^2}\\
&\leq 2\sqrt{\frac{3}{5}}(\norm{W^+}\norm{\nabla W^+}+\norm{W^-}\norm{\nabla W^-})\\
&\leq 2\sqrt{\frac{3}{5}}(\norm{W^+}^2+\norm{W^-}^2)^{\frac{1}{2}}(\norm{\nabla W^+}^2+\norm{\nabla W^-}^2)^{\frac{1}{2}}\\
&=2\sqrt{\frac{3}{5}}\norm{W}\norm{\nabla W},
\end{align*}
and consequently,
\begin{equation*}
\norm{\nabla\norm{W}}\leq \sqrt{\frac{3}{5}}\norm{\nabla W}.
\end{equation*}
\end{proof}

\begin{proof}[Proof of the Proposition]
1) If $M$ is compact, then by integrating the inequality of Lemma~\ref{lem:weitzenbock}, we see that:
\begin{equation*}
\int_M \bigl(\norm{\nabla W}^2+\frac{1}{2} R \norm{W}^2\bigr)  dv \leq \sqrt{6} \norm{W}_3^3.
\end{equation*}
Using the refined Kato inequality of Lemma~\ref{lem:Kato}, and the Hölder inequality, we obtain 
\begin{equation*}
\frac{5}{3}\int_M \bigl(\norm{\nabla \norm{W}}^2+\frac{1}{6} R \norm{W}^2\bigr)dv+\frac{2}{9}\int_M R \norm{W}^2 dv \leq \sqrt{6} \norm{W}_2\norm{W}_4^2,
\end{equation*}
thus
\begin{equation*}
\frac{5}{3}Y\norm{W}_4^2+\frac{2}{9}\int_M R \norm{W}^2 dv \leq \sqrt{6} \norm{W}_2\norm{W}_4^2.
\end{equation*}
Since $R_g$ is a positive constant (as $Y_g>0$), we see that $W=0$ as soon as $\sqrt{6}\norm{W}_2\leq \frac{5}{3}Y$, i.e. as soon as $\norm{W}^2_2\leq \frac{25}{54}Y^2$.

\vspace*{10pt}

2) Now, if $M$ is not compact, let choose $r>0$, $x$ a point and $\phi$  a cut-off function such that:
\begin{equation*}
\left\lbrace 
\begin{aligned}[l]
&\phi\equiv 1 \text{\quad on }B_x(r),\\
&\phi\equiv 0 \text{\quad on }M\smallsetminus B_x(2r),\\
&\norm{\nabla \phi}_\infty\leq \frac{2}{r}.
\end{aligned}\right.
\end{equation*}
Let $u=\norm{W}\phi$. For all $\epsilon>0$, we have:
\begin{equation*}
\norm{\nabla u}^2=\Bigl|\phi\,\nabla{\norm{W}}+\norm{W}\nabla \phi\Bigr|^2\leq (1+\epsilon)\norm{\nabla{\norm{W}}}^2\phi^2+(1+\frac{1}{\epsilon})\norm{W}^2\norm{\nabla \phi}^2.
\end{equation*}
Consequently, 
\begin{align*}
Y\norm{u}_4^2&\leq \int_M \norm{\nabla u}^2+\frac{1}{6}Ru^2 dv\\
&\leq (1+\epsilon)\int_M \norm{\nabla{\norm{W}}}^2\phi^2dv+\frac{1}{6}\int_M Ru^2 dv+(1+\frac{1}{\epsilon})\norm{\nabla \phi}_\infty^2\norm{W}_2^2\\
&\leq (1+\epsilon)\Bigl(\int_M (\norm{\nabla{\norm{W}}}^2+\frac{1}{6}R\norm{W}^2)\phi^2 dv+\frac{1}{\epsilon}\norm{\nabla \phi}_\infty^2\norm{W}_2^2\Bigr),
\end{align*}
since $R$ is a nonnegative constant (as $Y>0$).

On the other hand, multiplying the inequality of Lemma~\ref{lem:weitzenbock} by $\phi^2$ and integrating, we obtain:
\begin{align*}
\int_M (\norm{\nabla W}^2+\frac{1}{2} R \norm{W}^2)\phi^2 dv &\leq \sqrt{6} \int_M \Bigl(\norm{W}^3\phi^2 +\frac{1}{2}\ps{\nabla \norm{W}^2}{\nabla\phi^2}\Bigr)dv\\
&\leq \sqrt{6} \norm{W}_2\norm{u}_4^2+2\norm{\nabla \phi}_\infty\norm{W}_2\bigl(\int_M\norm{\nabla\norm{W}}^2\phi^2 dv\bigr)^{\frac{1}{2}}
\end{align*}
by the Hölder inequality. Then, using the refined Kato inequality,
\begin{align*}
\frac{5}{3}\int_M (\norm{\nabla \norm{W}}^2+\frac{1}{6} R \norm{W}^2)\phi^2 dv&\leq \sqrt{6} \norm{W}_2\norm{u}_4^2+2\norm{\nabla \phi}_\infty\norm{W}_2\bigl(\int_M\norm{\nabla \norm{W}}^2\phi^2 dv\bigr)^{\frac{1}{2}}\\
&\leq \sqrt{6} \norm{W}_2\norm{u}_4^2+\epsilon\int_M\norm{\nabla \norm{W}}^2\phi^2 dv+\frac{1}{\epsilon}\norm{\nabla \phi}^2_\infty\norm{W}_2^2,
\end{align*}
thus
\begin{equation*}
\bigl(\frac{5}{3}-\epsilon\bigr)\int_M (\norm{\nabla \norm{W}}^2+\frac{1}{6} R \norm{W}^2)\phi^2 dv\leq \sqrt{6} \norm{W}_2\norm{u}_4^2+\frac{1}{\epsilon}\norm{\nabla \phi}^2_\infty\norm{W}_2^2.
\end{equation*}
We finally obtain:
\begin{align*}
\frac{Y}{1+\epsilon}\bigl(\frac{5}{3}-\epsilon\bigr)\norm{u}_4^2\leq\sqrt{6} \norm{W}_2\norm{u}_4^2+\frac{1}{\epsilon(1+\epsilon)}\bigl(\frac{8}{3}-\epsilon\bigr)\norm{\nabla \phi}^2_\infty\norm{W}_2^2,
\end{align*}
then
\begin{align*}
\Bigl(\frac{Y}{1+\epsilon}\bigl(\frac{5}{3}-\epsilon\bigr)-\sqrt{6} \norm{W}_2\Bigr)\Bigl(\int_M\norm{W}^4\phi^4dv\Bigl)^{\frac{1}{2}}\leq\frac{8}{3 \epsilon}\frac{4}{r^2}\norm{W}_2^2.
\end{align*}
We can choose $\epsilon$ small enough such that
\begin{equation*}
\frac{Y}{1+\epsilon}\bigl(\frac{5}{3}-\epsilon\bigr)-\sqrt{6} \norm{W}_2>0.
\end{equation*}
Then, by letting $r$ go to infinity, we see that $W=0$.
\end{proof}

\begin{prop}\label{prop:nablaF}
Let $(M,g)$ be a complete Riemannian $4$-manifold. If $R_g$ is constant, then
\begin{equation*}
\nabla \mathcal F^\alpha(g)=\frac{1}{2}\Delta \rstg-(\overset{\circ}{\overline{W_g+\frac{1}{2}\rstg\wedge g}})(\rstg)+\frac{1}{4}\normm{\rstg}^2 g+\frac{2-\alpha}{12}R_g \rstg.
\end{equation*}
\end{prop}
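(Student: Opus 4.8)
The strategy is to start from the general gradient formulas for the quadratic functionals recalled in Section~\ref{sec:GradientFlows}, specialize to $n=4$, and simplify using the assumption that $R_g$ is constant together with the algebraic identity $W\vee W-\norm{W}^2g=0$ valid in dimension four. Recall that in dimension $4$ one has $\nabla\mathcal F_W=-\nabla\mathcal F_2$, so that $\nabla\mathcal F^\alpha=(1-\alpha)\nabla\mathcal F_W+\tfrac{\alpha}{2}\nabla\mathcal F_{\rst}$ can be rewritten entirely in terms of $\nabla\mathcal F_W$ and $\nabla\mathcal F_{\rst}$, or better, in terms of the single combination we want to land on. The cleanest route is to use the decomposition $Rm=W+\tfrac12\rst\wedge g+\tfrac{1}{24}R\,g\wedge g$ (for $n=4$, where $\tfrac{1}{n-2}=\tfrac12$) and note that $\overline{W+\tfrac12\rst\wedge g}$ is just $Rm$ minus its purely scalar part; since $R$ is constant, the scalar part contributes only a multiple of $\rst$ via $(\overset{\circ}{g\wedge g})(\rst)$, which one computes to be a fixed multiple of $\rst$.

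First I would write $\nabla\mathcal F^\alpha = \nabla\bigl((1-\alpha)\mathcal F_W+\tfrac{\alpha}{2}\mathcal F_{\rst}\bigr)$ and express $\mathcal F_{\rst}$ in terms of $\mathcal F_{Ric}-\tfrac14\mathcal F_R$ and $\mathcal F_R$: in dimension four $\normm{\rst}^2=\norm{Ric}^2-\tfrac14 R^2$, so $\mathcal F_{\rst}=\mathcal F_{Ric}-\tfrac14\mathcal F_R$, hence $\nabla\mathcal F_{\rst}=\nabla(\mathcal F_{Ric}-\tfrac14\mathcal F_R)$, for which we have the explicit formula. Then, using $\nabla\mathcal F_W=-\dv\tdv W-\tfrac12\overset{\circ}{W}(\rst)$ in dimension four (the short form recalled at the end of Section~\ref{sec:GradientFlows}, which already used $W\vee W-\norm{W}^2g=0$), and the Bianchi-type identity $\tdv Rm=-\D Ric$ together with $\dv\tdv W$ versus $\dv\tdv Rm$, I would assemble the two pieces. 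The key simplifications are: (i) $\dv\tdv Rm = \tfrac12\Delta\rst + (\text{lower order in }\nabla R) + Rm*Rm$ type reductions, which under $R$ constant collapses the second-order part to exactly $\tfrac12\Delta\rst$; (ii) the curvature-square terms $\overset{\circ}{Rm}(Ric)$, $Ric\circ(Ric-\tfrac12Rg)$, $\norm{Ric}^2 g$, etc., must be rewritten in the decomposition basis $W$, $\rst$, $R$ and, using $R$ constant, reorganized into $(\overset{\circ}{\overline{W+\tfrac12\rst\wedge g}})(\rst)$, $\normm{\rst}^2 g$, and $R\rst$. I would track the coefficient of $R\rst$ carefully, as that is where the parameter $\alpha$ should enter linearly, giving $\tfrac{2-\alpha}{12}$.

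The main obstacle will be the bookkeeping of the quadratic curvature terms: one must convert the "raw" contractions appearing in the Besse-type formulas ($Rm\vee Rm$, $\overset{\circ}{Rm}(Ric)$, $\overset{\circ}{W}(\rst)$, $A\circ(Ric-\tfrac12Rg)$, $\sigma_2 g$) into the target form, and this requires the dimension-four algebraic identities for $\overset{\circ}{Rm}$ acting through the Weyl–Ricci–scalar decomposition, plus the identity $\overset{\circ}{(u\wedge g)}(v)$ expressed via $\tr u$, $u$, $\langle u,v\rangle$ and $\tr v$. It is essentially a finite linear-algebra computation on the space $\mathcal S^2(M)$ under the $O(4)$-decomposition, but getting every numerical coefficient right — especially isolating the $\tfrac14\normm{\rst}^2 g$ trace term and the $\tfrac{2-\alpha}{12}R\rst$ term — is the delicate part. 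A useful consistency check along the way: for $\alpha=1$ the formula must reduce, via $\nabla\mathcal F_W=-\nabla\mathcal F_2$ and Gauss–Bonnet ($\nabla\mathcal F_{Rm}=\nabla(\mathcal F_{Ric}-\tfrac14\mathcal F_R)$), to the known expression for $\tfrac12\nabla\mathcal F_{\rst}$ in dimension four; and for $\alpha=0$ it must match the Bach tensor written in the decomposition basis. These two endpoint checks pin down all constants.
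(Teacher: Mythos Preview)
Your plan is correct and follows essentially the same route as the paper: start from the gradient formulas of Section~\ref{sec:GradientFlows}, reduce the second-order part via the Bianchi-type identities and Proposition~\ref{prop:deltaDR} (so that under $R_g$ constant only $\tfrac12\Delta\rst$ survives), and then rewrite the quadratic terms using the decomposition of $Rm$ and the identity $\overset{\circ}{\overline{(u\wedge g)}}\,v=\ps{u}{v}g+\tr v\, u-u\circ v-v\circ u$. The one shortcut the paper takes that you might adopt is to use Gauss--Bonnet first to rewrite $\mathcal F^\alpha=\mathcal F_W+\tfrac{\alpha}{24}\mathcal F_R-\alpha\,8\pi^2\chi(M)$, so that only the $\mathcal F_R$-gradient carries the $\alpha$-dependence; this makes the coefficient $\tfrac{2-\alpha}{12}$ fall out immediately rather than emerging from a cancellation between the $(1-\alpha)$ and $\tfrac{\alpha}{2}$ pieces.
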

\begin{proof}
We recall that $\mathcal F^\alpha=(1-\alpha)\mathcal F_W+\frac{\alpha}{2}\mathcal F_{\rst}$. According to the Gauss-Bonnet formula, we can also write: $\mathcal F^\alpha=\mathcal F_W+\frac{\alpha}{24}\mathcal F_R-\alpha 8\pi^2\chi(M)$. Consequently (see section~\ref{sec:GradientFlows}),
\begin{equation*}
\nabla \mathcal F^\alpha(g)=-\dv\tdv W_g - \frac{1}{2}\overset{\circ}W_g(\rstg)+\frac{\alpha}{12}\dv\D(R_g\,g )-\frac{\alpha}{12}R_g \rstg.
\end{equation*}
Moreover, by Propositions \ref{prop:Bianchi} and \ref{prop:deltaDR}:
\begin{align*}
-\dv\tdv W&=\frac{1}{2}\dv\D A=\frac{1}{2}\dv D Ric -\frac{1}{12}\dv D(R_g\, g)\\
&= \frac{1}{2}\Delta Ric_g+\frac{1}{4}\nabla^2 R_g+\frac{1}{2}Ric_g\circ \rstg-\frac{1}{2}\overset{\circ}{Rm_g}(\rstg)-\frac{1}{12}\dv D(R_g\, g)\\
&= \frac{1}{2}\Delta \rstg+\frac{1}{2}Ric_g\circ \rstg-\frac{1}{2}\overset{\circ}{Rm_g}(\rstg),
\end{align*}
as we assumed that $R_g$ is constant.
Using the relations 
\begin{equation*}
\pss{\overset{\circ}{T}u}{v}=\ps{T}{u\wedge v}\text{\quad and \quad}\overset{\circ}{\overline{(u\wedge g)}}\,v=\ps{u}{v}g+\tr v\,  u -u\circ v - v \circ u,
\end{equation*}
we easily get that:
\begin{equation*}
\overset{\circ}{Rm}(\rstg)=\overset{\circ}{W}(\rstg)-\rstg\circ\rstg+\frac{1}{2}\normm{\rstg}g-\frac{1}{12}R_g\rstg.
\end{equation*}
Finally,
\begin{equation*}
\nabla \mathcal F^\alpha(g)=\frac{1}{2}\Delta \rstg-\overset{\circ}{W_g}(\rstg)+\rstg\circ \rstg-\frac{1}{4}\normm{\rstg}g+\frac{2-\alpha}{12}R_g \rstg,
\end{equation*}
and we obtain the result by writing 
\begin{equation*}
\overset{\circ}{\overline{(\rstg\wedge g)}}\,(\rstg)=\normm{\rstg}g -2\rstg\circ\rstg.
\end{equation*}
\end{proof}

The following Lemma allows us to control the Yamabe constant from below when the quadratic functional is not too large:
\begin{lem}[Gursky, \cite{Gur94}, see also \cite {Str10}]\label{lem:StreetsYamabeLemma}
Let $(M,g)$ be a compact Riemannian $4$-manifold. For all $\alpha\geq 0$
\begin{equation*}
Y_g^2\geq \frac{2}{3} ((1-\alpha)8\pi^2\chi(M) -\mathcal F^\alpha(g)).
\end{equation*}
\end{lem}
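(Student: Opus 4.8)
\textbf{Plan for the proof of Lemma~\ref{lem:StreetsYamabeLemma}.}

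The plan is to combine the Gauss--Bonnet--type identities available in dimension four with the classical Gauss--Bonnet estimate for the Yamabe invariant. First I would recall the Gauss--Bonnet formula $\mathcal F_W(g)+\mathcal F_2(g)=8\pi^2\chi(M)$, together with the identity $\mathcal F_2(g)=\frac{1}{24}\mathcal F_R(g)-\frac{1}{2}\mathcal F_{\rst}(g)$ (this is exactly the $n=4$ specialization of the definition $\mathcal F_2=\frac{n}{8(n-1)}\mathcal F_R-\frac12\mathcal F_{Ric}$, after separating $\mathcal F_{Ric}=\mathcal F_{\rst}+\frac14\mathcal F_R$). Using $\mathcal F^\alpha=(1-\alpha)\mathcal F_W+\frac{\alpha}{2}\mathcal F_{\rst}$ and the conformal-invariance rewriting $\mathcal F^\alpha(g)=\mathcal F_W(g)+\frac{\alpha}{24}\mathcal F_R(g)-\alpha\,8\pi^2\chi(M)$ (already noted in the proof of Proposition~\ref{prop:nablaF}), the target inequality $Y_g^2\geq \frac23\bigl((1-\alpha)8\pi^2\chi(M)-\mathcal F^\alpha(g)\bigr)$ becomes, after substitution, an inequality involving only $Y_g^2$ and $\mathcal F_R(g)$ (the $\mathcal F_W$ and the $\chi(M)$ terms cancel): concretely one is reduced to showing $Y_g^2\geq \frac{1}{36}\mathcal F_R(g)$, i.e. $6\,Y_g\le \mathcal F_R(g)^{1/2}$ after taking account of signs. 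So the whole statement collapses to a single scale-invariant inequality between the Yamabe invariant and the $L^2$-norm of scalar curvature.

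Next I would establish that reduced inequality. If $Y_g\le 0$ there is nothing to prove (the right-hand side of the reduced inequality is $\ge 0$, and more directly in the original statement one checks the claimed bound holds trivially since $\mathcal F^\alpha\ge0$ forces the right side $\le (1-\alpha)8\pi^2\chi(M)$ — here I would instead just argue via the reduced form). When $Y_g>0$, pick a Yamabe minimizer $\tilde g\in[g]$, which has constant scalar curvature $R_{\tilde g}$; by the variational characterization $Y_g=\mathrm{Vol}_{\tilde g}(M)^{-1/2}\int_M R_{\tilde g}\,dv_{\tilde g}\cdot\frac{1}{6}=\frac{1}{6}R_{\tilde g}\,\mathrm{Vol}_{\tilde g}(M)^{1/2}$ (using $\frac{n-2}{4(n-1)}=\frac16$ for $n=4$). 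Then Cauchy--Schwarz gives $\int_M R_{\tilde g}^2\,dv_{\tilde g}=R_{\tilde g}^2\,\mathrm{Vol}_{\tilde g}(M)=36\,Y_g^2$, so $\mathcal F_R(\tilde g)=36\,Y_g^2$. Since $\mathcal F_R$ is not conformally invariant, I must compare $\mathcal F_R(\tilde g)$ with $\mathcal F_R(g)$; here the key point is that among metrics in a conformal class with $Y_g>0$, the Yamabe metric \emph{minimizes} $\int R^2\,dv$ — equivalently, $\mathcal F_R(g)\ge Y_g^2\cdot(\text{const})$ — which is a standard consequence of Hölder's inequality applied to the conformal-Laplacian eigenvalue characterization (write any $\hat g=u^2 g$, expand $\int R_{\hat g}^2 dv_{\hat g}$, and bound below using $\|u\|$ norms and the definition of $Y_g$). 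This yields $\mathcal F_R(g)\ge 36\,Y_g^2$, which is exactly the reduced inequality.

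The main obstacle I anticipate is the last comparison step, namely proving $\mathcal F_R(g)\ge 36\,Y_g^2$ cleanly for all $g$ in a conformal class of positive Yamabe type (including the non-compact bookkeeping, though here $M$ is compact so that simplifies things). The slick route is: for the conformal Laplacian $L_g=-6\Delta_g+R_g$ one has $Y_g=\inf \frac{\int u L_g u\,dv_g}{\|u\|_4^2}$; taking $u\equiv 1$ gives $Y_g\le \frac{\int_M R_g\,dv_g}{\mathrm{Vol}_g(M)^{1/2}}$, whence by Cauchy--Schwarz $Y_g\le \mathrm{Vol}_g(M)^{-1/2}\cdot\mathcal F_R(g)^{1/2}\,\mathrm{Vol}_g(M)^{1/2}=\mathcal F_R(g)^{1/2}$, giving only $\mathcal F_R(g)\ge Y_g^2$ — off by a factor $36$. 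To recover the sharp factor one genuinely needs to use the Yamabe metric and the scale-invariance: passing to $\tilde g$ where $R_{\tilde g}$ is constant, the quantity $\int R^2 dv$ is minimized (this is Gursky's observation), and on $\tilde g$ the constant-$R$ case makes Cauchy--Schwarz an equality, producing the factor $36$. So the real content to nail down is the monotonicity/minimality statement "$\mathcal F_R$ restricted to $[g]$ is minimized at the Yamabe metric when $Y_g\ge 0$," which I would prove by a direct Hölder-inequality computation in the conformal factor; everything else is substitution of Gauss--Bonnet identities.
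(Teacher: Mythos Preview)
Your reduction step is wrong, and this breaks the whole plan. After substituting $\mathcal F^\alpha(g)=\mathcal F_W(g)+\frac{\alpha}{24}\mathcal F_R(g)-\alpha\,8\pi^2\chi(M)$ and $8\pi^2\chi(M)=\mathcal F_W(g)+\mathcal F_2(g)$, the right-hand side becomes
\[
\frac{2}{3}\Bigl((1-\alpha)8\pi^2\chi(M)-\mathcal F^\alpha(g)\Bigr)=\frac{1-\alpha}{36}\mathcal F_R(g)-\frac{1}{3}\mathcal F_{\rst}(g),
\]
so the $\mathcal F_{\rst}$ term does \emph{not} disappear. You have silently dropped it and claimed the statement reduces to $Y_g^2\ge \frac{1}{36}\mathcal F_R(g)$. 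That inequality is false for a general metric: $\mathcal F_R$ is not conformally invariant and can be made arbitrarily large within $[g]$ while $Y_g$ stays fixed. Indeed, the argument you then give (Yamabe metric minimizes $\int R^2\,dv$ in its conformal class) proves exactly the \emph{reverse} direction $\mathcal F_R(g)\ge 36\,Y_g^2$, which you even wrote down (``$6\,Y_g\le \mathcal F_R(g)^{1/2}$''). So you have established the opposite of what your reduction required.

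The fix, and what the paper does, is to keep the $\mathcal F_{\rst}$ term and recognize that the right-hand side is bounded above by $\frac{2}{3}\mathcal F_2(g)$ (take $\alpha=0$ in the display above; for $\alpha\ge 0$ it is only smaller). The crucial point is that $\mathcal F_2$ is a \emph{conformal invariant}. One then passes to a Yamabe metric $\tilde g\in[g]$, where constant scalar curvature gives $Y_{\tilde g}^2=\frac{1}{36}\mathcal F_R(\tilde g)\ge \frac{1}{36}\mathcal F_R(\tilde g)-\frac{1}{3}\mathcal F_{\rst}(\tilde g)=\frac{2}{3}\mathcal F_2(\tilde g)$. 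Conformal invariance of both sides transfers this to $g$, and the algebra with Gauss--Bonnet finishes it. No comparison of $\mathcal F_R(g)$ with $\mathcal F_R(\tilde g)$ is needed at all.
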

\begin{proof}
If $\tilde g\in [g]$ is a Yamabe metric, it has constant scalar curvature and we get
\begin{align*}
Y_{\tilde g}^2&=\frac{1}{36}Vol_{\tilde g}(M)^{-1}(\int_M R_{\tilde g} dv_{\tilde g})^2=\frac{1}{36}\int_M R_{\tilde g}^2 dv_{\tilde g}\\
&\geq \frac{2}{3} (\frac{1}{24}\mathcal F_R({\tilde g})-\frac{1}{2}\mathcal F_{\overset{\circ}{Ric}}({\tilde g}))=\frac{2}{3}\mathcal F_2({\tilde g}).
\end{align*}
Since $Y$ and $\mathcal F_2$ are conformal invariants, it follows that the inequality is still true for $g$. Then 
\begin{align*}
Y_g^2&\geq \frac{2}{3}\mathcal F_2({g})\\
&=\frac{2}{3} (\alpha\mathcal F_2({g})+(1-\alpha)(8\pi^2\chi(M) -\mathcal F_W(g)))\\
&\geq \frac{2}{3} ((1-\alpha)8\pi^2\chi(M) -\mathcal F^\alpha(g)).
\end{align*}
\end{proof}

\begin{prop}\label{prop:equiBounds}
Let $(M,g)$ be a compact Riemannian $4$-manifold and let $\alpha$ be in $(0,1)$. If there exists $\epsilon>0$ such that
\begin{equation*}
\left\lbrace 
\begin{aligned}[l]
\alpha&\leq \frac{4}{13}\\
\mathcal F^\alpha(g)&\leq 2\alpha(\pi^2\chi(M)-\epsilon)
\end{aligned}\right.
\text{\qquad or \qquad}
\left\lbrace 
\begin{aligned}[l]
\alpha&\geq \frac{4}{13}\\
\mathcal F^\alpha(g)&\leq \frac{8}{9}(1-\alpha)(\pi^2\chi(M)-\epsilon),
\end{aligned}\right.
\end{equation*}
then $g$ satisfies:
\begin{equation*}
\mathcal F_W(g)+\frac{1}{4}\mathcal F_{\rst}(g)\leq \frac{3}{16} Y_g^2-\epsilon.
\end{equation*}
\end{prop}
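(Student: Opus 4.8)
The plan is to combine the lower bound on the Yamabe invariant furnished by Lemma~\ref{lem:StreetsYamabeLemma} with a short linear manipulation of the energy hypothesis, the case split occurring exactly at $\alpha=\frac{4}{13}$.

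First I would apply Lemma~\ref{lem:StreetsYamabeLemma} in the case $\alpha=0$ (equivalently, use $Y_g^2\geq\frac{2}{3}\mathcal F_2(g)$ from its proof) together with the Gauss--Bonnet identity $\mathcal F_W(g)+\mathcal F_2(g)=8\pi^2\chi(M)$, which gives
\begin{equation*}
\frac{3}{16}Y_g^2\ \geq\ \frac{3}{16}\cdot\frac{2}{3}\bigl(8\pi^2\chi(M)-\mathcal F_W(g)\bigr)\ =\ \pi^2\chi(M)-\frac{1}{8}\mathcal F_W(g).
\end{equation*}
Substituting this bound, the conclusion $\mathcal F_W(g)+\frac{1}{4}\mathcal F_{\rst}(g)\leq\frac{3}{16}Y_g^2-\epsilon$ is reduced to the algebraic inequality
\begin{equation*}
\frac{9}{8}\mathcal F_W(g)+\frac{1}{4}\mathcal F_{\rst}(g)\ \leq\ \pi^2\chi(M)-\epsilon.\tag{$\ast$}
\end{equation*}

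To establish $(\ast)$ I would rewrite the two branches of the hypothesis using $\mathcal F^\alpha(g)=(1-\alpha)\mathcal F_W(g)+\frac{\alpha}{2}\mathcal F_{\rst}(g)$ together with $\mathcal F_W(g)\geq 0$ and $\mathcal F_{\rst}(g)\geq 0$. If $\alpha\leq\frac{4}{13}$, dividing $\mathcal F^\alpha(g)\leq 2\alpha(\pi^2\chi(M)-\epsilon)$ by $2\alpha>0$ yields $\frac{1-\alpha}{2\alpha}\mathcal F_W(g)+\frac{1}{4}\mathcal F_{\rst}(g)\leq\pi^2\chi(M)-\epsilon$; since $\alpha\mapsto\frac{1-\alpha}{2\alpha}$ is decreasing and equals $\frac{9}{8}$ at $\alpha=\frac{4}{13}$, we have $\frac{1-\alpha}{2\alpha}\geq\frac{9}{8}$, and using $\mathcal F_W(g)\geq 0$ to lower the coefficient of $\mathcal F_W(g)$ down to $\frac{9}{8}$ gives $(\ast)$. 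If $\alpha\geq\frac{4}{13}$, dividing $\mathcal F^\alpha(g)\leq\frac{8}{9}(1-\alpha)(\pi^2\chi(M)-\epsilon)$ by $\frac{8}{9}(1-\alpha)>0$ yields $\frac{9}{8}\mathcal F_W(g)+\frac{9\alpha}{16(1-\alpha)}\mathcal F_{\rst}(g)\leq\pi^2\chi(M)-\epsilon$; since $\alpha\mapsto\frac{9\alpha}{16(1-\alpha)}$ is increasing and equals $\frac{1}{4}$ at $\alpha=\frac{4}{13}$, we have $\frac{9\alpha}{16(1-\alpha)}\geq\frac{1}{4}$, and using $\mathcal F_{\rst}(g)\geq 0$ to lower the coefficient of $\mathcal F_{\rst}(g)$ down to $\frac{1}{4}$ again gives $(\ast)$.

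I do not anticipate a genuine obstacle: the argument is a two-line reduction followed by a two-line linear estimate. The only point worth isolating is that $\alpha=\frac{4}{13}$ is exactly the threshold at which the two binding coefficients $\frac{1-\alpha}{2\alpha}$ and $\frac{9\alpha}{16(1-\alpha)}$ take the critical values $\frac{9}{8}$ and $\frac{1}{4}$; this explains why the hypothesis is phrased in two pieces glued at $\frac{4}{13}$, and, as a consistency check, the two upper bounds on $\mathcal F^\alpha(g)$ do coincide there, both being equal to $\frac{8}{13}(\pi^2\chi(M)-\epsilon)$, in agreement with Corollary~\ref{cor:pinching}. Note also that the hypothesis forces $\pi^2\chi(M)\geq\epsilon>0$ (since $\mathcal F^\alpha(g)\geq 0$ and $\alpha>0$), so the reduced inequality $(\ast)$ is not vacuous, while the divisions above are legitimate simply because $\alpha\in(0,1)$.
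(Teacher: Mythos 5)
Your proof is correct and is essentially the paper's own argument: both reduce the claim to the linear inequality $\frac{9}{8}\mathcal F_W+\frac{1}{4}\mathcal F_{\rst}\leq \pi^2\chi(M)-\epsilon$ (the paper writes it scaled by $\frac{8}{13}$ as $\frac{9}{13}\mathcal F_W+\frac{2}{13}\mathcal F_{\rst}\leq\frac{8}{13}(\pi^2\chi(M)-\epsilon)$), obtained in each branch by dividing the energy hypothesis by the appropriate positive factor and using nonnegativity of $\mathcal F_W$, $\mathcal F_{\rst}$ at the threshold $\alpha=\frac{4}{13}$, and both conclude with Lemma~\ref{lem:StreetsYamabeLemma} applied with $\alpha=0$. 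The only difference is the order in which the Yamabe bound is invoked, which is immaterial.
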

\begin{proof}
If $\alpha\leq \frac{4}{13}$, then $\frac{4}{13\alpha}\geq \frac{9}{13(1-\alpha)}$, so using the assumption on $\mathcal F^\alpha$, we can write:
\begin{equation*}
\frac{9}{13}\mathcal F_W+\frac{2}{13}\mathcal F_{\rst}\leq \frac{4}{13\alpha}\mathcal F^\alpha\leq \frac{8}{13}(\pi^2\chi(M)-\epsilon).
\end{equation*}
In the same way, if $\alpha\geq \frac{4}{13}$, then $\frac{9}{13(1-\alpha)}\geq \frac{4}{13\alpha}$, and using the assumption on $\mathcal F^\alpha$, it follows that:
\begin{equation*}
\frac{9}{13}\mathcal F_W+\frac{2}{13}\mathcal F_{\rst}\leq \frac{9}{13(1-\alpha)}\mathcal F^\alpha\leq \frac{8}{13}(\pi^2\chi(M)-\epsilon).
\end{equation*}
Consequently,
\begin{align*}
\mathcal F_W(g)+\frac{1}{4}\mathcal F_{\rst}(g)&\leq \frac{1}{8}(8\pi^2\chi(M)-\mathcal F_W(g))-\epsilon\\
&\leq \frac{3}{16}Y_g^2-\epsilon,
\end{align*}
according to Lemma \ref{lem:StreetsYamabeLemma} with $\alpha=0$.
\end{proof}

Finally, let prove the following estimate:
\begin{lem}\label{lem:psMajor}
\begin{equation*}
\norm{\pss{W_g+\frac{1}{2}\rstg\wedge g}{\rstg\wedge\rstg}}\leq \frac{2}{\sqrt{3}} \normm{\rstg}^2   \bigl(\norm{W_g}^2+\frac{1}{4}\normm{\rstg}^2\bigr)^{\frac{1}{2}}.
\end{equation*}
\end{lem}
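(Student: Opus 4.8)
The plan is to reduce the statement to a purely algebraic inequality at a fixed point of $M$, and then to bring it down to an elementary inequality for the four eigenvalues of $\rstg$. Fix $p\in M$; if $\rstg(p)=0$ there is nothing to prove, and since both sides of the inequality scale the same way under a constant rescaling of the metric, we may assume $\normm{\rstg}=1$ at $p$. Choose an oriented orthonormal frame $(e_1,\dots,e_4)$ diagonalizing $\rstg$, with eigenvalues $\lambda_1,\dots,\lambda_4$, so $\sum_i\lambda_i=0$ and $\sum_i\lambda_i^2=1$. In such a frame $\rstg\wedge\rstg$ has nonzero components only for index pairs $\{i,j\}=\{k,l\}$, with $(\rstg\wedge\rstg)_{ijij}=2\lambda_i\lambda_j$, so in the pairing only the ``diagonal'' components of $W_g+\tfrac12\rstg\wedge g$ survive and a short computation gives
\begin{equation*}
\pss{W_g+\tfrac12\rstg\wedge g}{\rstg\wedge\rstg}=2\sum_{i<j}W_{ijij}\lambda_i\lambda_j-\trp{\rstg^3},
\end{equation*}
where $W_{ijij}$ are the frame components of $W_g$ and the last term is $\tfrac12\sum_{i<j}(\lambda_i+\lambda_j)\lambda_i\lambda_j=-\sum_i\lambda_i^3$. (Equivalently one reads this off $\overset{\circ}{\overline{W_g+\frac12\rstg\wedge g}}(\rstg)=\overset{\circ}{W_g}(\rstg)+\tfrac12\normm{\rstg}^2g-\rstg^2$, computed exactly as in Proposition~\ref{prop:nablaF}.)

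For the Weyl term I would exploit two facts special to dimension four: total trace-freeness, and the complementary-pair identity $W_{ijij}=W_{klkl}$ with $\{k,l\}=\{1,2,3,4\}\setminus\{i,j\}$. Writing $W_1=W_{1212}$, $W_2=W_{1313}$, $W_3=W_{1414}$, these give $W_1+W_2+W_3=0$ and, combined with $\sum_{i<j}W_{ijij}^2\le\norm{W_g}^2$ (the diagonal of a symmetric operator on $\Lambda^2$ has smaller Hilbert--Schmidt norm than the whole), $W_1^2+W_2^2+W_3^2\le\tfrac12\norm{W_g}^2$. With $s_1=\lambda_1\lambda_2+\lambda_3\lambda_4$, $s_2=\lambda_1\lambda_3+\lambda_2\lambda_4$, $s_3=\lambda_1\lambda_4+\lambda_2\lambda_3$ and $\bar s=\tfrac13\sum_ks_k=-\tfrac16$, the Weyl term regroups as $2\sum_kW_k(s_k-\bar s)$, so by Cauchy--Schwarz and an explicit computation $\sum_k(s_k-\bar s)^2=\tfrac16+8e_4$ (with $e_4=\lambda_1\lambda_2\lambda_3\lambda_4$),
\begin{equation*}
\Bigl|2\sum_{i<j}W_{ijij}\lambda_i\lambda_j\Bigr|\le\sqrt2\,\norm{W_g}\,\sqrt{\tfrac16+8e_4}.
\end{equation*}
It is essential here to use the paired $3$-term structure (hence the constant $\tfrac12$): the naive bound $\sum_{i<j}W_{ijij}^2\le\norm{W_g}^2$ alone is not sharp enough.

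Putting the two estimates together and setting $x=\norm{W_g}$ and $e_3=\sum_{i<j<k}\lambda_i\lambda_j\lambda_k$ (so $\trp{\rstg^3}=3e_3$), the lemma reduces to the scalar inequality $\sqrt2\,x\sqrt{\tfrac16+8e_4}+3|e_3|\le\tfrac{2}{\sqrt3}\sqrt{x^2+\tfrac14}$ for all $x\ge0$. Squaring turns this into $(1-16e_4)x^2-b\,x+(\tfrac13-9e_3^2)\ge0$ with $b=6\sqrt2\,|e_3|\sqrt{\tfrac16+8e_4}\ge0$; the leading coefficient $1-16e_4$ is $\ge0$ (this follows from the inequality below, or from AM--GM on $\lambda_1^2,\dots,\lambda_4^2$), so the quadratic is nonnegative on $[0,\infty)$ as soon as its discriminant is $\le0$, and a direct computation shows this discriminant condition is exactly $36e_3^2+16e_4\le1$ (the degenerate case $16e_4=1$ forces $e_3=0$, where the claim is immediate).

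Everything thus comes down to proving $36e_3^2+16e_4\le1$ for four reals with $\sum_i\lambda_i=0$, $\sum_i\lambda_i^2=1$, and I expect this to be the heart of the matter. Writing $p_k=\sum_i\lambda_i^k$ and using Newton's identities ($3e_3=p_3$, $16e_4=2-4p_4$), it is equivalent to $p_4-p_3^2\ge\tfrac14$. This has a clean proof: since $\sum_i\lambda_i=0$, the vectors $\mathbf1=(1,1,1,1)$ and $\Lambda=(\lambda_1,\dots,\lambda_4)$ are orthogonal in $\mathbb R^4$, so the orthogonal projection of $Q=(\lambda_1^2,\dots,\lambda_4^2)$ onto their span is $\tfrac14\mathbf1+p_3\Lambda$, whence $p_4=|Q|^2\ge\tfrac14|\mathbf1|^2\cdot\tfrac14+p_3^2|\Lambda|^2=\tfrac14+p_3^2$; equality forces $Q\in\mathrm{span}\{\mathbf1,\Lambda\}$, which recovers the extremal configurations $\lambda\propto(1,1,-1,-1)$ and $\lambda\propto(3,-1,-1,-1)$.
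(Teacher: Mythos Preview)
Your proof is correct and takes a genuinely different route from the paper's. The paper argues coordinate-free: it decomposes $\rstg\wedge\rstg=T+V+U$ into its Weyl-, traceless-Ricci- and scalar-type pieces, rewrites the pairing as $\pss{W_g+\tfrac{1}{2\sqrt2}\rstg\wedge g}{T+\sqrt2\,V}$ (absorbing the $\sqrt2$ imbalance between $\norm{W}$ and $\tfrac12\norm{\rstg\wedge g}$), applies a single Cauchy--Schwarz, and then computes $\norm{T}^2+2\norm{V}^2=\tfrac43\normm{\rstg}^4$ from the identity $\norm{u\wedge v}^2=\norm{u}^2\norm{v}^2+\ps{u}{v}^2-2\ps{u\circ u}{v\circ v}$. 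This is short and basis-free, and the constant drops out of a norm computation rather than an optimization.

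Your approach instead diagonalizes $\rstg$ and turns the statement into an explicit eigenvalue inequality. The key steps---the complementary-pair identity $W_{ijij}=W_{klkl}$ in dimension four, the regrouping $2\sum_k W_k(s_k-\bar s)$, the computation $\sum_k(s_k-\bar s)^2=\tfrac16+8e_4$, the discriminant reduction to $36e_3^2+16e_4\le1$, and the proof of $p_4-p_3^2\ge\tfrac14$ via orthogonal projection of $(\lambda_i^2)$ onto $\mathrm{span}\{\mathbf1,\Lambda\}$---all check out (in the paper's normalization $\norm{W}^2=\tfrac14 W_{ijkl}W^{ijkl}$ one indeed has $\sum_{i<j}W_{ijij}^2\le\norm{W}^2$, hence $W_1^2+W_2^2+W_3^2\le\tfrac12\norm{W}^2$). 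What your argument buys is a transparent identification of the extremal eigenvalue configurations $(1,1,-1,-1)$ and $(3,-1,-1,-1)$; the price is that it is longer, frame-dependent, and leans on the four-dimensional algebra in a way the paper's decomposition does not.
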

\begin{proof}
Let write the orthogonal decomposition
\begin{equation*}
\rstg\wedge\rstg=T+V+U,
\end{equation*}
where 
\begin{equation*}
U=\frac{1}{24}\tr^2(\rstg\wedge\rstg)g\wedge g=-\frac{1}{12}\normm{\rstg}^2 g\wedge g,
\end{equation*}
and
\begin{align*}
V&=\frac{1}{2}\bigl(\tr(\rstg\wedge\rstg)-\frac{1}{4}\tr^2(\rstg\wedge\rstg)g\bigr)\wedge g\\
&=-(\rstg\circ\rstg-\frac{1}{4}\normm{\rstg}^2 g)\wedge g.
\end{align*}
Then 
\begin{align*}
\norm{\pss{W_g+\frac{1}{2}\rstg\wedge g}{\rstg\wedge\rstg}}^2&=\norm{\pss{W_g+\frac{1}{2}\rstg\wedge g}{T+V}}^2\\
&=\norm{\pss{W_g+\frac{1}{2\sqrt{2}}\rstg\wedge g}{T+\sqrt{2}V}}^2\\
&\leq \normm{W_g+\frac{1}{2\sqrt{2}}\rstg\wedge g}^2\normm{T+\sqrt{2}V}^2\\
&= \bigl(\norm{W_g}^2+\frac{1}{4}\normm{\rstg}^2\bigr) \bigl(\norm{T}^2+2\norm{V}^2\bigr).
\end{align*}
And using the fact that $\norm{u\wedge v}^2=\norm{u}^2\norm{v}^2+\ps{u}{v}^2-2\ps{u\circ u}{v\circ v}$, we obtain:
\begin{align*}
\normm{\rstg\wedge\rstg}^2&=2\normm{\rstg}^4-2\normm{\rstg\circ\rstg}^2,\\
\norm{U}^2&=\frac{1}{6}\normm{\rstg}^4,\\
\norm{V}^2&=2\normm{\rstg\circ\rstg}^2-\frac{1}{2}\normm{\rstg}^4.
\end{align*}
Therefore
\begin{equation*}
\norm{T}^2+2\norm{V}^2=\normm{\rstg\wedge\rstg}^2+\norm{V}^2-\norm{U}^2=\frac{4}{3}\normm{\rstg}^4.
\end{equation*}
\end{proof}

\paragraph{Proof of Theorem \ref{thm:rigidity}}

As $\tr(\nabla F^\alpha)=\frac{\alpha}{4}\Delta R$, if $\alpha\neq 0$ and if $g$ is a critical point of $\mathcal F^\alpha$, then $R_g$ is harmonic. 

1) If $M$ is compact, then $R_g$ is a positive constant (since $Y_g>0$). Moreover,
\begin{align*}
Y\normm{\rst}_4^2&\leq \int_M \normm{\nabla \normm{\rst}}^2+\frac{1}{6} R \normm{\rst}^2 dv\\
&\leq \int_M \normm{\nabla\rst}^2+\frac{1}{6} R  \normm{\rst}^2 dv,
\end{align*}
since, by the Kato inequality, $\normm{\nabla{\normm{\rst}}}\leq \normm{\nabla{\rst}}$.

On the other hand, according to Proposition \ref{prop:nablaF}, and since $\pss{\overset{\circ}{T}u}{v}=\ps{T}{u\wedge v}$, we have:
\begin{align*}
0=\pss{\nabla \mathcal F^\alpha}{\rst}=\frac{1}{2}\pss{\Delta \rst}{\rst}-\pss{W+\frac{1}{2}\rst\wedge g}{\rst\wedge \rst}+\frac{2-\alpha}{12}R \normm{\rst}^2,
\end{align*}
thus
\begin{equation*}
\int_M \bigl(\normm{\nabla \rst}^2+\frac{2-\alpha}{6} R \normm{\rst}^2 \bigr)dv=2\int_M\pss{W+\frac{1}{2}\rst\wedge g}{\rst\wedge\rst}dv_g.
\end{equation*}
It follows from Lemma~\ref{lem:psMajor} and the Hölder inequality that
\begin{align*}
\int_M\bigl(\normm{\nabla \rst}^2+\frac{2-\alpha}{6} R \normm{\rst}^2 \bigr)dv&\leq  \frac{4}{\sqrt{3}} \Bigl(\int_M\norm{W}^2+\frac{1}{4}\normm{\rst}^2dv\Bigr)^{\frac{1}{2}}\normm{\rst}_4^2.
\end{align*}
Consequently:
\begin{align*}
\Bigl(Y-\frac{4}{\sqrt{3}} \bigl(\mathcal F_W+\frac{1}{4}\mathcal F_{\rst}\bigr)^{\frac{1}{2}}\Bigr)\normm{\rst}_4^2+\frac{1-\alpha}{6}\int_M R \normm{\rst}^2 dv&\leq 0.
\end{align*}
Since $\mathcal F_W+\frac{1}{4}\mathcal F_{\rst}<\frac{3}{16}Y^2$ and $R$ is a positive constant, it follows that $g$ is an Einstein metric.

\vspace*{10pt}

2) If $M$ is not compact, since $R_g$ is harmonic and in $L^2(M)$, it is constant (see \cite{Yau76}, Theorem $3$). As $Y_g>0$, it is nonnegative.

Let choose $r>0$, $x$ a point and $\phi$  a cut-off function such that:
\begin{equation*}
\left\lbrace 
\begin{aligned}[l]
&\phi\equiv 1 \text{\quad on }B_x(r),\\
&\phi\equiv 0 \text{\quad on }M\smallsetminus B_x(2r),\\
&\norm{\nabla \phi}_\infty\leq \frac{2}{r}.
\end{aligned}\right.
\end{equation*}

Let $u=\normm{\rstg}\phi$. It satisfies:
\begin{equation*}
\norm{\nabla u}^2=\Bigl|\phi\,\nabla{\normm{\rstg}}+\normm{\rstg}\nabla \phi\Bigr|^2\leq (1+\epsilon)\normm{\nabla{\normm{\rstg}}}^2\phi^2+(1+\frac{1}{\epsilon})\normm{\rstg}^2\norm{\nabla \phi}^2.
\end{equation*}
Consequently,
\begin{align*}
Y\norm{u}_4^2&\leq \int_M \norm{\nabla u}^2+\frac{1}{6}Ru^2 dv\\
&\leq (1+\epsilon)\int_M \normm{\nabla{\normm{\rst}}}^2\phi^2dv+\frac{1}{6}\int_M Ru^2 dv+(1+\frac{1}{\epsilon})\norm{\nabla \phi}_\infty^2\normm{\rst}_2^2\\
&\leq (1+\epsilon)\int_M (\normm{\nabla{\normm{\rst}}}^2+\frac{1}{6}R\normm{\rst}^2)\phi^2 dv+(1+\frac{1}{\epsilon})\frac{4}{r^2}\normm{\rst}_2^2\\
&\leq (1+\epsilon)\Bigl(\int_M (\normm{\nabla{\rst}}^2+\frac{1}{6}R\normm{\rst}^2)\phi^2 dv+\frac{4}{\epsilon r^2}\normm{\rst}_2^2\Bigr),
\end{align*}
by the Kato inequality.

According to Proposition \ref{prop:nablaF} and by writing that $\int_M\pss{\nabla \mathcal F^\alpha(g)}{\rstg}\phi^2dv_g=0$, we obtain:
\begin{equation*}
\int_M\pss{\Delta \rst}{\rst}\phi^2dv+\frac{2-\alpha}{6}\int_M R \normm{\rst}^2\phi^2 dv=2\int_M\pss{W+\frac{1}{2}\rst\wedge g}{\rst\wedge\rst}\phi^2dv.
\end{equation*}
For all $\epsilon>0$:
\begin{align*}
\int_M\pss{\Delta \rst}{\rst}\phi^2dv&=\int_M\normm{\nabla \rst}^2\phi^2dv+2\int_M\pss{\nabla \rst}{\nabla\phi\otimes\rst}\phi dv\\
&\geq \int_M\normm{\nabla \rst}^2\phi^2dv- 2\norm{\nabla \phi}_\infty\normm{\rst}_2\Bigl(\int_M\normm{\nabla\rst}^2\phi^2dv\Bigr)^{\frac{1}{2}}\\
&\geq \int_M\normm{\nabla \rst}^2\phi^2dv- \epsilon\int_M\normm{\nabla\rst}^2\phi^2dv-\frac{1}{\epsilon}\norm{\nabla \phi}_\infty^2\normm{\rst}^2_2\\
&\geq (1-\epsilon)\int_M\normm{\nabla \rst}^2\phi^2dv-\frac{4}{\epsilon r^2}\normm{\rst}^2_2.
\end{align*}
And on the other hand,
\begin{align*}
\int_M\pss{W+\frac{1}{2}\rst\wedge g}{\rst\wedge\rst}\phi^2dv&\leq\frac{2}{\sqrt{3}} \int_M \Bigl(\norm{W}^2+\frac{1}{4}\normm{\rst}^2dv\Bigr)^{\frac{1}{2}}\normm{\rst}^2\phi^2 dv \\
&\leq  \frac{2}{\sqrt{3}}  \bigl(\mathcal F_W+\frac{1}{4}\mathcal F_{\rst}\bigr)^{\frac{1}{2}}\norm{u}_4^2.
\end{align*}
It follows that
\begin{equation*}
(1-\epsilon)\Bigl(\int_M (\normm{\nabla{\rstg}}^2+\frac{1}{6}R\normm{\rstg}^2)\phi^2 dv\leq  \frac{4}{\sqrt{3}}  \bigl(\mathcal F_W+\frac{1}{4}\mathcal F_{\rst}\bigr)^{\frac{1}{2}}\norm{u}_4^2+\frac{4}{\epsilon r^2}\normm{\rst}^2_2.
\end{equation*}
Consequently,
\begin{equation*}
\frac{1-\epsilon}{1+\epsilon}Y\norm{u}_4^2\leq  \frac{4}{\sqrt{3}}  \bigl(\mathcal F_W+\frac{1}{4}\mathcal F_{\rst}\bigr)^{\frac{1}{2}}\norm{u}_4^2+\frac{8}{\epsilon r^2}\normm{\rst}^2_2,
\end{equation*}
thus
\begin{equation*}
\Bigl(\frac{1-\epsilon}{1+\epsilon}Y- \frac{4}{\sqrt{3}}  \bigl(\mathcal F_W+\frac{1}{4}\mathcal F_{\rst}\bigr)^{\frac{1}{2}}\Bigr)\Bigl(\int_M\normm{\rst}^4\phi^4 dv\Bigr)^{\frac{1}{2}}\leq \frac{8}{\epsilon r^2}\normm{\rst}^2_2.
\end{equation*}
We can choose $\epsilon>0$ such that
\begin{equation*}
\frac{1-\epsilon}{1+\epsilon}Y- \frac{4}{\sqrt{3}}  \bigl(\mathcal F_W+\frac{1}{4}\mathcal F_{\rst}\bigr)^{\frac{1}{2}}>0.
\end{equation*}
Then, letting $r$ go to infinity, we see that $g$ is Einstein.

It follows that $g$ has harmonic Weyl tensor ($-2\dv W = \D A = \D \rst=0$), and as it also satisfies $\mathcal F_W<\frac{3}{16}Y^2<\frac{25}{54}Y^2$, it is conformally flat according to Proposition~\ref{prop:confFlat}. Hence, $g$ has constant sectional curvature.\qed

\section{Yamabe, Sobolev and collapsing}

\begin{defn}[Best second Sobolev constant]\label{defn:SobolevDefinition}
Let $(M,g)$ be a complete Riemannian $n$-manifold, $n\geq 3$ and $A$ be a positive real number.

Let define the best second Sobolev constant relative to $A$ by:
\begin{equation*}
\mathcal B_A(g)=\inf\{B\in \mathbb R , \norm{u}_{\frac{2n}{n-2}}\leq A \norm{\nabla u}_2+ B\norm{u}_2, \forall u \in H_1^2(M)\},
\end{equation*}
with the convention $\inf(\emptyset)=+\infty$

If $M$ is compact, $\mathcal B_A<\infty$ for all $A\geq Y_{S^n}^{-\frac{1}{2}}$.
\end{defn}

\begin{prop}\label{prop:RpBoundsB}
Let $n\geq 3$ be an integer, let $p$ be in $(\frac{n}{2};\infty]$ and let $A$ be a positive number. There exists $C(n,p)$ such that if $(M,g)$ is a complete Riemannian $n$-manifold with $Y_g\geq  \frac{2}{A^2}$, then 
\begin{equation*}
\mathcal B_{A}(g)\leq (C A^2 \norm{R_g}_{p})^\frac{p}{2p-n}.
\end{equation*}
\end{prop}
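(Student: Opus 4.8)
The plan is to start from the definition of the Yamabe invariant, which gives a ``rough'' Sobolev inequality with a scalar-curvature correction term, and then absorb that term using the hypothesis $p>\frac{n}{2}$ via Hölder and an interpolation (Young) argument. Concretely, the definition of $Y_g$ gives, for every $u\in H_1^2(M)$,
\begin{equation*}
Y_g\,\norm{u}_{\frac{2n}{n-2}}^2\leq \int_M \norm{\nabla u}^2\,dv_g+\frac{n-2}{4(n-1)}\int_M R_g\,u^2\,dv_g.
\end{equation*}
Replacing $u$ by $|u|$ (which does not increase the Dirichlet energy) and bounding $R_g\le |R_g|$, the second term is at most $\frac{n-2}{4(n-1)}\norm{R_g}_p\,\norm{u}_{2p'}^2$ where $p'=\frac{p}{p-1}$ is the conjugate exponent. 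Since $p>\frac n2$, one checks $2<2p'<\frac{2n}{n-2}$, so by interpolation $\norm{u}_{2p'}\le \norm{u}_2^{1-\theta}\norm{u}_{\frac{2n}{n-2}}^{\theta}$ for an explicit $\theta=\theta(n,p)\in(0,1)$; in fact $\theta=\frac{n}{2p'}\cdot\frac{?}{?}$, which I would compute to be such that $2\theta p'$ is exactly the exponent making the powers match, giving $\theta p'=\frac{n}{2}\cdot\frac{p-?}{?}$ — the precise value will come out as $\theta=\frac{n(p-1)}{2p}$ after a short calculation, so that $\frac{p}{2p-n}$ appears as the final exponent.

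Next I would feed this into the defining inequality and use Young's inequality $ab\le \eta a^{1/\theta}+C_\eta b^{1/(1-\theta)}$ to split $\norm{R_g}_p\norm{u}_2^{2(1-\theta)}\norm{u}_{\frac{2n}{n-2}}^{2\theta}\le \frac{Y_g}{2}\norm{u}_{\frac{2n}{n-2}}^2+C(n,p)\,\norm{R_g}_p^{\frac{1}{1-\theta}}\norm{u}_2^2$. Absorbing the first term on the left using $Y_g\ge \frac{2}{A^2}$, one is left with
\begin{equation*}
\tfrac{1}{A^2}\norm{u}_{\frac{2n}{n-2}}^2\le \tfrac{Y_g}{2}\norm{u}_{\frac{2n}{n-2}}^2\le \int_M\norm{\nabla u}^2\,dv_g+C(n,p)\,\norm{R_g}_p^{\frac{1}{1-\theta}}\norm{u}_2^2,
\end{equation*}
hence $\norm{u}_{\frac{2n}{n-2}}^2\le A^2\norm{\nabla u}_2^2+C A^2\norm{R_g}_p^{1/(1-\theta)}\norm{u}_2^2$. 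Taking square roots and using $\sqrt{x+y}\le\sqrt x+\sqrt y$ turns this into $\norm{u}_{\frac{2n}{n-2}}\le A\norm{\nabla u}_2+(CA^2\norm{R_g}_p^{1/(1-\theta)})^{1/2}\norm{u}_2$, i.e.\ $\mathcal B_A(g)\le (CA^2\norm{R_g}_p^{1/(1-\theta)})^{1/2}$. It remains to check that $\frac{1}{2(1-\theta)}=\frac{p}{2p-n}$, which follows from the value of $\theta$ above; rewriting $CA^2\norm{R_g}_p^{1/(1-\theta)}$ as $(C'A^{?}\norm{R_g}_p)^{p/(2p-n)}$ by pulling the exponent out (adjusting $C$) gives exactly the stated bound $\mathcal B_A(g)\le (CA^2\norm{R_g}_p)^{p/(2p-n)}$, after noting $2=\frac{p}{2p-n}\cdot\frac{2(2p-n)}{p}$ so the power of $A^2$ matches once $C$ absorbs the discrepancy — this is the one bookkeeping point to do carefully.

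The main obstacle, such as it is, is purely the exponent bookkeeping: one must track $\theta$, $p'$, and the powers of $A$ and $\norm{R_g}_p$ simultaneously so that the final inequality has precisely the homogeneity claimed (note both sides scale correctly under $g\mapsto \lambda g$, which is a useful consistency check: $\mathcal B_A$ has a definite scaling weight, $\norm{R_g}_p$ another, and the exponent $\frac{p}{2p-n}$ is forced by matching them). There is also a minor technical point in the noncompact case — the inequality from the definition of $Y_g$ and the interpolation inequality both hold for all $u\in H_1^2(M)$ without needing compactness, and Hölder with $\norm{R_g}_p$ is legitimate precisely because $R_g\in L^p$ is implicitly what makes the right-hand side finite (if $R_g\notin L^p$ the bound is vacuous). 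No compactness or completeness beyond what is needed to state $H_1^2(M)$ is actually used, so the proof goes through verbatim in the generality stated.
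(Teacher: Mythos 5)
Your strategy is exactly the paper's: use the definition of $Y_g$ together with $Y_g\geq\frac{2}{A^2}$, estimate $\int_M R_g u^2\,dv_g$ by H\"older against $\norm{R_g}_p$, interpolate the $L^{\frac{2p}{p-1}}$ norm between $L^2$ and $L^{\frac{2n}{n-2}}$, and absorb the critical-norm contribution by Young's inequality, which is possible precisely because $p>\frac n2$. So the route is the right one, and no compactness is needed, as you say.

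However, the two computations you deferred are written down incorrectly. First, the interpolation exponent: from $\frac{p-1}{2p}=\frac{1-\theta}{2}+\theta\,\frac{n-2}{2n}$ one gets $\theta=\frac{n}{2p}$, not $\frac{n(p-1)}{2p}$ (your value exceeds $1$ already for $n=p=4$); with the correct $\theta$ one indeed has $\frac{1}{2(1-\theta)}=\frac{p}{2p-n}$, so the final exponent is right. Second, the powers of $A$: if you perform the Young splitting so that the coefficient of $\norm{u}_{\frac{2n}{n-2}}^2$ is $\frac{Y_g}{2}$, the Young constant carries a factor $Y_g^{-\theta/(1-\theta)}\leq (A^2/2)^{\theta/(1-\theta)}$, and after dividing by $\frac{Y_g}{2}\geq\frac{1}{A^2}$ the zero-order term comes out as $C(n,p)\,(A^2\norm{R_g}_p)^{\frac{2p}{2p-n}}\norm{u}_2^2$, i.e.\ with $A^{\frac{4p}{2p-n}}$ and not $A^2$; the intermediate bound $CA^2\norm{R_g}_p^{1/(1-\theta)}$ you wrote is not homogeneous and cannot be fixed by adjusting a constant $C(n,p)$, since $A$ is a free parameter. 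The cleanest bookkeeping (the paper's) is to use $Y_g\geq\frac{2}{A^2}$ at the very start, so that $\frac{A^2}{2}\norm{R_g}_p$ travels as a single quantity through H\"older, interpolation and Young, and the bound $\mathcal B_A(g)\leq (CA^2\norm{R_g}_p)^{\frac{p}{2p-n}}$ drops out with no discrepancy to absorb.
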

\begin{proof}
Let $u$ be in $H_1^2(M)$. Since $Y_g\geq  \frac{2}{A^2}$, then using the Hölder inequality,
\begin{align*}
\norm{u}^2_{\frac{2n}{n-2}}&\leq \frac{A^2}{2}(\norm{\nabla u}_2^2+\frac{n-2}{4(n-1)}\int_M R_g u^2 dv_g)\\
&\leq \frac{A^2}{2}(\norm{\nabla u}_2^2+\frac{n-2}{4(n-1)}\norm{R_g}_p \norm{u}_{\frac{2p}{p-1}}^2 )\\
&\leq \frac{A^2}{2}(\norm{\nabla u}_2^2+\frac{n-2}{4(n-1)}\norm{R_g}_p \norm{u}_2^{2(1-\frac{n}{2p})}\norm{u}_{\frac{2n}{n-2}}^{2\frac{n}{2p}})\\
&\leq \frac{A^2}{2}\norm{\nabla u}_2^2+\frac{1}{2}\norm{u}_{\frac{2n}{n-2}}^2+(1-\frac{n}{2p})(\frac{n}{p})^{\frac{n}{2p-n}}(\frac{n-2}{4(n-1)}\frac{A^2}{2}\norm{R_g}_p)^{\frac{2p}{2p-n}}\norm{u}_2^2\\
&\leq \frac{A^2}{2}\norm{\nabla u}_2^2+\frac{1}{2}\norm{u}_{\frac{2n}{n-2}}^2+\frac{1}{2}C^2(A^2\norm{R_g}_p)^{\frac{2p}{2p-n}}\norm{u}_2^2,
\end{align*}
where $C$ only depends on $n$ and $p$. Consequently, we get
\begin{align*}
\norm{u}^2_{\frac{2n}{n-2}}&\leq A^2\norm{\nabla u}_2^2+C^2(A^2\norm{R_g}_p)^{\frac{2p}{2p-n}}\norm{u}_2^2\\
&\leq(A\norm{\nabla u}_2+C(A^2\norm{R_g}_p)^{\frac{p}{2p-n}}\norm{u}_2)^2,
\end{align*}
which proves the claim.
\end{proof}

\begin{prop}\label{prop:collapsequivalence}
Let $(g_i)_{i\in I}$ be a family of metrics on a manifold $M$ with a uniform $C^0$ curvature bound. Then the three following assertions are equivalent:
\begin{align*}
 &(i)\quad  \inf_{i\in I}\, \mathrm{inj}_M(g_i)=0,\\
 &(ii)\quad \inf_{i\in I}\,\inf_{x\in M} Vol_{g_i}(B_x(1))=0,\\
 &(iii)\quad  \forall A\in \mathbb R \ \sup_{i\in I}\, (\mathcal B_A(g_i))=\infty.
\end{align*}
We say that the family collapses with bounded curvature.
\end{prop}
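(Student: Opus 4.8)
The plan is to prove the equivalences by working with the negations of $(i)$, $(ii)$ and $(iii)$, since each is most naturally phrased as a uniform lower bound on the relevant geometric quantity. Fix $K$ with $\norm{Rm_{g_i}}\le K$ for all $i$; then the sectional curvatures and Ricci tensors of the $g_i$ are uniformly bounded, so the volume comparison coming from an upper curvature bound and the Bishop--Gromov comparison coming from $Ric_{g_i}\ge -(n-1)K g_i$ both hold with constants depending only on $n$ and $K$. I would establish the two equivalences $\neg(i)\Leftrightarrow\neg(ii)$ and $\neg(ii)\Leftrightarrow\neg(iii)$ separately.

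For $\neg(i)\Rightarrow\neg(ii)$: if $\mathrm{inj}_M(g_i)\ge\iota_0>0$ for every $i$, then the volume comparison theorem gives $Vol_{g_i}(B_x(r_0))\ge v_0>0$ for all $i$ and all $x$, where $r_0$ and $v_0$ depend only on $n$, $K$, $\iota_0$; since $B_x(r_0)\subset B_x(1)$, this yields $\neg(ii)$. For the converse $\neg(ii)\Rightarrow\neg(i)$ I would invoke the local injectivity radius estimate of Cheeger--Gromov--Taylor: a uniform curvature bound on a unit ball together with a uniform lower bound on its volume forces a uniform lower bound on the injectivity radius at its centre, which is exactly $\neg(i)$.

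For $\neg(ii)\Rightarrow\neg(iii)$: by the previous equivalence $\neg(ii)$ also gives a uniform positive lower bound on $\mathrm{inj}_M(g_i)$, so the $g_i$ have uniformly bounded geometry, and the standard uniform Sobolev inequality under bounded geometry supplies constants $A_0,B_0$ depending only on $n$, $K$ and these bounds with $\norm{u}_{\frac{2n}{n-2}}\le A_0\norm{\nabla u}_2+B_0\norm{u}_2$ for all $u\in H_1^2(M)$ and all $i$; hence $\mathcal B_{A_0}(g_i)\le B_0$ and $\sup_i\mathcal B_{A_0}(g_i)<\infty$, which contradicts $(iii)$. For the converse $\neg(iii)\Rightarrow\neg(ii)$: assume $\sup_i\mathcal B_A(g_i)=B_0<\infty$ for some $A$. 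Fixing $i$ and $x$ and testing the defining inequality of $\mathcal B_A(g_i)$ with the $1$-Lipschitz cutoff $u(y)=\min\bigl(1,(2-d_{g_i}(x,y))_+\bigr)$, which equals $1$ on $B_x(1)$ and is supported in $B_x(2)$, one gets $\norm{u}_{\frac{2n}{n-2}}\ge Vol_{g_i}(B_x(1))^{\frac{n-2}{2n}}$ and $\norm{\nabla u}_2,\norm{u}_2\le Vol_{g_i}(B_x(2))^{\frac{1}{2}}$, hence $Vol_{g_i}(B_x(1))^{\frac{n-2}{2n}}\le (A+B_0)\,Vol_{g_i}(B_x(2))^{\frac{1}{2}}$; combining this with the Bishop--Gromov bound $Vol_{g_i}(B_x(2))\le C(n,K)\,Vol_{g_i}(B_x(1))$ and simplifying (using $\frac{1}{2}-\frac{n-2}{2n}=\frac{1}{n}$) gives $Vol_{g_i}(B_x(1))\ge\bigl((A+B_0)C(n,K)^{\frac{1}{2}}\bigr)^{-n}>0$ uniformly in $i$ and $x$, i.e. $\neg(ii)$.

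Combining $\neg(i)\Leftrightarrow\neg(ii)$ with $\neg(ii)\Leftrightarrow\neg(iii)$ proves that $(i)$, $(ii)$ and $(iii)$ are equivalent. The substantive inputs are the Cheeger--Gromov--Taylor injectivity radius estimate and the uniform Sobolev inequality under bounded geometry; isolating precisely these statements with the stated dependence of constants is the only real obstacle, the volume comparisons and the test-function computation being routine.
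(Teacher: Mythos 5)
Your proof is correct and follows the same overall decomposition as the paper: $(i)\Leftrightarrow(ii)$ via volume comparison below the injectivity radius in one direction and a Cheeger-type injectivity radius estimate in the other (the paper cites Cheeger's lemma from Petersen where you cite Cheeger--Gromov--Taylor; same content), and $(ii)\Leftrightarrow(iii)$ via a uniform Sobolev inequality in one direction and a volume lower bound extracted from the Sobolev constant in the other. The genuine difference is in the direction $\neg(iii)\Rightarrow\neg(ii)$: the paper simply quotes Carron's lemma, which gives $Vol_g(B_x(1))\geq\bigl(2^{\frac{n(n+4)}{2}}\max(A,\mathcal B_A)\bigr)^{-1}$ with no curvature hypothesis at all, whereas you prove this step by hand with the Lipschitz cutoff supported in $B_x(2)$ combined with Bishop--Gromov doubling $Vol(B_x(2))\leq C(n,K)Vol(B_x(1))$; your exponent bookkeeping ($\frac12-\frac{n-2}{2n}=\frac1n$) is right, so the argument is sound, but it does consume the curvature bound where the paper's cited lemma does not need it -- harmless here since the bound is assumed, and in exchange your version is self-contained. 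A second minor difference: for $\neg(ii)\Rightarrow\neg(iii)$ you route through the injectivity radius to get bounded geometry before invoking the uniform Sobolev inequality, while the paper applies Hebey's Theorem 3.14 directly from the Ricci lower bound and the unit-ball volume lower bound; both are legitimate, the paper's route just skips one reduction. The only small points to make explicit are that the metrics should be complete (as in the paper's definition of collapsing, and as required by the comparison and injectivity-radius theorems) and that the defining inequality of $\mathcal B_A(g_i)$ may indeed be used with the constant $B_0=\sup_i\mathcal B_A(g_i)$, which follows by letting $B\downarrow\mathcal B_A(g_i)$ for each fixed test function.
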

We will need the following lemmas:

\begin{lem}[Cheeger, see \cite{Pet98}, Lemma~4.5]\label{lem:CheegerLemma}
For all $C>0$ and $v_0>0$ there exists $i_0(n,C,v_0)>0$ such that the following property is true:

If $(M,g)$ is a complete Riemannian $n$-manifold such that $vol(B(x,1))\geq v_0$ for all $x$ in $M$ and $\norm{Rm_g}_\infty\leq C$, then $inj_g(M)\geq i_0$.
\end{lem}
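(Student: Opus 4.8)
The plan is to establish the contrapositive in quantitative form: I will fix an explicit $i_0=i_0(n,C,v_0)>0$ with $i_0<\pi/\sqrt{C}$ and show $\mathrm{inj}_g(p)\geq i_0$ for every $p\in M$, which is exactly $\mathrm{inj}_g(M)=\inf_{p}\mathrm{inj}_g(p)\geq i_0$. So suppose some $p$ has $\mathrm{inj}_g(p)<i_0$. The first step is to convert this into a short geodesic loop. By the Rauch comparison theorem, the bound $\norm{Rm_g}_\infty\leq C$ forbids conjugate points along any geodesic before distance $\pi/\sqrt{C}$; hence the cut point of $p$ nearest to $p$ lies at distance $\mathrm{inj}_g(p)<i_0<\pi/\sqrt{C}$ and therefore cannot be conjugate to $p$. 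Klingenberg's lemma then produces a geodesic loop $\gamma$ based at $p$, smooth away from $p$, of length $2\,\mathrm{inj}_g(p)<2i_0$.

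The second step is a tube volume estimate around $\gamma$. Every point at distance less than $1$ from $\gamma$ is joined to $\gamma$ by a minimizing geodesic that meets $\gamma$ orthogonally and has no focal point before its endpoint, so $B_g(p,1)\subseteq B_g(\gamma,1)$ is contained in the image of the normal exponential map of $\gamma$ restricted to normal vectors of length at most $1$. Parametrising that map by arclength along $\gamma$, a unit normal direction, and the distance $t\in[0,1]$, its Jacobian is a product of Jacobi fields along the normal geodesics, and $\norm{Rm_g}_\infty\leq C$ bounds each such field on $[0,1]$ by the solution of the corresponding Rauch comparison ODE. Integrating the Jacobian (only up to the first focal point, which already covers $B_g(\gamma,1)$) yields
\[
\mathrm{vol}_g\big(B_g(p,1)\big)\ \leq\ \mathrm{length}(\gamma)\cdot J_0(n,C),
\]
where $J_0(n,C)$ is an explicit constant of the form $|S^{n-2}|\,\cosh(\sqrt{C})\,\big(\sinh(\sqrt{C})/\sqrt{C}\big)^{n-2}$. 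Now set $i_0:=\min\big(\pi/\sqrt{C},\,v_0/(2J_0(n,C))\big)$; then $\mathrm{length}(\gamma)=2\,\mathrm{inj}_g(p)<2i_0\leq v_0/J_0(n,C)$, so $\mathrm{vol}_g(B_g(p,1))<v_0$, contradicting the hypothesis. Hence $\mathrm{inj}_g(M)\geq i_0$.

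An alternative route is the standard compactness argument: if no such $i_0$ existed, rescale a sequence of near-minimal points to unit injectivity radius, use $\norm{Rm}_\infty\leq C$ to get vanishing rescaled curvature, extract a $C^{1,\alpha}$ limit on a ball of definite size via Cheeger--Gromov--Taylor, and observe that the limit would be flat yet still carry a geodesic loop of length $2$ through the base point, which is impossible; I prefer the direct tube estimate because it yields an explicit $i_0$. The step I expect to be the main obstacle is the first one: invoking Klingenberg's lemma in the correct pointwise form to turn ``injectivity radius below the conjugate radius'' into a genuine short geodesic loop, and then making sure the $1$-tube around that loop really covers $B_g(p,1)$, so that the Jacobi-field comparison actually controls its volume.
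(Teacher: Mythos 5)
Your step 1 is fine: with $\norm{Rm_g}_\infty\leq C$ there are no conjugate points before $\pi/\sqrt{C}$, and Klingenberg's lemma does give a geodesic \emph{loop} at $p$ of length $2\,\mathrm{inj}_g(p)$. The genuine gap is in step 2, and it is exactly at the point you flagged. The loop $\gamma$ is in general not a smooth closed geodesic: it has a corner at $p$, and $B_g(\gamma,1)$ is \emph{not} contained in the image of the normal exponential map of $\gamma$. A point $q$ whose nearest point on $\gamma$ is the corner $p$ is joined to $p$ by a minimizing geodesic whose initial direction only satisfies $\langle v,a\rangle\leq 0$ and $\langle v,b\rangle\leq 0$, where $a,b$ are the two outgoing arms of the loop at $p$; unless the corner angle is exactly $\pi$ this is a solid cone of directions of positive measure. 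The volume swept by these geodesics is bounded only by (a sector of) the comparison ball volume $V_{-C}(1)$, a constant independent of $\mathrm{length}(\gamma)$, so the inequality $\mathrm{vol}_g(B_g(p,1))\leq \mathrm{length}(\gamma)\cdot J_0(n,C)$ is not justified, and with the corrected bound $\mathrm{length}(\gamma)\cdot J_0+V_{-C}(1)\cdot(\text{angle factor})$ no contradiction with $\mathrm{vol}\geq v_0$ follows. This is not a technicality: proving that a short loop at $p$ forces $\mathrm{vol}_g(B_g(p,1))$ to be small is essentially the whole content of the lemma. The classical tube argument works with the \emph{shortest closed geodesic} of a compact manifold (smooth by Klingenberg, since at a minimum point of the injectivity radius the loop has no corner); here $M$ is merely complete, the infimum of $\mathrm{inj}$ need not be attained, and at an arbitrary $p$ the corner cannot be removed. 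The standard repairs are either the Cheeger--Gromov--Taylor argument (pull back the metric by $\exp_p$ on the ball of radius $r<\pi/\sqrt{C}$ in $T_pM$, use the loop to produce many preimages of $p$, and compare volumes, which yields the quantitative pointwise bound $\mathrm{inj}(p)\gtrsim r\,\mathrm{vol}B(p,r)/(\mathrm{vol}B(p,r)+V_C(2r))$), or an almost-minimum point-picking argument that quantitatively recovers a nearly smooth closed geodesic.

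Your alternative blow-up route is closer to the proof in the reference the paper cites (Petersen), but as sketched it also has a hole: a complete flat manifold can certainly carry a geodesic loop of length $2$ (flat cylinders and tori do), so ``flat plus a loop'' is not yet a contradiction. You must transport the volume hypothesis to the limit: Bishop--Gromov with $\mathrm{vol}_g(B_x(1))\geq v_0$ gives $\mathrm{vol}(B_x(r))\geq c(n,C)v_0 r^n$ for $r\leq 1$, hence the rescaled manifolds have Euclidean volume growth on larger and larger balls, so the flat limit has Euclidean volume growth and must be $\mathbb R^n$, which then contradicts the persisting loop of length $2$ at the basepoint. One also needs the precompactness to apply, which requires injectivity radius (or volume) control on whole balls around the basepoints, not just at the basepoint, obtained either by a point-selection argument or by quoting an injectivity-radius propagation estimate. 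With those two steps supplied, that route is correct and is essentially the cited proof.
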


\begin{lem}[Carron, see \cite{Heb96}, Lemma~3.2]\label{lem:CarronLemma}
Let $(M,g)$ be a complete Riemannian $n$-manifold. Then 
\begin{equation*}
\forall x \in M,\quad vol(B(x,1))\geq \left(2^{\frac{n(n+4)}{2}}\max(A,\mathcal B_A)\right)^{-1}.
\end{equation*}
\end{lem}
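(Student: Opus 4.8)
\textbf{Proof plan for Lemma~\ref{lem:CarronLemma}.}

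The plan is to deduce the volume lower bound directly from the definition of $\mathcal B_A$ by testing the second Sobolev inequality against a well-chosen family of functions supported in the unit ball $B(x,1)$, and then optimizing. Recall that by definition, for every $u\in H_1^2(M)$ we have $\norm{u}_{\frac{2n}{n-2}}\leq A\norm{\nabla u}_2 + \mathcal B_A\norm{u}_2$, hence also $\norm{u}_{\frac{2n}{n-2}}\leq \max(A,\mathcal B_A)(\norm{\nabla u}_2+\norm{u}_2)$. Fix $x\in M$ and apply this to the Lipschitz cutoff $u(y)=\big(1-d(x,y)\big)_+$, which is supported in $B(x,1)$, satisfies $0\leq u\leq 1$, $\norm{\nabla u}\leq 1$ almost everywhere, and $u\geq \tfrac12$ on $B(x,\tfrac12)$.

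First I would estimate the three norms appearing. On the right, $\norm{\nabla u}_2^2\leq vol(B(x,1))$ and $\norm{u}_2^2\leq vol(B(x,1))$, so $\norm{\nabla u}_2+\norm{u}_2\leq 2\,vol(B(x,1))^{1/2}$. On the left, $\norm{u}_{\frac{2n}{n-2}}\geq \big(\int_{B(x,1/2)}u^{\frac{2n}{n-2}}\big)^{\frac{n-2}{2n}}\geq \tfrac12\,vol(B(x,\tfrac12))^{\frac{n-2}{2n}}$. Combining, $\tfrac12\,vol(B(x,\tfrac12))^{\frac{n-2}{2n}}\leq 2\max(A,\mathcal B_A)\,vol(B(x,1))^{1/2}$.

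The remaining point is to remove the ball of radius $\tfrac12$ in favour of the ball of radius $1$; the obstacle, such as it is, is purely bookkeeping and comes from wanting a clean constant $2^{n(n+4)/2}$. The cleanest route is to iterate: apply the same computation on every ball $B(x,2^{-k})$ for $k=0,\dots$ using the rescaled cutoff $u_k(y)=\big(2^{-k}-d(x,y)\big)_+/2^{-k}$ — no, better, simply use a covering/doubling-free argument: replace $d(x,\cdot)$ by $2(1-2\,d(x,\cdot))_+$ so that the cutoff is still supported in $B(x,1)$ but equals $1$ on $B(x,\tfrac14)$, and track the powers of $2$ carefully; since $vol(B(x,\tfrac14))\geq \cdots$ this does not immediately help, so instead I would argue without any inner ball at all. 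Take directly $u(y)=(1-d(x,y))_+$ and note $\int_M u^{\frac{2n}{n-2}}\,dv \geq \int_M u^{2}\,dv$ is false in general, so one genuinely needs a point where $u$ is bounded below. I would therefore keep the $B(x,\tfrac12)$ version and then bound $vol(B(x,\tfrac12))$ from below by $vol(B(x,1))$ in a trivial way is impossible; instead raise both sides of $\tfrac12\,vol(B(x,\tfrac12))^{\frac{n-2}{2n}}\leq 2\max(A,\mathcal B_A)\,vol(B(x,1))^{1/2}$ and note that the same inequality holds with $x$ replaced by any point of $B(x,\tfrac12)$, and $B(y,\tfrac12)\subset B(x,1)$ for such $y$; covering $B(x,1)$ by finitely many (a number depending only on $n$) such half-balls of a slightly smaller radius and comparing volumes then yields $vol(B(x,\tfrac12))\geq c(n)\,vol(B(x,1))$ with an explicit $c(n)$, after which the previous display gives $vol(B(x,1))^{1/2}\bigl(\tfrac12 c(n)^{\frac{n-2}{2n}}-2\max(A,\mathcal B_A)vol(B(x,1))^{(n-2)/2n-1/2}\bigr)\le 0$; solving this for $vol(B(x,1))$ (using $(n-2)/2n-1/2<0$) produces a lower bound of the form $\big(c'(n)\max(A,\mathcal B_A)\big)^{-1}$, and a careful choice of constants in the covering gives exactly $c'(n)=2^{n(n+4)/2}$. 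The main effort is thus the elementary covering step and the arithmetic of tracking constants; everything else is a one-line application of the definition of $\mathcal B_A$.
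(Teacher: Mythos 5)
The paper does not prove this lemma at all: it is quoted from Hebey's book (Lemma 3.2, due to Carron), so your attempt has to be measured against the standard proof, and it contains a genuine gap. Your first step is fine and is in fact exactly how Carron's argument begins: testing the inequality $\norm{u}_{\frac{2n}{n-2}}\leq A\norm{\nabla u}_2+\mathcal B_A\norm{u}_2$ with the cutoff $(1-d(x,\cdot))_+$ legitimately yields $\tfrac12\,vol(B(x,\tfrac12))^{\frac{n-2}{2n}}\leq 2\max(A,\mathcal B_A)\,vol(B(x,1))^{1/2}$. The gap is the step you use to close the loop: the claim that $B(x,1)$ can be covered by a number of balls of radius $\tfrac12$ depending only on $n$, hence $vol(B(x,\tfrac12))\geq c(n)\,vol(B(x,1))$. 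On an arbitrary complete manifold this doubling-type property is false: with no curvature hypothesis the covering number of a unit ball by half-balls is unbounded (already in hyperbolic spaces of very negative curvature the ratio $vol(B(x,1))/\sup_y vol(B(y,\tfrac12))$ is arbitrarily large), and indeed a uniform lower bound on volume ratios at a fixed scale is essentially the non-collapsing information the lemma is meant to produce, so invoking it here is circular. Note also that even if one grants that step, solving your final inequality gives $vol(B(x,1))\gtrsim \max(A,\mathcal B_A)^{-n}$, so the constant would not come out in the stated form by "tracking powers of $2$" alone.

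What actually makes the proof work is an iteration over scales rather than a covering at a single scale. Applying your computation to the cutoffs $(t-d(x,\cdot))_+$ for $0<t\leq 1$ gives, for $f(t)=vol(B(x,t))$, an inequality of the form
\begin{equation*}
f(t)\;\geq\;\Bigl(\tfrac{t}{2\max(A,\mathcal B_A)(1+t)}\Bigr)^{2} f(t/2)^{\frac{n-2}{n}},
\end{equation*}
and one iterates this along the dyadic radii $t=2^{-k}$. The unknown small-ball volume enters only through $f(2^{-k})^{\left(\frac{n-2}{n}\right)^{k}}$, and since $f(t)\sim \omega_n t^n$ as $t\to 0$ on any smooth Riemannian manifold while $\bigl(\frac{n-2}{n}\bigr)^{k}\to 0$ geometrically, this factor tends to $1$; the infinite product of the explicit prefactors converges and produces the dimensional constant. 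That limiting step, which uses the Euclidean small-scale asymptotics of ball volumes, is the ingredient your single-scale argument has no substitute for.
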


\begin{proof}[Proof of the Proposition]
If $\mathrm{inj}_x(g_i)\geq i_0$, with $i_0\in (0,2)$, and if the sectional curvatures are bounded by $K>0$, then by the Bishop-Gromov comparison theorem,
\begin{equation*}
Vol_{g_i}(B_x(1))\geq Vol_{g_i}(B_x(\frac{i_0}{2}))\geq Vol_K(\frac{i_0}{2}),
\end{equation*}
where $Vol_K(\frac{i_0}{2})$ is the volume of a ball of radius $\frac{i_0}{2}$ in the space of constant curvature $K$.

Conversely, if the volume of unit balls is uniformly bounded from below, then by Lemma~\ref{lem:CheegerLemma}, so is the injectivity radius.

Finally, the equivalence between the existence of a lower bound on the volume of unit balls and an upper bound on the Sobolev constant comes from Theorem 3.14 of \cite{Heb96} and Lemma~\ref{lem:CarronLemma}.
\end{proof}

\begin{prop}\label{prop:YamabeCollapse}
Let $M$ be a manifold. Let $(g_i)_{i\in I}$ be a family of complete metrics on $M$ with a uniform $C^0$ curvature bound. If 
\begin{equation*}
\inf_{i\in I}\, (Y_{g_i}(M))>0,
\end{equation*}
then the family doesn't collapse with bounded curvature.
\end{prop}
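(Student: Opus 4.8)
The plan is to show that the family fails assertion $(iii)$ in the collapsing definition, i.e. to produce a single $A>0$ for which $\sup_{i\in I}\mathcal B_A(g_i)<\infty$; by the equivalence $(i)\Leftrightarrow(ii)\Leftrightarrow(iii)$ of Proposition~\ref{prop:collapsequivalence} (valid because the family has a uniform $C^0$ curvature bound), this already shows that the family does not collapse with bounded curvature. The only real input is Proposition~\ref{prop:RpBoundsB}, which converts a uniform positive lower bound on the Yamabe invariant together with a bound on the scalar curvature into a uniform bound on a second Sobolev constant.

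Concretely, I would set $Y_0:=\inf_{i\in I}Y_{g_i}$, which is positive by hypothesis, and let $D$ be a uniform $C^0$ curvature bound, so that $\norm{Rm_{g_i}}_\infty\le D$ for every $i$. Choosing $A:=\sqrt{2/Y_0}$, we get $Y_{g_i}\ge Y_0=\tfrac{2}{A^2}$ for all $i$, so Proposition~\ref{prop:RpBoundsB} applies to each complete manifold $(M,g_i)$; taking the endpoint exponent $p=\infty$ (allowed, since $\infty\in(\tfrac n2,\infty]$, in which case $\tfrac{p}{2p-n}=\tfrac12$) it yields
\[
\mathcal B_A(g_i)\le\bigl(C(n,\infty)\,A^2\,\norm{R_{g_i}}_\infty\bigr)^{1/2}.
\]
Since the scalar curvature is obtained from $Rm_{g_i}$ by metric contractions, $\norm{R_{g_i}}_\infty\le c(n)\norm{Rm_{g_i}}_\infty\le c(n)D$ for a purely dimensional constant $c(n)$, and therefore
\[
\sup_{i\in I}\mathcal B_A(g_i)\le\bigl(C(n,\infty)\,c(n)\,A^2\,D\bigr)^{1/2}<\infty,
\]
which contradicts assertion $(iii)$ for this $A$, and hence finishes the proof.

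There is essentially no obstacle once Proposition~\ref{prop:RpBoundsB} is in hand; the single point that requires care is the use of the endpoint exponent $p=\infty$, so that $\norm{R_{g_i}}_p=\norm{R_{g_i}}_\infty$ is controlled by the curvature bound alone with no appeal to any bound on $Vol_{g_i}(M)$. This is exactly what makes the hypotheses ``uniform $C^0$ curvature bound'' and ``uniform positive lower bound on the Yamabe invariant'' sufficient, even on a manifold that is noncompact and possibly of infinite volume; with a finite $p\in(\tfrac n2,\infty)$ one would additionally need a uniform volume bound.
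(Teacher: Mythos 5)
Your argument is exactly the paper's: the author also proves this by combining Proposition~\ref{prop:collapsequivalence} with Proposition~\ref{prop:RpBoundsB} at the endpoint $p=\infty$, and your write-up merely makes the choice $A=\sqrt{2/Y_0}$ and the bound $\norm{R_{g_i}}_\infty\leq c(n)\norm{Rm_{g_i}}_\infty$ explicit. The proof is correct and complete.
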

\begin{proof}
It results from Proposition \ref{prop:collapsequivalence} and Proposition \ref{prop:RpBoundsB} with $p=\infty$. 
\end{proof}

\section{Bando-Bernstein-Shi estimates}

For tensors $T,T_1,\dotsc,T_j$ and nonnegative integers $j$ and $k$, let write:

\begin{align*}
\tsp{k}{T_1,\dotsc,T_j}&=\sum_{i_1+\dotsb+i_j=k} \nabla^{i_1}T_{1}\ast\dotsb\ast\nabla^{i_j}T_j,\\
\tspn{j}{k}{T}&=\sum_{i_1+\dotsb+i_j=k} \nabla^{i_1}T\ast\dotsb\ast\nabla^{i_j}T.
\end{align*}

\begin{prop}
Let $(M,g)$ be a compact Riemannian $n$-manifold. 

Let $P:\sdp \to \sym$ be a smooth map of the form
\begin{equation*}
P(g)= \dv\tdv Rm_g +a\Delta R_g g + b \nabla^2 R_g + Rm_g* Rm_g.
\end{equation*}
Then for all integers $k\geq 0$,
\begin{equation*}
\left(\norm{\nabla^k Rm}_2^2\right)'_g(P_g)+\norm{\nabla^{k+2} Rm_g}_2^2- \frac{a}{2}\norm{\nabla^{k+2} R_g}_2^2=\int_M\tspn{3}{2k+2}{Rm_g}+\tspn{4}{2k}{Rm_g} dv_g,
\end{equation*}
\begin{equation*}
\left(\norm{\nabla^k R}_2^2\right)'_g(P_g)+(1-2a(n-1))\norm{\nabla^{k+2} R_g}_2^2=\int_M\tspn{3}{2k+2}{Rm_g}+\tspn{4}{2k}{Rm_g} dv_g,
\end{equation*}
where the coefficients of the lower order terms only depend on $n$, $k$ and $P$.
\end{prop}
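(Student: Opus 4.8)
The plan is to read $\bigl(\norm{\nabla^k Rm}_2^2\bigr)'_g(P_g)$ as the time derivative of $t\mapsto\int_M\norm{\nabla^k_{g_t}Rm_{g_t}}_{g_t}^2\,dv_{g_t}$ along $\partial_t g=P_g$, isolate the top‑order quadratic term in the $(k{+}2)$‑nd derivatives of the curvature, and show everything else lands in the buckets $\int_M\tspn{3}{2k+2}{Rm}$ and $\int_M\tspn{4}{2k}{Rm}$. First I would record the standard first‑variation formulas under $\partial_t g=h$: $\partial_t\,dv_g=\tfrac12(\tr h)\,dv_g$, $\partial_t g^{-1}=-h$, $\partial_t\Gamma=g^{-1}\ast\nabla h$, and (Proposition~\ref{prop:CommutingProposition}) $\partial_t(\nabla^k Rm)=\nabla^k(Rm'_g(h))+\sum_{j=0}^{k-1}\nabla^{j}(\partial_t\Gamma)\ast\nabla^{k-1-j}Rm$. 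Differentiating the norm then yields, for general $h$,
\begin{equation*}
\bigl(\norm{\nabla^k Rm}_2^2\bigr)'_g(h)=2\psr{\nabla^k Rm}{\nabla^k(Rm'_g(h))}+\int_M h\ast\nabla^k Rm\ast\nabla^k Rm\,dv_g+2\sum_{j}\psr{\nabla^{j+1}h\ast\nabla^{k-1-j}Rm}{\nabla^k Rm},
\end{equation*}
the middle integral absorbing the $\partial_t\,dv_g$ and $\partial_t g^{-1}$ contributions. Now I substitute $h=P_g$, which schematically is of the form $\nabla^2 Rm+Rm\ast Rm$ (one curvature factor carrying two derivatives — this is where $\dv\tdv Rm$, $a\Delta R_g\,g$, $b\nabla^2 R_g$ all sit — plus the explicit $Rm\ast Rm$). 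Then every commutator term becomes $\nabla^{j+3}Rm\ast\nabla^{k-1-j}Rm\ast\nabla^k Rm$ or a term with an extra curvature factor, the $h\ast\nabla^k Rm\ast\nabla^k Rm$ integral likewise splits as $\nabla^2 Rm\ast\nabla^k Rm\ast\nabla^k Rm+Rm\ast Rm\ast\nabla^k Rm\ast\nabla^k Rm$, and, writing $Rm'_g(h)=L(h)+Rm\ast h$ with $L$ the second‑order metric‑algebraic principal part of the curvature linearization (Proposition~\ref{prop:CurvatureEvolution}), the $Rm\ast P_g$ piece contributes only such terms after $\nabla^k$ and pairing. So all of these lie in $\int_M\tspn{3}{2k+2}{Rm}+\tspn{4}{2k}{Rm}\,dv_g$, and modulo that error bucket $\bigl(\norm{\nabla^k Rm}_2^2\bigr)'_g(P_g)\equiv 2\psr{\nabla^k Rm}{\nabla^k(L(P_g))}$.

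Next I would integrate by parts $k$ times and pass $L$ to its formal adjoint. Since the $L^2$‑gradient of $\mathcal F_{Rm}$ is $-\dv\tdv Rm$ up to lower‑order (recalled in Section~\ref{sec:GradientFlows}), one has $L^\ast(S)=-\tfrac12\dv\tdv S$ on algebraic curvature tensors, while $(\nabla^\ast)^k\nabla^k Rm$ equals $(-1)^k$ times the iterated rough Laplacian $\Delta^k Rm$ up to terms carrying an extra curvature factor; hence, modulo the error bucket,
\begin{equation*}
\bigl(\norm{\nabla^k Rm}_2^2\bigr)'_g(P_g)\equiv(-1)^{k+1}\psr{\dv\tdv\bigl(\Delta^{k}Rm\bigr)}{P_g},
\end{equation*}
and only the $\nabla^2 Rm$‑part $\dv\tdv Rm+a\Delta R_g\,g+b\nabla^2 R_g$ of $P_g$ survives outside the bucket. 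Then I use Propositions~\ref{prop:Bianchi} and~\ref{prop:deltaDR} — $\tdv Rm=-\D Ric$, $\dv\D Ric_g=\Delta Ric_g+\tfrac12\nabla^2 R_g+Rm\ast Rm$, $\dv\D(R_g\,g)=\Delta R_g\,g+\nabla^2 R_g$, and the contracted Bianchi identity $\dv Ric_g=\tfrac12\nabla R_g$ — to rewrite $\dv\tdv(\Delta^k Rm)$ and the $\nabla^2 Rm$‑part of $P_g$ in terms of $\Delta^{j}Ric_g$ and $\Delta^{j}\nabla^2 R_g$, the cost of commuting $\Delta^k$ past $\dv\tdv$ being again error terms. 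Expanding the resulting pairing and symmetrizing the derivatives by further integration by parts (all commutators acquire an extra curvature factor and drop into the bucket) reduces the main term to a quadratic form in $\nabla^{k+2}Ric_g$ and $\nabla^{k+2}R_g$; rewriting this via the orthogonal decomposition $Rm_g=W_g+\tfrac1{n-2}\rstg\wedge g+\tfrac1{2n(n-1)}R_g\,g\wedge g$, which is preserved by $\nabla$ so that $\norm{\nabla^{k+2}Rm_g}^2$ splits into its Weyl, traceless‑Ricci and scalar pieces, should land exactly on $-\norm{\nabla^{k+2}Rm_g}_2^2+\tfrac a2\norm{\nabla^{k+2}R_g}_2^2$ — with, pleasantly, the coefficient $b$ cancelling out of the final expression.

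The second identity is obtained the same way with $Rm'_g$ replaced by $R'_g(h)=\dv\tdv h+\Delta\tr h-\ps{Ric_g}{h}$ (Proposition~\ref{prop:CurvatureEvolution}): its principal part is $\dv\tdv\,\cdot+\Delta\tr\,\cdot$, so after the identical integration‑by‑parts and Bianchi reductions $\bigl(\norm{\nabla^k R}_2^2\bigr)'_g(P_g)$ becomes a multiple of $\norm{\nabla^{k+2}R_g}_2^2$ modulo the buckets. Tracking the $\Delta R_g$ produced by $\tr(\dv\tdv Rm)=\tfrac12\Delta R_g$, $\tr(a\Delta R_g\,g)=na\Delta R_g$, $\tr(b\nabla^2 R_g)=b\Delta R_g$ against $\nabla\mathcal F_R=2\dv\D(R_g\,g)+Rm\ast Rm$ (again $b$ should disappear) produces the factor $1-2a(n-1)$, which is exactly the coefficient that is positive precisely when $a<\tfrac1{2(n-1)}$.

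The hard part will be the penultimate step — the bookkeeping of the quadratic form in $\nabla^{k+2}Ric_g$ and $\nabla^{k+2}R_g$. One must check carefully that every commutation of covariant derivatives, every commutation of $\Delta^k$ through $\dv\tdv$, and every use of the non‑parallel contractions hidden in the Bianchi identities produces only terms with at least three curvature factors and the correct derivative count, so they genuinely sit in $\int_M\tspn{3}{2k+2}{Rm}+\tspn{4}{2k}{Rm}\,dv_g$; and that the signs together with the decomposition coefficients conspire to give precisely $-\norm{\nabla^{k+2}Rm}^2$ (resp. $-(1-2a(n-1))\norm{\nabla^{k+2}R}^2$) and not merely $-\norm{\nabla^{k+2}Ric}^2$ plus scalar terms. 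A convenient way to organise this is to exploit diffeomorphism invariance of $\norm{\nabla^k Rm}_2^2$ and $\norm{\nabla^k R}_2^2$: their directional derivative along $P_g$ equals that along $(P-L_V)_g$ with $V$ as in Proposition~\ref{prop:elliptic}, whose symbol $-\tfrac12\norm\xi^4\mathrm{Id}_{\sym}+a\ps{R_\xi}{\,\cdot\,}R_\xi$ already displays the two leading coefficients, and the added Lie‑derivative term contributes only bucket terms once paired with $\dv\tdv(\Delta^k Rm)$.
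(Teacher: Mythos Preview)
Your overall architecture is right and matches the paper: differentiate, push all sub-principal pieces into the buckets $\int_M\tspn{3}{2k+2}{Rm}+\tspn{4}{2k}{Rm}$, and identify the top-order quadratic term. The gap is precisely where you flag it as ``the hard part''.

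Once you trace both $\tdv\dv(\Delta^k Rm)$ and the leading part of $P_g$ down to expressions in $\Delta^{j}Ric$ and $\Delta^{j}\nabla^2 R$, the resulting pairing is a quadratic form in $\nabla^{k+2}Ric$ and $\nabla^{k+2}R$ only. The orthogonal decomposition $Rm=W+\tfrac{1}{n-2}\rst\wedge g+\tfrac{1}{2n(n-1)}R\,g\wedge g$ goes the wrong direction for your purpose: it writes $\norm{\nabla^{k+2}Rm}^2$ as a sum containing $\norm{\nabla^{k+2}W}^2$, and no algebraic identity will manufacture $\norm{\nabla^{k+2}W}^2$ from Ricci and scalar data. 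Your symbol/DeTurck remark gives the right leading coefficients but does not close this gap either, since it does not tell you that the sub-leading terms are cubic in curvature.

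The paper avoids this by never tracing down. It stays at the $(2,2)$-tensor level and uses the second Bianchi identities $\D Rm=\tD Rm=0$ (not the algebraic decomposition) to prove directly that
\[
\D\tD\,\dv\tdv Rm \;=\; \Delta^{2}Rm \;+\; \tspn{2}{2}{Rm}
\]
(this is equation~\eqref{eq:DC1} in Proposition~\ref{prop:CurvatureVariation}; the proof repeatedly applies the commutation rules~\eqref{eq:DC9}--\eqref{eq:DC10} together with $\D Rm=\tD Rm=0$). Pairing with $\Delta^k Rm$ then gives $-\norm{\nabla^{k+2}Rm}_2^2$ immediately. The $a\Delta R_g\,g$ term is handled by the adjoint identity $\psr{\D\tD(\Delta R\,g)}{\Delta^k Rm}=\psr{\Delta R}{\tr\tdv\dv\Delta^k Rm}$ and equation~\eqref{eq:DC4}, which produces $\tfrac{a}{2}\norm{\nabla^{k+2}R}_2^2$; the $b\nabla^2 R$ term dies because $\nabla^2 R=\D\tD R$ and $\D\D=Rm\ast\cdot$ (equation~\eqref{eq:DC5}), so $b$ never survives outside the bucket. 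The scalar identity is obtained the same way via equations~\eqref{eq:DC2}--\eqref{eq:DC3}. Two minor remarks: your $(-1)^{k+1}$ should just be $-1$, since $\psr{\nabla^k S}{\nabla^k T}=\psr{S}{\dv^k\nabla^k T}$ carries no alternating sign in these conventions; and $\dv Ric=-\tfrac12\D R$ with the sign conventions of Section~\ref{subsec:Operators}.
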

\begin{proof}
We write $lot_k(g)$ for any term of the form 
\begin{equation*}
\int_M\tspn{3}{2k+2}{Rm_g}+\tspn{4}{2k}{Rm_g} dv_g,
\end{equation*}
and for any $(p,p)$ tensor $T$, we define the $(2,0)$ tensor $\nabla^k T \veebar \nabla^k T$ by:
\begin{equation*}
(\nabla^k T \veebar \nabla^k T)(X,Y)= \sum_{j=1}^k \ps{\nabla^{j-1}\nabla_X\nabla^{k-j}T}{\nabla^{j-1}\nabla_Y\nabla^{k-j}T}.
\end{equation*}

Then we have:
\begin{multline*}
\left(\int_M\norm{\nabla^k Rm}^2 dv\right)'_g(P_g)=2 \psr{(\nabla^k Rm)'_g(P_g)}{\nabla^k Rm_g}\\
-\psr{P_g}{\nabla^k Rm_g\veebar \nabla^k Rm_g}+\frac{1}{2}\int_M \norm{\nabla^k Rm_g}^2 tr(P_g) dv_g.
\end{multline*}

Using \eqref{eq:DC7} in Proposition \ref{prop:CommutingProposition}, it follows that:
\begin{align*}
\left(\int_M\norm{\nabla^k Rm}^2 dv\right)'_g(P_g)&=2 \psr{\nabla^k Rm'_g(P_g)}{\nabla^k Rm_g}+lot_k(g),\\
\intertext{then, by \eqref{eq:DC6} in Proposition \ref{prop:CommutingProposition},}
\left(\int_M\norm{\nabla^k Rm}^2 dv\right)'_g(P_g)&=2 \psr{Rm'_g(P_g)}{\Delta^k Rm_g}+lot_k(g),\\
\intertext{using that $Rm'_g=-\frac{1}{2}\D\tD+Rm\ast\cdotp$ (Proposition \ref{prop:CurvatureEvolution}) and (\ref{eq:DC1}) in Proposition \ref{prop:CurvatureVariation}, we get:}
\left(\int_M\norm{\nabla^k Rm}^2 dv\right)'_g(P_g)&=-\psr{\Delta^2 Rm_g + a \D\tD(\Delta R_g g)+b \D\tD\tD\D R_g}{\Delta^k Rm_g}+lot_k(g).\\
\intertext{It follows from \eqref{eq:DC5} in Proposition \ref{prop:DFCommuting} that:}
\left(\int_M\norm{\nabla^k Rm}^2 dv\right)'_g(P_g)&=-\psr{Rm_g}{\Delta^{k+2}Rm_g} -a\psr{\Delta R_g}{tr(\tilde\delta\delta \Delta^k Rm_g)}+lot_k(g),\\
\intertext{then by \eqref{eq:DC6} in Proposition \ref{prop:CommutingProposition} and \eqref{eq:DC4} in Proposition \ref{prop:CurvatureVariation},}
\left(\int_M\norm{\nabla^k Rm}^2 dv\right)'_g(P_g)&=-\psr{Rm_g}{\delta^{k+2}\nabla^{k+2} Rm_g} -a\psr{\Delta R_g}{\Delta^k\tilde\delta\delta Ric_g}+lot_k(g),\\
\intertext{and by Proposition \ref{prop:Bianchi},}
\left(\int_M\norm{\nabla^k Rm}^2 dv\right)'_g(P_g)&=-\int_M\norm{\nabla^{k+2} Rm_g}^2 dv_g +\frac{a}{2}\psr{\Delta R_g}{\Delta^{k+1}R_g}+lot_k(g).\\
\intertext{Finally, by \eqref{eq:DC6} in Proposition \ref{prop:CommutingProposition},}
\left(\int_M\norm{\nabla^k Rm}^2 dv\right)'_g(P_g)&=-\int_M\norm{\nabla^{k+2} Rm_g}^2 dv_g +\frac{a}{2}\int_M\norm{\nabla^{k+2} R_g}^2 dv_g+lot_k(g).
\end{align*}

On the other hand,
\begin{multline*}
\left(\int_M\norm{\nabla^k R}^2 dv\right)'_g(P_g)=2 \psr{(\nabla^k R)'_g(P_g)}{\nabla^k R_g}\\
-\psr{P_g}{\nabla^k R_g\veebar \nabla^k R_g}+\frac{1}{2}\int_M\norm{\nabla^k R}^2 tr(P_g) dv_g.
\end{multline*}

Then by \eqref{eq:DC7} and \eqref{eq:DC6} of Proposition \ref{prop:CommutingProposition},
\begin{align*}
\left(\int_M\norm{\nabla^k R}^2 dv\right)'_g(P_g)&= 2 \psr{R'_g(P_g)}{\Delta^k R_g}+lot_k(g),\\
\intertext{since $\dv\D(R_g\,g)=\Delta R_g\,g+\nabla^2 R_g$ (Proposition~\ref{prop:deltaDR}) and $R'_g=\tr\dv\D+Rm\ast\cdotp$ (Proposition~\ref{prop:CurvatureEvolution}), we get:}
\left(\int_M\norm{\nabla^k R}^2 dv\right)'_g(P_g)&=2 \psr{R'_g(\dv\tdv Rm_g +a\dv\D(R_g g)) + (b-a) \tr\dv\D\D\tD R_g)}{\Delta^k R_g}+lot_k(g),\\
\intertext{by \eqref{eq:DC2} and \eqref{eq:DC3} of Proposition \ref{prop:CurvatureVariation} and by \eqref{eq:DC5} of Proposition \ref{prop:DFCommuting},}
\left(\int_M\norm{\nabla^k R}^2 dv\right)'_g(P_g)&=-(1-2a(n-1))\psr{\Delta^2 R_g}{\Delta^k R_g}+lot_k(g),\\
\intertext{and finally, by \eqref{eq:DC6} of Proposition \ref{prop:CommutingProposition},}
\left(\int_M\norm{\nabla^k R}^2 dv\right)'_g(P_g)&=-(1-2a(n-1)) \int_M\norm{\nabla^{k+2} R_g}^2 dv_g +lot_k(g).
\end{align*}
\end{proof}

\paragraph{Proof of Theorem \ref{thm:estimates}}
Let define
\begin{equation*}
\mathcal R_k=\int_M\norm{\nabla^k Rm}^2 dv + \frac{a_+}{1-2a_+(n-1)}\int_M\norm{\nabla^k R}^2 dv
\end{equation*}
and
\begin{equation*}
c_a=\frac{1-2a_+(n-1)}{2}.
\end{equation*}

\begin{lem}
There exists $C(n,P,k)$ such that for all integers $k\geq 0$
\begin{equation*}
(\mathcal R_{k})'_g(P_g)+c_a\mathcal R_{k+2}(g)\leq C \norm{Rm_g}_\infty^{k+2} \mathcal R_0(g).
\end{equation*}
\end{lem}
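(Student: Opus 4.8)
The plan is to fold the two identities of the previous Proposition into a single differential inequality for $\mathcal R_k$; the whole point of the particular value of $c_a$ and of the weight $\mu:=\frac{a_+}{1-2a_+(n-1)}$ occurring in $\mathcal R_k$ is that, in the linear combination, the top-order scalar-curvature term cancels, and then the lower-order terms are absorbed by interpolation. Writing $lot_k$ for any expression of the form $\int_M\tspn{3}{2k+2}{Rm_g}+\tspn{4}{2k}{Rm_g}\,dv_g$ (coefficients depending only on $n,k,P$), the Proposition gives
\begin{align*}
(\norm{\nabla^k Rm}_2^2)'_g(P_g)&=-\norm{\nabla^{k+2}Rm_g}_2^2+\tfrac a2\norm{\nabla^{k+2}R_g}_2^2+lot_k,\\
(\norm{\nabla^k R}_2^2)'_g(P_g)&=-(1-2a(n-1))\norm{\nabla^{k+2}R_g}_2^2+lot_k,
\end{align*}
so, using $\mathcal R_{k+2}(g)=\norm{\nabla^{k+2}Rm_g}_2^2+\mu\norm{\nabla^{k+2}R_g}_2^2$,
\begin{equation*}
(\mathcal R_k)'_g(P_g)+c_a\mathcal R_{k+2}(g)=(c_a-1)\norm{\nabla^{k+2}Rm_g}_2^2+\Bigl(\tfrac a2-\mu\bigl(1-2a(n-1)-c_a\bigr)\Bigr)\norm{\nabla^{k+2}R_g}_2^2+lot_k.
\end{equation*}
Because $a<\tfrac{1}{2(n-1)}$ forces $0\le a_+(n-1)<\tfrac12$, one has $c_a\in(0,1]$ and $c_a-1\le-\tfrac12$; moreover $1-2a(n-1)-c_a=\tfrac12\bigl(1-2(n-1)(2a-a_+)\bigr)$, so the coefficient of $\norm{\nabla^{k+2}R_g}_2^2$ vanishes when $a>0$ (this is exactly the reason for the choice of $c_a$) and equals $\tfrac a2\le0$ when $a\le0$. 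Thus in both cases
\begin{equation*}
(\mathcal R_k)'_g(P_g)+c_a\mathcal R_{k+2}(g)\le-\tfrac12\norm{\nabla^{k+2}Rm_g}_2^2+lot_k,
\end{equation*}
and the Lemma will follow from the estimate $lot_k\le\tfrac12\norm{\nabla^{k+2}Rm_g}_2^2+C(n,P,k)\norm{Rm_g}_\infty^{k+2}\norm{Rm_g}_2^2$, since $\norm{Rm_g}_2^2\le\mathcal R_0(g)$.

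To prove this last estimate I would argue in three steps. (i) On the closed manifold $M$, since $\nabla g=0$, each term of $\int_M\tspn{3}{2k+2}{Rm_g}\,dv_g$ may be rewritten, by integrations by parts that merely move covariant derivatives between the three factors (no curvature commutators arise and the total order is unchanged), as a sum of integrals of terms $\nabla^{i_1}Rm_g\ast\nabla^{i_2}Rm_g\ast\nabla^{i_3}Rm_g$ with $i_1+i_2+i_3=2k+2$ and each $i_\ell\le k+1$ (possible since $3(k+1)\ge2k+2$); similarly $\int_M\tspn{4}{2k}{Rm_g}\,dv_g$ becomes a sum of quartic terms of total order $2k$ with each factor of order at most $k$. (ii) Hölder's inequality with exponents $\tfrac{2(k+1)}{i_\ell}$, whose reciprocals sum to $1$ (factors of order $0$ being placed in $L^\infty$), together with the interpolation inequality $\norm{\nabla^j Rm_g}_{2m/j}\le C(n,m)\norm{Rm_g}_\infty^{1-j/m}\norm{\nabla^m Rm_g}_2^{j/m}$ with $m=k+1$, bounds each reduced cubic term by $C\norm{Rm_g}_\infty\norm{\nabla^{k+1}Rm_g}_2^2$; the analogous computation bounds each reduced quartic term by $C\norm{Rm_g}_\infty^2\norm{\nabla^{k}Rm_g}_2^2$. (iii) The geometry-free $L^2$ interpolation $\norm{\nabla^j Rm_g}_2\le C(n,j,m)\norm{Rm_g}_2^{1-j/m}\norm{\nabla^m Rm_g}_2^{j/m}$, which on a closed manifold follows by iterating the elementary estimate $\norm{\nabla^j Rm_g}_2^2\le C(n)\norm{\nabla^{j-1}Rm_g}_2\norm{\nabla^{j+1}Rm_g}_2$, applied with $m=k+2$ and followed by Young's inequality, yields $\norm{Rm_g}_\infty\norm{\nabla^{k+1}Rm_g}_2^2\le\tfrac14\norm{\nabla^{k+2}Rm_g}_2^2+C\norm{Rm_g}_\infty^{k+2}\norm{Rm_g}_2^2$ --- the power $k+2$ coming out exactly because $\bigl(\norm{Rm_g}_\infty\norm{Rm_g}_2^{2/(k+2)}\bigr)^{k+2}=\norm{Rm_g}_\infty^{k+2}\norm{Rm_g}_2^2$ --- and likewise $\norm{Rm_g}_\infty^2\norm{\nabla^{k}Rm_g}_2^2\le\tfrac14\norm{\nabla^{k+2}Rm_g}_2^2+C\norm{Rm_g}_\infty^{k+2}\norm{Rm_g}_2^2$. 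Summing over the finitely many terms of $lot_k$ gives the claim.

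The algebraic part --- the vanishing of the $\norm{\nabla^{k+2}R_g}_2^2$ coefficient --- is short but is the conceptual heart of the definitions of $c_a$ and $\mu$. I expect the main obstacle to lie in step (iii): one has to arrange the integration-by-parts reduction, the Hölder balance, and the $\norm{Rm_g}_\infty$-weighted Young inequality so that the final bound carries precisely the power $\norm{Rm_g}_\infty^{k+2}$ together with a single factor $\norm{Rm_g}_2^2$, and not a larger power of $\norm{Rm_g}_\infty$. All the interpolation inequalities used here are the standard ones for tensors on compact Riemannian manifolds, which are collected in the paper's final technical section.
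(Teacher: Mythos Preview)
Your argument is correct and follows essentially the same route as the paper. The algebraic cancellation you carry out is exactly what the paper does (it writes the scalar-curvature coefficient uniformly as $-\tfrac12(a_+-a)$, which agrees with your case analysis), and your estimate of $lot_k$ is a direct re-derivation of what the paper packages as Proposition~\ref{prop:BoundedLotk}; your intermediate route through $\norm{Rm}_\infty\norm{\nabla^{k+1}Rm}_2^2$ and $\norm{Rm}_\infty^2\norm{\nabla^{k}Rm}_2^2$ differs slightly from the paper's use of $\norm{Rm}_{k+4}$ via Corollary~\ref{cor:InterpolationCorol}, but both rest on the same interpolation inequalities (Proposition~\ref{prop:Interpolation}) and yield the identical bound.
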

\begin{proof}
\begin{align*}
(\mathcal R_{k})'_g(P_g)+c_a\mathcal R_{k+2}(g)&=-(1-c_a)\norm{\nabla^{k+2}Rm_g}_2^2-\frac{1}{2}(a_+-a)\norm{\nabla^{k+2}R_g}_2^2+lot_k(g)\\
&\leq-\frac{1}{2} \norm{\nabla^{k+2}Rm_g}_2^2+lot_k(g)\\
&\leq C \norm{Rm_g}_\infty^{k+2} \mathcal F_{Rm}(g)\text{ by Proposition \ref{prop:BoundedLotk}}\\
&\leq C \norm{Rm_g}_\infty^{k+2} \mathcal R_0(g).
\end{align*}
\end{proof}
\begin{proof}[Proof of the Theorem]
Let define
\begin{equation*}
f_k(t)=\sum_{j=0}^k \frac{c_a^j t^j}{j!}\mathcal R_{2j}(g_t),
\end{equation*}
then 
\begin{align*}
f_k'(t)&=\sum_{j=0}^{k-1} \frac{c_a^j t^j}{j!}((\mathcal R_{2j})'_{g_t}(P_{g_t})+c_a\mathcal R_{2j+2}(g_t))+ \frac{c_a^k t^k}{k!}((\mathcal R_{2k})'_{g_t}(P_{g_t})\\
&\leq \sum_{j=0}^{k} \frac{c_a^j t^j}{j!} C_j \norm{Rm_{g_t}}_\infty^{j+2} \mathcal R_0(g_t)\\
&\leq C'\norm{Rm_{g_t}}_\infty^2 \mathcal R_0(g_t)(1+\norm{Rm_{g_t}}_\infty t)^k\text{\quad with }C'=C'(n,P,k)\\
&\leq C' D^2 f_k(t)(1+Dt)^k.
\end{align*}
It follows that 
\begin{equation*}
(\ln f_k)'(t)\leq C' D^2 (1+Dt)^k,
\end{equation*}
and therefore
\begin{align*}
f_k(t)\leq f_k(0)\exp\Bigl(\frac{C' D}{k+1}(1+Dt)^{k+1}\Bigr),
\end{align*}
then
\begin{align*}
\norm{\nabla^{2k} Rm_{g_t}}_2^2&\leq\mathcal R_{2k}(g_t)\\
&\leq \frac{k!}{c_a^k t^k}f_k(t)\\
&\leq \frac{k!}{c_a^k t^k}R_0(g_0)\exp\Bigl(\frac{C' D}{k+1}(1+Dt)^{k+1}\Bigr)\\
&\leq \frac{1}{t^k}\mathcal F_{Rm}(g_0)\exp\Bigl(C''+\frac{C' D}{k+1}(1+Dt)^{k+1}\Bigr)\\
&\leq \frac{1}{t^k}\mathcal F_{Rm}(g_0)\exp\Bigl(C (1+D)(1+Dt)^{k+1}\Bigr).
\end{align*}
\end{proof}

\section{Compactness of sets of solutions}

In order to make a ``blow-up'' at a singular time, we will need a compactness result for solutions of our equations. We can have a similar approach to that of Hamilton in \cite{Ham95}, consisting in using the derivative estimates on the curvature to apply the following theorem:
\begin{thm}[Hamilton, see \cite{Ham95}]\label{thm:HamiltonCompactness}
Let $(M_i,g_i,x_i)_{i\in I}$ be a sequence of pointed complete Riemannian manifolds with uniform $C^0$ bounds on all the derivatives of the curvature. If the injectivity radius $\textrm{inj}_{x_i}(g_i)$ is uniformly bounded from below by a positive constant, we can find a converging subsequence in the pointed $C^\infty$ topology.
\end{thm}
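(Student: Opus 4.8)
The plan is to follow the standard Cheeger--Gromov scheme: reduce the global convergence statement to local $C^\infty$ estimates in well-chosen coordinate charts, extract convergent subsequences by the Arzel\`a--Ascoli theorem, and then assemble the local limits into a pointed limit manifold by a diagonal/exhaustion argument.

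First I would produce good coordinates near each $x_i$. Since $\norm{Rm_{g_i}}_\infty$ is uniformly bounded and $\mathrm{inj}_{x_i}(g_i)\geq i_0>0$, the Rauch comparison theorem gives that $\exp_{x_i}$ is a diffeomorphism onto $B_{x_i}(\rho)$ for some uniform $\rho>0$, and in geodesic normal coordinates the metric components $g_{i,\alpha\beta}$ together with $g_i^{\alpha\beta}$ are bounded above and below, uniformly in $i$. To upgrade regularity I would switch to harmonic coordinates (or invoke the estimates of Jost--Karcher / Anderson): the uniform two-sided bound on $g_i$ plus the bound on $\norm{Rm_{g_i}}_\infty$ yield harmonic charts of a uniform size $\rho'>0$ on which $\norm{g_i}_{C^{1,\gamma}}$ is controlled. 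Then the bounds on $\norm{\nabla^k Rm_{g_i}}_\infty$ feed into elliptic bootstrapping: in harmonic coordinates the Ricci tensor is an elliptic operator in the metric, and repeated differentiation shows that $\norm{g_i}_{C^{k+2,\gamma}}$ is bounded on a slightly smaller chart, uniformly in $i$, for every $k$.

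Next I would run the exhaustion. Fix $r>0$; cover $B_{x_i}(r)$ by a number of such harmonic charts bounded independently of $i$ (volume comparison applies, since curvature and injectivity radius are controlled). By Arzel\`a--Ascoli, after passing to a subsequence, on each chart the metric tensors converge in $C^\infty_{loc}$ and the transition functions converge as well; gluing produces a smooth Riemannian manifold $(U_r,g_\infty^{(r)})$ together with, for $i$ large, diffeomorphisms $\phi_i^{(r)}$ from a neighborhood of a basepoint $x_\infty$ onto a neighborhood of $x_i$ with $(\phi_i^{(r)})^*g_i\to g_\infty^{(r)}$ in $C^\infty$. Letting $r\to\infty$ along a diagonal subsequence, and using uniqueness of the limit to check compatibility of the $U_r$, produces the pointed limit $(M_\infty,g_\infty,x_\infty)$ and maps $\phi_i$ realizing pointed $C^\infty$ convergence; the two-sided bounds on the metric in the charts force $g_\infty$ to be complete.

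The main obstacle is not any single estimate but the bookkeeping needed to patch the local convergence into an honest pointed manifold: one must choose the charts, the decomposition of the balls $B_{x_i}(r)$, and the subsequences in a nested, compatible way so that the diffeomorphisms $\phi_i^{(r)}$ for different $r$ agree on overlaps in the limit and the limiting transition functions genuinely define a manifold. This is the technical heart of Hamilton's argument in \cite{Ham95}, and it is where the uniform lower bound on $\mathrm{inj}_{x_i}(g_i)$ — rather than on the global injectivity radius — enters, through the fact that a curvature bound propagates a definite lower bound on the injectivity radius outward from $x_i$ over bounded distances.
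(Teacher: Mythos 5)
The paper does not prove this statement at all: it is quoted as Hamilton's compactness theorem and simply cited to \cite{Ham95}, so there is no internal proof to compare yours against. Judged on its own, your sketch is the standard Cheeger--Gromov scheme and is essentially sound: uniform charts of definite size, uniform $C^{k}$ bounds on the metric components, Arzel\`a--Ascoli plus a diagonal argument over an exhaustion, and gluing via convergent transition functions. You also correctly identify the one genuinely delicate hypothesis-related point, namely that the injectivity radius is only assumed bounded below \emph{at the basepoints} $x_i$, and that a bound on $\norm{Rm_{g_i}}_\infty$ is what propagates a (distance-dependent but positive) lower bound on the injectivity radius over each ball $B_{x_i}(r)$; without that propagation lemma the covering step for $B_{x_i}(r)$ would be unjustified, so make sure it is invoked \emph{before} you count charts, not only at the end. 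One structural remark: your detour through harmonic coordinates and elliptic bootstrapping is the Anderson/Cheeger--Gromov--Taylor route, which is designed for the harder situation where only $\norm{Rm}_\infty$ (or $\norm{Rm}_{L^p}$) is controlled. Here the hypothesis already gives uniform $C^0$ bounds on \emph{all} covariant derivatives of the curvature, and Hamilton's own argument exploits this: one bounds all derivatives of the metric directly in exponential (or fixed background) coordinates by ODE/comparison arguments along geodesics, with no elliptic theory needed. Both routes prove the theorem; Hamilton's is more elementary under these hypotheses, while yours would generalize to weaker curvature control. Finally, the completeness of the limit is better justified by noting that each metric ball $B_{x_\infty}(r)$ in the limit is a $C^\infty$ limit of the complete balls $B_{x_i}(r)$ together with the exhaustion as $r\to\infty$, rather than by the two-sided metric bounds in charts alone.
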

 
However, as in \cite{Str08}, the compulsory use of integral estimates instead of pointwise ones forces us to add a few extra hypotheses. The proof of the following theorem is similar to that of Theorem 7.1 of \cite{Str08}:

\begin{thm}\label{thm:Compactness}
Let $(g_i(t),x_i)$, $t\in (-T_1,T_2)$ be pointed solutions of $E_P$ on a Riemannian manifold $M$.

Suppose that the family of metrics $(g_i(t))_{i\in \mathbb{N}, t\in (-T_1,T_2)}$ has a uniform (in $i$ and $t$) $C^0$ bound on curvature and doesn't collapse with bounded curvature. Suppose also that there exists $C$ such that $\lim_{t\to -T_1} \norm{Rm_{g_i}}_2\leq C$. Then there exists a subsequence of $(M,g_i(t),x_i)$ converging in the pointed $C^\infty$ topology to a pointed solution $(M_\infty,g_\infty(t),x_\infty)$ of $E_P$.
\end{thm}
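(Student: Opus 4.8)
The plan is to follow the standard Hamilton-style compactness argument (as in \cite{Ham95} and \cite{Str08}), adapted to the fourth order setting. The key point is that the non-collapsing hypothesis together with the uniform $C^0$ curvature bound gives a uniform lower bound on the injectivity radius at the basepoints (via Proposition~\ref{prop:collapsequivalence} and Lemma~\ref{lem:CheegerLemma}), while the hypotheses $\norm{Rm_{g_i}}_\infty\leq D$ uniformly, no collapsing, and $\lim_{t\to-T_1}\norm{Rm_{g_i}}_2\leq C$ feed into the Bando-Bernstein-Shi estimates of Theorem~\ref{thm:estimates} to produce, for every $k$ and every compact subinterval $[a,b]\subset(-T_1,T_2)$, a uniform bound $\sup_i\sup_{t\in[a,b]}\norm{\nabla^k Rm_{g_i(t)}}_\infty\leq C(k,a,b)$. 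Once these two ingredients are in place, the argument is essentially the one in \cite{Str08}, Theorem~7.1, which I would carry out in the following steps.

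First, I would fix a time $t_0$ in $(-T_1,T_2)$ and apply Theorem~\ref{thm:estimates} to the solution on the interval $(-T_1,t_0]$: since the $L^2$ norm of the curvature at times near $-T_1$ is bounded by $C$ and $\norm{Rm}_\infty\leq D$, one gets for each $k$ a bound on $\int_M\norm{\nabla^{2k}Rm_{g_i(t_0)}}^2 dv$ depending only on $n$, $P$, $k$, $C$, $D$, $t_0+T_1$, and hence (interpolating and using the non-collapsing to get a uniform local Sobolev inequality, via Proposition~\ref{prop:collapsequivalence}, so that $L^2$ bounds on enough derivatives upgrade to $L^\infty$ bounds) a uniform $C^0$ bound on $\nabla^k Rm_{g_i(t_0)}$ for all $k$. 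Applying Theorem~\ref{thm:HamiltonCompactness} at time $t_0$ to $(M,g_i(t_0),x_i)$ — with the injectivity radius bound coming from non-collapsing plus bounded curvature — extracts a subsequence converging in the pointed $C^\infty$ topology to a complete pointed manifold $(M_\infty,g_\infty,x_\infty)$; concretely, this produces an exhaustion of $M_\infty$ by relatively compact open sets, diffeomorphisms $\varphi_i$ from these onto open subsets of $M$, and $\varphi_i^*g_i(t_0)\to g_\infty$ in $C^\infty_{loc}$.

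Second, I would push forward the entire time-dependent solutions: consider $\tilde g_i(t)=\varphi_i^*g_i(t)$ on the exhausting sets, for $t$ in $(-T_1,T_2)$. The curvature derivative bounds above, together with the evolution equation $\partial_t g=P(g)$ (which expresses all time derivatives of $g_i$ in terms of covariant derivatives of curvature and the metric, schematically $\partial_t^m \tilde g_i = \mathcal P(\nabla^{\ast}Rm, g)$), give uniform $C^\infty_{loc}$ bounds — in both space and time — on the family $\tilde g_i(t)$ on every $K\times[a,b]$ with $K$ compact in $M_\infty$ and $[a,b]\subset(-T_1,T_2)$. Arzelà-Ascoli plus a diagonal argument over an exhaustion and a sequence of time subintervals then extracts a further subsequence with $\tilde g_i(t)\to g_\infty(t)$ in $C^\infty_{loc}$ on $M_\infty\times(-T_1,T_2)$, with $g_\infty(t_0)=g_\infty$. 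Passing to the limit in $\partial_t\tilde g_i=P(\tilde g_i)$, which is preserved under $C^\infty_{loc}$ convergence since $P$ is a fixed differential operator with metric-independent structure, shows that $g_\infty(t)$ is a solution of $E_P$ on $(-T_1,T_2)$; completeness of each $g_\infty(t)$ follows from the uniform two-sided bounds comparing $g_\infty(t)$ to $g_\infty(t_0)$ on compact sets (the metrics stay uniformly equivalent on compact time intervals because $\partial_t g$ is bounded).

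The main obstacle is the first step — obtaining the uniform $C^0$ bounds on \emph{all} covariant derivatives of the curvature at a fixed time from the \emph{integral} estimates of Theorem~\ref{thm:estimates}. Unlike the Ricci flow, where the maximum principle gives pointwise derivative bounds directly, here one only controls $\int_M\norm{\nabla^{2k}Rm}^2\,dv$, and converting this to a pointwise bound requires a Sobolev inequality whose constant is controlled uniformly along the sequence; this is exactly where the non-collapsing hypothesis is essential (it gives, via Proposition~\ref{prop:collapsequivalence}, uniform control of the Sobolev constants $\mathcal B_A$, hence a uniform local Sobolev embedding), and where the hypothesis $\lim_{t\to-T_1}\norm{Rm_{g_i}}_2\leq C$ is essential (it controls the $\mathcal F_{Rm}$ prefactor in Theorem~\ref{thm:estimates}, which would otherwise blow up). Care is needed because the estimate of Theorem~\ref{thm:estimates} degenerates as $t\to -T_1$ (the $1/t^k$ factor, after the time shift), so one must work at an interior time $t_0$ and then propagate forward and backward; getting the constants to be genuinely uniform in $i$ — in particular that the interpolation from $L^2$-bounds on $\nabla^{2k}Rm$ for large $k$ to $L^\infty$-bounds on $\nabla^j Rm$ for all $j$ uses only $n$, $D$, and the Sobolev constant — is the technical heart of the argument, but it proceeds exactly as in \cite{Str08}.
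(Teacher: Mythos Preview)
Your proposal is correct and follows essentially the same approach as the paper, which in fact does not give a detailed proof but simply states that ``the proof of the following theorem is similar to that of Theorem 7.1 of \cite{Str08}''. Your outline---non-collapsing plus bounded curvature to control injectivity radius and Sobolev constants, the Bando--Bernstein--Shi estimates of Theorem~\ref{thm:estimates} to get uniform $L^2$ bounds on all $\nabla^k Rm$ at interior times, upgrading to $C^0$ via the Sobolev inequalities of Proposition~\ref{prop:SobolevInfty}, then Hamilton's compactness at a fixed time followed by Arzel\`a--Ascoli in space-time---is exactly the argument the paper has in mind, and is consistent with how the same ingredients are assembled in the proof of Theorem~\ref{thm:singularity1}.
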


\section{Proofs of the main theorems}

\paragraph{Proof of Theorem \ref{thm:singularity1}}
We have to show that if $T<\infty$ and if there exists $D<\infty$ such that $\norm{Rm}_\infty(g_t)\leq D$ for all $t$ in $[0;T)$, then the family $(g_t)$ collapses with bounded curvature. 

Suppose that the family does not collapse with bounded curvature. Then by Proposition \ref{prop:collapsequivalence}, there exists $A>0$ and $B<\infty$ such that $\mathcal B_A(g_t)\leq B$ for all $t$ in $[0;T)$.

The estimates (Theorem \ref{thm:estimates}) show that all the derivatives of the curvature have uniform $L^2$ bounds on $[T/2,T)$:
\begin{align*}
\norm{\nabla^{2k}Rm_{g_t}}_2^2&\leq \mathcal F_{Rm}(g_0) \frac{e^{C(1+D)(1+Dt)^{k+1}}}{t^k}\\
&\leq C(n,k,P,T,D) \mathcal F_{Rm}(g_0).
\end{align*}

Then, by Sobolev inequalities (Proposition \ref{prop:SobolevInfty}), and as 
\begin{equation*}
\norm{\nabla^{2k+1}Rm}_2^2\leq C \norm{\nabla^{2k}Rm}_2\norm{\nabla^{2k+2}Rm}_2
\end{equation*}
(Lemma \ref{lem:HamiltonLemma1}), this shows that all the derivatives of the curvature have uniform $C^0$ bounds for the metric $g_t$ on $[T/2,T)$:
\begin{align*}
\norm{\nabla^i Rm}_{\infty,g_t}&\leq C \norm{\nabla^i Rm}_2^{\frac{1}{n+1}}\norm{\nabla^i Rm}_{H_{1}^{2}(A)}^{\frac{n-2k+1}{n+1}}\norm{\nabla^i Rm}_{H_{k}^{2}(A)}^{\frac{2k-1}{n+1}}\\
&\leq C(n,i,P,T,A,B,D) \mathcal F_{Rm}(g_0)^{\frac{1}{2}},
\end{align*}
where $k=[\frac{n}{2}]+1$.

Then, by a classical argument, all the metrics $g_t$ are equivalent and the solution extends beyond $T$. This contradicts the maximality of the solution.\qed

\paragraph{Proof of Theorem \ref{thm:YamabeSingularity}}
The third case of Theorem \ref{thm:singularity1} cannot occur. Indeed, if the Yamabe invariant is uniformly bounded from below by a positive constant, and if the curvature has a uniform $C^0$ bound, Proposition \ref{prop:YamabeCollapse} assures that the family $(g_t)$ doesn't collapse with bounded curvature.\qed

\paragraph{Proof of Theorem \ref{thm:zoomin}}
\noindent 1. If the curvature remains uniformly bounded by some constant $D$ along the flow, then by Theorem \ref{thm:YamabeSingularity}, $T=\infty$. 

Moreover, as $\norm{Rm_{g_t}}_2\leq C(1+\norm{Rm}_\infty)^{1-\frac{n}{4}}$, $\norm{Rm_{g_t}}_2$ also remains uniformly bounded.

We can now apply the estimates of Theorem \ref{thm:estimates} to $(g_t)_{t\in [N,N+2)}$, solution of $E_P(g(N))$ where $N$ is a nonnegative integer. It shows that for all $t\in (N,N+2)$
\begin{equation*}
\int_M \norm{\nabla^{2k}Rm_{g_t}}dv_{g_t}\leq \mathcal F_{Rm}(g(N)) \frac{e^{C'(1+D)(1+(t-N)D)^{k+1}}}{(t-N)^k},
\end{equation*}
then by using the assumption on $\norm{Rm}_2$, for all $t\in [N+1,N+2)$:
\begin{equation*}
\int_M \norm{\nabla^{2k}Rm_{g_t}}dv_{g_t}\leq C (1+D)^{1-\frac{n}{4}}e^{C'(1+D)(1+2D)^{k+1}}.
\end{equation*}
It follows that all the derivatives of the curvature have a uniform $L^2$ bound on $[0,\infty)$.

Furthermore, by Proposition \ref{prop:RpBoundsB} with $p=\infty$, there exists $A$ such that $\mathcal B_A$ has a uniform bound. Then the metric doesn't collapse with bounded curvature by Proposition \ref{prop:collapsequivalence}, and the Sobolev inequality of Proposition \ref{prop:SobolevInfty} shows that the curvature has uniform $C^k$ bounds.

By compactness theorem \ref{thm:HamiltonCompactness}, we can find a convergent subsequence of every sequence $(M,g(t_i),x_i)$.

\vspace*{10pt}

\noindent 2. If $\underset{t\to T}{\varlimsup}\ \norm{Rm_{g(t)}}_\infty=\infty$, we can choose a sequence $(t_i)$ such that
\begin{equation*}
\norm{Rm_{g(t_i)}}_\infty=\sup_{t\leq t_i}\norm{Rm_{g(t)}}_\infty\text{,\quad }t_i\to T\text{\quad  and \quad }\norm{Rm_{g(t_i)}}_\infty\to\infty.
\end{equation*}
Let define $\alpha_i=\norm{Rm_{g(t_i)}}_\infty$ and 
\begin{equation*}
g_i(t)=\alpha_i g\left(t_i+\frac{t}{\alpha_i^2}\right).
\end{equation*}

Then $g_i$ is a solution of $E_P$ on $\left[-\alpha_i^2 t_i,\alpha_i^2(T-t_i)\right)$.

Let choose $T_1>0$. Then for $i$ big enough, $g_i$ are solutions of $E_P$ on $[-T_1,0]$

The curvature of $g_i(t)$ is uniformly bounded by $1$, and its Yamabe invariant is uniformly bounded from below by $Y_0$ (as it is scale invariant). By Proposition \ref{prop:YamabeCollapse}, the family doesn't collapse with bounded curvature. Moreover, as $\norm{Rm_{g_t}}_2\leq C(1+\norm{Rm_{g_t}}_\infty)^{1-\frac{n}{4}}$, for $i$ big enough
\begin{equation*}
\norm{Rm_{g_i(-T_1)}}_2\leq C\biggl(\frac{1}{\alpha_i}\Bigl(1+\bigl|Rm_{g(t_i-\frac{T_1}{\alpha_i^2})}\bigr|_\infty\Bigr)\biggr)^{1-\frac{n}{4}}\leq 2C.
\end{equation*}

Let choose $x_i$ such that $\norm{Rm_{g(t_i)}(x_i)}=\alpha_i$. 

Compactness theorem \ref{thm:Compactness} applies, and shows that a subsequence of $(M,g_i(t),x_i)$ converges  in the pointed $C^\infty$  topology to a pointed solution $(M_\infty,g_\infty(t),x_\infty)$.

Moreover, $\norm{Rm_{g_\infty(0)}(x_\infty)}=1$, so the limit manifold $(M_\infty,g_\infty(0))$ is not flat.

Now, suppose that there exists some $p>\frac{n}{2}$ such that $\norm{Rm_{g(t)}}_p$ is uniformly bounded. Then  $\norm{Rm_{g_i(0)}}_p=\alpha_i^{\frac{n}{2p}-1}\norm{Rm_{g(t_i)}}_p$ tends to $0$, so $g_\infty(0)$ is flat, a contradiction. \qed

\paragraph{Proof of Theorem \ref{thm:4dimSingularity}}
We will need the following Lemma:
\begin{lem}\label{lem:VolumeCompactLemma}
If $(M,g)$ is a complete Riemannian manifold such that 
\begin{equation*}
\inf_{x\in M} Vol_g(B_x(1))>0,
\end{equation*}
then $M$ is compact if and only if $(M,g)$ has finite volume.
\end{lem}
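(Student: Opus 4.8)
The plan is to establish the non-trivial implication by proving its contrapositive: assuming $(M,g)$ is complete and \emph{not} compact, with $v_0 := \inf_{x\in M} Vol_g(B_x(1)) > 0$, I will show that $Vol_g(M) = +\infty$. The reverse implication is immediate, since a compact Riemannian manifold always has finite volume.

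\emph{Step 1: extract an infinite $2$-separated family of points.} I construct a sequence $(x_i)$ greedily. Pick $x_1\in M$ arbitrarily; having chosen $x_1,\dots,x_k$, stop if $M=\bigcup_{i\le k}B_{x_i}(2)$, and otherwise pick $x_{k+1}\in M\setminus\bigcup_{i\le k}B_{x_i}(2)$. If this procedure ever halts, then $M$ is a finite union of balls of radius $2$, hence bounded; since $(M,g)$ is complete, the Hopf--Rinow theorem implies that closed bounded subsets are compact, so $M$ itself would be compact, contradicting our hypothesis. Hence the procedure never halts and produces an infinite sequence $(x_i)_{i\ge1}$ with $d_g(x_i,x_j)\ge 2$ whenever $i\ne j$ (indeed $x_{k+1}\notin B_{x_i}(2)$ for every $i\le k$).

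\emph{Step 2: disjointness and volume summation.} The balls $B_{x_i}(1)$ are pairwise disjoint: if $z\in B_{x_i}(1)\cap B_{x_j}(1)$ with $i\ne j$, then $d_g(x_i,x_j)\le d_g(x_i,z)+d_g(z,x_j)<2$, a contradiction. Therefore
\[
Vol_g(M)\ \ge\ \sum_{i\ge1} Vol_g(B_{x_i}(1))\ \ge\ \sum_{i\ge1} v_0\ =\ +\infty,
\]
which gives the desired conclusion and finishes the proof.

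This argument involves no real analytic difficulty: the uniform lower bound on the volume of unit balls does all the work, and, in contrast with Lemma~\ref{lem:CheegerLemma}, no curvature bound is needed for this particular statement. The only points meriting a little care are the dichotomy underlying the greedy construction in Step~1 and the (standard) appeal to Hopf--Rinow to upgrade ``bounded'' to ``compact'' for a complete manifold; I do not expect either to pose an obstacle.
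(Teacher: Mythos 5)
Your proof is correct and follows essentially the same strategy as the paper: produce an infinite sequence of points pairwise at distance at least $2$, so that the unit balls around them are disjoint and each has volume at least $v_0$, forcing $Vol_g(M)=\infty$. The only (inessential) difference is how the $2$-separated sequence is obtained — you use a greedy maximal packing together with Hopf--Rinow, while the paper picks $x_k\in B_x(k+1)\smallsetminus B_x(k)$ at increasing distance from a fixed point and keeps every third one.
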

\begin{proof}
If $M$ is compact, it has finite volume. Suppose that $M$ is not compact and choose $x$ in $M$. We can find a sequence of points $(x_k)$ such that $x_k$ is in $B_x(k+1)\smallsetminus B_x(k)$. Then the balls $B_{x_{3k}}(1)$ are two by two disjoint, and thus 
\begin{equation*}
Vol_g(M)\geq \sum_{k\geq 0} Vol_g(B_{x_{3k}}(1))=\infty.
\end{equation*}
\end{proof}

\begin{proof}[Proof of the Theorem]
We can suppose that $\mathcal F^\alpha(g_0)< (1-\alpha)8\pi^2\chi(M)$. Indeed, if the equality holds, as
\begin{equation*}
\partial_t \mathcal F^\alpha(g_t)=-2\int_M \norm{\nabla \mathcal F^\alpha(g_t)}^2 dv_{g_t},
\end{equation*}
either $g_0$ is a critical point of $\mathcal F^\alpha$, and the solution of $E_P(g_0)$ is constant, or the inequality becomes immediately strict for $t>0$. Then, $\mathcal F^\alpha$ being decreasing along its gradient flow, the inequality is preserved.

Lemma~\ref{lem:StreetsYamabeLemma} shows that $Y_{g(t)}\geq Y_0$, with 
\begin{equation*}
Y_0=(\frac{2}{3}((1-\alpha)8\pi^2\chi(M)-\mathcal F^\alpha(g_0)))^{\frac{1}{2}}>0.
\end{equation*}
Because we chose $\alpha>0$, $\norm{Rm}_2$ is uniformly bounded along the flow. Indeed, since $\mathcal F^\alpha$ is decreasing, $\mathcal F^\alpha(g_t)\leq \mathcal F^\alpha(g_0)$. It follows that
\begin{align}
\norm{W}_2^2&\leq \frac{1}{1-\alpha}\mathcal F^\alpha(g_0),\\
\normm{\overset{\circ}{Ric}}_2^2&\leq \frac{2}{\alpha} \mathcal F^\alpha(g_0),\\
\norm{R}_2^2&=24\mathcal F_2+12\normm{\overset{\circ}{Ric}}_2^2\leq 24(8\pi^2\chi(M)+\frac{1}{\alpha} \mathcal F^\alpha(g_0)).\label{eq:R2bound}
\end{align}

We can apply Theorem \ref{thm:zoomin}, and we only have to show that the limit manifold is non-compact, Bach-flat and scalar-flat when a singularity appears, and that we can choose $(t_i)$ such that the limit is diffeomorphic to $M$ and critical for $\mathcal F^\alpha$ when there is no singularity.

First note that the volume is constant along the flow:
\begin{equation*}
\partial_t Vol_g(M)=\int_M \frac{1}{2}\tr(P_g) dv_g=0,
\end{equation*}
and that since $(g_i)$ is not collapsing with bounded curvature, the limit manifold satisfies  
\begin{equation*}
\inf_{x\in M} Vol_g(B_x(1))>0.
\end{equation*}

In the first case, when the curvature remains bounded, this implies that the limit manifold $M_\infty$ has finite volume, then is compact by Lemma~\ref{lem:VolumeCompactLemma}. By the definition of the pointed $C^\infty$ topology, this implies that $M_\infty$ is diffeomorphic to $M$.

In the second case, when a singularity occurs, as $Vol_{g_i}(M)=\alpha_i^{\frac{n}{2}}Vol_g(M)$ tends to infinity, the limit manifold cannot be compact, since it would be diffeomorphic to $M$ and of infinite volume.

\vspace*{10pt}

Furthermore, for all $t$ in $[0,T)$,
\begin{equation*}
\int_{0}^t \norm{\nabla \mathcal F^\alpha(g_s)}_2^2 ds=\mathcal F^\alpha(g_0)-\mathcal F^\alpha(g_t),
\end{equation*}
therefore 
\begin{equation*}
\int_{0}^T \norm{\nabla \mathcal F^\alpha(g_s)}_2^2 ds\leq\mathcal F^\alpha(g_0)<\infty.
\end{equation*}

If there is no singularity, we get
\begin{equation*}
\int_{0}^\infty \norm{\nabla \mathcal F^\alpha(g_s)}_2^2 ds<\infty,
\end{equation*}
so there exists a sequence $t_i\to\infty$ such that $\norm{\nabla \mathcal F^\alpha(g_{t_i})}_2\to 0$. If we take this sequence in Theorem \ref{thm:zoomin}, any converging subsequence converges to a critical point of $\mathcal F^\alpha$.

At a singularity, by a change of variable:
\begin{align*}
\int_{-T_1}^0 \norm{\nabla \mathcal F^\alpha(g_i(t))}_2^2 dt&=\int_{t_i-\frac{T_1}{\alpha_i^2}}^{t_i} \norm{\nabla \mathcal F^\alpha(g(t))}_2^2 dt,
\end{align*}
which goes to zero when $i$ goes to infinity. This shows that $\norm{\nabla \mathcal F^\alpha(g_\infty(0))}_2=0$, therefore $g_\infty$ is a critical point for $\mathcal F^\alpha$.

Taking the trace of $\nabla \mathcal F^\alpha(g_\infty(0))=0$, we see that the scalar curvature of $g_\infty$ is harmonic, and since it has bounded $L^2$ norm, Theorem $3$ of \cite{Yau76} shows that it has to be constant. And since $(M,g_\infty)$ has infinite volume (by Lemma~\ref{lem:VolumeCompactLemma}), the limit manifold is scalar-flat. Then $\nabla \mathcal F^\alpha(g_\infty(0))=\nabla \mathcal F_W(g_\infty(0))$, so $g_\infty$ is also Bach-flat.

\end{proof}

\paragraph{Proof of Theorem \ref{thm:smallenergy}}
We will need the following lemma:
\begin{lem}\label{lem:YamabeLimit}
If $(M_i,g_i,x_i)$ converges to $(M_\infty,g_\infty, x_\infty)$ in the pointed $C^\infty$ topology, then
\begin{equation*}
Y_{g_\infty}\geq \underset{i\to\infty}{\varlimsup}Y_{g_i}.
\end{equation*}
\end{lem}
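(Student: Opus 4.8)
The plan is to prove the claimed lower semicontinuity by transplanting near-optimal test functions for the Yamabe quotient from $M_\infty$ to each $M_i$ through the diffeomorphisms supplied by pointed $C^\infty$ convergence, exploiting that every ingredient of the quotient is stable under $C^2$-convergence of metrics on a fixed compact set. For a metric $g$ on a manifold $N$ write
\[
\mathcal E_g(u)=\frac{\int_N \norm{\nabla u}_g^2+\frac{n-2}{4(n-1)}R_g\,u^2\,dv_g}{\norm{u}_{\frac{2n}{n-2},g}^2},\qquad u\in C_c^\infty(N)\setminus\{0\},
\]
so that $Y_g=\inf_u \mathcal E_g(u)$: although $Y_g$ is defined as an infimum over $H_1^2$, it is classical that for a complete manifold the infimum is unchanged if restricted to $C_c^\infty$ (see \cite{Heb96}; alternatively one truncates an almost-optimal $H_1^2$ function by a cut-off, using that $R_{g_\infty}$ is locally bounded). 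The point is that the inverse metric, the volume density and the scalar curvature all depend on at most two derivatives of $g$, hence converge uniformly on compact sets along a $C^2$-converging sequence of metrics.

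First I would fix $\epsilon>0$ and choose $u\in C_c^\infty(M_\infty)$, $u\not\equiv 0$, with $\mathcal E_{g_\infty}(u)\leq Y_{g_\infty}+\epsilon$; let $K=\mathrm{supp}\,u$, compact. By the definition of pointed $C^\infty$ convergence, for $i$ large there are open sets $\Omega_i\subset M_\infty$ with $K\subset\Omega_i$ and diffeomorphisms $\phi_i\colon\Omega_i\to\phi_i(\Omega_i)\subset M_i$ with $\phi_i^*g_i\to g_\infty$ in $C^\infty$ on $\Omega_i$, in particular in $C^2(K)$. Set $u_i=(\phi_i^{-1})^*u$ on $\phi_i(\Omega_i)$ and extend it by $0$; since $u_i$ has compact support $\phi_i(K)$ inside the open set $\phi_i(\Omega_i)$, this gives $u_i\in C_c^\infty(M_i)\setminus\{0\}$. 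Since integration is $\phi_i$-invariant,
\[
\int_{M_i}\Bigl(\norm{\nabla u_i}_{g_i}^2+\tfrac{n-2}{4(n-1)}R_{g_i}u_i^2\Bigr)dv_{g_i}=\int_K\Bigl(\norm{\nabla u}_{\phi_i^*g_i}^2+\tfrac{n-2}{4(n-1)}R_{\phi_i^*g_i}u^2\Bigr)dv_{\phi_i^*g_i},
\]
and likewise $\norm{u_i}_{\frac{2n}{n-2},g_i}^2=\bigl(\int_K\norm{u}^{\frac{2n}{n-2}}dv_{\phi_i^*g_i}\bigr)^{\frac{n-2}{n}}$. As $u$ is fixed and $\phi_i^*g_i\to g_\infty$ in $C^2(K)$, the integrands converge uniformly on $K$, so the right-hand sides converge to the corresponding integrals for $g_\infty$; since the denominator tends to $\norm{u}_{\frac{2n}{n-2},g_\infty}^2>0$, we obtain $\mathcal E_{g_i}(u_i)\to\mathcal E_{g_\infty}(u)$.

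Consequently $Y_{g_i}\leq\mathcal E_{g_i}(u_i)\leq Y_{g_\infty}+2\epsilon$ for $i$ large, hence $\varlimsup_i Y_{g_i}\leq Y_{g_\infty}+2\epsilon$, and letting $\epsilon\to 0$ gives the lemma. The only genuinely non-soft ingredient is the reduction of $Y_{g_\infty}$ to an infimum over compactly supported functions — this is the step I expect to be the main point to justify carefully; it is standard, and in every application $R_{g_\infty}$ is in any case bounded (the limit is either a blow-up limit or a limit along a flow with uniformly bounded curvature), which makes the truncation argument immediate. Note that only this one-sided inequality can be expected, since a test function on $M_i$ need not concentrate in a region embedding uniformly into $M_\infty$.
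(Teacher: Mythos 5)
Your proof is correct and follows essentially the same route as the paper: transplant compactly supported test functions from $M_\infty$ to $M_i$ through the diffeomorphisms given by pointed $C^\infty$ convergence, use uniform convergence of $\phi_i^*g_i$ on the support to pass to the limit in the Yamabe quotient, and reduce $Y_{g_\infty}$ to an infimum over $C_c^\infty(M_\infty)$ (the paper invokes density of $C_c^\infty$ in $H_1^2(M_\infty,g_\infty)$ at exactly the point you flag as the only non-soft step). The only cosmetic difference is that you fix a near-optimal test function and track an $\epsilon$, while the paper bounds the energy of every compactly supported test function below by $\varlimsup_i Y_{g_i}$ and then takes the infimum.
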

\begin{proof}
There exists diffeomorphisms $\phi_i:U_i\subset M_\infty\to V_i\subset M_i$, with $\{U_i\}$ an exhaustion, such that $\phi_i^*g_i$ converges to $g_\infty$.

Let $u$ be in $C_c^\infty(M_\infty)$ with $\int_M\norm{u}^{\frac{2n}{n-2}}dv_{g_\infty}=1$. Since it has compact support, $\mathrm{supp}(u) \subset U_i$ for $i$ big enough. Let define $u_i=u\circ \phi_i^{-1} \in H_1^2(M)$ (extended by $0$ outside $V_i$).

Then,
\begin{align*}
&\int_{M_\infty}\norm{\nabla u}^2_{\phi^*g_i} + \frac{n-2}{4(n-1)}  R_{\phi^*g_i} u^2 dv_{\phi^*g_i}\\
=&\int_{M}\norm{\nabla u_i}^2_{g_i}  + \frac{n-2}{4(n-1)}  R_{g_i} u_i^2 dv_{g_i}\\
\geq&\,Y_{g_i} \Bigl(\int_M\norm{u_i}^{\frac{2n}{n-2}}dv_{g_i}\Bigr)^{\frac{n-2}{n}}\\
=&\,Y_{g_i} \Bigl(\int_M\norm{u}^{\frac{2n}{n-2}}dv_{\phi^*g_i}\Bigr)^{\frac{n-2}{n}},
\end{align*}
therefore
\begin{equation*}
\int_{M_\infty}\norm{\nabla u}^2_{g_\infty} + \frac{n-2}{4(n-1)}  R_{g_\infty} u^2 dv_{g_\infty}\geq \underset{i\to\infty}{\varlimsup}Y_{g_i},
\end{equation*}
and since $C_c^\infty(M_\infty)$ is dense in $H_1^2(M_\infty,g_\infty)$, it follows that
\begin{equation*}
Y_{g_\infty}\geq \underset{i\to\infty}{\varlimsup}Y_{g_i}.
\end{equation*}
\end{proof}

\begin{proof}[Proof of the Theorem]
We want to use Theorem~\ref{thm:4dimSingularity} to show that no singularity occurs. Suppose that $\underset{t\to T}{\varlimsup}\ \norm{Rm_{g_t}}_\infty=\infty$. We apply Theorem~\ref{thm:rigidity} to the limit manifold to obtain a contradiction. 

Since $\mathcal F^\alpha(g)$ is decreasing along the flow, all the manifolds $(M,g_t)$ satisfy the bound 
\begin{equation*}
\mathcal F^\alpha(g_t)\leq \mathcal F^\alpha(g_0)\leq 2\alpha(\pi^2\chi(M)-\epsilon)\text{\qquad if }\alpha\leq \frac{4}{13}
\end{equation*}
or
\begin{equation*}
\mathcal F^\alpha(g_t)\leq \mathcal F^\alpha(g_0)\leq \frac{8}{9}\alpha(\pi^2\chi(M)-\epsilon)\text{\qquad if }\alpha\geq \frac{4}{13}
\end{equation*}
for some $\epsilon>0$.

Then, according to Proposition~\ref{prop:equiBounds}, all the manifolds $(M,g_t)$ satisfy the inequality:
\begin{equation*}
\mathcal F_W(g_t)+\frac{1}{4}\mathcal F_{\rst}(g_t)\leq\frac{3}{16}Y_{g_t}^2-\epsilon,
\end{equation*}
and since $\mathcal F_W$, $\mathcal F_{\rst}$ and $Y$ are scale invariant, the rescaled manifolds $(M,g_i)$ (see the proof of Theorem~\ref{thm:zoomin}) satisfy the same inequality.

Consequently, according to Lemma~\ref{lem:YamabeLimit}, the limit manifold $(M_\infty,g_\infty)$ satisfy:
\begin{equation*}
\mathcal F_W(g_\infty)+\frac{1}{4}\mathcal F_{\rst}(g_\infty)\leq  \underset{i\to\infty}{\varliminf}\bigl(\mathcal F_W(g_t)+\frac{1}{4}\mathcal F_{\rst}(g_t)\bigr)\leq \underset{i\to\infty}{\varlimsup}\bigl(\frac{3}{16}Y^2_{g_i}-\epsilon\bigr)\leq \frac{3}{16}Y^2_{g_\infty}-\epsilon.
\end{equation*}
But $(M_\infty,g_\infty)$ is non-flat, Bach-flat, scalar-flat, and satisfies 
\begin{equation*}
Y_{g_\infty}\geq \underset{i\to\infty}{\varlimsup}Y_{g_i}\geq Y_0>0,
\end{equation*}
and
\begin{equation*}
\int_{M_\infty} R_{g_\infty}^2 dv_{g_\infty}\leq 24(8\pi^2\chi(M)+\frac{1}{\alpha}\mathcal F^\alpha(g_0))
\end{equation*}
according to (\ref{eq:R2bound}) in the proof of Theorem~\ref{thm:4dimSingularity}, so Theorem~\ref{thm:rigidity} with $\alpha=0$ asserts that it is flat, a contradiction.

Consequently, the flow exists for all time, and there exists a sequence $(t_i)$ such that $g_{t_i}$ converges to a metric $g_\infty$ critical for $\mathcal F^\alpha$, with
\begin{equation*}
\mathcal F_W(g_\infty)+\frac{1}{4}\mathcal F_{\rst}(g_\infty)\leq \frac{3}{16}Y^2_{g_\infty}-\epsilon,
\end{equation*}
and that satisfies:
\begin{equation*}
Y_{g_\infty}\geq \underset{i\to\infty}{\varlimsup}Y_{g_i}\geq Y_0>0.
\end{equation*}
According to Theorem~\ref{thm:rigidity}, $g_\infty$ is a metric of positive constant sectional curvature (since $Y_{g_\infty}>0$), then $M$ is diffeomorphic to the sphere of the real projective space. 
\end{proof}

\paragraph{Proof of Corollary \ref{cor:confPinching}}

As the assumptions are conformally invariant, we can suppose that $g_0$ is a Yamabe metric. Then 
\begin{equation*}
Y_{g_0}^2=\frac{1}{36}\mathcal F_R(g_0),
\end{equation*}
and it follows that 
\begin{equation*}
\mathcal F_W(g_0)+\frac{4}{13}\frac{1}{24}\mathcal F_R(g_0)<\frac{40}{13}\pi^2\chi(M).
\end{equation*}
According to the Gauss-Bonnet formula, we get:
\begin{equation*}
\frac{9}{13}\mathcal F_W(g_0)+\frac{2}{13}\mathcal F_{\rst}(g_0)<\frac{8}{13}\pi^2\chi(M),
\end{equation*}
then we can apply Corollary~\ref{cor:pinching} to conclude.
\qed

\paragraph{Proof of Theorem \ref{thm:3dimSingularity}}
When $\alpha$ is positive, $\mathcal G^\alpha$ controls $\mathcal F_{Rm}$, and as $\mathcal G^\alpha$ is decreasing along the flow, the $L^2$ norm of the curvature is bounded along the flow:
\begin{align*}
\normm{\rstg}_2^2&\leq \mathcal G^\alpha (g_0),\\
\norm{R_g}_2^2&\leq \frac{1}{\alpha} \mathcal G^\alpha(g_0).
\end{align*}

Then, since $2>\frac{n}{2}$, no singularity can appear according to Theorem~\ref{thm:zoomin}. Moreover,
\begin{equation*}
\int_{0}^\infty \norm{\nabla \mathcal G^\alpha(g_t)}_2^2 dt\leq \mathcal G^\alpha(g_0),
\end{equation*}
therefore there exists a sequence $(t_i)$ such that $\norm{\nabla \mathcal G^\alpha(g_{t_i})}_2$ tends to zero. By Theorem \ref{thm:zoomin}, a subsequence of $g_{t_i}$ converges to a manifold $(M_\infty,g_\infty,x_\infty)$, $g_\infty$ being critical for $\mathcal G^\alpha$.\qed

\section{Appendix}

\subsection{Operators on double-forms}\label{subsec:Operators}

We use the formalism of double-forms of Labbi (see \cite{Lab08}) and define the following operators (we point out that our definition of $D$ and $\tilde D$ differs by a sign from that of \cite{Lab08}):

\begin{alignat*}{3}
&\dv: \T{p+1}{q} \to \T{p}{q}&\quad &by \quad &(\dv T)_{i_1 \dotsc i_p}{}^{j_1\dotsc j_q}&=-\nabla^\alpha T_{\alpha i_1 \dotsc i_p}{}^{j_1\dotsc j_q},\\
&\tdv: \T{p}{q+1} \to \T{p}{q}&\quad &by \quad &(\tdv T)_{i_1 \dotsc i_p}{}^{j_1\dotsc j_q}&=-\nabla_\alpha T_{i_1 \dotsc i_p}{}^{\alpha j_1\dotsc j_q},\\
&\D: \T{p}{q} \to \T{p+1}{q}&\quad &by \quad &(\D T)_{i_0 \dotsc i_p}{}^{j_1\dotsc j_q}&=\sum_{k=0}^p (-1)^k \nabla^{i_k} T_{i_0 \dotsc \hat{i_k} \dotsc i_p}{}^{j_1\dotsc j_q},\\
&\tD: \T{p}{q} \to \T{p}{q+1}&\quad &by \quad &(\tD T)_{i_1 \dotsc i_p}{}^{j_0\dotsc j_q}&=\sum_{k=0}^p (-1)^k \nabla_{j_k} T_{i_1 \dotsc  i_p}{}^{j_0\dotsc \hat{j_k} \dotsc j_q},\\
&\mathsf{tr}: \T{p+1}{q+1} \to \T{p}{q}&\quad &by \quad &(\tr{T})_{i_1 \dotsc i_p}{}^{j_1\dotsc j_q}&=T_{\alpha i_1 \dotsc  i_p}{}^{\alpha j_1\dotsc j_q}.
\end{alignat*}

\begin{defn}
We say that $T\in\T{p}{q}$ is a double-form if 
\begin{itemize}
\item For all $\zeta_1, \dotsc, \zeta_q$ in $T^*M$, $T(\cdotp,\dotsc,\cdotp,\zeta_1,\dotsc,\zeta_q)$ is a $p$-form.
\item For all $X_1,\dotsc,X_p$ in $TM$, $T(X_1,\dotsc,X_p,\cdotp,\dotsc,\cdotp)$ is a $q$-form.
\end{itemize}

We note $\Df{p}{q}$ the space of $(p,q)$ double-forms. 
\end{defn}

For example, endomorphisms seen as $(1,1)$ tensors and curvature operators seen as $(2,2)$ tensors are double-forms.

If $S$ and $T$ are in $\Df{p}{q}$, let define their scalar product by
\begin{equation*}
\ps{S}{T}=\frac{1}{p!q!}g^{i_1 j_1}\dotsb g^{i_p j_p}g_{k_1 l_1}\dotsb g_{k_q l_q}S_{i_1\dotsc i_p}^{k_1\dotsc k_q}T_{j_1 \dotsc j_p}^{l_1 \dotsc l_q}.
\end{equation*}

The operators defined above preserve the double-forms. Moreover, in the space of double-forms, $\D$ is the adjoint of $\dv$ and $\tD$ is the  adjoint of $\tdv$ for the scalar product $\psr{S}{T}=\int_M \ps{S}{T} dv_g$.

We also write 
\begin{equation*}
\ps{\nabla^k S}{\nabla^k T}=g^{i_1 j_1}\dotsb g^{i_k j_k}\ps{\nabla_{i_1}\dotsb \nabla_{i_k}S}{\nabla_{j_1}\dotsb \nabla_{j_k}T}.
\end{equation*}

If $T$ is in $\Df{p}{q}$, then
\begin{equation*}
\tr\,\dv=(-1)^{p+q}\dv\,\tr.
\end{equation*}

On the space of double-forms:
\begin{equation*}
\tr\D+\D\tr=-\tdv \hspace*{50pt}\tr\tD+\tD\tr=-\dv.
\end{equation*}

The second Bianchi identity leads to:
\begin{prop}\label{prop:Bianchi}
\begin{alignat*}{2}
\D Rm_g &=\tD Rm_g=0&\hspace*{50pt}\tdv Ric_g&=-\frac{1}{2}\D R_g,\\
\tdv Rm_g&=-\D Ric_g&\tdv W_g&=-\frac{n-3}{n-2}\D A_g.
\end{alignat*}
\end{prop}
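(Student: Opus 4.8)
The plan is to read all four identities off the second Bianchi identity, together with the elementary commutation relations between $\tr$, $\D$, $\tD$ and $\dv$, $\tdv$ on double-forms collected at the start of Section~\ref{subsec:Operators}. For the first assertion, unwinding the definition of $\D$ shows that $(\D Rm_g)_{i_0 i_1 i_2}{}^{j_1 j_2}$ is, after lowering $i_0$, exactly the alternating sum $\nabla_{i_0}Rm_{i_1 i_2 j_1 j_2}-\nabla_{i_1}Rm_{i_0 i_2 j_1 j_2}+\nabla_{i_2}Rm_{i_0 i_1 j_1 j_2}$, which vanishes by the second Bianchi identity; hence $\D Rm_g=0$. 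Because $Rm_g$ is symmetric under exchanging its two pairs of indices, the operator $\tD$ produces the same alternating sum in the upper pair, so $\tD Rm_g=0$ as well.

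For $\tdv Rm_g=-\D Ric_g$, I would apply the commutation identity $\tr\D+\D\tr=-\tdv$ to $Rm_g$: since $\tr Rm_g=Ric_g$ and $\D Rm_g=0$, this gives $\D Ric_g=-\tdv Rm_g$. Equivalently one applies $\tr\tD+\tD\tr=-\dv$ to $Rm_g$ and transposes the two pairs. For $\tdv Ric_g=-\tfrac12\D R_g$, I would contract $\tdv Rm_g=-\D Ric_g$ once more with the metric, which amounts to the classical twice-contracted second Bianchi identity $\nabla^j Ric_{ij}=\tfrac12\nabla_i R$; since $Ric_g$ is symmetric, $\tdv Ric_g=\dv Ric_g=-\tfrac12\D R_g$.

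Finally, for $\tdv W_g=-\frac{n-3}{n-2}\D A_g$, I start from the Weyl decomposition, which (regrouping the decomposition recalled at the beginning of Section~\ref{sec:GradientFlows}) reads $Rm_g=W_g+\frac{1}{n-2}\,A_g\wedge g$ with $A_g=Ric_g-\frac{1}{2(n-1)}R_g\,g$. Applying $\tdv$ and using $\tdv Rm_g=-\D Ric_g$ reduces the problem to computing $\tdv(A_g\wedge g)$. Expanding this directly from the definition of the Kulkarni--Nomizu product produces a fixed combination of $\D A_g$, of $\D\bigl((\tr A_g)\,g\bigr)$, and of the divergence $\dv A_g$; the Schouten identity $\dv A_g=-\D(\tr A_g)$ — which is exactly the contracted Bianchi identity above, written as $\nabla^k A_{ik}=\nabla_i\tr A$ — collapses it to $\tdv(A_g\wedge g)=-\D\bigl(A_g+(\tr A_g)\,g\bigr)$. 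Substituting $\tr A_g=\frac{n-2}{2(n-1)}R_g$ and $Ric_g=A_g+\frac{1}{2(n-1)}R_g\,g$, the $R_g$-terms cancel and the coefficients of $\D A_g$ combine to $\frac{1}{n-2}-1=-\frac{n-3}{n-2}$; this is consistent with $W_g=0$ and $\tdv W_g=0$ in dimension three.

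The first three identities are just bookkeeping on top of the (contracted) second Bianchi identity and the double-form commutation relations. The genuinely delicate step is the last one: getting the product rule for $\tdv$ applied to a Kulkarni--Nomizu product with $g$ correct — this is where the dimension-dependence enters — and then verifying that the numerical coefficients conspire to give exactly $-\frac{n-3}{n-2}$; a sign or normalization slip there is the easiest error to make.
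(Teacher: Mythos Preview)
Your argument is correct. The paper does not actually supply a proof of this proposition: it is stated immediately after the sentence ``The second Bianchi identity leads to:'' and left to the reader. Your write-up fills in exactly the details one would expect, using the commutation relation $\tr\D+\D\tr=-\tdv$ for the identity $\tdv Rm_g=-\D Ric_g$, the twice-contracted Bianchi identity for $\tdv Ric_g=-\tfrac12\D R_g$, and the Weyl decomposition together with the Schouten identity $\nabla^k A_{ik}=\nabla_i\tr A$ for the last formula. The computation $\tdv(A\wedge g)=-\D\bigl(A+(\tr A)g\bigr)$ and the subsequent cancellation of the $R_g$-terms are both correct, yielding the coefficient $-\frac{n-3}{n-2}$ as claimed.
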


\begin{prop}\label{prop:deltaDR}
\begin{align*}
\dv\D(R_g\,g)&=\Delta R_g\,g +\nabla^2 R_g,\\
\dv\D Ric_g&=\Delta Ric_g +\frac{1}{2}\nabla^2 R_g +Ric\circ Ric-\overset{\circ}{Rm}(Ric).
\end{align*}
\end{prop}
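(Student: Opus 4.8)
The plan is to prove both identities by directly unwinding the definitions of $\D$ and $\dv$ from Section~\ref{subsec:Operators}, and then, for the second one, by commuting two covariant derivatives with the Ricci identity and invoking the contracted second Bianchi identity of Proposition~\ref{prop:Bianchi}. The common starting point: if $h$ is a symmetric $2$-tensor, regarded as a $(1,1)$ double-form via the metric identification, then $(\D h)_{ij}{}^{k}=\nabla_i h_j{}^{k}-\nabla_j h_i{}^{k}$, and applying $\dv$ (which contracts the first index against a covariant derivative, with the sign fixed by the definition) gives
\[
(\dv\D h)_j{}^{k}=\Delta h_j{}^{k}+\nabla^{\alpha}\nabla_j h_{\alpha}{}^{k},
\]
with $\Delta$ the Laplacian appearing in the statement. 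So everything reduces to analyzing the second term $\nabla^{\alpha}\nabla_j h_{\alpha}{}^{k}$.

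For the first identity I would take $h=R_g\,g$. Since $\nabla g=0$ we have $h_{\alpha}{}^{k}=R_g\,\delta_{\alpha}^{k}$, hence $\Delta h=(\Delta R_g)\,g$ and $\nabla^{\alpha}\nabla_j h_{\alpha}{}^{k}=\nabla^{k}\nabla_j R_g=(\nabla^{2}R_g)_j{}^{k}$, which yields $\dv\D(R_g\,g)=\Delta R_g\,g+\nabla^{2}R_g$. For the second identity take $h=Ric_g$, so the term to handle is $\nabla^{\alpha}\nabla_j Ric_{\alpha k}$. Commuting, $\nabla^{\alpha}\nabla_j Ric_{\alpha k}=\nabla_j\bigl(\nabla^{\alpha}Ric_{\alpha k}\bigr)+g^{\alpha\beta}[\nabla_{\beta},\nabla_j]Ric_{\alpha k}$. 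By the contracted second Bianchi identity ($\tdv Ric_g=-\tfrac12\D R_g$, from Proposition~\ref{prop:Bianchi}) one has $\nabla^{\alpha}Ric_{\alpha k}=\tfrac12\nabla_k R_g$, so the first piece contributes $\tfrac12\nabla^{2}R_g$. The second piece is a contraction of the curvature tensor with $Ric_g$: the commutator $g^{\alpha\beta}[\nabla_{\beta},\nabla_j]Ric_{\alpha k}$ produces one term in which the curvature is contracted on the index $\alpha$ already traced against $\nabla^{\alpha}$ — this contraction of $Rm$ is a Ricci tensor and gives $Ric_g\circ Ric_g$ — and one term in which the full curvature acts on the free index $k$, giving $Rm_{\alpha j\beta k}Ric^{\alpha\beta}=\overset{\circ}{Rm}(Ric_g)$ with a minus sign. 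Collecting everything: $\dv\D Ric_g=\Delta Ric_g+\tfrac12\nabla^{2}R_g+Ric_g\circ Ric_g-\overset{\circ}{Rm}(Ric_g)$.

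The routine-but-delicate point, and the one I would be most careful about, is the bookkeeping of signs: the signs built into the paper's conventions for $\dv$, $\D$ and the Laplacian on double-forms, and above all matching the two curvature contractions coming out of $g^{\alpha\beta}[\nabla_{\beta},\nabla_j]Ric_{\alpha k}$ to $+Ric_g\circ Ric_g$ and $-\overset{\circ}{Rm}(Ric_g)$ with exactly these coefficients — in particular checking that no extra $\nabla^{2}R_g$ or $\normm{Ric}^2 g$ contribution survives beyond the one already produced by the Bianchi step. Once the definitions of Section~\ref{subsec:Operators} and Proposition~\ref{prop:Bianchi} are at hand, the rest is a short computation.
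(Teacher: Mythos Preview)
Your proposal is correct and follows essentially the same route as the paper's own proof: compute $(\D h)_{ijk}=\nabla_i h_{jk}-\nabla_j h_{ik}$ in coordinates, apply $\dv$ to get $\Delta h+\nabla^{\alpha}\nabla_j h_{\alpha k}$, and for $h=Ric_g$ commute the two derivatives and invoke the contracted Bianchi identity $\nabla^{\alpha}Ric_{\alpha k}=\tfrac12\nabla_k R_g$ to obtain the $\tfrac12\nabla^{2}R_g$ term, with the commutator producing exactly $Ric\circ Ric-\overset{\circ}{Rm}(Ric)$. Your caution about the sign bookkeeping (the minus in $\dv$, the geometer's Laplacian $\Delta=-\nabla^{\alpha}\nabla_{\alpha}$, and the two curvature contractions) is well placed, but once these are fixed by the definitions in Section~\ref{subsec:Operators} the computation goes through exactly as you describe.
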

\begin{proof}
In coordinates,
\begin{equation*}
(\D (R_g g))_{ijk}=\nabla_i R_g g_{jk}-\nabla_j R_g g_{ik},
\end{equation*}
therefore,
\begin{equation*}
(\dv\D (R_g g))_{jk}=-\nabla^\alpha\nabla_\alpha R_g\, g_{jk}+\nabla_k\nabla_j R_g.
\end{equation*}

For the second one:
\begin{equation*}
(\D Ric)_{ijk}=\nabla_i Ric_{jk}-\nabla_j Ric_{ik},
\end{equation*}
therefore,
\begin{align*}
(\dv\D Ric)_{jk}&=-\nabla^\alpha\nabla_\alpha Ric_{jk}+\nabla_\alpha\nabla_j Ric_{\alpha k}\\
&=\Delta Ric_{jk}-\nabla_j(\dv Ric)_k+Rm_{\alpha j}{}^{\alpha\beta}Ric_{k\beta}+Rm_{\alpha j k}{}^{\beta}Ric^\alpha{}_{\beta}\\
&=\Delta Ric_{jk}+\frac{1}{2}\nabla_j\nabla_k R+(Ric\circ Ric)_{jk}-\overset{\circ}{Rm}(Ric)_{jk}.
\end{align*}
\end{proof}

We recall that:
\begin{prop}\label{prop:Evolution}
For all $g$ in $\sdp$ and $h$ in $\sym$,
\begin{align*}
dv'_g(h)&=\frac{1}{2}\trp{h}dv_g,\\
(g^{i j})'(h)&=-h^{i j},\\
\Gamma'_g(h)_{i j}^k &=\frac{1}{2}g^{k\alpha}(\nabla_j\, h_{\alpha i}+\nabla_i\, h_{\alpha j}-\nabla_\alpha h_{i j}).
\end{align*}
\end{prop}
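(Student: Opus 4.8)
The plan is to compute each of the three variations directly in local coordinates, differentiating along the path $g_t = g + th$ at $t = 0$; I write $A'$ for $\frac{d}{dt}\big|_{t=0}A(g_t)$ and suppress the argument $h$ when convenient.

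For the inverse metric, I would differentiate the identity $g^{i\alpha}g_{\alpha j} = \delta^i_j$, which gives $(g^{i\alpha})'(h)\,g_{\alpha j} + g^{i\alpha}h_{\alpha j} = 0$, hence $(g^{ij})'(h) = -g^{i\alpha}g^{j\beta}h_{\alpha\beta} = -h^{ij}$. For the volume form, I would use that in a chart $dv_g = \sqrt{\det(g_{ij})}\,dx$ and invoke Jacobi's formula, which yields $(\det(g_{ij}))'(h) = \det(g)\,g^{\alpha\beta}h_{\alpha\beta} = \det(g)\,\trp{h}$; taking a square root gives $(\sqrt{\det g})'(h) = \tfrac12\,\trp{h}\,\sqrt{\det g}$, i.e. $dv'_g(h) = \tfrac12\,\trp{h}\,dv_g$.

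For the Christoffel symbols, the key point is that for each $t$ the difference $\Gamma^k_{ij}(g_t) - \Gamma^k_{ij}(g)$ is a tensor (the difference of two connections), so its $t$-derivative $\Gamma'_g(h)$ is a genuine $(1,2)$-tensor and may be evaluated at any point $p$ in normal coordinates centered at $p$, where $g_{ij}(p) = \delta_{ij}$ and all first partial derivatives of $g$ vanish at $p$. Differentiating $\Gamma^k_{ij} = \tfrac12\,g^{k\alpha}(\partial_i g_{j\alpha} + \partial_j g_{i\alpha} - \partial_\alpha g_{ij})$, the contribution of $(g^{k\alpha})'(h)$ is multiplied by first derivatives of $g$ and hence vanishes at $p$, leaving $\Gamma'_g(h)_{ij}^k(p) = \tfrac12\,g^{k\alpha}(\partial_i h_{j\alpha} + \partial_j h_{i\alpha} - \partial_\alpha h_{ij})(p)$; since at $p$ the ordinary derivatives of the tensor $h$ agree with its covariant derivatives, this equals $\tfrac12\,g^{k\alpha}(\nabla_i h_{j\alpha} + \nabla_j h_{i\alpha} - \nabla_\alpha h_{ij})(p)$, and as $p$ is arbitrary the stated identity holds on $M$.

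None of these computations is genuinely difficult; the only point deserving care is the tensoriality of $\Gamma'_g(h)$, which is what justifies the normal-coordinate shortcut and lets one discard the $(g^{k\alpha})'(h)$ term outright. Avoiding normal coordinates is possible — one keeps that term and recombines it with the $\partial h$ terms into Christoffel symbols by hand — but that route is more error-prone and yields nothing new.
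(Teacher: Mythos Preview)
Your proof is correct and entirely standard. The paper itself does not prove this proposition: it is introduced with ``We recall that:'' and stated without argument, so there is nothing to compare against beyond noting that your computations are the usual ones.
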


The first variation of the curvature tensors are given by:
\begin{prop}\label{prop:CurvatureEvolution}
For all $g$ in $\sdp$ and $h$ in $\sym$,
\begin{align*}
Rm'_g(h)_{i j}{}^{k l}&=-\frac{1}{2} (\tD\D h_{ij}{}^{kl}+Rm_{\alpha j}{}^{kl}h_i{}^\alpha+Rm_{i\alpha}{}^{kl}h_j{}^\alpha),\\
Ric'_g(h)&=\frac{1}{2}(\dv\D h+\tD\tr\D h - h\circ Ric - \overset{\circ}{Rm}(h)),\\
R'_g(h)&=\trp{\dv\D h}-\ps{Ric}{h}=\dv\tdv h + \Delta \tr\, h -\ps{Ric}{h}.
\end{align*}
Where $Rm_g$ and $Ric_g$ are seen as double-forms. We can also write:
\begin{equation*}
Ric'_g(h)=\frac{1}{2}(\Delta h-\D(\dv h+\frac{1}{2}\tD\tr h))-\tD(\tdv h+\frac{1}{2}\D\tr h)+h\ast Rm.
\end{equation*}
\end{prop}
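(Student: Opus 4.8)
The plan is to differentiate the coordinate expressions for the curvature tensors along the affine family $g_t=g+th$ and read off the first-order term; everything will rest on the variation of the Levi-Civita connection recorded in Proposition~\ref{prop:Evolution},
\[
\Gamma'_g(h)_{ij}^k=\frac{1}{2} g^{k\alpha}(\nabla_i h_{\alpha j}+\nabla_j h_{\alpha i}-\nabla_\alpha h_{ij}),
\]
which is a genuine $(2,1)$-tensor, being a difference of connections. For the Riemann tensor I would use the Palatini-type identity: since $\Gamma'_g(h)$ is a tensor, differentiating $R^l{}_{ijk}=\partial_i\Gamma^l_{jk}-\partial_j\Gamma^l_{ik}+\Gamma\ast\Gamma$ yields, with no correction term, $(R^l{}_{ijk})'_g(h)=\nabla_i\Gamma'_g(h)^l{}_{jk}-\nabla_j\Gamma'_g(h)^l{}_{ik}$. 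Substituting the formula for $\Gamma'_g(h)$, I would recognise the resulting alternating combination of second covariant derivatives of $h$ (viewed as a $(1,1)$ double-form) as $\tD\D h$; the curvature corrections — arising both from commuting the two covariant derivatives and from moving the remaining indices into the double-form slots of $Rm_{ij}{}^{kl}$ with $g_t$ and $g_t^{-1}$, using $(g^{ij})'_g(h)=-h^{ij}$ — should combine, via the symmetries of $Rm$, into $Rm_{\alpha j}{}^{kl}h_i{}^\alpha+Rm_{i\alpha}{}^{kl}h_j{}^\alpha$, giving the first formula.

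For the Ricci tensor I would contract the Riemann variation over a pair of indices that is \emph{not} raised with the metric, so that the contraction introduces no new $h$-term, and then rewrite the divergence-type terms by commuting covariant derivatives and using the contracted second Bianchi identity (Proposition~\ref{prop:Bianchi}); the commutators are precisely what produce the zeroth-order terms $h\circ Ric$ and $\overset{\circ}{Rm}(h)$, landing on the first expression for $Ric'_g(h)$. The second, equivalent expression would then follow purely formally from the identities relating $\D$, $\tD$, $\mathrm{tr}$, $\dv$, $\tdv$ on double-forms (recalled in the appendix) together with the Weitzenböck-type identity $\dv\D h=\Delta h+h\ast Rm$, itself another commutation of derivatives. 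Finally, for the scalar curvature I would use $R=g^{jk}Ric_{jk}$, so that $R'_g(h)=-h^{jk}Ric_{jk}+g^{jk}Ric'_g(h)_{jk}=-\ps{Ric}{h}+\trp{\dv\D h}$; tracing the Ricci formula, the curvature contributions cancel and the divergences collapse to $\dv\tdv h+\Delta\trp{h}$, giving the last line.

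The main obstacle will be the index bookkeeping rather than any conceptual point: one must keep precise track of which contractions and index manipulations involve the metric — these are the source of the $(g^{ij})'_g(h)=-h^{ij}$ terms — and must correctly account for the curvature terms produced each time two covariant derivatives are interchanged, since these are exactly what turn the ``divergence--divergence'' expressions into the ``rough Laplacian plus lower order'' form. Everything else is mechanical substitution together with the formal manipulations of the operators $\D$, $\tD$, $\dv$, $\tdv$, $\mathrm{tr}$ collected in the appendix.
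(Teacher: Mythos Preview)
Your plan is correct and is essentially the paper's own argument: the paper cites Labbi for the formula for $Rm'_g(h)$ and then obtains $Ric'_g(h)$ and $R'_g(h)$ by successive applications of $\tr$, rewriting the derivative terms not by explicit coordinate commutations but via the double-form identities $\tr\tD+\tD\tr=-\dv$, $\tr\D+\D\tr=-\tdv$, and $\Delta=\dv\D+\D\dv+Rm\ast\cdotp$. Two small imprecisions in your outline are worth flagging. First, the zeroth-order terms $h\circ Ric$ and $\overset{\circ}{Rm}(h)$ in $Ric'_g(h)$ come simply from applying $\tr$ to the zeroth-order piece $Rm_{\alpha j}{}^{kl}h_i{}^\alpha+Rm_{i\alpha}{}^{kl}h_j{}^\alpha$ already present in $Rm'_g(h)$, not from any commutation in the second-order piece; relatedly, the Bianchi identity plays no role here. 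Second, the Weitzenb\"ock-type identity reads $\dv\D h=\Delta h-\D\dv h+h\ast Rm$, and the $-\D\dv h$ term is precisely what supplies the $-\tfrac{1}{2}\D\dv h$ inside the second expression for $Ric'_g(h)$.
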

\begin{proof}
See \cite{Lab08} for $Rm'_g$. Then
\begin{align*}
Ric'_g(h)&=\tr Rm'_g(h)=-\frac{1}{2}(\tr\tD\D h + h\circ Ric + \overset{\circ}{Rm}(h))\\
&=\frac{1}{2}(\dv\D h + \tD \tr\D h- h\circ Ric - \overset{\circ}{Rm}(h))\text{ (Since } \tr\tD=-\tD\tr-\dv\text{).}\\
\intertext{Then, by writing $\tr\D=-\D\tr-\tdv$, we also have}
Ric'_g(h)&=\frac{1}{2}(\dv\D h-\tD\tdv h-\tD\D \tr h- h\circ Ric - \overset{\circ}{Rm}(h)),\\
\intertext{and since $\D\tD(\tr h)=\tD\D(\tr h)$ and $\Delta=\dv\D+\D\dv+Rm\ast\,\cdotp$ (see Proposition~\ref{prop:DFCommuting}),}
Ric'_g(h)&=\frac{1}{2}(\Delta h-\D(\dv h+\frac{1}{2}\tD\tr h))-\tD(\tdv h+\frac{1}{2}\D\tr h)+h\ast Rm.\\
\end{align*}
By tracing the former equality, we obtain:
\begin{align*}
R'_g(h)&=\tr\, Ric'_g(h)=\frac{1}{2}(\tr\dv\D h+\tr\tD\tr\D h)-\ps{Ric}{h}\\
&=\frac{1}{2}(\tr\dv\D h-\dv\tr\D h)-\ps{Ric}{h}\\
&=\trp{\dv\D h}-\ps{Ric}{h} \text{, since $\tr\dv (\D h)=-\dv\tr(\D h)$}.\\
\intertext{And we also have}
R'_g(h)&=-\dv\tr\D h-\ps{Ric}{h}\\
&=\dv\D\tr h+\dv\tdv h-\ps{Ric}{h}\text{(because $\tr\D=-\D\tr-\tdv$)}\\
&=\Delta \tr\, h+\dv\tdv h -\ps{Ric}{h}.
\end{align*}

\end{proof}

\subsection{Derivative commuting}\label{subsec:DerivativeCommuting}

\begin{prop}\label{prop:DFCommuting}
On the space of double-forms, the following identities hold:
\begin{alignat}{2}
\D\tD&=\tD\D+Rm\,\ast\,\cdotp,&\hspace*{40pt} \dv\tdv&=\tdv\dv+Rm\,\ast\,\cdotp,\label{eq:DC11}\\
\tD\dv&=\dv\tD+Rm\,\ast\,\cdotp,&\D\tdv&=\tdv\D+Rm\,\ast\,\cdotp,\label{eq:DC9}\\
\D\D&=Rm\,\ast\,\cdotp,&\tD\tD&=Rm\,\ast\,\cdotp,\label{eq:DC5}\\
\dv\dv&=Rm\,\ast\,\cdotp,&\tdv\tdv&=Rm\,\ast\,\cdotp,\label{eq:DC12}\\
\Delta&=\dv\D+\D\dv+Rm\,\ast\,\cdotp,&\Delta&=\tdv\tD+\tD\tdv+Rm\,\ast\,\cdotp,\label{eq:DC10}
\end{alignat}
\begin{equation}
\tr\dv\D\dv\D=\Delta^2\tr-\Delta\tr\D\dv+ \tsp{2}{Rm_g,\cdotp}.\label{eq:DC15}
\end{equation}
\end{prop}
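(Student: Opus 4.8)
The plan is to obtain \eqref{eq:DC15} by a purely formal computation from the second-order identities \eqref{eq:DC5}, \eqref{eq:DC12} and \eqref{eq:DC10} already proved above, keeping track only of the schematic shape of the error terms. I work throughout on a fixed space $\Df{p}{q}$ of double-forms and use repeatedly the following bookkeeping facts: $\dv$ and $\D$ are first-order, so $\dv\D$ and $\D\dv$ are second-order; composing the zeroth-order curvature term $Rm_g\ast\cdot$ with a differential operator of order at most $2$, or applying $\Delta$ to it, produces a term of the form $\tsp{2}{Rm_g,\cdot}$, since the (at most) two available derivatives get distributed between $Rm_g$ and the argument; the trace $\tr$ commutes with $\nabla$, hence with $\Delta$ (because $\nabla g=0$); and $\tr$ applied to a term of the form $\tsp{2}{Rm_g,\cdot}$ is again of that form.

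First I rewrite the \emph{left-most} factor $\dv\D$ in $\dv\D\dv\D$ by \eqref{eq:DC10}, in the form $\dv\D=\Delta-\D\dv+Rm_g\ast\cdot$, which gives
\begin{equation*}
\dv\D\dv\D=\Delta\,\dv\D\ -\ \D\dv\,\dv\D\ +\ (Rm_g\ast\cdot)\,\dv\D.
\end{equation*}
In the middle term, $\dv\dv=Rm_g\ast\cdot$ by \eqref{eq:DC12}, so $\D\dv\dv\D=\D\,(Rm_g\ast\cdot)\,\D$, which applied to a double-form $T$ equals $\nabla\bigl(Rm_g\ast\nabla T\bigr)\in\tsp{2}{Rm_g,\cdot}$; the last term applied to $T$ is $Rm_g\ast\nabla^2 T\in\tsp{2}{Rm_g,\cdot}$. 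Hence $\dv\D\dv\D=\Delta\,\dv\D+\tsp{2}{Rm_g,\cdot}$, and after composing with $\tr$ on the left and commuting $\tr$ past $\Delta$,
\begin{equation*}
\tr\dv\D\dv\D=\Delta\,\tr\dv\D+\tsp{2}{Rm_g,\cdot}.
\end{equation*}

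It then remains to evaluate $\tr\dv\D$. Applying \eqref{eq:DC10} once more, $\tr\dv\D=\Delta\,\tr-\tr\D\dv+Rm_g\ast\cdot$; substituting this into the previous line and applying $\Delta$, the term $\Delta(Rm_g\ast\cdot)$ is again of the form $\tsp{2}{Rm_g,\cdot}$, and one is left with
\begin{equation*}
\tr\dv\D\dv\D=\Delta^2\tr-\Delta\,\tr\D\dv+\tsp{2}{Rm_g,\cdot},
\end{equation*}
which is \eqref{eq:DC15}. I do not anticipate a genuine obstacle; the only point demanding some care is the bookkeeping of the curvature terms — checking that each error produced (from $\dv\dv=Rm_g\ast\cdot$, from $(Rm_g\ast\cdot)\dv\D$, and from $\Delta(Rm_g\ast\cdot)$) carries at most two derivatives in total, so that it really lies in $\tsp{2}{Rm_g,\cdot}$ rather than in a larger class, together with the (elementary) fact that $\tr$ may be commuted past $\Delta$. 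Everything else is formal substitution into the identities of Proposition~\ref{prop:DFCommuting}.
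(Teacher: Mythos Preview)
Your proof is correct and follows essentially the same route as the paper: you apply \eqref{eq:DC10} to the outer $\dv\D$, absorb $\D\dv\dv\D$ using \eqref{eq:DC12}, then apply \eqref{eq:DC10} once more to the remaining $\dv\D$ and commute $\tr$ with $\Delta$. The only cosmetic difference is that the paper commutes $\tr$ past $\Delta$ at the very end rather than after the first step, and is somewhat terser about the bookkeeping of the curvature remainders.
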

\begin{proof}
See \cite{Lab08} for the first ones. For the last one:
\begin{align*}
\tr\dv\D\dv\D&=\tr\Delta\dv\D + \tsp{2}{Rm_g,\cdotp}\text{ by (\ref{eq:DC10}) and (\ref{eq:DC12})}\\
&=\tr\Delta^2-\tr\Delta\D\dv + \tsp{2}{Rm_g,\cdotp}\text{ by (\ref{eq:DC10})}\\
&=\Delta^2\tr-\Delta\tr\D\dv+ \tsp{2}{Rm_g,\cdotp}.
\end{align*}
\end{proof}

\begin{prop}\label{prop:CommutingProposition}
For all positive integers $k$ and all tensors $T$,
\begin{align}
(\nabla^k T)'_g(h)&=\nabla^k T'_g(h)+\tsp{k}{h,T},\label{eq:DC7}\\
\dv\Delta^{k} T&=\Delta^{k}\dv T+\tsp{2k-1}{Rm,T},\label{eq:DC8}\\
\dv^{k+1}\nabla^{k+1} T &= \Delta^{k+1}+\tsp{2k}{Rm,T}.\label{eq:DC6}
\end{align}
\end{prop}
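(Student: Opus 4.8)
The plan is to prove the three displays by induction on $k$, reducing everything to one elementary input — the Ricci commutation rule $[\nabla_i,\nabla_j]S = Rm*S$, which does not change the number of covariant derivatives on $S$ — together with the Leibniz rule for the schematic product $*$ and the remark that, since $\Delta=-\nabla^\alpha\nabla_\alpha$ and $\nabla g=0$, the iterated Laplacian $\Delta^mS$ is an exact metric contraction of $\nabla^{2m}S$ with no curvature correction whatsoever.

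For \eqref{eq:DC7} I would induct on $k$. By Proposition~\ref{prop:Evolution} the quantity $\Gamma'_g(h)$ is first order in $h$, so the variation of the Levi-Civita connection sends a tensor $S$ to $\Gamma'_g(h)*S$, a term of type $\tsp{1}{h,S}$. Writing $\nabla^k T=\nabla(\nabla^{k-1}T)$ and differentiating by the chain rule gives $(\nabla^k T)'_g(h)=\nabla\bigl((\nabla^{k-1}T)'_g(h)\bigr)+\Gamma'_g(h)*\nabla^{k-1}T$. By the induction hypothesis the first summand is $\nabla^k T'_g(h)$ plus $\nabla$ applied to a term of type $\tsp{k-1}{h,T}$, which the Leibniz rule places in $\tsp{k}{h,T}$, while the second summand $\nabla h*\nabla^{k-1}T$ is also of that type. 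The case $k=1$ is the same computation with $\nabla^0 T=T$.

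For \eqref{eq:DC8} I would first do $k=1$ directly: writing $\dv\Delta T$ and $\Delta\dv T$ out with the $\dv$-index contracted into the first slot of $T$, their difference is obtained by transposing the outer derivative past the two derivatives of $\Delta$; each transposition produces a $Rm*(\cdots)$ term, and after expanding the outer derivative by Leibniz every correction has the form $\nabla^i Rm*\nabla^j T$ with $i+j=1$, hence lies in $\tsp{1}{Rm,T}$. The inductive step applies this to $S=\Delta^{k-1}T$, so $\dv\Delta^k T=\Delta\,\dv(\Delta^{k-1}T)+\tsp{1}{Rm,\Delta^{k-1}T}$; the induction hypothesis rewrites $\dv(\Delta^{k-1}T)=\Delta^{k-1}\dv T+\tsp{2k-3}{Rm,T}$, and one checks $\Delta\,\tsp{2k-3}{Rm,T}\subseteq\tsp{2k-1}{Rm,T}$ and $\tsp{1}{Rm,\Delta^{k-1}T}\subseteq\tsp{2k-1}{Rm,T}$ — the latter because $\Delta^{k-1}T$ counts as $\nabla^{2k-2}T$. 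Finally, \eqref{eq:DC6} follows by induction on $k$ from the exact identity $\dv\nabla S=\Delta S$, valid for every tensor $S$ because $\dv$ contracts precisely the derivative index the outermost $\nabla$ just added: writing $\nabla^{k+1}T=\nabla^k(\nabla T)$, the case $k$ of \eqref{eq:DC6} applied to $\nabla T$ gives $\dv^{k+1}\nabla^{k+1}T=\dv\bigl(\Delta^k(\nabla T)+\tsp{2k-2}{Rm,\nabla T}\bigr)$; then \eqref{eq:DC8} applied to $\nabla T$ turns $\dv\Delta^k(\nabla T)$ into $\Delta^k\dv(\nabla T)+\tsp{2k-1}{Rm,\nabla T}=\Delta^{k+1}T+\tsp{2k-1}{Rm,\nabla T}$, and the leftover pieces are absorbed using $\tsp{m}{Rm,\nabla T}\subseteq\tsp{m+1}{Rm,T}$ and $\dv\,\tsp{m}{Rm,S}\subseteq\tsp{m+1}{Rm,S}$; the base case $k=1$ is the same argument with the $k=1$ case of \eqref{eq:DC8}.

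The computations are routine; the only part needing genuine attention is the bookkeeping of derivative orders inside the $\tsp{}$-terms, which is where I expect the (minor) main obstacle. The trap is to treat $\Delta^m$ as if it contributed curvature or carried fewer than $2m$ derivatives: since it is an exact contraction of $\nabla^{2m}$, a lone derivative or a lone $Rm$ meeting a $\Delta^m$ factor must be counted with the full weight $2m$, and it is exactly this that makes the exponents $2k-1$ and $2k$ come out sharp rather than as loose upper bounds with extra curvature factors.
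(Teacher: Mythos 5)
Your proposal is correct and follows essentially the same route as the paper: the variation of the Christoffel symbols plus induction for \eqref{eq:DC7}, a direct commutation of derivatives for the $k=1$ case of \eqref{eq:DC8} followed by induction (peeling off one $\Delta$ and absorbing $\tsp{1}{Rm,\Delta^{k-1}T}$ and $\Delta\,\tsp{2k-3}{Rm,T}$ into $\tsp{2k-1}{Rm,T}$), and for \eqref{eq:DC6} induction on $k$ combined with \eqref{eq:DC8} and the identity $\dv\nabla=\Delta$. Your bookkeeping remark that $\Delta^{m}$ counts as a curvature-free contraction of $\nabla^{2m}$ is exactly the point that makes the paper's exponents $2k-1$ and $2k$ come out, so nothing is missing.
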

\begin{proof}
In coordinates, we have:
\begin{equation*}
\nabla_i T_{i_1\dotsc i_p}^{j_1\dotsc j_q}=\partial_i T_{i_1\dotsc i_p}^{j_1\dotsc j_q}-\sum_{l=1}^p \Gamma_{i i_l}^\alpha T_{i_1\dotsc\alpha\dotsc i_p}^{j_1\dotsc j_q}+\sum_{l=1}^q \Gamma_{i \alpha}^{j_l} T_{i_1\dotsc i_p}^{j_1\dotsc\alpha\dotsc j_q},
\end{equation*}
therefore,
\begin{align*}
{\nabla T}'_g(h)_{i\, i_1\dotsc i_p}^{j_1\dotsc j_q}&=\nabla_i T'_g(h)_{i_1\dotsc i_p}^{j_1\dotsc j_q}-\sum_{l=1}^p \Gamma'_g(h)_{i i_l}^\alpha T_{i_1\dotsc\alpha\dotsc i_p}^{j_1\dotsc j_q}+\sum_{l=1}^q \Gamma'_g(h)_{i\alpha}^{j_l} T_{i_1\dotsc i_p}^{j_1\dotsc\alpha\dotsc j_q}\\
&=\nabla_i T'_g(h)_{i_1\dotsc i_p}^{j_1\dotsc j_q}+ \nabla h \ast T,
\end{align*}
then by induction, if $(\nabla^k T)'_g(h)=\nabla^k T'_g(h)+\tsp{k}{h,T}$, 
\begin{align*}
(\nabla^{k+1} T)'_g(h)&=\nabla (\nabla^k T)'_g(h)+\nabla h\ast\nabla^k T\\
&=\nabla^{k+1} T'_g(h)+\nabla \tsp{k}{h,T}+\nabla h\ast\nabla^k T\\
&=\nabla^{k+1} T'_g(h)+\tsp{k+1}{h,T}.
\end{align*}

\vspace*{10pt}

In coordinates:
\begin{align*}
\delta\Delta T_{i_1\dotsc i_p}&=\nabla^\alpha\nabla_\beta\nabla^\beta T_{\alpha i_1\dotsc i_p}\\
&=\nabla_\beta\nabla^\alpha\nabla^\beta T_{\alpha i_1\dotsc i_p}+ Rm\ast \nabla T\\
&=\nabla_\beta\nabla^\beta\nabla^\alpha T_{\alpha i_1\dotsc i_p} + \nabla(Rm\ast T)+Rm\ast \nabla T\\
&=\Delta\delta T_{i_1\dotsc i_p}+\tsp{1}{Rm,T},
\end{align*}
then by induction, if $\dv\Delta^{k} T=\Delta^{k}\dv T+\tsp{2k-1}{Rm,T}$,
\begin{align*}
\dv\Delta^{k+1} T&=\Delta\dv\Delta^{k}  T+\tsp{1}{Rm,\Delta^k T}\\
&=\Delta^{k+1}\dv T+\Delta(\tsp{2k-1}{Rm,T}) +\tsp{1}{Rm,\Delta^k T}\\
&=\Delta^{k+1}\dv T+\tsp{2k+1}{Rm,\Delta T}.
\end{align*}

\vspace*{10pt}

Finally, by induction, if $\dv^{k}\nabla^{k} T = \Delta^{k}+\tsp{2k-2}{Rm,T}$ for all tensors $T$, then
\begin{align*}
\dv^{k+1}\nabla^{k+1} T &= \dv\Delta^{k}\nabla T+\dv\tsp{2k-2}{Rm,\nabla T}\\
&=\Delta^k\dv\nabla T+\tsp{2k-1}{Rm,\nabla T}+\dv\tsp{2k-2}{Rm,\nabla T}\text{ by (\ref{eq:DC8})}\\
&=\Delta^{k+1} T+\tsp{2k}{Rm, T}.
\end{align*}
\end{proof}

\begin{prop}\label{prop:CurvatureVariation}
\begin{align}
Rm'_g(\dv\tdv Rm_g)&=-\frac{1}{2}\Delta^{2} Rm_g + \tspn{2}{2}{Rm_g},\label{eq:DC1}\\
R'_g(\dv\tdv Rm_g)&=-\frac{1}{2}\Delta^{2} R_g + \tspn{2}{2}{Rm_g},\label{eq:DC2}\\
R'_g(\dv\D(R_g g))&=(n-1)\Delta^{2} R_g + \tspn{2}{2}{Rm_g}\label{eq:DC3},\\
\tr\,\tdv\dv\Delta^k Rm_g&=-\frac{1}{2}\Delta^k R_g+\tspn{2}{2k}{Rm_g}.\label{eq:DC4}
\end{align}
\end{prop}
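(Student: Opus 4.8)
All four identities have the same shape: a first variation of a curvature quantity, evaluated on a second-order curvature operator applied to $Rm_g$ (or $R_g$), equals the expected fourth-order linear term plus an error that is quadratic in the curvature with the stated number of derivatives. The plan is to start from Proposition~\ref{prop:CurvatureEvolution}, which writes $Rm'_g(h)$, $Ric'_g(h)$ and $R'_g(h)$ as explicit fourth-order differential operators in $h$ modulo an $Rm_g\ast h$ term, to substitute the appropriate $h$, and then to commute the operators $\D,\tD,\dv,\tdv,\Delta,\mathrm{tr}$ past one another — each swap costing only a term $Rm_g\ast(\cdots)$ by Propositions~\ref{prop:DFCommuting} and~\ref{prop:CommutingProposition}, while $\D Rm_g=\tD Rm_g=0$ by Proposition~\ref{prop:Bianchi} — until the top-order part reassembles into a power of $\Delta$ (using $\Delta=\dv\D+\D\dv+Rm_g\ast\,\cdot=\tdv\tD+\tD\tdv+Rm_g\ast\,\cdot$, i.e. \eqref{eq:DC10}) and the commutator terms are absorbed into the $\tspn{2}{\bullet}{Rm_g}$ class.

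\textbf{Identities \eqref{eq:DC1} and \eqref{eq:DC2}.} For \eqref{eq:DC1}, use $Rm'_g(h)=-\tfrac12\tD\D h+Rm_g\ast h$ (Proposition~\ref{prop:CurvatureEvolution}) with $h=\dv\tdv Rm_g$; the term $Rm_g\ast\dv\tdv Rm_g$ already lies in $\tspn{2}{2}{Rm_g}$. In $\tD\D\dv\tdv Rm_g$, commute $\D$ inward past $\dv$ and $\tdv$ via \eqref{eq:DC9}, \eqref{eq:DC10} and $\D Rm_g=0$, leaving $\tD\,\Delta\,\tdv Rm_g$ modulo $\tspn{2}{2}{Rm_g}$; then commute $\tD$ past $\Delta$ (the $\tD$-version of \eqref{eq:DC8}) and use \eqref{eq:DC10} with $\tD Rm_g=0$ to get $\tD\tdv Rm_g=\Delta Rm_g+Rm_g\ast Rm_g$. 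Hence $\tD\D\dv\tdv Rm_g=\Delta^2 Rm_g+\tspn{2}{2}{Rm_g}$, which is \eqref{eq:DC1}. For \eqref{eq:DC2}, simply take the double trace: $R_g=\mathrm{tr}\,\mathrm{tr}\,Rm_g$ and $R'_g(h)=\mathrm{tr}\,Ric'_g(h)=\mathrm{tr}\,\mathrm{tr}\,Rm'_g(h)$ as in the proof of Proposition~\ref{prop:CurvatureEvolution}; since $\mathrm{tr}$ commutes with $\nabla$, hence with $\Delta$, exactly and maps $\tspn{2}{2}{Rm_g}$ into itself, \eqref{eq:DC1} yields $R'_g(\dv\tdv Rm_g)=-\tfrac12\Delta^2 R_g+\tspn{2}{2}{Rm_g}$. (Alternatively, run the same commutations directly on $R'_g(h)=\dv\tdv h+\Delta\,\mathrm{tr}\,h-\ps{Ric_g}{h}$, using $\ps{Ric_g}{\dv\tdv Rm_g}\in\tspn{2}{2}{Rm_g}$.)

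\textbf{Identities \eqref{eq:DC3} and \eqref{eq:DC4}.} For \eqref{eq:DC3}, apply $R'_g(h)=\mathrm{tr}(\dv\D h)-\ps{Ric_g}{h}$ (Proposition~\ref{prop:CurvatureEvolution}) with $h=\dv\D(R_g g)=\Delta R_g\,g+\nabla^2 R_g$, a symmetric $2$-tensor by Proposition~\ref{prop:deltaDR}; the term $\ps{Ric_g}{\dv\D(R_g g)}$ is quadratic in the curvature with two derivatives, hence in $\tspn{2}{2}{Rm_g}$. To the main term $\mathrm{tr}\,\dv\D\dv\D(R_g g)$ one applies \eqref{eq:DC15} verbatim: it equals $\Delta^2\mathrm{tr}(R_g g)-\Delta\,\mathrm{tr}\,\D\dv(R_g g)+\tsp{2}{Rm_g,R_g g}$, where $\mathrm{tr}(R_g g)=nR_g$, the term $\tsp{2}{Rm_g,R_g g}$ lies in $\tspn{2}{2}{Rm_g}$, and $\mathrm{tr}\,\D\dv(R_g g)$ is computed from \eqref{eq:DC10} and Proposition~\ref{prop:deltaDR} to be $\Delta R_g$ modulo $\tspn{2}{0}{Rm_g}$; thus the coefficient of $\Delta^2 R_g$ becomes $n-1$. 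For \eqref{eq:DC4}, first commute $\dv$, then $\tdv$, then $\mathrm{tr}$ past $\Delta^k$ — by \eqref{eq:DC8}, its $\tdv$-version, and $\mathrm{tr}\,\Delta=\Delta\,\mathrm{tr}$ — reducing the left side to $\Delta^k\bigl(\mathrm{tr}\,\tdv\dv Rm_g\bigr)$ modulo $\tspn{2}{2k}{Rm_g}$. In the base case, $\mathrm{tr}$ commutes with $\dv$ and $\tdv$ up to sign (Section~\ref{subsec:Operators}) and $\mathrm{tr}\,Rm_g=Ric_g$, so $\mathrm{tr}\,\tdv\dv Rm_g=\dv\tdv Ric_g+\tspn{2}{0}{Rm_g}$, and the contracted Bianchi identity $\tdv Ric_g=-\tfrac12\D R_g$ (Proposition~\ref{prop:Bianchi}) turns $\dv\tdv Ric_g$ into a multiple of $\Delta R_g$; reapplying $\Delta^k$ completes the proof.

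\textbf{Main difficulty.} No single step is deep; the work lies entirely in the bookkeeping of the error terms. The point needing genuine care is that \emph{every} commutation be checked to produce a curvature term with exactly the right number of derivatives for it to lie in the advertised $\tspn{2}{\bullet}{Rm_g}$ class — one derivative too many would make the identity useless for Theorem~\ref{thm:estimates}, whose proof this Proposition drives — together with, in \eqref{eq:DC3}, the sign and normalization arithmetic ($\mathrm{tr}(R_g g)=nR_g$ against the $\D\dv$-term) that makes the coefficient collapse precisely to $n-1$.
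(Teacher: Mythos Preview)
Your proposal is correct and follows essentially the same approach as the paper: start from the first-variation formulas of Proposition~\ref{prop:CurvatureEvolution}, then repeatedly commute $\D,\tD,\dv,\tdv,\Delta,\tr$ using Propositions~\ref{prop:DFCommuting}, \ref{prop:CommutingProposition} and the Bianchi identities, each swap producing only a $\tspn{2}{\bullet}{Rm_g}$ term. The only noteworthy deviation is your treatment of \eqref{eq:DC2}: the paper computes directly from $R'_g=\tr\dv\D+Rm\ast\,\cdot$, pushes through to $-\tr\dv\D\dv\D\,Ric_g$ via $\tdv Rm_g=-\D Ric_g$, and then invokes the composite identity \eqref{eq:DC15}, whereas you simply double-trace \eqref{eq:DC1} using $R'_g(h)=\tr^2 Rm'_g(h)$ and the fact that $\tr$ commutes exactly with $\Delta$ --- a genuinely shorter route that avoids \eqref{eq:DC15} altogether for this identity. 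One small caveat: the ``$\tD$-version of \eqref{eq:DC8}'' you invoke is not stated explicitly in the paper, but it follows by the same proof (or by the evident lower/upper symmetry of the double-form calculus), so this is harmless.
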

\begin{proof}
Since $Rm'_g=-\frac{1}{2}\D\tD+Rm_g\ast\,\cdotp$ (Proposition~\ref{prop:CurvatureEvolution}),
\begin{align*}
Rm'_g(\dv\tdv Rm_g)&=-\frac{1}{2}\D\tD\dv\tdv Rm_g + \tspn{2}{2}{Rm_g}\\
&=-\frac{1}{2}\D\dv\tD\tdv Rm_g + \tspn{2}{2}{Rm_g}\text{ by (\ref{eq:DC9}) in Proposition \ref{prop:DFCommuting}}\\
&=-\frac{1}{2}\Delta\tD\tdv Rm_g +\frac{1}{2} \dv\D\tD\tdv Rm_g + \tspn{2}{2}{Rm_g}\text{ by (\ref{eq:DC10}) in Proposition \ref{prop:DFCommuting}}\\
&=-\frac{1}{2}\Delta^2 Rm_g +\frac{1}{2} \dv\tD\D\tdv Rm_g + \tspn{2}{2}{Rm_g}\text{ by (\ref{eq:DC10}) as }\tD Rm_g=0\text{, and by (\ref{eq:DC11}) in Proposition \ref{prop:DFCommuting}}\\
&=-\frac{1}{2}\Delta^{2} Rm_g + \tspn{2}{2}{Rm_g}\text{ by (\ref{eq:DC9}) in Proposition \ref{prop:DFCommuting}, as }\D Rm=0.
\end{align*}
Using $R'_g=\tr\dv\D+Rm\ast\,\cdotp$ (Proposition~\ref{prop:CurvatureEvolution}),
\begin{align*}
R'_g(\dv\tdv Rm_g)&=\tr\dv\D\dv\tdv Rm_g+ \tspn{2}{2}{Rm_g}\hspace*{150pt}\\
&=-\tr\dv\D\dv\D Ric_g+ \tspn{2}{2}{Rm_g}\text{ by Proposition \ref{prop:Bianchi}}\\
&=-\Delta^2\tr\, Ric_g+\Delta\tr\D\dv Ric_g+ \tspn{2}{2}{Rm_g}\text{ by (\ref{eq:DC15}) in Proposition \ref{prop:DFCommuting}}\\
&=-\Delta^{2} R_g + \frac{1}{2}\Delta\tr\D\tD R_g+\tspn{2}{2}{Rm_g}\text{ by Proposition \ref{prop:Bianchi}}\\
&=-\frac{1}{2}\Delta^{2} R_g + \tspn{2}{2}{Rm_g},
\end{align*}
and
\begin{align*}
R'_g(\dv\D(R_g g))&=\tr\dv\D\dv\D(R_g g)+ \tspn{2}{2}{Rm_g}\\
&=n\Delta^2 R_g-\Delta\tr\D\dv(R_g g)+ \tspn{2}{2}{Rm_g}\text{ by (\ref{eq:DC15}) in Proposition \ref{prop:DFCommuting}}\\
&=n\Delta^2 R_g+\Delta\tr\D\tD R_g+ \tspn{2}{2}{Rm_g}\\
&=(n-1)\Delta^{2} R_g + \tspn{2}{2}{Rm_g}.
\end{align*}
Finally,
\begin{align*}
\tr\tdv\dv\Delta^k Rm_g&=\tr\Delta^k\tdv\dv Rm_g+\tspn{2}{2k}{Rm_g} \text{ by (\ref{eq:DC8}) in Proposition \ref{prop:CommutingProposition}}\\
&=\Delta^k\tdv\dv Ric_g+\tspn{2}{2k}{Rm_g}\\
&=-\frac{1}{2}\Delta^{k+1} R_g+\tspn{2}{2k}{Rm_g} \text{ by Proposition \ref{prop:Bianchi}}.
\end{align*}
\end{proof}

\subsection{Interpolation inequalities}

Let define the $L^p$ norm of a tensor $T$ by
\begin{equation*}
\norm{T}_{p}=\left(\int_M \norm{T}^p dv_g\right)^\frac{1}{p}.\\
\end{equation*}

\begin{prop}\label{prop:Interpolation}
Let $m$ be a positive integer and let $\alpha, \beta$ be in $[0,1]$, with $(\alpha,\beta)\neq (0,0)$. There exists a constant $C(n,m,\alpha,\beta)$ such that for all tensors $T$ and for all $0\leq k \leq m$, 
\begin{equation*}
\norm{\nabla^k T}_{\frac{1}{\gamma_k}}\leq C \norm{T}_{\frac{1}{\alpha}}^{1-\frac{k}{m}}\norm{\nabla^m T}_{\frac{1}{\beta}}^{\frac{k}{m}},
\end{equation*}
where $\gamma_k=(1-\frac{k}{m})\alpha + \frac{k}{m}\beta$
\end{prop}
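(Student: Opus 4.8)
The plan is to reduce the statement to the single \emph{basic} interpolation inequality
\[
\norm{\nabla S}_{p}^{2}\le C(n,p_{0},p_{2})\,\norm{S}_{p_{0}}\norm{\nabla^{2}S}_{p_{2}},
\qquad\text{whenever }p_{0},p_{2}\in[1,\infty]\text{ and }\tfrac1p=\tfrac12\bigl(\tfrac1{p_{0}}+\tfrac1{p_{2}}\bigr),
\]
and then to propagate it along the chain $T,\nabla T,\dots,\nabla^{m}T$ by an elementary convexity argument. Throughout I assume $M$ is closed (or $T$ compactly supported), so the integrations by parts below produce no boundary term, and I may assume the right-hand side of the asserted inequality is finite, for otherwise there is nothing to prove.

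\emph{The basic inequality.} For $p\ge2$, integrating by parts once (contracting the derivative index of $\nabla S$ against $\nabla$),
\[
\int_{M}\norm{\nabla S}^{p}\,dv_g=-\int_{M}\norm{\nabla S}^{p-2}\ps{S}{\Delta S}\,dv_g-\int_{M}\ps{S}{\nabla\bigl(\norm{\nabla S}^{p-2}\bigr)\cdot\nabla S}\,dv_g,
\]
and since $\norm{\Delta S}\le c_{n}\norm{\nabla^{2}S}$ and $\bigl|\nabla(\norm{\nabla S}^{p-2})\bigr|\le|p-2|\,\norm{\nabla S}^{p-3}\norm{\nabla^{2}S}$, we get $\int_{M}\norm{\nabla S}^{p}\,dv_g\le(c_{n}+|p-2|)\int_{M}\norm{S}\,\norm{\nabla S}^{p-2}\norm{\nabla^{2}S}\,dv_g$. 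Applying Hölder's inequality on the right with the three exponents $\tfrac{p}{p-2},p_{0},p_{2}$ — which are conjugate precisely when $\tfrac1p=\tfrac12(\tfrac1{p_{0}}+\tfrac1{p_{2}})$ — and dividing by $\norm{\nabla S}_{p}^{p-2}$ gives the basic inequality, with a constant depending only on $n,p_{0},p_{2}$. For $1\le p<2$ the same computation works verbatim after replacing $\norm{\nabla S}$ by $(\norm{\nabla S}^{2}+\epsilon)^{1/2}$ throughout and letting $\epsilon\to0^{+}$.

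\emph{Chaining.} Fix $m,\alpha,\beta$ and put $\tfrac1{q_{j}}=(1-\tfrac jm)\alpha+\tfrac jm\beta$ for $0\le j\le m$ (with $1/0=\infty$), so that $q_{0}=1/\alpha$, $q_{m}=1/\beta$, $q_{k}=1/\gamma_{k}$, each $q_{j}\ge1$, and $\tfrac1{q_{j}}=\tfrac12(\tfrac1{q_{j-1}}+\tfrac1{q_{j+1}})$ for $1\le j\le m-1$. Applying the basic inequality to $S=\nabla^{j-1}T$ with $(p_{0},p_{2})=(q_{j-1},q_{j+1})$ gives, for $1\le j\le m-1$,
\[
x_{j}^{2}\le C\,x_{j-1}x_{j+1},\qquad x_{j}:=\norm{\nabla^{j}T}_{q_{j}},
\]
where $C=C(n,m,\alpha,\beta)\ge1$ is the largest of the finitely many constants produced above. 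If $m=1$, or if some $x_{j}$ equals $0$ or $\infty$, the desired inequality $x_{k}\le C^{k(m-k)/2}x_{0}^{1-k/m}x_{m}^{k/m}$ is immediate or vacuous (for $C\ge1$; note a vanishing interior $x_{j}$ forces $x_{1}=\dots=x_{m-1}=0$ along the chain). Otherwise all $x_{j}\in(0,\infty)$; set $y_{j}=\log x_{j}$. Then $y_{j+1}-2y_{j}+y_{j-1}\ge-\log C$ for $1\le j\le m-1$, and since $j\mapsto j(m-j)$ has constant second difference $-2$, the sequence $z_{j}:=y_{j}-\ell(j)-\tfrac{\log C}{2}\,j(m-j)$ — with $\ell$ the affine interpolant between $y_{0}$ and $y_{m}$ — is discretely convex and vanishes at $j=0$ and $j=m$; hence $z_{j}\le0$, which is exactly $x_{k}\le C^{k(m-k)/2}x_{0}^{1-k/m}x_{m}^{k/m}$.

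The one genuinely delicate point is the basic inequality: one must make sure the integration by parts carries no boundary contribution (hence the standing hypothesis on $M$ or on $T$), and, in the range $p<2$, carry out the standard $\epsilon$-regularisation so that the negative power $\norm{\nabla S}^{p-2}$ creates no integrability problem. The rest — matching the Hölder exponents and the discrete convexity estimate — is routine, and the final constant depends only on $n,m,\alpha,\beta$ (and, harmlessly, on $k\in\{0,\dots,m\}$).
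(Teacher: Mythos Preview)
Your approach is essentially the same as the paper's: reduce to a one-step interpolation inequality (the paper's Lemma~\ref{lem:HamiltonLemma1}, cited from Hamilton) and then propagate along the chain by a discrete log-convexity argument (the paper's Lemma~\ref{lem:HamiltonLemma2}, also from Hamilton). The paper simply invokes these two lemmas as black boxes and applies them to $f(k)=\norm{\nabla^k T}_{1/\gamma_k}$; you go further and sketch proofs of both. Your convexity argument is correct (and in fact yields the slightly sharper exponent $k(m-k)/2$ rather than $k(m-k)$).

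There is, however, one genuine slip in your sketch of the basic inequality. For $p\ge 2$ your integration-by-parts plus H\"older argument is fine, but the sentence ``for $1\le p<2$ the same computation works verbatim after replacing $\norm{\nabla S}$ by $(\norm{\nabla S}^{2}+\epsilon)^{1/2}$'' is not correct: the $\epsilon$-regularisation repairs the differentiability of $\norm{\nabla S}^{p-2}$, but it does \emph{not} repair the H\"older step, since your triple of exponents is $p_0,\ \tfrac{p}{p-2},\ p_2$, and for $p<2$ the middle exponent is negative, so H\"older simply does not apply. In the range $1\le p<2$ one needs a different argument (Hamilton handles this separately). This does not affect the applications in the paper --- Corollary~\ref{cor:InterpolationCorol} and everything downstream use $\alpha=\tfrac12$, $\beta=\tfrac{1}{k+4}$, so every $q_j\ge 2$ along your chain --- but it leaves the full stated range $\alpha,\beta\in[0,1]$ incompletely justified.
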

We use the two following lemmas of Hamilton:
\begin{lem}[\cite{Ham82} Theorem 12.1]\label{lem:HamiltonLemma1}
Let $\alpha, \beta$ be in $[0,1]$, with $(\alpha,\beta)\neq (0,0)$, and let $\gamma=\frac{1}{2}\alpha+\frac{1}{2}\beta$. There exists $C(n,\gamma)>0$ such that for all tensors $T$,
\begin{equation*}
\norm{\nabla T}_{\frac{1}{\gamma}}\leq C \norm{T}_{\frac{1}{\alpha}}\norm{\nabla^2 T}_{\frac{1}{\beta}}.
\end{equation*}
\end{lem}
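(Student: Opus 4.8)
The plan is to prove this interpolation inequality by the classical device of integrating by parts to move one derivative off $\nabla T$, and then closing up with the Kato inequality together with Hölder's inequality applied to three factors. The essential feature to keep track of is that every estimate below is purely pointwise or integral and never invokes the curvature of $g$ or the volume of $M$, so the resulting constant depends only on $n$ and $\gamma$ — which is precisely what makes the statement useful. Write $p=1/\gamma$, $q=1/\alpha$, $s=1/\beta$ (with the convention $1/0=\infty$); the hypothesis $\gamma=\tfrac12(\alpha+\beta)$ is exactly the relation $\tfrac{2}{p}=\tfrac{1}{q}+\tfrac{1}{s}$. By approximation it suffices to treat smooth $T$ on the compact manifold $M$.

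I would first handle the range $p\ge 2$, i.e. $\alpha+\beta\le 1$, which is the range actually used in the paper (directly with $p=2$, and through Proposition~\ref{prop:Interpolation}). Put $u=\norm{\nabla T}$ and apply the divergence theorem on the closed manifold $M$ to the vector field $X^i=u^{p-2}\ps{\nabla^i T}{T}$ (there is no boundary term); expanding $\nabla_i X^i$, then estimating the rough Laplacian term coming from $\nabla_i\nabla^i T$ by $\norm{\Delta T}\le\sqrt n\,\norm{\nabla^2 T}$ and the term coming from $\nabla(u^{p-2})$ by the Kato inequality $\norm{\nabla\norm{\nabla T}}\le\norm{\nabla^2 T}$, one obtains
\begin{equation*}
\int_M u^p\,dv_g\le\bigl(p-2+\sqrt n\bigr)\int_M u^{p-2}\,\norm{\nabla^2 T}\,\norm{T}\,dv_g.
\end{equation*}
Now apply Hölder's inequality to the right-hand side with the three exponents $\tfrac{p}{p-2}$, $\tfrac{1}{\beta}$ and $\tfrac{1}{\alpha}$, which are conjugate precisely because $\tfrac{p-2}{p}+\beta+\alpha=\tfrac{p-2}{p}+\tfrac{2}{p}=1$; the factor $u^{p-2}$ contributes $\bigl(\int_M u^p\bigr)^{(p-2)/p}=\norm{\nabla T}_{1/\gamma}^{p-2}$, and dividing by it (the case $\norm{\nabla T}_{1/\gamma}=0$ being trivial) yields
\begin{equation*}
\norm{\nabla T}_{1/\gamma}^{2}\le C(n,\gamma)\,\norm{T}_{1/\alpha}\,\norm{\nabla^2 T}_{1/\beta},
\end{equation*}
which is the claimed estimate. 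The endpoint cases $\alpha=0$ or $\beta=0$ (an $L^\infty$ factor on the right) go through unchanged, using an $L^\infty$ slot in the Hölder step.

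The one point I expect to be genuinely delicate is the complementary range $1\le p<2$ (that is, $\alpha+\beta>1$): there $u^{p-2}$ blows up on the zero set of $\nabla T$ and, more seriously, the absorption step above cannot be run, since $\norm{\nabla T}_{1/\gamma}^{p-2}$ is then a negative power of the quantity one wants to bound, so Hölder no longer produces something one may divide out. The usual remedy is to replace $u$ throughout by $u_\delta=(\norm{\nabla T}^2+\delta^2)^{1/2}$ — for which $\norm{\nabla u_\delta}\le\norm{\nabla^2 T}$ still holds and $u_\delta\ge\delta>0$ — rerun the computation, and let $\delta\to 0$, checking that the spurious terms (typically $\delta^2\int_M u_\delta^{p-2}\le\delta^p\,Vol_g(M)$ and similar) tend to zero; obtaining a clean argument here while keeping the constant independent of the metric is the subtle part. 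Since this low-exponent range is never needed in the present paper, an alternative is simply to state the lemma for $\alpha+\beta\le 1$.
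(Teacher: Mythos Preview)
The paper does not give its own proof of this lemma; it is simply quoted from Hamilton \cite{Ham82}, Theorem~12.1. Your argument is exactly Hamilton's: write $\int u^p=\int u^{p-2}\ps{\nabla T}{\nabla T}$, integrate by parts, bound the two resulting terms by $\int u^{p-2}\norm{\nabla^2 T}\,\norm{T}$ via Kato and $\norm{\Delta T}\le\sqrt n\,\norm{\nabla^2 T}$, then apply H\"older with the conjugate triple $\tfrac{p}{p-2},\tfrac1\beta,\tfrac1\alpha$ and absorb.

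Two remarks. First, what you actually prove is $\norm{\nabla T}_{1/\gamma}^{2}\le C\,\norm{T}_{1/\alpha}\norm{\nabla^2 T}_{1/\beta}$, not the inequality as printed in the paper. This is a typo on the paper's side: the unsquared version fails under $T\mapsto\lambda T$, and the squared version is exactly what is used downstream --- in the proof of Proposition~\ref{prop:Interpolation} the lemma is invoked as $f(k)\le C f(k-1)^{1/2}f(k+1)^{1/2}$, and in the proof of Theorem~\ref{thm:singularity1} it is quoted as $\norm{\nabla^{2k+1}Rm}_2^2\le C\norm{\nabla^{2k}Rm}_2\norm{\nabla^{2k+2}Rm}_2$. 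Second, your caution about the range $1/\gamma<2$ is appropriate and, as you note, irrelevant here: every application in the paper has $\gamma\le\tfrac12$ (Corollary~\ref{cor:InterpolationCorol} uses $\alpha=\tfrac12$, $\beta=\tfrac{1}{k+4}$; Lemma~\ref{lem:SobolevInterpLemma} uses $\alpha=\beta=1/p$ with $p\ge 2$), so restricting the proof to $\alpha+\beta\le 1$ loses nothing.
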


\begin{lem}[\cite{Ham82} Corollary 12.5]\label{lem:HamiltonLemma2}
Let $m$ be a positive integer.

If $f:\{0,1,\dotsc,m\}\to \mathbb R$ satisfies
\begin{equation*}
\forall\ 0< k < m \quad f(k)\leq C f(k-1)^{1/2}f(k+1)^{1/2},
\end{equation*}
where $C$ is a constant, then 
\begin{equation*}
\forall\ 0\leq k \leq m \quad f(k)\leq C^{k(m-k)}f(0)^{1-\frac{k}{m}}f(m)^{\frac{k}{m}}.
\end{equation*}
\end{lem}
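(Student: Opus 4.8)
The plan is to take logarithms and reduce the multiplicative inequality to a one-dimensional discrete maximum principle for convex sequences. First I would dispose of the degenerate cases: if $C=0$, or if $f(0)=0$ or $f(m)=0$, then the hypothesis $f(k)\le C f(k-1)^{1/2}f(k+1)^{1/2}$ forces $f$ to vanish at every interior point (propagating inward from the boundary), and the claimed inequality holds trivially; so we may assume $C>0$ and $f(k)>0$ for all $k$.

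Set $g(k)=\ln f(k)$ and $c=\ln C$. Taking logarithms in the hypothesis gives, for $0<k<m$,
\[
g(k+1)-2g(k)+g(k-1)\ge -2c,
\]
so the discrete second difference of $g$ is bounded below by $-2c$. Next I would introduce the comparison function
\[
\ell(k)=\Bigl(1-\frac{k}{m}\Bigr)g(0)+\frac{k}{m}g(m)+c\,k(m-k),
\]
which satisfies $\ell(0)=g(0)$, $\ell(m)=g(m)$, and --- the affine part having zero second difference while $k(m-k)$ has constant second difference $-2$ --- also $\ell(k+1)-2\ell(k)+\ell(k-1)=-2c$ for $0<k<m$. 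Observe that $\ell(k)=\ln\bigl(C^{k(m-k)}f(0)^{1-k/m}f(m)^{k/m}\bigr)$ is precisely the logarithm of the right-hand side of the claimed inequality, so it suffices to prove $g(k)\le \ell(k)$ for all $k$.

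Put $\phi=g-\ell$. Then $\phi(0)=\phi(m)=0$ and, subtracting the two second-difference relations, $\phi(k+1)-2\phi(k)+\phi(k-1)\ge 0$ for $0<k<m$; that is, $\phi$ is discretely convex. I would finish with the discrete maximum principle: let $k_0$ be a point at which $\phi$ attains its maximum over $\{0,\dots,m\}$. If $k_0\in\{0,m\}$, then $\max\phi=0$ and we are done. If $0<k_0<m$, then $\phi(k_0-1)+\phi(k_0+1)\ge 2\phi(k_0)$ combined with the maximality of $\phi(k_0)$ forces $\phi(k_0-1)=\phi(k_0)=\phi(k_0+1)$; hence the (nonempty) set of maximizers is stable under passing to a neighbour, and therefore contains $0$ and $m$, so again $\max\phi=0$. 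Thus $\phi\le 0$, i.e. $g(k)\le\ell(k)$, and exponentiating yields the stated bound.

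The whole argument is elementary; the only step demanding a little care is verifying that $\ell$ has exactly the prescribed boundary values and constant second difference $-2c$, so that $\phi$ is genuinely a discretely convex sequence vanishing on the boundary --- once that is in place, the discrete maximum principle is routine. (One could instead argue by induction on $m$, but the convexity formulation is cleaner and avoids a case analysis.)
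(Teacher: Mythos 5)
The paper does not actually prove this lemma: it is quoted verbatim from Hamilton (\cite{Ham82}, Corollary 12.5), so there is no internal proof to compare yours against. Your argument is correct and is essentially the classical one: after taking logarithms the hypothesis says the discrete second difference of $g=\ln f$ is bounded below by $-2c$, the explicit comparison sequence $\ell(k)=(1-\tfrac{k}{m})g(0)+\tfrac{k}{m}g(m)+c\,k(m-k)$ has exactly the boundary values of $g$ and constant second difference $-2c$ (indeed $k(m-k)$ has second difference $-2$), and $\ell$ is precisely the logarithm of the claimed right-hand side, so the discrete maximum principle applied to the convex sequence $\phi=g-\ell$, which vanishes at $0$ and $m$, finishes the proof. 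The only point to patch is the reduction to the case $f>0$: excluding $C=0$, $f(0)=0$ and $f(m)=0$ does not by itself rule out an interior zero with positive endpoints (e.g. $f(0)=f(2)=1$, $f(1)=0$), where $\ln f$ is undefined. The same propagation you use at the boundary disposes of this case: if $f(j)=0$ at some interior $j$, then the hypothesis at the neighbouring interior indices (whose right-hand sides contain $f(j)^{1/2}$) forces those values to vanish too, hence all interior values are zero and the conclusion is trivial since its right-hand side is nonnegative. With that one sentence added, your case analysis is complete and the log-convexity argument goes through; it is also a perfectly good substitute for the induction-on-$m$ proof one could extract from Hamilton.
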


\begin{proof}[Proof of the Proposition]
Let define 
\begin{equation*}
f(k)=\norm{\nabla^k T}_{\frac{1}{\gamma_k}}.
\end{equation*}

As $\gamma_k=\frac{1}{2}\gamma_{k-1}+\frac{1}{2}\gamma_{k+1}$, Lemma~\ref{lem:HamiltonLemma1} show that there exists $C(n,m,\alpha,\beta)$ such that 
\begin{equation*}
f(k)\leq C f(k-1)^{1/2}f(k+1)^{1/2},
\end{equation*}
then Lemma~\ref{lem:HamiltonLemma2} proves the proposition.
\end{proof}

\begin{prop}\label{prop:InterpolationPjk}
Let $j\geq 2$ and $k\geq 1$ be two integers. 

Let $F:\T{p}{q}\to\mathbb R$ be a map such that for all tensors $T$,
\begin{equation*}
F(T)=\tspn{j}{k}{T}.
\end{equation*}
For all integers $m\geq [\frac{k+1}{2}]$ and all real numbers $\alpha, \beta$ in $[0,1]$ such that 
\begin{equation*}
(j-\frac{k}{m})\alpha+\frac{k}{m}\beta=1,
\end{equation*}
there exists a constant $C(j,n,m,\alpha,\beta,F)$ such that for all tensors $T$, 
\begin{equation*}
\norm{\int_M F(T)dv_g}\leq C \norm{T}_{\frac{1}{\alpha}}^{j-\frac{k}{m}}\norm{\nabla^m T}_{\frac{1}{\beta}}^{\frac{k}{m}}.
\end{equation*}
\end{prop}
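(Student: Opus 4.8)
The plan is to expand $F(T)$ into a finite sum of monomials $\nabla^{i_1}T\ast\dots\ast\nabla^{i_j}T$ with $i_1+\dots+i_j=k$, to rewrite each such monomial (once integrated over $M$) as a finite sum of monomials in which no factor carries more than $m$ derivatives, and then to combine the pointwise bound $|\nabla^{i_1}T\ast\dots\ast\nabla^{i_j}T|\le C\prod_l|\nabla^{i_l}T|$, Hölder's inequality, and the Gagliardo--Nirenberg estimate of Proposition~\ref{prop:Interpolation}.

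First I would note that, since $\nabla$ commutes with metric contractions ($\nabla g=\nabla g^{-1}=0$), integrating a monomial $\int_M \nabla^{i_1}T\ast\dots\ast\nabla^{i_j}T\,dv_g$ by parts produces no curvature terms: moving one covariant derivative off the factor $\nabla^{i_l}T$ onto the contraction of the remaining factors gives, by the Leibniz rule, $-\sum_{l'\ne l}\int_M \nabla^{i_1}T\ast\dots\ast\nabla^{i_l-1}T\ast\dots\ast\nabla^{i_{l'}+1}T\ast\dots\,dv_g$, a finite sum of monomials of the same total order $k$ (each $\ast$ possibly a different contraction scheme, still of the form $\tspn{j}{k}{T}$). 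Using this I would prove the reduction step: since $m\ge[\frac{k+1}{2}]$ forces $k\le 2m$, every monomial with some $i_l>m$ can, after finitely many such integrations by parts, be written as a finite sum of monomials $\nabla^{i_1}T\ast\dots\ast\nabla^{i_j}T$ with $i_1+\dots+i_j=k$ and $0\le i_l\le m$ for all $l$. For this I would track the excess $E=\sum_l\max(i_l-m,0)$: if $i_l\ge m+1$ then $\sum_{l'\ne l}i_{l'}=k-i_l\le m-1$, so every other exponent is $\le m-1$; hence after integrating by parts off the over-full slot $l$, each resulting monomial has $i_l$ replaced by $i_l-1\ (\ge m)$ and some $i_{l'}$ replaced by $i_{l'}+1\ (\le m)$, so $E$ drops by exactly $1$ in each term, and after $E_0$ rounds all exponents are $\le m$.

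With every exponent in $\{0,1,\dots,m\}$, for each monomial I would set $p_l=1/\gamma_{i_l}$ where $\gamma_i=(1-\tfrac{i}{m})\alpha+\tfrac{i}{m}\beta\in[0,1]$ (with the convention that $\gamma_{i_l}=0$ corresponds to the $L^\infty$ norm). The hypothesis $(j-\tfrac{k}{m})\alpha+\tfrac{k}{m}\beta=1$ is precisely $\sum_{l=1}^j\gamma_{i_l}=1$, i.e. $\sum_l 1/p_l=1$ (and it also forces $(\alpha,\beta)\ne(0,0)$, so Proposition~\ref{prop:Interpolation} applies). Hölder's inequality then gives $\int_M|\nabla^{i_1}T\ast\dots\ast\nabla^{i_j}T|\,dv_g\le C\prod_{l=1}^j\norm{\nabla^{i_l}T}_{1/\gamma_{i_l}}$, and Proposition~\ref{prop:Interpolation} bounds each factor by $C\norm{T}_{1/\alpha}^{1-i_l/m}\norm{\nabla^m T}_{1/\beta}^{i_l/m}$; multiplying and using $\sum_l i_l=k$ gives $C\norm{T}_{1/\alpha}^{j-k/m}\norm{\nabla^m T}_{1/\beta}^{k/m}$ for each monomial, and summing over the finitely many monomials yields the stated bound.

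The step I expect to be the main obstacle is the reduction, namely making precise that integration by parts can always redistribute derivatives so that no factor carries more than $m$ of them; this is where the exact hypothesis $m\ge[\frac{k+1}{2}]$ is used (through $k\le 2m$ and $j\ge 2$), and once the bookkeeping with the excess $E$ is in place the rest is a routine application of Hölder's inequality and Proposition~\ref{prop:Interpolation}.
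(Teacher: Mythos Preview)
Your proof is correct and follows essentially the same route as the paper's: reduce via integration by parts so that no factor carries more than $m$ derivatives, then apply the pointwise bound (Lemma~\ref{lem:StarNorm}), H\"older with exponents $1/\gamma_{i_l}$, and Proposition~\ref{prop:Interpolation}. The only cosmetic difference is in the bookkeeping of the reduction step: the paper singles out one factor $\nabla^{k_1}T$ and integrates by parts until $k_1$ equals $\lfloor\tfrac{k+1}{2}\rfloor$ (so both $k_1$ and the remaining total $l_1=k-k_1$ are $\le m$), whereas you track the excess $E=\sum_l\max(i_l-m,0)$ and show it drops at each step---both arguments exploit $k\le 2m$ in the same way.
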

We will need the following lemma:
\begin{lem}\label{lem:StarNorm}
For all tensors of the form $S\ast T$, there exists a constant $C$ depending only on the dimension and the coefficients in the expression such that
\begin{equation*}
\norm{S\ast T}\leq C \norm{S}\norm{T}.
\end{equation*}
\end{lem}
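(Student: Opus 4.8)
The inequality is pointwise: at each $x\in M$ every norm appearing is the fibrewise norm on tensors over $x$, so it suffices to prove $\norm{(S\ast T)(x)}\leq C\norm{S(x)}\norm{T(x)}$ at a fixed $x$ with $C$ independent of $x$. The plan is therefore to fix $x$, choose a $g$-orthonormal basis of $T_xM$, and work in the associated coordinates, in which $g_{ij}=\delta_{ij}$ and the norm of any tensor equals, up to a fixed combinatorial factor coming from the factorials in the double-form scalar product and depending only on the ranks of the tensor, the Euclidean norm of its array of components. By the definition of $\ast$ recalled in the introduction, $S\ast T$ is a finite linear combination, with coefficients independent of the metric, of tensors obtained from $S\otimes T$ by tensoring with finitely many copies of $g$ and $g^{-1}$, contracting pairs of indices, and permuting indices, so it is enough to bound the Euclidean norm of each such term by $C(n)\norm{S}\norm{T}$ and then sum over the finitely many terms with their coefficients.

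I would then dispose of the three building blocks one at a time. Permuting indices is an isometry, hence leaves the norm unchanged. Tensoring with $g$ or with $g^{-1}$ multiplies the Euclidean norm by $\norm{g}=\sqrt n$ (in the orthonormal basis both have components $\delta_{ij}$), and since the Euclidean norm of a tensor product is the product of the Euclidean norms of the factors, after all the tensorings a term has norm at most $n^{(a+b)/2}\norm{S}\norm{T}$, where $a$ and $b$, the numbers of copies of $g$ and $g^{-1}$ used, are bounded in terms of the structure of the expression.

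Finally, a single contraction of a tensor $A$ is a map of the form $A\mapsto\sum_{i=1}^n A_{\dotsc i\dotsc i\dotsc}$; applying the Cauchy--Schwarz inequality $\bigl|\sum_{i=1}^n a_i\bigr|^2\leq n\sum_{i=1}^n\norm{a_i}^2$ to the contracted pair and summing over all values of the remaining indices gives $\norm{\mathrm{contr}(A)}\leq\sqrt n\,\norm{A}$; iterating over the bounded number of contractions present in each term contributes a further factor depending only on $n$ and the combinatorics of the expression. Combining the three estimates bounds every term of $S\ast T$ by $C(n)\norm{S}\norm{T}$, and summing yields $\norm{S\ast T}\leq C\norm{S}\norm{T}$ with $C$ depending only on the dimension and on the coefficients and index structure of the expression. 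This lemma is essentially bookkeeping rather than a genuine difficulty; the only point requiring care is tracking the normalization factorials relating the double-form norm to the Euclidean component norm, but these contribute only a fixed multiplicative constant determined by the ranks of $S$ and $T$ and are harmlessly absorbed into $C$.
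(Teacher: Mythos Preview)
Your proof is correct and follows essentially the same approach as the paper's: contractions cost a factor $\sqrt{n}$ each by Cauchy--Schwarz, tensoring with $g$ or $g^{-1}$ contributes $\sqrt{n}$ per copy, and these combine to give the stated bound. The paper's proof is simply a terser version of your argument, omitting the explicit mention of permutations and the orthonormal basis.
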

\begin{proof}
By Cauchy-Schwarz inequality, the norm of a tensor with contracted indices is not more than the norm of the tensor multiplied by a power of the dimension:
\begin{equation*}
(g^{\alpha \beta}T_{\alpha\beta})^2\leq n\ T_{\alpha\beta}T^{\alpha\beta}.
\end{equation*}

Then,
\begin{equation*}
\norm{S\ast T}\leq C(n)\norm{S\otimes T\otimes g^{\otimes j}\otimes (g^{-1}){}^{\otimes k}}\leq C(n)n^{\frac{j+k}{2}}\norm{S}\norm{T}.
\end{equation*}
\end{proof}
\begin{proof}[Proof of the Proposition]
Let consider one term in $F(T)$. We can write it as a contraction of
\begin{equation*}
\nabla^{k_1}T\otimes\tspn{j-1}{l_1}{T},
\end{equation*}
with $k_1+l_1=k$. Since $F(T)$ is real, all indices are contracted. 

Moreover we can suppose that $k_1\leq m$ and $k_1\leq m$. Indeed, with an integration by part, let show that we can write 
\begin{equation*}
\int_M contr^o of (\nabla^{k_1}T\otimes\tspn{j-1}{l_1}{T}) dv_g = \int_M contr^o of (\nabla^{k_1-1}T\otimes\tspn{j-1}{l_1+1}{T}) dv_g.
\end{equation*}

Since all indices are contracted, the first index of $\nabla^{k_1} T$ is either contracted with an other one of $\nabla^{k_1} T$, or with one of $\tspn{j-1}{l_1}{T}$.

In the first case, we can write:
\begin{equation*}
\int_M contr^o of (\nabla^{k_1}T\otimes\tspn{j-1}{l_1}{T}) dv_g = \int_M contr^o of (\nabla^{k_1-1}T\otimes\nabla\tspn{j-1}{l_1}{T}) dv_g,
\end{equation*}
and in the second case, we can write:
\begin{equation*}
\int_M contr^o of (\nabla^{k_1}T\otimes\tspn{j-1}{l_1}{T}) dv_g = \int_M contr^o of (\nabla^{k_1-1}T\otimes\dv\tspn{j-1}{l_1}{T}) dv_g.
\end{equation*}
So we can lower or increase $k_1$ until $k_1=l_1$ or $k_1=l_1+1$. Then $[\frac{k+1}{2}]=k_1$, thus $k_1\leq m$ and $l_1\leq k_1\leq m$.

Then, by Lemma~\ref{lem:StarNorm} and the Hölder inequality,
\begin{align*}
\norm{\int_M \nabla^{k_1}T\ast\dotsb\ast\nabla^{k_j}T dv_g}&\leq C' \int_M \norm{\nabla^{k_1}T}\dotsb\norm{\nabla^{k_j}T} dv_g\\
&\leq  C'\norm{\nabla^{k_1} T}_{\frac{1}{\gamma_{k_1}}}\dotsb \norm{\nabla^{k_j} T}_{\frac{1}{\gamma_{k_j}}},
\end{align*}
where $\gamma_{k_i}=(1-\frac{k_i}{m})\alpha + \frac{k_i}{m}\beta$, since $\sum_{i=1}^j\gamma_{k_i}=(j-\frac{k}{m})\alpha + \frac{k}{m}\beta=1$
and now, by Proposition \ref{prop:Interpolation}, we get:
\begin{equation*}
\norm{\int_M \nabla^{k_1}T\ast\dotsb\ast\nabla^{k_j}T dv_g}\leq C \norm{T}_{\frac{1}{\alpha}}^{j-\frac{k}{m}}\norm{\nabla^m T}_{\frac{1}{\beta}}^{\frac{k}{m}}.
\end{equation*}

Finally, $F(T)$ being a linear combination of such terms, the proposition is proved.
\end{proof}

\begin{cor}\label{cor:InterpolationCorol}
There exists $C(n,k)$ such that for all tensors $T$
\begin{align*}
\norm{\int_M \tspn{3}{2k+2}{T} dv_g}&\leq C \norm{T}_{k+4}^{\frac{k+4}{k+2}}\norm{\nabla^{k+2} T}_2^{\frac{2k+2}{k+2}},\\
\norm{\int_M \tspn{4}{2k}{T} dv_g}&\leq C \norm{T}_{k+4}^{\frac{2k+8}{k+2}}\norm{\nabla^{k+2} T}_2^{\frac{2k}{k+2}}.
\end{align*}
\end{cor}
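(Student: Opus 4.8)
The plan is to deduce both inequalities directly from Proposition~\ref{prop:InterpolationPjk}, so that the only real content is the arithmetic of choosing the interpolation parameters, supplemented by a trivial Hölder bound to cover one degenerate case.

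\emph{First inequality.} First I would apply Proposition~\ref{prop:InterpolationPjk} with $j=3$, with $2k+2$ in the role of its derivative-count, and with $m=k+2$ and $\beta=\frac{1}{2}$. Admissibility of $m$ holds since $m=k+2\geq k+1=[\frac{2k+3}{2}]$. The compatibility condition $(j-\frac{2k+2}{m})\alpha+\frac{2k+2}{m}\beta=1$ becomes $\frac{k+4}{k+2}\alpha+\frac{k+1}{k+2}=1$, which forces $\alpha=\frac{1}{k+4}\in(0,1]$; together with $\beta=\frac{1}{2}\in[0,1]$ and $(\alpha,\beta)\neq(0,0)$, this checks all the hypotheses. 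Since $j-\frac{2k+2}{m}=\frac{k+4}{k+2}$, $\frac{2k+2}{m}=\frac{2k+2}{k+2}$, $\frac{1}{\alpha}=k+4$ and $\frac{1}{\beta}=2$, the conclusion of Proposition~\ref{prop:InterpolationPjk} reads exactly
\begin{equation*}
\norm{\int_M \tspn{3}{2k+2}{T}\, dv_g}\leq C\norm{T}_{k+4}^{\frac{k+4}{k+2}}\norm{\nabla^{k+2} T}_2^{\frac{2k+2}{k+2}}.
\end{equation*}

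\emph{Second inequality.} For $k\geq 1$ I would argue identically, this time with $j=4$, derivative-count $2k$, $m=k+2$ and $\beta=\frac{1}{2}$. Here $m=k+2\geq k=[\frac{2k+1}{2}]$ is admissible, and the compatibility condition $(4-\frac{2k}{m})\alpha+\frac{2k}{m}\beta=1$ reduces to $\frac{2k+8}{k+2}\alpha+\frac{k}{k+2}=1$, again giving $\alpha=\frac{1}{k+4}\in(0,1]$, while $j-\frac{2k}{m}=\frac{2k+8}{k+2}$ and $\frac{2k}{m}=\frac{2k}{k+2}$. Proposition~\ref{prop:InterpolationPjk} then yields
\begin{equation*}
\norm{\int_M \tspn{4}{2k}{T}\, dv_g}\leq C\norm{T}_{k+4}^{\frac{2k+8}{k+2}}\norm{\nabla^{k+2} T}_2^{\frac{2k}{k+2}}.
\end{equation*}
The case $k=0$ of this second estimate is not directly covered by Proposition~\ref{prop:InterpolationPjk} (which assumes a strictly positive derivative-count), but there $\tspn{4}{0}{T}$ is merely a linear combination of contractions of $T\otimes T\otimes T\otimes T$, so Lemma~\ref{lem:StarNorm} and Hölder's inequality give $\normm{\int_M \tspn{4}{0}{T}\, dv_g}\leq C\norm{T}_4^4$, which is the asserted bound when $k=0$.

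\emph{Where the difficulty lies.} There is essentially no analytic obstacle here: the entire substance has already been isolated in Proposition~\ref{prop:InterpolationPjk}, and what is left is bookkeeping --- solving the two linear equations for $\alpha$, verifying $\alpha,\beta\in[0,1]$ and the admissibility $m\geq[\frac{k_0+1}{2}]$, and matching exponents with the statement. The one point to watch is that Proposition~\ref{prop:InterpolationPjk} is formulated only for a positive derivative-count, which is why the degenerate instance $k=0$ of the second inequality must be disposed of separately by the elementary estimate above.
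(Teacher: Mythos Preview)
Your proof is correct and follows the same route as the paper: both inequalities are direct applications of Proposition~\ref{prop:InterpolationPjk} with $m=k+2$, and your parameter choices $\alpha=\frac{1}{k+4}$, $\beta=\frac{1}{2}$ are the right ones (the paper's own proof records $\alpha=\frac{1}{2}$, $\beta=\frac{1}{k+4}$, which is a typographical swap, since in the conclusion of the proposition it is $\|T\|_{1/\alpha}$ and $\|\nabla^m T\|_{1/\beta}$ that appear). Your separate treatment of the degenerate case $k=0$ in the second inequality is a point of extra care the paper glosses over.
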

\begin{proof}
It is an direct application of Proposition \ref{prop:InterpolationPjk} with $\alpha=\frac{1}{2}$, $\beta=\frac{1}{k+4}$, $m=k+2$, $k'=2k+2$, $j=3$ and $k'=2k$, $j=4$.
\end{proof}

\begin{prop}\label{prop:BoundedLotk}
Let $F:\T{p}{q}\to\mathbb R$ be a map such that 
\begin{equation*}
F(T)=\tspn{3}{2k+2}{T}+\tspn{4}{2k}{T}.
\end{equation*}
There exists $C(n,k)$ such that for all tensors $T$,
\begin{equation*}
\norm{\int_M F(T) dv_g}\leq \frac{1}{2} \norm{\nabla^{k+2} T}_2^2+C \norm{T}_\infty^{k+2} \mathcal F_{T}.
\end{equation*}
\end{prop}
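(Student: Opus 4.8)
The plan is to deduce the estimate directly from the interpolation bounds of Corollary~\ref{cor:InterpolationCorol}, an elementary $L^p$ interpolation of $\norm{T}_{k+4}$ between $\norm{T}_\infty$ and $\norm{T}_2$, and a final application of Young's inequality to absorb the top‑order factor into $\norm{\nabla^{k+2}T}_2^2$.

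First I would split $\int_M F(T)\,dv_g$ into its two pieces and apply Corollary~\ref{cor:InterpolationCorol}, which gives
\begin{align*}
\norm{\int_M \tspn{3}{2k+2}{T} dv_g}&\leq C \norm{T}_{k+4}^{\frac{k+4}{k+2}}\norm{\nabla^{k+2} T}_2^{\frac{2k+2}{k+2}},\\
\norm{\int_M \tspn{4}{2k}{T} dv_g}&\leq C \norm{T}_{k+4}^{\frac{2k+8}{k+2}}\norm{\nabla^{k+2} T}_2^{\frac{2k}{k+2}}.
\end{align*}
Next, since $\int_M \norm{T}^{k+4}dv_g=\int_M \norm{T}^{k+2}\norm{T}^2 dv_g\leq \norm{T}_\infty^{k+2}\mathcal F_T$, one has $\norm{T}_{k+4}^{k+4}\leq \norm{T}_\infty^{k+2}\mathcal F_T$, hence (taking the powers $\frac{1}{k+2}$ and $\frac{2}{k+2}$, using $\frac{2k+8}{k+2}=\frac{2(k+4)}{k+2}$)
\begin{equation*}
\norm{T}_{k+4}^{\frac{k+4}{k+2}}\leq \norm{T}_\infty\,\mathcal F_T^{\frac{1}{k+2}},\qquad \norm{T}_{k+4}^{\frac{2k+8}{k+2}}\leq \norm{T}_\infty^{2}\,\mathcal F_T^{\frac{2}{k+2}}.
\end{equation*}
Substituting these in, the two pieces of $\int_M F(T)\,dv_g$ are controlled respectively by $C\norm{T}_\infty \mathcal F_T^{\frac{1}{k+2}}\norm{\nabla^{k+2}T}_2^{\frac{2k+2}{k+2}}$ and $C\norm{T}_\infty^{2} \mathcal F_T^{\frac{2}{k+2}}\norm{\nabla^{k+2}T}_2^{\frac{2k}{k+2}}$.

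Finally I would apply Young's inequality $xy\leq \delta\,x^{p}+C(\delta,p)\,y^{p'}$ to each term, picking the conjugate exponents so that the power of $\norm{\nabla^{k+2}T}_2$ becomes exactly $2$. For the first term this forces $p=\frac{k+2}{k+1}$, $p'=k+2$, so the remainder is $C(\delta)\bigl(\norm{T}_\infty \mathcal F_T^{\frac{1}{k+2}}\bigr)^{k+2}=C(\delta)\norm{T}_\infty^{k+2}\mathcal F_T$. For the second term (when $k\geq 1$; the case $k=0$ needs nothing, since $\tspn{4}{0}{T}=T\ast T\ast T\ast T$ has no derivatives and $\int_M\norm{T\ast T\ast T\ast T}\,dv_g\leq C\norm{T}_\infty^{2}\mathcal F_T$ directly) one takes $p=\frac{k+2}{k}$, $p'=\frac{k+2}{2}$, and the remainder is $C(\delta)\bigl(\norm{T}_\infty^{2}\mathcal F_T^{\frac{2}{k+2}}\bigr)^{\frac{k+2}{2}}=C(\delta)\norm{T}_\infty^{k+2}\mathcal F_T$. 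Choosing $\delta=\tfrac14$ in both and adding the two estimates yields $\norm{\int_M F(T)\,dv_g}\leq \tfrac12\norm{\nabla^{k+2}T}_2^{2}+C(n,k)\norm{T}_\infty^{k+2}\mathcal F_T$, as claimed.

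There is no genuine analytic obstacle in this statement: the substantive content is already packaged in Corollary~\ref{cor:InterpolationCorol}. The only thing to watch is the bookkeeping of exponents — one must check that the interpolation corollary produces factors of the precise form $\norm{T}_{k+4}^{a}\norm{\nabla^{k+2}T}_2^{b}$ matching the scaling $a\cdot\frac{1}{k+4}+b\cdot\frac{1}{2}=$ (the right homogeneity), so that after replacing $\norm{T}_{k+4}$ by $\norm{T}_\infty$ and $\norm{T}_2$ and applying Young's inequality the leftover term is exactly $\norm{T}_\infty^{k+2}\mathcal F_T$ and the coefficient of $\norm{\nabla^{k+2}T}_2^2$ can be made $\tfrac12$.
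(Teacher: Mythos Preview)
Your proof is correct and follows essentially the same approach as the paper: apply Corollary~\ref{cor:InterpolationCorol}, use Young's inequality to absorb the top-order factor into $\tfrac{1}{2}\norm{\nabla^{k+2}T}_2^2$, and control $\norm{T}_{k+4}^{k+4}$ by $\norm{T}_\infty^{k+2}\mathcal F_T$. The only cosmetic difference is the order of the last two steps --- the paper applies Young's inequality first (obtaining a remainder $C\norm{T}_{k+4}^{k+4}$) and then bounds $\norm{T}_{k+4}^{k+4}\leq \norm{T}_\infty^{k+2}\norm{T}_2^2$, whereas you substitute the $L^\infty$--$L^2$ bound before invoking Young; the arithmetic and the resulting constants are the same.
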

\begin{proof}
By Corollary \ref{cor:InterpolationCorol}, 
\begin{align*}
\norm{\int_M \tspn{3}{2k+2}{T} dv_g}&\leq C \norm{T}_{k+4}^{\frac{k+4}{k+2}}\norm{\nabla^{k+2} T}_2^{\frac{2k+2}{k+2}}\\
&\leq \frac{1}{4}\norm{\nabla^{k+2} T}_2^2+\frac{1}{k+2}\Bigl(\frac{4(k+1)}{k+2}\Bigr)^{k+1} C^{k+2}\norm{T}_{k+4}^{k+4},
\end{align*}
and
\begin{align*}
\norm{\int_M \tspn{4}{2k}{T} dv_g}&\leq C \norm{T}_{k+4}^{\frac{2k+8}{k+2}}\norm{\nabla^{k+2} T}_2^{\frac{2k}{k+2}}\\
&\leq \frac{1}{4}\norm{\nabla^{k+2} T}_2^2+\frac{2}{k+2}\Bigl(\frac{4k}{k+2}\Bigr)^{\frac{k}{2}} C^{\frac{k+2}{2}}\norm{T}_{k+4}^{k+4}.
\end{align*}

It follows by adding the two inequalities that there exists $C(n,k)$ such that  
\begin{align*}
\norm{\int_M F(T) dv_g}&\leq \frac{1}{2} \norm{\nabla^{k+2} T}_2^2+C \norm{T}_{k+4}^{k+4}\\
&\leq  \frac{1}{2} \norm{\nabla^{k+2} T}_2^2+C \norm{T}_\infty^{k+2} \norm{T}_2^{2}.
\end{align*}
\end{proof}

\subsection{Multiplicative Sobolev inequalities}

\begin{prop}\label{prop:MultiplicativeSobolev}
Let $(M,g)$ be a complete Riemannian $n$-manifold, $n\geq 3$. Let $q\geq 2$. There exists $C$ such that for all $u\in H_1^q(M)$, 
\begin{align*}
\norm{u}_p\leq C \norm{u}_{m}^{1-\alpha}(A \norm{\nabla u}_q+\mathcal B_A(g)\norm{u}_q)^\alpha.
\end{align*}
With $2\leq m\leq p$ and  
\begin{align*}
&\text{if } q<n \qquad p\leq \frac{nq}{n-q} \text{\quad and \quad} C=C(n,q),\\
&\text{if } q=n \qquad p< \infty \text{\quad and \quad} C=C(m,p),\\
&\text{if } q>n \qquad p\leq \infty \text{\quad and \quad} C=C(n,m,q)
\end{align*} 
and
\begin{equation*}
\alpha=\frac{\frac{1}{m}-\frac{1}{p}}{\frac{1}{m}-\frac{1}{q}+\frac{1}{n}}.
\end{equation*}
\end{prop}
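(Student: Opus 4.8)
The plan is to build everything on the single inequality encoded in the definition of $\mathcal B_A$,
\[
\norm{v}_{\frac{2n}{n-2}}\leq A\norm{\nabla v}_2+\mathcal B_A(g)\norm{v}_2\qquad(v\in H_1^2(M)),
\]
together with the substitution $v=\norm{u}^{s}$ for a well-chosen exponent $s$ (iterated when $p$ is large). First I would reduce to the case $\mathcal B_A(g)<\infty$, since otherwise the right-hand side is infinite, and to $u\in C_c^\infty(M)$, for which every norm below is finite; the general statement then follows by density of $C_c^\infty(M)$ in $H_1^q(M)$ on a complete manifold, and by homogeneity one may also normalize one of the norms.

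The heart of the matter is the endpoint inequality when $q<n$. Write $q^{*}=\frac{nq}{n-q}$ and apply the base inequality to $v=\norm{u}^{s}$ with $s=\frac{q(n-2)}{2(n-q)}\ge1$. This choice is rigged so that \emph{simultaneously} $s\cdot\frac{2n}{n-2}=q^{*}$ and $2(s-1)\cdot\frac{q}{q-2}=q^{*}$: the former gives $\norm{v}_{\frac{2n}{n-2}}=\norm{u}_{q^{*}}^{s}$, the latter lets Hölder's inequality with exponents $\frac{2q}{q-2}$ and $q$ give $\norm{\nabla v}_2\le s\,\norm{u}_{q^{*}}^{s-1}\norm{\nabla u}_q$, and it forces the interpolation exponent $\theta$ in $\norm{u}_{2s}\le\norm{u}_q^{1-\theta}\norm{u}_{q^{*}}^{\theta}$ (legitimate since $q\le2s\le q^{*}$) to satisfy $\theta s=s-1$, hence $(1-\theta)s=1$. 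After substitution every term therefore carries a factor $\norm{u}_{q^{*}}^{s-1}$, which divides out, leaving
\[
\norm{u}_{q^{*}}\le s\bigl(A\norm{\nabla u}_q+\mathcal B_A(g)\norm{u}_q\bigr),
\]
with $\mathcal B_A(g)$ appearing \emph{linearly}. Interpolating $\norm{u}_p$ between $\norm{u}_m$ and $\norm{u}_{q^{*}}$ (valid because $2\le m\le p\le q^{*}$) and using the identity $\frac1m-\frac1q+\frac1n=\frac1m-\frac1{q^{*}}$, the resulting Hölder exponent is precisely the $\alpha$ of the statement; this settles the case $q<n$ for all admissible $m,p$.

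When $q=n$ one has $q^{*}=\infty$, and when $q>n$ the number $\frac{nq}{n-q}$ is negative, so here I would iterate: from an already-controlled exponent $r$, apply the base inequality to $v=\norm{u}^{s(r)}$ with $s(r)=1+\frac{(q-2)r}{2q}$, chosen so that, after Hölder, the gradient term only involves $\norm{u}_r$ (and the same miracle $\theta s=s-1$ recurs, keeping $\mathcal B_A(g)$ linear). This produces a bound on $\norm{u}_{r'}$, $r'=\frac{2n}{n-2}+\frac{n(q-2)}{q(n-2)}\,r$, in terms of $\norm{u}_q$ and $A\norm{\nabla u}_q+\mathcal B_A(g)\norm{u}_q$. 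The multiplier $\frac{n(q-2)}{q(n-2)}$ equals $1$ for $q=n$ and exceeds $1$ for $q>n$, so starting from $r=q$ the exponent increases past any prescribed finite value (when $q=n$), or tends to $+\infty$ and allows $p=\infty$ in the limit (when $q>n$); and for $q<n$ the same recursion, now a contraction, has $q^{*}$ for its limit, consistently with the above. A final round of Hölder interpolation brings in the $\norm{u}_m$-factor with the exponent $\alpha$; the constant then also depends on the number of iteration steps, hence on $m,p$ (for $q=n$) or on $n,m,q$ (for $q>n$).

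The hard part is bookkeeping rather than any individual estimate: one must (i) legitimize every division and interpolation by first working on $C_c^\infty(M)$ and passing to the limit, (ii) check that the tailored exponents really keep $\mathcal B_A(g)$ linear at every stage, and (iii) control, uniformly in the allowed data, the number of iteration steps and hence the final constant — and, in the regimes $q\ge n$, carry out the interpolation that converts the bounds phrased in terms of $\norm{u}_q$ into the stated bound in terms of $\norm{u}_m$. Everything beyond the base inequality is elementary Hölder and Young estimates plus the exponent arithmetic indicated above.
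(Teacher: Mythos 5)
Your strategy is the same as the paper's: apply the defining inequality of $\mathcal B_A(g)$ to $\norm{u}^s$, split the resulting terms by Hölder so that $\mathcal B_A(g)$ only ever appears linearly, and iterate when $q\ge n$. Your step map $r\mapsto r'=\frac{2n}{n-2}+\frac{n(q-2)}{q(n-2)}\,r$ with $s(r)=1+\frac{(q-2)r}{2q}$ is exactly the paper's recursion $\tau_{k+1}=\gamma(1+\tau_k)$ with $\gamma=\frac{n(q-2)}{q(n-2)}$, and your treatment of $q<n$ (fixed point $q^*$, division by $\norm{u}_{q^*}^{s-1}$ on compactly supported smooth functions, then interpolation of $\norm{u}_p$ between $\norm{u}_m$ and $\norm{u}_{q^*}$) coincides with the paper's proof of that case.

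The gap is in the regimes $q\ge n$: you seed the iteration at $r=q$, so all your intermediate bounds are of the form $\norm{u}_{r_k}\le C_k\,\norm{u}_q^{1-\beta_k}(A\norm{\nabla u}_q+\mathcal B_A(g)\norm{u}_q)^{\beta_k}$, and you then assert that a final Hölder interpolation converts the $\norm{u}_q$-factor into the stated $\norm{u}_m$-factor. That conversion does work when $m\le q$ (interpolate $\norm{u}_q\le\norm{u}_m^{1-\theta}\norm{u}_p^{\theta}$ and absorb the power of $\norm{u}_p$ into the left-hand side, which is legitimate on $C_c^\infty$), but the proposition allows any $m\in[2,p]$, in particular $m>q$ (e.g. $q=n$ and $n<m<p$). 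In that case $\norm{u}_q$ cannot be bounded above by powers of $\norm{u}_m$ and $\norm{u}_p$ (both exponents lie above $q$, and there is no volume bound), and trading $\norm{u}_q$ for $\mathcal B_A(g)^{-1}(A\norm{\nabla u}_q+\mathcal B_A(g)\norm{u}_q)$ introduces an inadmissible negative power of $\mathcal B_A(g)$; so the step as described fails there. The remedy is precisely the paper's choice of seed: start the recursion at the exponent $m$ (in the paper's notation $\tau_0=m\frac{q-2}{2q}$), so that the very first Hölder split produces the factor $\norm{u}_m$ directly. Your step formula and the identity $\theta s=s-1$ apply verbatim with this seed, the exponent arithmetic yields exactly the stated $\alpha$ (for $q>n$ after letting the number of steps tend to infinity, with a convergent product of constants, as in the paper), and no final conversion is needed.
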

We will need the following lemma:
\begin{lem}\label{lem:SobolevLemma}
Let $(M,g)$ be a complete Riemannian $n$-manifold, $n\geq 3$. For all $\tau>0$ and $q>2$,
\begin{equation*}
\norm{u}_{(1+\tau)\frac{2n}{n-2}}\leq (1+\tau)^{\frac{1}{1+\tau}}\norm{u}_{\frac{2q\tau}{q-2}}^{\frac{\tau}{1+\tau}}(A \norm{\nabla u}_q+\mathcal B_A(g)\norm{u}_q)^{\frac{1}{1+\tau}}.
\end{equation*}
\end{lem}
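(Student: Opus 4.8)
The plan is to reduce the claim to the defining inequality of the best second Sobolev constant $\mathcal B_A(g)$ (Definition~\ref{defn:SobolevDefinition}) applied to the function $w=\norm{u}^{1+\tau}$, and then to split the two resulting terms with Hölder's inequality. We may of course assume that the right-hand side is finite. A preliminary point is that $w$ belongs to $H_1^2(M)$ with weak gradient $(1+\tau)\norm{u}^{\tau}\nabla\norm{u}$, so that $\norm{\nabla w}=(1+\tau)\norm{u}^{\tau}\norm{\nabla u}$ almost everywhere: this is the chain rule for Sobolev functions, valid here because $t\mapsto\norm{t}^{1+\tau}$ is $C^1$ with derivative vanishing at $0$ (as $\tau>0$), and the integrability of $\norm{\nabla w}^2$ and of $w^2$ will follow from the Hölder estimates below. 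I will first prove the inequality for $u$ bounded (e.g. $u\in C_c^\infty(M)$, or $u\in H_1^q(M)\cap L^\infty(M)$) and then recover the general case by truncating $u$ at level $k$ and letting $k\to\infty$, using monotone convergence on both sides.

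Granting this, I would start from Definition~\ref{defn:SobolevDefinition} applied to $w$:
\begin{equation*}
\norm{u}_{(1+\tau)\frac{2n}{n-2}}^{1+\tau}=\norm{w}_{\frac{2n}{n-2}}\leq A\norm{\nabla w}_2+\mathcal B_A(g)\norm{w}_2.
\end{equation*}
For the gradient term, Hölder's inequality with the conjugate exponents $\frac{q}{q-2}$ and $\frac{q}{2}$ (both $>1$ since $q>2$) gives
\begin{equation*}
\norm{\nabla w}_2=(1+\tau)\Bigl(\int_M\norm{u}^{2\tau}\norm{\nabla u}^2\,dv_g\Bigr)^{\frac12}\leq(1+\tau)\norm{u}_{\frac{2q\tau}{q-2}}^{\tau}\norm{\nabla u}_q,
\end{equation*}
and the same Hölder pairing applied to $\norm{w}_2^2=\int_M\norm{u}^{2\tau}\norm{u}^2\,dv_g$ yields $\norm{w}_2\leq\norm{u}_{\frac{2q\tau}{q-2}}^{\tau}\norm{u}_q$.

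Combining the three displays,
\begin{equation*}
\norm{u}_{(1+\tau)\frac{2n}{n-2}}^{1+\tau}\leq\norm{u}_{\frac{2q\tau}{q-2}}^{\tau}\bigl(A(1+\tau)\norm{\nabla u}_q+\mathcal B_A(g)\norm{u}_q\bigr)\leq(1+\tau)\norm{u}_{\frac{2q\tau}{q-2}}^{\tau}\bigl(A\norm{\nabla u}_q+\mathcal B_A(g)\norm{u}_q\bigr),
\end{equation*}
where the last step uses $1+\tau\geq1$ to absorb the factor in front of $A\norm{\nabla u}_q$. Raising to the power $\frac{1}{1+\tau}$ gives exactly the asserted inequality. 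The only genuinely delicate step is the justification of $w\in H_1^2(M)$ and of the chain rule together with the truncation/limit argument for unbounded $u$; everything else is a one-line application of Hölder. If one prefers, for the intended use in a Moser iteration one may simply state the lemma for $u\in H_1^q(M)\cap L^\infty(M)$, in which case this obstacle disappears entirely.
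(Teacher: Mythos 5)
Your proof is correct and follows essentially the same route as the paper: apply the defining inequality of $\mathcal B_A(g)$ to $u^{1+\tau}$, split both the gradient term and the $L^2$ term by H\"older with exponents $\frac{q}{q-2}$ and $\frac{q}{2}$, and raise to the power $\frac{1}{1+\tau}$. The extra care you take with the chain rule in $H_1^2$ and the truncation argument only adds rigor to what the paper treats as a formal computation.
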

\begin{proof}
Let consider the function $u^{1+\tau}$. 

We have $\nabla u^{1+\tau}=(1+\tau)u^\tau \nabla u$, therefore $\norm{\nabla u^{1+\tau}}_2\leq (1+\tau)\norm{u^\tau}_{\frac{2q}{q-2}}\norm{\nabla u}_q$. It follows that 
\begin{align*}
\norm{u^{1+\tau}}_{\frac{2n}{n-2}}&\leq A (1+\tau)\norm{u^\tau}_{\frac{2q}{q-2}}\norm{\nabla u}_q+\mathcal B_A \norm{u^{1+\tau}}_2\\
&\leq (1+\tau)\bigl(A\norm{u}_{\frac{2q\tau}{q-2}}^\tau\norm{\nabla u}_q+\mathcal B_A\norm{u}_{2(1+\tau)}^{1+\tau}\bigr)\\
&\leq (1+\tau)\norm{u}_{\frac{2q\tau}{q-2}}^\tau(A\norm{\nabla u}_q+\mathcal B_A\norm{u}_q),
\end{align*}
what gives the result by elevating to the power $\frac{1}{1+\tau}$.
\end{proof}

\begin{proof}[Proof of the Proposition]
For all $p\neq n$, let define $p^*=\frac{np}{n-p}$.
\vspace*{10pt}

\noindent{\bf If $\mathbf{q<n}$:} Let choose if $q>2$
\begin{equation*}
\tau=q^*\frac{q-2}{2q}=\frac{\frac{1}{2}-\frac{1}{q}}{\frac{1}{q}-\frac{1}{n}}.                                                               \end{equation*}
Then
\begin{equation*}
1+\tau=\frac{\frac{1}{2}-\frac{1}{n}}{\frac{1}{q}-\frac{1}{n}}=\frac{q^*}{2^*},
\end{equation*}
and by Lemma~\ref{lem:SobolevLemma}:
\begin{align*}
\norm{u}_{q^*}\leq (1+\tau)^{\frac{1}{1+\tau}}\norm{u}_{q^*}^{\frac{\tau}{1+\tau}}(A \norm{\nabla u}_q+\mathcal B_A(g)\norm{u}_q)^{\frac{1}{1+\tau}}.
\end{align*}
It follows that
\begin{equation*}
\norm{u}_{q^*}\leq \frac{q^*}{2^*}(A \norm{\nabla u}_q+\mathcal B_A(g)\norm{u}_q),
\end{equation*}
which is also true when $q=2$. Then, as $\frac{1}{p}=\frac{1-\alpha}{m}+\alpha(\frac{1}{q}-\frac{1}{n})$, we have
\begin{equation*}
\norm{u}_p\leq \norm{u}_m^{1-\alpha}\norm{u}_{q^*}^\alpha\leq \frac{q^*}{2^*}\norm{u}_m^{1-\alpha}(A \norm{\nabla u}_q+\mathcal B_A(g)\norm{u}_q)^\alpha.
\end{equation*}

\noindent{\bf If $\mathbf{q\geq n}$:} Let define 
\begin{equation*}
\gamma=2^*\frac{q-2}{2q}=\frac{\frac{1}{2}-\frac{1}{q}}{\frac{1}{2}-\frac{1}{n}}.
\end{equation*}
Let define a sequence $(\tau_k)$ by 
\begin{equation*}
\tau_0=m\frac{q-2}{2q}\text{\quad and \quad}\tau_{k+1}=\gamma(1+\tau_k).
\end{equation*}
Then by Lemma~\ref{lem:SobolevLemma}:
\begin{equation*}
\norm{u}_{\frac{2q}{q-2}\tau_{k+1}}^{\tau_{k+1}}\leq (1+\tau_k)^{\gamma}\norm{u}_{\frac{2q}{q-2}\tau_k}^{\gamma\tau_k}(A \norm{\nabla u}_q+\mathcal B_A(g)\norm{u}_q)^{\gamma}.
\end{equation*}
It follows by induction that
\begin{equation*}
\norm{u}_{\frac{2q}{q-2}\tau_{k+1}}^{\tau_{k+1}}\leq (1+\tau_{k})^{\gamma}\dotsb(1+\tau_0)^{\gamma^{k+1}}\norm{u}_{m}^{\gamma^{k+1}\tau_0}(A \norm{\nabla u}_q+\mathcal B_A(g)\norm{u}_q)^{\gamma(1+\gamma+\dotsb\gamma^{k})}.
\end{equation*}

\noindent{\bf If $\mathbf{q=n}$:} Then $\gamma=1$, $\tau_k=\tau_0+k$, and it follows that
\begin{equation*}
\norm{u}_{\frac{2q}{q-2}\tau_k}^{\tau_k}\leq \tau_k\tau_{k-1}\dotsb\tau_0\norm{u}_{m}^{\tau_0}(A \norm{\nabla u}_n+\mathcal B_A(g)\norm{u}_n)^k,
\end{equation*}
and thus
\begin{equation*}
\norm{u}_{\frac{2q}{q-2}\tau_k}\leq \tau_k^{\frac{k+1}{\tau_k}}\norm{u}_{m}^{\frac{\tau_0}{\tau_k}}(A \norm{\nabla u}_n+\mathcal B_A(g)\norm{u}_n)^{\frac{k}{\tau_k}}.
\end{equation*}

Now, choose a positive integer $k$ such that $p\frac{q-2}{2q}< \tau_0+k\leq p$, and let $\theta=\frac{\tau_k}{k}\alpha$. Then $p\leq \frac{2q}{q-2}\tau_k$ and
\begin{align*}
\frac{1}{p}=\frac{1-\alpha}{m}=\frac{1-\theta}{m}+\frac{\tau_0}{m}\frac{\theta}{\tau_k}=\frac{1-\theta}{m}+\frac{\theta}{\frac{2q}{q-2}\tau_k},
\end{align*}
thus
\begin{align*}
\norm{u}_p&\leq \norm{u}_m^{1-\frac{\tau_k}{k}\alpha}\norm{u}_{\frac{2q}{q-2}\tau_k}^{\frac{\tau_k}{k}\alpha}\\
&\leq\tau_k^{\frac{k+1}{k}\alpha} \norm{u}_{m}^{1-\alpha}(A \norm{\nabla u}_n+\mathcal B_A(g)\norm{u}_n)^{\alpha}\\
&\leq p^{2\alpha} \norm{u}_{m}^{1-\alpha}(A \norm{\nabla u}_n+\mathcal B_A(g)\norm{u}_n)^{\alpha}.
\end{align*}

\noindent{\bf If $\mathbf{q> n}$:} Then $\gamma>1$, $\tau_k=\gamma^k(\tau_0+\frac{\gamma}{\gamma-1})-\frac{\gamma}{\gamma-1}$ and we can write: 
\begin{equation*}
\norm{u}_{\frac{2q}{q-2}\tau_{k+1}}\leq \Bigl(\prod_{j=0}^k (1+\tau_{j})^{\frac{\gamma^{k-j}}{1+\tau_k}}\Bigr)\norm{u}_{m}^{\frac{\gamma^{k+1}\tau_0}{\tau_{k+1}}}(A \norm{\nabla u}_q+\mathcal B_A(g)\norm{u}_q)^{\frac{\gamma}{\gamma-1}\frac{\gamma^{k+1}-1}{\tau_{k+1}}},
\end{equation*}
and since $\tau_k\sim c \gamma^k$, the product is convergent:
\begin{equation*}
\ln\Bigl(\prod_{j=0}^k (1+\tau_{j})^{\frac{\gamma^{k-j}}{1+\tau_k}}\Bigr)=\sum_{j=0}^k {\frac{\gamma^{k-j}}{1+\tau_k}}\ln(1+\tau_{j})\leq \sum_{j=0}^\infty \frac{1}{\tau_0 \gamma^j}\ln\bigl(\gamma^j(\tau_0+\frac{\gamma}{\gamma-1})\bigr).
\end{equation*}

By letting $k\to\infty$, we obtain:
\begin{equation*}
\norm{u}_{\infty}\leq C(n,m,q) \norm{u}_{m}^{\frac{\tau_0}{\tau_0+\frac{\gamma}{\gamma-1}}}(A \norm{\nabla u}_q+\mathcal B_A(g)\norm{u}_q)^{\frac{\frac{\gamma}{\gamma-1}}{\tau_0+\frac{\gamma}{\gamma-1}}},
\end{equation*}
where $C(n,m,q)=\prod_{j=0}^\infty (1+\tau_{j})^{\frac{\gamma^{k-j}}{1+\tau_k}}$ and we check that 
\begin{align*}
\frac{\frac{\gamma}{\gamma-1}}{\tau_0+\frac{\gamma}{\gamma-1}}=\frac{1}{1+m(\frac{1}{n}-\frac{1}{q})}=\frac{\frac{1}{m}}{\frac{1}{m}-\frac{1}{q}+\frac{1}{n}}.
\end{align*}
This gives the result when $p=\infty$

Finally,
\begin{align*}
\norm{u}_p&\leq \norm{u}_m^{\frac{m}{p}}\norm{u}_\infty^{1-\frac{m}{p}}\\
&\leq C(n,m,q) \norm{u}_{m}^{1-\frac{\frac{1}{m}-\frac{1}{p}}{\frac{1}{m}-\frac{1}{q}+\frac{1}{n}}}(A \norm{\nabla u}_q+\mathcal B_A(g)\norm{u}_q)^{\frac{\frac{1}{m}-\frac{1}{p}}{\frac{1}{m}-\frac{1}{q}+\frac{1}{n}}}.
\end{align*}
\end{proof}

Let define the Sobolev $H_k^p(A)$-norm of a tensor $T$ by:
\begin{equation*}
\norm{T}_{H_k^p(A)}=A^k\norm{\nabla^k T}_p+\mathcal B_A^k\norm{T}_p.
\end{equation*}

\begin{prop}\label{prop:MultiplicativeTensorSobolev}
Let $(M,g)$ be a compact Riemannian $n$-manifold, $n\geq 3$. Let $q\geq 2$. There exists $C(n,k,m,p,q)$ such that for all tensors $T$ 
\begin{equation*}
\norm{T}_{H_k^p(A)}\leq C \norm{T}_{H_{k}^m(A)}^{1-\alpha}\norm{T}_{H_{k+1}^q(A)}^{\alpha},
\end{equation*}
with $2\leq m\leq p$ and  
\begin{align*}
&q<n \text{\qquad if \qquad} p\leq\frac{nq}{n-q},\\
&q=n \text{\qquad if \qquad} p< \infty,\\
&q>n \text{\qquad if \qquad} p\leq \infty
\end{align*}
and
\begin{equation*}
\alpha=\frac{\frac{1}{m}-\frac{1}{p}}{\frac{1}{m}-\frac{1}{q}+\frac{1}{n}}.
\end{equation*}
\end{prop}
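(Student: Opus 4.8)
The plan is to bootstrap the scalar inequality of Proposition~\ref{prop:MultiplicativeSobolev} to tensors and then fix the powers of $A$ and $\mathcal{B}_A$ by a scale-invariant interpolation. First I would record that for every (smooth) tensor $S$,
\begin{equation*}
\norm{S}_p\leq C\,\norm{S}_m^{1-\alpha}\bigl(A\norm{\nabla S}_q+\mathcal{B}_A(g)\norm{S}_q\bigr)^{\alpha},
\end{equation*}
which is Proposition~\ref{prop:MultiplicativeSobolev} applied to the function $u=\norm{S}$ together with the Kato inequality $\norm{\nabla\norm{S}}\leq\norm{\nabla S}$; the admissible ranges of $p,m,q,n$ are exactly those in the statement being proved. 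This already settles the case $k=0$.

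Next I would prove the rescaled interpolation inequality that lets me match the $A$- and $\mathcal{B}_A$-powers. Since $M$ is compact one has $\mathcal{B}_A(g)>0$ (test the defining inequality on constants), so $\lambda:=A/\mathcal{B}_A(g)>0$ is well defined. Applying Proposition~\ref{prop:Interpolation} with its two exponent parameters both equal to $1/q$ (legitimate since $q\geq 2$, so $1/q\in(0,1]$) and its top order equal to $k+1$ gives, for $0\leq j\leq k+1$,
\begin{equation*}
\norm{\nabla^j T}_q\leq C\,\norm{T}_q^{1-\frac{j}{k+1}}\norm{\nabla^{k+1}T}_q^{\frac{j}{k+1}};
\end{equation*}
multiplying by $\lambda^j=\bigl(\lambda^{k+1}\bigr)^{j/(k+1)}$ and using $a^{1-\theta}b^{\theta}\leq a+b$ yields $\lambda^j\norm{\nabla^j T}_q\leq C\bigl(\lambda^{k+1}\norm{\nabla^{k+1}T}_q+\norm{T}_q\bigr)$, and then multiplication by $\mathcal{B}_A^{k+1}$ gives, taking $j=1$ and $j=k$ respectively,
\begin{equation*}
A\,\mathcal{B}_A^{k}\norm{\nabla T}_q\leq C\norm{T}_{H_{k+1}^q(A)},\qquad A^{k}\mathcal{B}_A\norm{\nabla^k T}_q\leq C\norm{T}_{H_{k+1}^q(A)}
\end{equation*}
(both trivial if $k=0$).

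Finally I would estimate the two summands of $\norm{T}_{H_k^p(A)}=A^k\norm{\nabla^k T}_p+\mathcal{B}_A^k\norm{T}_p$ separately. Applying the first displayed inequality to $S=\nabla^k T$ and multiplying by $A^k=(A^k)^{1-\alpha}(A^k)^{\alpha}$ gives
\begin{equation*}
A^k\norm{\nabla^k T}_p\leq C\bigl(A^k\norm{\nabla^k T}_m\bigr)^{1-\alpha}\bigl(A^{k+1}\norm{\nabla^{k+1}T}_q+A^k\mathcal{B}_A\norm{\nabla^k T}_q\bigr)^{\alpha},
\end{equation*}
and since $A^k\norm{\nabla^k T}_m\leq\norm{T}_{H_k^m(A)}$ while the second bracket is $\leq C\norm{T}_{H_{k+1}^q(A)}$ by the interpolation estimate above, this is $\leq C\norm{T}_{H_k^m(A)}^{1-\alpha}\norm{T}_{H_{k+1}^q(A)}^{\alpha}$. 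Likewise, applying the inequality to $S=T$ and multiplying by $\mathcal{B}_A^k$, and using $\mathcal{B}_A^k\norm{T}_m\leq\norm{T}_{H_k^m(A)}$ together with $\mathcal{B}_A^k A\norm{\nabla T}_q+\mathcal{B}_A^{k+1}\norm{T}_q\leq C\norm{T}_{H_{k+1}^q(A)}$, gives $\mathcal{B}_A^k\norm{T}_p\leq C\norm{T}_{H_k^m(A)}^{1-\alpha}\norm{T}_{H_{k+1}^q(A)}^{\alpha}$. Adding the two bounds proves the claim (the general tensor case follows by density). Everything here is routine bookkeeping except the scale-invariant choice $\lambda=A/\mathcal{B}_A(g)$, which is the one point that makes the powers of $A$ and $\mathcal{B}_A$ balance on the two sides; the main thing to be careful about is that the hypotheses on $(p,m,q,n)$ coincide verbatim with those of Proposition~\ref{prop:MultiplicativeSobolev} and that Proposition~\ref{prop:Interpolation} is applicable with parameter $1/q$, which needs only $q\geq 2$.
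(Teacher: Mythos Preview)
Your proof is correct and follows essentially the same route as the paper's: reduce to the scalar case via Kato (the paper uses the regularization $u_\epsilon=\sqrt{\norm{T}^2+\epsilon}$ in place of invoking Kato directly, which sidesteps the nondifferentiability of $\norm{T}$ at zeros, but this is cosmetic), prove the intermediate-derivative bound $A^j\mathcal{B}_A^{k+1-j}\norm{\nabla^j T}_q\leq C\norm{T}_{H_{k+1}^q(A)}$ from Proposition~\ref{prop:Interpolation}, and then apply the $k=0$ case to $S=\nabla^k T$ and $S=T$ separately. Your packaging of the interpolation step via the scale parameter $\lambda=A/\mathcal{B}_A$ is equivalent to the paper's Lemma~\ref{lem:SobolevInterpLemma}.
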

We begin with two lemmas:
\begin{lem}\label{lem:SobolevTensorLemma}
The proposition is true for $k=0$.
\end{lem}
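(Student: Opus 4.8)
The plan is to reduce the tensor inequality in Proposition~\ref{prop:MultiplicativeTensorSobolev} for $k=0$ to the scalar multiplicative Sobolev inequality of Proposition~\ref{prop:MultiplicativeSobolev}, via the Kato inequality. First I would unwind the definition of the Sobolev norm at level $k=0$: since $A^0=\mathcal B_A^0=1$ we have $\norm{T}_{H_0^p(A)}=\norm{\nabla^0 T}_p+\norm{T}_p=2\norm{T}_p$, and likewise $\norm{T}_{H_0^m(A)}=2\norm{T}_m$, while $\norm{T}_{H_1^q(A)}=A\norm{\nabla T}_q+\mathcal B_A(g)\norm{T}_q$. So the claim to prove is exactly
\[
\norm{T}_p\leq C\,\norm{T}_m^{1-\alpha}\bigl(A\norm{\nabla T}_q+\mathcal B_A(g)\norm{T}_q\bigr)^{\alpha}
\]
for all smooth tensors $T$, with the ranges of $p,q,m$ and the value of $\alpha$ stated in Proposition~\ref{prop:MultiplicativeTensorSobolev} (these are identical to those of Proposition~\ref{prop:MultiplicativeSobolev}).

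Next I would set $u=\norm{T}$. Since $M$ is compact and $T$ is smooth, $u$ is Lipschitz, hence $u\in H_1^q(M)$, and the Kato inequality gives $\norm{\nabla u}\leq\norm{\nabla T}$ almost everywhere — the standard justification being to differentiate $\sqrt{\norm{T}^2+\varepsilon}$ and let $\varepsilon\to 0$, an estimate already used repeatedly elsewhere in the paper. Applying Proposition~\ref{prop:MultiplicativeSobolev} to the scalar function $u$ yields
\[
\norm{u}_p\leq C\,\norm{u}_m^{1-\alpha}\bigl(A\norm{\nabla u}_q+\mathcal B_A(g)\norm{u}_q\bigr)^{\alpha},
\]
with the same constant $C$, the same admissible ranges for $p,q,m$, and the same exponent $\alpha$, since the hypotheses on $p,q,m$ coincide in the two statements.

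Finally I would observe that $\norm{u}_r=\norm{T}_r$ for every exponent $r$, and that $\norm{\nabla u}_q\leq\norm{\nabla T}_q$ together with $\alpha\geq 0$ and the monotonicity of $x\mapsto x^{\alpha}$ on $[0,\infty)$ lets one replace $\norm{\nabla u}_q$ by $\norm{\nabla T}_q$ on the right-hand side. This gives precisely the displayed inequality for $\norm{T}_p$; reinserting the factors of $2$ (absorbed into $C$) recovers the $H_0$-norm form $\norm{T}_{H_0^p(A)}\leq C\,\norm{T}_{H_0^m(A)}^{1-\alpha}\norm{T}_{H_1^q(A)}^{\alpha}$. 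The only point needing care is the passage from scalars to tensors, i.e.\ that $\norm{T}\in H_1^q(M)$ with the pointwise bound $\norm{\nabla\norm{T}}\leq\norm{\nabla T}$ (in particular at zeros of $T$); for $T$ smooth on a compact $M$ this is routine, and it is exactly the ingredient that reduces the lemma to the already-established scalar case.
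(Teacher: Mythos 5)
Your proof is correct and follows essentially the same route as the paper: the paper applies Proposition~\ref{prop:MultiplicativeSobolev} to the regularization $u_\epsilon=\sqrt{\norm{T}^2+\epsilon}$, uses $\norm{\nabla u_\epsilon}\leq\norm{\nabla T}$, and lets $\epsilon\to 0$, which is exactly the Kato-type reduction you describe (you apply the scalar inequality to $u=\norm{T}$ directly and invoke the same $\epsilon$-regularization to justify the pointwise gradient bound). The only cosmetic difference is your explicit bookkeeping of the factor $2$ in $\norm{T}_{H_0^p(A)}$, which is harmlessly absorbed into the constant.
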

\begin{proof}
Let $\epsilon$ be a positive number. Let define $u_\epsilon=\sqrt{\norm{T}^2+\epsilon}$. Proposition \ref{prop:MultiplicativeSobolev} applies and show that 
\begin{equation*}
\norm{u_\epsilon}_p\leq C \norm{u_\epsilon}_{m}^{1-\alpha}(A \norm{\nabla u_\epsilon}_q+\mathcal B_A(g)\norm{u_\epsilon}_q)^\alpha.
\end{equation*}
Then, as $\epsilon\to 0$, $\norm{u_\epsilon}_p\to\norm{T}_p$, $\norm{u_\epsilon}_m\to\norm{T}_m$ and 
\begin{equation*}
\norm{\nabla u_\epsilon}=\frac{\ps{T}{\nabla T}}{\norm{T}^2+\epsilon}\leq \norm{\nabla T}.
\end{equation*}
The inequality is obtained by making $\epsilon\to 0$.
\end{proof}

\begin{lem}\label{lem:SobolevInterpLemma}
There exists $C(n,k,p)$ such that for all $0\leq j\leq k$, 
\begin{equation*}
A^j \mathcal B_A^{k-j}(g)\norm{\nabla^j T}_p\leq C \norm{T}_{H_k^p(A)}.
\end{equation*}
\end{lem}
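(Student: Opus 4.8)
The plan is to deduce the inequality from the interpolation estimate of Proposition~\ref{prop:Interpolation} followed by Young's inequality. First I would dispose of the endpoint cases: when $j=0$ the left-hand side is exactly $\mathcal B_A^k\norm{T}_p$, one of the two terms of $\norm{T}_{H_k^p(A)}$, and when $j=k$ it is $A^k\norm{\nabla^k T}_p$, the other one; so both hold with $C=1$, and I may assume $0<j<k$.

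For the remaining range, I would apply Proposition~\ref{prop:Interpolation} with $m=k$ and $\alpha=\beta=\frac1p$ (so that every intermediate exponent occurring in that proposition equals $\frac1p$) and read off the case of index $j$, which gives a constant $C=C(n,k,p)$ with
\begin{equation*}
\norm{\nabla^j T}_p\leq C\,\norm{T}_p^{1-\frac jk}\,\norm{\nabla^k T}_p^{\frac jk}.
\end{equation*}
Multiplying both sides by $A^j\mathcal B_A^{k-j}$ and distributing the factor as $A^j\mathcal B_A^{k-j}=(\mathcal B_A^k)^{1-\frac jk}(A^k)^{\frac jk}$, the right-hand side becomes $C\bigl(\mathcal B_A^k\norm{T}_p\bigr)^{1-\frac jk}\bigl(A^k\norm{\nabla^k T}_p\bigr)^{\frac jk}$. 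By the weighted arithmetic-geometric mean inequality with weights $\frac{k-j}{k}$ and $\frac jk$,
\begin{equation*}
\bigl(\mathcal B_A^k\norm{T}_p\bigr)^{1-\frac jk}\bigl(A^k\norm{\nabla^k T}_p\bigr)^{\frac jk}\leq \tfrac{k-j}{k}\,\mathcal B_A^k\norm{T}_p+\tfrac jk\,A^k\norm{\nabla^k T}_p\leq \mathcal B_A^k\norm{T}_p+A^k\norm{\nabla^k T}_p=\norm{T}_{H_k^p(A)},
\end{equation*}
which is the claim, with the constant coming from the interpolation step (one may replace it by $\max_{0\le j\le k}C(n,k,j,p)$ to make it uniform in $j$).

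The argument is essentially mechanical and I do not expect a genuine obstacle: everything reduces to the interpolation inequality, which has already been established. The only point to keep in mind is the endpoint $p=\infty$, which is not covered by Proposition~\ref{prop:Interpolation} (one cannot take $\alpha=\beta=0$ there); however, in this paper Lemma~\ref{lem:SobolevInterpLemma} is only ever applied with the finite exponents $m$ and $q$ appearing in the norms $\norm{T}_{H_k^m(A)}$ and $\norm{T}_{H_{k+1}^q(A)}$ of Proposition~\ref{prop:MultiplicativeTensorSobolev}, so this case need not be treated.
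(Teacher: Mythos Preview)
Your proof is correct and follows essentially the same route as the paper's: apply Proposition~\ref{prop:Interpolation} with equal exponents to get $\norm{\nabla^j T}_p\leq C\norm{T}_p^{1-j/k}\norm{\nabla^k T}_p^{j/k}$, absorb the factor $A^j\mathcal B_A^{k-j}$ into the two pieces, and bound the resulting geometric mean by the sum. The paper's proof simply omits the explicit treatment of the endpoint cases and the AM--GM step you spell out.
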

\begin{proof}
By Proposition \ref{prop:Interpolation}, 
\begin{equation*}
\norm{\nabla^j T}_p\leq C(n,k,p)\norm{\nabla^k T}_p^{\frac{j}{k}}\norm{T}_p^{1-\frac{j}{k}},
\end{equation*}
then
\begin{align*}
A^j \mathcal B_A^{k-j}(g)\norm{\nabla^j T}_p&\leq C(n,k,p)(A^k\norm{\nabla^k T}_p)^{\frac{j}{k}}(\mathcal B_A^k\norm{T}_p)^{1-\frac{j}{k}}\\
&\leq C(n,k,p) \norm{T}_{H_k^p}.
\end{align*}
\end{proof}

\begin{proof}[Proof of the Proposition]
Applying Lemma~\ref{lem:SobolevTensorLemma} to $\nabla^k T$, we obtain
\begin{align*}
A^k\norm{\nabla^k T}_p&\leq C (A^k\norm{\nabla^k T}_{m})^{1-\alpha}(A^{k+1} \norm{\nabla^{k+1} T}_q+A \mathcal B_A^{k}(g)\norm{\nabla T}_q)^\alpha\\
&\leq C (1+C'(n,k,p))^\alpha\norm{T}_{H_{k}^m}^{1-\alpha}\norm{T}_{H_{k+1}^q}^\alpha\text{ by Lemma~\ref{lem:SobolevInterpLemma}}\\
\mathcal B_A^k\norm{T}_p&\leq C (\mathcal B_A^k\norm{T}_{m})^{1-\alpha}(A \mathcal B_A^k \norm{\nabla T}_q+\mathcal B_A^{k+1}(g)\norm{T}_q)^\alpha\text{ by Lemma~\ref{lem:SobolevTensorLemma}}\\
&\leq C (1+C'(n,k,p))^\alpha\norm{T}_{H_{k}^m}^{1-\alpha}\norm{T}_{H_{k+1}^q}^\alpha\text{ by Lemma~\ref{lem:SobolevInterpLemma}}.
\end{align*}
It follows that
\begin{equation*}
\norm{T}_{H_{k}^p}\leq 2C (1+C'(n,k,p))\norm{T}_{H_{k}^m}^{1-\alpha}\norm{T}_{H_{k+1}^q}^\alpha.
\end{equation*}
\end{proof}

\begin{prop}\label{prop:SobolevInfty}
Let $(M,g)$ be a compact Riemannian $n$-manifold, $n\geq 3$. Let $k=[\frac{n}{2}]+1$. There exists $C(n)$ such that for all tensors $T$ 
\begin{equation*}
\norm{T}_\infty\leq C \norm{T}_2^{\frac{1}{n+1}}\norm{T}_{H_{1}^{2}(A)}^{\frac{n-2k+1}{n+1}}\norm{T}_{H_{k}^{2}(A)}^{\frac{2k-1}{n+1}}.
\end{equation*}
\end{prop}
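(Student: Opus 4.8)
The plan is to obtain the estimate by iterating the one-step multiplicative Sobolev inequality of Proposition~\ref{prop:MultiplicativeTensorSobolev}; the key structural fact is that $k=[\tfrac{n}{2}]+1$ is the smallest integer with $2k>n$, which is exactly what makes a chain of about $k$ such steps terminate at $L^2$-based norms.

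First I would generate the factor $\norm{T}_2$. Applying Proposition~\ref{prop:MultiplicativeTensorSobolev} at order $0$ with $m=2$, $p=\infty$ and $q_0=2n$ (choosing $p=\infty$ forces $q>n$, and $q_0=2n$ is what makes the subsequent chain close up), one gets
\[
\norm{T}_\infty\le C\,\norm{T}_2^{\,1-\alpha_0}\,\norm{T}_{H_1^{2n}(A)}^{\,\alpha_0},\qquad \alpha_0=\frac{\tfrac12}{\tfrac12-\tfrac1{2n}+\tfrac1n}=\frac{n}{n+1},
\]
so that $\norm{T}_2$ already appears with the exponent $1-\alpha_0=\tfrac1{n+1}$. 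Then I would iterate over the orders $j=1,\dots,k-1$: at stage $j$, Proposition~\ref{prop:MultiplicativeTensorSobolev} is used once more to rewrite $\norm{T}_{H_j^{q_{j-1}}(A)}$ as $\norm{T}_{H_j^{m_j}(A)}^{1-\alpha_j}\,\norm{T}_{H_{j+1}^{q_j}(A)}^{\alpha_j}$ with $\tfrac1{q_j}=\tfrac1{q_{j-1}}+\tfrac1n$, so that $q_{j-1}$ is the Sobolev conjugate of $q_j$ and, for $m_j=2$, the step is a genuine embedding ($\alpha_j=1$, no new factor). Since $\tfrac1{q_j}$ grows by $\tfrac1n$ at each stage from $\tfrac1{q_0}=\tfrac1{2n}$, after $k-1$ stages one reaches $q_{k-1}=2$ exactly when $n$ is odd; when $n$ is even the chain falls one stage short, and the last stage must instead be a true interpolation — legitimate precisely because $2k>n$ (so that $H_{k-1}^{q_{k-2}}\hookrightarrow L^\infty$) and which, via the tensor Sobolev inequality $\norm{T}_{\frac{2n}{n-2}}\le C\norm{T}_{H_1^2(A)}$, brings in the factor $\norm{T}_{H_1^2(A)}$.

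At the end the right-hand side involves only $\norm{T}_2$, $\norm{T}_{H_k^2(A)}$ and, in the even case, a single intermediate norm $\norm{T}_{H_{k-1}^2(A)}$. This last factor is eliminated using Proposition~\ref{prop:Interpolation} applied to $\nabla T$ with $\alpha=\beta=\tfrac12$ and $m=k-1$, which gives $\norm{\nabla^jT}_2\le C\,\norm{\nabla T}_2^{\,1-\frac{j-1}{k-1}}\norm{\nabla^kT}_2^{\,\frac{j-1}{k-1}}$; reassembling with the matching powers $A^j=A^{\,1-\frac{j-1}{k-1}}(A^k)^{\frac{j-1}{k-1}}$ and $\mathcal B_A^j=\mathcal B_A^{\,1-\frac{j-1}{k-1}}(\mathcal B_A^k)^{\frac{j-1}{k-1}}$ yields $\norm{T}_{H_j^2(A)}\le C\,\norm{T}_{H_1^2(A)}^{\,1-\frac{j-1}{k-1}}\norm{T}_{H_k^2(A)}^{\,\frac{j-1}{k-1}}$. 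Substituting this back and multiplying out the exponents — which stay matched at every stage, both on the $L^p$-side and in the powers of $A$ and $\mathcal B_A$ — produces an inequality $\norm{T}_\infty\le C\,\norm{T}_2^{a}\norm{T}_{H_1^2(A)}^{b}\norm{T}_{H_k^2(A)}^{c}$ with $a+b+c=1$ and $a=\tfrac1{n+1}$, of exactly the claimed form; since the whole scheme is dictated by $n$, so is the constant $C$. The analytic content is entirely in Propositions~\ref{prop:MultiplicativeTensorSobolev} and~\ref{prop:Interpolation}; the only real obstacle is the combinatorial bookkeeping — choosing the sequences $(m_j,q_j)$ so that the iteration closes after exactly $k$ order-raising steps at $L^2$, and carrying the exponents together with the hidden powers of $A$ and $\mathcal B_A$ through all the substitutions so that they collapse to the values in the statement.
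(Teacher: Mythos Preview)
Your scheme is correct and is essentially the paper's own: both start from Proposition~\ref{prop:MultiplicativeTensorSobolev} with $m=2$, $p=\infty$, $q=2n$ to produce the factor $\norm{T}_2^{1/(n+1)}$, then climb from $H_1^{2n}$ to $H_k^2$ by iterating the one-step embedding $\norm{T}_{H_l^{q^*}}\le C\norm{T}_{H_{l+1}^{q}}$ (the case $\alpha=1$ of Proposition~\ref{prop:MultiplicativeTensorSobolev}). The paper packages this iteration as a separate lemma, $\norm{T}_{H_l^{2n/(n-2j)}(A)}\le C\norm{T}_{H_{j+l}^2(A)}$ for $j<n/2$, so that for odd $n$ one step of the lemma (with $j=k-1$, $l=1$) finishes immediately.

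The only real difference is how the even case is handled. The paper inserts the ``true'' interpolation at order $1$ rather than at order $k-1$: from $\norm{T}_{H_1^{2n}}$ it applies Proposition~\ref{prop:MultiplicativeTensorSobolev} with $m=2$, $p=2n$, $q=n$ to get $\norm{T}_{H_1^{2n}}\le C\norm{T}_{H_1^2}^{1/n}\norm{T}_{H_2^n}^{(n-1)/n}$, and then the iteration lemma (now with $j=k-2$, $l=2$) takes $H_2^n$ straight to $H_k^2$. This directly yields $\norm{T}_\infty\le C\norm{T}_2^{1/(n+1)}\norm{T}_{H_1^2}^{1/(n+1)}\norm{T}_{H_k^2}^{(n-1)/(n+1)}$ with no leftover intermediate norm, so your final appeal to Proposition~\ref{prop:Interpolation} to eliminate $\norm{T}_{H_{k-1}^2}$ is unnecessary. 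Your route gives the same inequality, just with one extra step; the parenthetical aside about $H_{k-1}^{q_{k-2}}\hookrightarrow L^\infty$ and $\norm{T}_{2n/(n-2)}\le C\norm{T}_{H_1^2}$ is confused and plays no role in the argument you actually carry out.
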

We will need the following lemma:
\begin{lem}\label{lem:HigherOrderSobolev}
Let $k$ and $l$ be nonnegative integers such that $k<\frac{n}{2}$. There exists $C(n,k,l)$ such that for all tensors $T$ 
\begin{equation*}
\norm{T}_{H_l^\frac{2n}{n-2k}(A)}\leq C \norm{T}_{H_{k+l}^2(A)}.
\end{equation*}
\end{lem}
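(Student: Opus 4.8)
The plan is to obtain the estimate by iterating the one--derivative multiplicative Sobolev inequality of Proposition~\ref{prop:MultiplicativeTensorSobolev}, each iteration trading one derivative in $L^2$ for one Sobolev step in the integrability exponent. Concretely, I would set $p_j=\frac{2n}{n-2j}$ for $0\le j\le k$, so that $p_0=2$ and $p_k=\frac{2n}{n-2k}$, and prove by induction on $j$ that
\begin{equation*}
\norm{T}_{H_{l+k-j}^{p_j}(A)}\le C_j(n,k,l)\,\norm{T}_{H_{l+k}^2(A)}
\end{equation*}
for every tensor $T$. The case $j=0$ is trivial (take $C_0=1$), and the case $j=k$ is exactly the assertion of the Lemma.

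For the inductive step I would apply Proposition~\ref{prop:MultiplicativeTensorSobolev} to $T$ with the number of derivatives taken to be $l+k-j-1$ (which is $\ge l\ge 0$ since $j\le k-1$), and with the parameters $m=q=p_j$ and target exponent $p=p_{j+1}$. With these choices the interpolation exponent is $\alpha=n(\tfrac1{p_j}-\tfrac1{p_{j+1}})=1$, so the proposition degenerates to the plain embedding $\norm{T}_{H^{p_{j+1}}_{l+k-j-1}(A)}\le C\,\norm{T}_{H^{p_j}_{l+k-j}(A)}$, and composing with the induction hypothesis gives the bound at level $j+1$ with $C_{j+1}=C\,C_j$. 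I then only have to check that the parameters are admissible: $2\le p_j\le p_{j+1}$ and $q=p_j\ge 2$ are immediate, and --- provided $p_j<n$ --- the range condition of Proposition~\ref{prop:MultiplicativeTensorSobolev} reads $p_{j+1}\le\frac{np_j}{n-p_j}$, which holds with equality since $\frac1{p_{j+1}}=\frac1{p_j}-\frac1n$.

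The one point that genuinely uses the hypothesis $k<\tfrac n2$, and the only place where I expect to have to be a little careful, is verifying $p_j<n$ for all exponents appearing in the iteration, i.e. for $0\le j\le k-1$: since $k$ is an integer with $k<\tfrac n2$ we have $2k\le n-1$, hence $2(k-1)\le n-3<n-2$, which is exactly $p_j<n$ for $j\le k-1$. Once this is in place the iteration runs through the admissible endpoint Sobolev embeddings $W^{1,p_j}\hookrightarrow L^{p_{j+1}}$ without ever leaving the range of Proposition~\ref{prop:MultiplicativeTensorSobolev}, and accumulating the constants over the $k$ steps yields the claimed constant $C(n,k,l)$. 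Everything else is a routine bookkeeping of constants.
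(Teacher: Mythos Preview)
Your argument is correct and is essentially the same as the paper's: both proceed by induction, applying Proposition~\ref{prop:MultiplicativeTensorSobolev} with $\alpha=1$ at each step to trade one derivative for one Sobolev step $p_j\mapsto p_{j+1}=\frac{np_j}{n-p_j}$. The paper phrases the induction on $k$ (and more tersely), while you parametrize by the number $j$ of steps already taken and spell out the admissibility check $p_j<n$ for $j\le k-1$, but the content is identical.
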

\begin{proof}
By Proposition \ref{prop:MultiplicativeTensorSobolev}, as 
\begin{equation*}
\frac{2n}{n-2\frac{2n}{n-2(k-1)}}=\frac{2n}{n-2k},
\end{equation*}
we have
\begin{equation*}
\norm{T}_{H_l^\frac{2n}{n-2k}}\leq C(n,k,l) \norm{T}_{H_{1+l}^\frac{2n}{n-2(k-1)}},
\end{equation*}
so the result holds by induction on $k$.
\end{proof}
\begin{proof}[Proof of the Proposition]
From Proposition \ref{prop:MultiplicativeTensorSobolev}, it follows that there exists $C_1(n)$ such that
\begin{equation*}
\norm{T}_\infty\leq C_1\norm{T}_2^{\frac{1}{n+1}}\norm{T}_{H_1^{2n}}^{\frac{n}{n+1}}.
\end{equation*}
\noindent{\bf If n=2k-1: } Applying Lemma~\ref{lem:HigherOrderSobolev}, we get:
\begin{align*}
\norm{T}_\infty&\leq C(n)\norm{T}_2^{\frac{1}{n+1}}\norm{T}_{H_{k}^{2}}^{\frac{n}{n+1}}.
\end{align*}
\noindent{\bf If n=2k-2: } Applying Proposition \ref{prop:MultiplicativeTensorSobolev}, we get:
\begin{align*}
\norm{T}_\infty&\leq C_2(n) \norm{T}_2^{\frac{1}{n+1}}\norm{T}_{H_{1}^{2}}^{\frac{1}{n+1}}\norm{T}_{H_{2}^{n}}^{\frac{n-1}{n+1}}\\
&\leq C(n)\norm{T}_2^{\frac{1}{n+1}}\norm{T}_{H_{1}^{2}}^{\frac{1}{n+1}}\norm{T}_{H_{k}^{2}}^{\frac{n-1}{n+1}}\text{ by Lemma~\ref{lem:HigherOrderSobolev}}.
\end{align*}
\end{proof}

\nocite{Bes87,DeT83,GV01,Ham82,Man02,Str08,Zhe03,Heb96,Lab08,Str10}
\begin{bibdiv}
\begin{biblist}
\bibselect{bib}
\end{biblist}
\end{bibdiv}

\end{document}